\providecommand{\BBb}[1]{{\mathbb{#1}}}
\providecommand{\cal}[1]{{\mathcal{#1}}}   
\newcommand{\ang}[1]{\langle#1\rangle}
\newcommand{\bignt}[1]{\lfloor#1\rfloor}
\newcommand{\Bcirc}{\overset{\lower 1.5pt%
              \hbox{$@,@,@,@,@,\scriptscriptstyle\circ$}}B{}}
\newcommand{\Binfty}{\overset{\lower 1.5pt%
              \hbox{$@,@,@,@,@,\scriptscriptstyle\infty$}}B{}}
\newcommand{\bigdot}{\mathbin{\raise.65\jot\hbox{$\scriptscriptstyle\bullet$}}}
\newcommand{\C}{{\BBb C}}
\newcommand{\dbar}{d\hspace{1pt}\llap{$\overline{\phantom o}$}}
\newcommand{\Dm}{\BBb D}
\newcommand{\dual}[2]{\langle\,#1,\,#2\,\rangle}
\newcommand{\Dual}[2]{\bigl\langle\,#1,\,#2\,\bigr\rangle}
\newcommand{\erd}{\overset{\lower 1pt\hbox{\large.}}{e}
                  \overset{\lower 1pt\hbox{\large.}}{r}}
\newcommand{\F}{\BBb F}
\newcommand{\Fcirc}{\overset{\lower 1.5pt%
               \hbox{$@,@,@,@,@,\scriptscriptstyle\circ$}}F{}}
\newcommand{\fracc}[2]{{
                \textstyle\frac{#1}{\raise 1pt\hbox{$\scriptstyle #2$}}}}
\newcommand{\fracp}{\fracc1p}
\newcommand{\fracci}[2]{{\frac{#1}{\raise 1pt\hbox{$\scriptscriptstyle #2$}}}}
\newcommand{\fracpi}{\fracci1p}
\newcommand{\im}{\operatorname{i}}
\newcommand{\kd}[1]{\boldsymbol{[\![}{#1}\boldsymbol{]\!]}}
\newcommand{\lap}{\operatorname{\Delta}}
\newcommand{\mlap}{-\!\operatorname{\Delta}}
\newcommand{\Mi}{{\scriptscriptstyle M}}
\newcommand{\norm}[2]{\mathinner{\|}#1\,|#2\|}
\newcommand{\Norm}[2]{\mathinner{\bigl\|\,#1\,\big|#2\bigr\|}}
\newcommand{\op}[1]{\operatorname{#1}}
\newcommand{\N}{\BBb N}
\newcommand{\R}{{\BBb R}}
\newcommand{\Rn}{{\BBb R}^{n}}
\newcommand{\Rp}{\overline{{\BBb R}}_+}
\newcommand{\Rpp}{\overline{{\BBb R}}^2_{++}}
\providecommand{\rom}[1]{\upn{#1}}
\newcounter{enmcount}\renewcommand{\theenmcount}{{\rm\arabic{enmcount}}}
\newcounter{rmcount}\renewcommand{\thermcount}{{\rm\roman{rmcount}}}
\newcounter{Rmcount}\renewcommand{\theRmcount}{{\rm\Roman{Rmcount}}}
\newcommand{\sgo}{s.g.o.\ }
\newcommand{\sgos}{s.g.o.s\ }
\newcommand{\smlnt}[1]{\lceil#1\rceil}
\newcommand{\supp}{\operatorname{supp}}
\newcommand{\Z}{\BBb Z}
\renewcommand{\acute}[1]{\overset{{\scriptscriptstyle /}}{#1}}
\renewcommand{\check}[1]{\overset{{\scriptscriptstyle \vee}}{#1}}
\renewcommand{\hat}[1]{\overset{{\scriptscriptstyle \wedge}}{#1}}
\numberwithin{equation}{section}
\newtheorem{thm}{Theorem}
\numberwithin{thm}{section}
\newtheorem{prop}[thm]{Proposition}
\newtheorem{lem}[thm]{Lemma}
\newtheorem{cor}[thm]{Corollary}
\theoremstyle{definition}
\newtheorem{defn}[thm]{Definition}
\newtheorem{exmp}[thm]{Example}
 \numberwithin{exercise}{section}
\theoremstyle{remark}
\newtheorem{rem}[thm]{Remark}
\title[Elliptic boundary problems and the Boutet de Monvel\dots]{Elliptic boundary problems\\ 
 and the Boutet de Monvel calculus\\
 in Besov and Triebel--Lizorkin spaces}
\author{Jon Johnsen}
\address{Mathematical Institute, University of Copenhagen, DK-2100
Copenhagen~{\O}, Denmark}
\email{jjohnsen@math.ku.dk}
\thanks{Partially supported by the Danish Natural Sciences Research
Council, no.~11--1221--1.
\\[9\jot]{\tt Appeared in Mathematica Scandinavica, {\bf 79} (1996), 25--85.}}
\begin{document}
\maketitle

\begin{abstract}{The Boutet de Monvel calculus of pseudo-differential
boundary operators is generalised to the scales of Besov and
Triebel--Lizorkin spaces, $B^{s}_{p,q}$ and $F^{s}_{p,q}$, with
$s\in\R$ and $p$ and $q\in\,]0,\infty]$ (though with $p<\infty$ for
the $F^{s}_{p,q}$ spaces).
 
The continuity and Fredholm properties proved here 
extend those in \cite{F2} and \cite{G3}, and the results on range
complements of surjectively elliptic Green operators improve the
earlier known, even for the classical spaces with $1<p<\infty$.

The symbol classes treated are the $x$-uniformly estimated ones. On
$\Rp^n$ a trace operator $T$ and a singular Green operator $G$, both of class
0, are defined in general to be 
 \begin{equation}
 T=K^*e^+,\qquad G=r^+G_1^*e^+
 \end{equation}
where the Poisson operator $K$ is $\op{OPK}(e^{iD_{x'}\cdot
D_{\xi'}}\overline{\tilde t}(x',x_n,\xi'))$ and the singular Green operator
$G_1$ equals $\op{OPG}(e^{iD_{x'}\cdot D_{\xi'}}\overline{\tilde g}
(x',y_n,x_n,\xi'))$, respectively. 
}
\end{abstract}

\section{Summary}  \label{summ-sect}
\thispagestyle{empty}

As a main example of the considerations in this article one may take
an elliptic differential operator $A=\sum_{|\alpha|\le
d}a_\alpha(x)D^\alpha$ on an open bounded set $\Omega\subset\Rn$ with
$C^\infty$ boundary $\Gamma:=\partial\Omega$ and a trace operator $T$
for which
 \begin{equation}
 \begin{aligned}
 Au(x)&=f(x)\quad\text{ in}\quad\Omega, \\
 Tu(x)&=\varphi(x)\quad\text{ on}\quad \Gamma,
 \end{aligned}
 \label{i1}
 \end{equation}
is a boundary value problem that is elliptic in the sense of Agmon,
Douglis and Nirenberg \cite{ADN1}.

The topics to be discussed in this connection are:
 \begin{itemize} \addtolength{\itemsep}{\jot}
  \item[(I)] solvability and regularity results in the
{\em Besov\/} and {\em Triebel--Lizorkin\/} spaces $B^{s}_{p,q}$ and
$F^{s}_{p,q}$ with $s\in\R$ and $p$ and $q\in\,]0,\infty]$ (and $p<\infty$
in the $F$ case),
  \item[(II)] a generalisation of the pseudo-differential calculus 
of Boutet de~Monvel \cite{BM71} (for problems like that in 
\eqref{i1}, e.g.) to the setting in (I).
 \end{itemize}
In fact (I) and (II) are treated simultaneously in order to give a
{\em unified\/} treatment of these two systematic points of view.

It is also the purpose to present modifications of some basic facts in
the calculus with full proofs. The arguments should be of interest,
partly because they are fairly elementary, and partly 
since they bring the $\Rn_+$-part of the calculus closer to the
pseudo-differential theory of H\"ormander \cite[Sect.~18.1]{H}.

The results are presented in the rest of this section, and
Section~\ref{prel-sect} settles the notation and the prerequisites on
the $B^{s}_{p,q}$ and $F^{s}_{p,q}$ spaces. Section~\ref{Rnp-sect}
describes the operator classes of the calculus relatively to the
half-space $\Rn_+$, whereas the continuity properties in the
$\Rn_+$-case are proved in
Section~\ref{cont-sect}. Fredholm properties are treated in
Section~\ref{grn-sect}, even for multi-order systems acting in vector
bundles, and applications are indicated in Section~\ref{appl-sect}.

\subsection*{On the spaces} The project in (I) covers many different spaces
at the same time, since:
 \begin{itemize}\addtolength{\itemsep}{2\jot}
                \addtolength{\topsep}{2\jot}  
  \item $C^s=B^{s}_{\infty,\infty}$ for $s>0$ (H\"older--Zygmund spaces),
  \item $W^s_p=B^{s}_{p,p}$ for $s\in\R_+\!\setminus\N_0$ and
$1<p<\infty$ (Slobodetski{\u\i} spaces),
  \item $W^k_p=F^{k}_{p,2}$ for $k\in\N_0$ and $1<p<\infty$ (Sobolev
spaces),
  \item $H^s_p=F^{s}_{p,2}$ for $s\in\R$ and $1<p<\infty$ (Bessel
potential spaces).
 \end{itemize}
In particular $F^{0}_{p,2}=L_p$ for $1<p<\infty$ and
$B^{s}_{2,2}=F^{s}_{2,2}=H^s$ for $s\in\R$ (Lebesgue and Sobolev
spaces).

For these relations the reader is referred to the books of H.~Triebel
\cite{T2,T3}. As a further motivation, note that the local Hardy
space $h_p$ equals $F^{0}_{p,2}$ for $0<p<\infty$, and that the relation to
Morrey--Campanato spaces is explained in \cite{T3}, together with the fact 
that also $F^{s}_{p,\infty}$ has been considered earlier on in \cite{Chr,DeS}.
(The local BMO space $bmo$ equals $F^{0}_{\infty,2}$, which is not treated 
here. However, a recent work of J.~Marschall \cite{Mar95} may provide a point of
departure for an 
extension of the present results to the $F^{s}_{\infty,q}$ spaces.)

For a presentation of the results the reader is referred to
Theorems~\ref{i1-thm} and \ref{i2-thm} and to Corollaries~\ref{i1-cor} and
\ref{i2-cor} below. 

The calculus of Boutet de~Monvel was originally worked out for the
$H^s$ Sobolev spaces \cite{BM71}, and an extension to $B^{s}_{p,q}$ and
$F^{s}_{p,2}$ with $1<p<\infty$ and $1\le q\le\infty$  was given by G.~Grubb
\cite{G3}. Among the earlier attempts at an $L_p$ 
theory for the calculus the shortcomings of \cite{RS} are accounted
for in \cite[Rem.~3.2]{G3}, so the reader may refer to the details there.

An extension of the calculus to the $B^{s}_{p,q}$ and $F^{s}_{p,q}$ 
scales, with the restriction $p<\infty$ in the $F$~case, has been worked 
out already by J.~Franke. However, at the central point 
the arguments are not contained in his thesis \cite{F2}, and 
the only published material on this work is the review article
\cite{F1}, which does not contain proofs. In addition to this, the 
concept of negative class for operators of the form $P_\Omega+G$\,---\,and
hence for general Green operators $\cal A$\,---\,was 
first introduced in \cite{G3}, and already
for classical problems like \eqref{i1} this notion is indispensable 
for an optimal description of parametrices, cf.~\cite[Thm.~5.4~ff.]{G3}.

On these grounds the author has included in his thesis
\cite{JJ93} an extension to the $B^{s}_{p,q}$ and $F^{s}_{p,q}$ spaces.
It is presented here with some improvements.  

It should also be mentioned that 
the $L_p$ theory of {\em differential\/} boundary problems, studied in
\cite{ADN1,ADN2}, \cite{So2} \dots, was considered first in 
the $B^{s}_{p,q}$ and $F^{s}_{p,q}$ scales by H.~Triebel
in \cite{Tri78}, cf.~also \cite{T2}, although with some 
restrictions for $p$ and $q<1$. An extension to the case
with $p$ and $q$ also in the whole of $\,]0,1]$ has been worked out by
Franke and T.~Runst \cite{FR95}.

\subsection*{On the calculus} Boundary problems like \eqref{i1} 
are represented here by
Green operators in the pseudo-differential boundary operator calculus
of L.~Boutet de~Monvel, cf.~(II). A typical example is obtained from a
matrix operator       
 \begin{equation}
 \cal A=\begin{pmatrix}P_\Omega+G&K\\ T& S\end{pmatrix}\colon
 \smash[b]{
  \begin{array}{ccc}
  C^\infty(\overline{\Omega})^N\\ \oplus\\ C^\infty(\Gamma)^M
  \end{array} 
 \to
  \begin{array}{ccc}
  C^\infty(\overline{\Omega})^{N'}\\ \oplus\\ C^\infty(\Gamma)^{M'}
  \end{array}
 }  
 \label{i2} 
 \end{equation}
where 
 \begin{itemize} \addtolength{\itemsep}{\jot}
  \item $P_\Omega=r_\Omega P e_\Omega$ is the truncation to $\Omega$
of a pseudo-differential operator on $\Rn$ with the symbol lying in 
$S^{d}_{1,0}$ and satisfying a transmission condition at
$\Gamma\subset\Rn$;
  \item $G$ is a singular Green operator,
  \item $K$ is a Poisson (or potential) operator,
  \item $T$ is a trace operator,
  \item $S$ is a pseudo-differential operator on $\Gamma$.
 \end{itemize}
(See Section~\ref{Rnp-sect} below for the expressions of these operators in
local coordinates.) Then $\cal A$ is said to have {\em order\/} $d$ and
{\em class\/} $r$, for numbers $d\in\R$ and $r\in\Z$, if all entries
have this order and both $T$ and $P_\Omega+G$ have class $r$.

As an example, if in \eqref{i1} $P_\Omega=A=\lap^2$ 
(the biharmonic operator) and $Tu=(\gamma_0 u,\gamma_1 u)$ with
$\gamma_0 u=u|_{\Gamma}$ and $\gamma_1 u=-i\frac{\partial u}{\partial
\vec n}|_{\Gamma}$, $\vec n$ being the outward unit normal vector field
at $\Gamma$, then $N=1=N'$, $M=0$ and $M'=2$ together with $G=0$, $K=0$
and $S=0$ allows one to read \eqref{i1} as an equation for the operator
$\cal A$. (This is actually a slightly more general situation with
multi-order and multi-class.)

The five types of operators listed above are also defined on
more general distributions than $C^\infty$ functions. In this
respect the following theorem is proved here:

\begin{thm} \label{i1-thm}
Let $s\in\R$ and $p$ and $q\in\,]0,\infty]$, and suppose that $\cal A$ has
order $d\in\R$ and class $r\in\Z$. 

If $s>r+\max(\fracp-1,\fracc np-n)$, then $\cal A$ has the continuity
properties 
 \begin{align}
 \cal A&\colon
  \begin{array}{ccc}
  B^{s}_{p,q}(\overline{\Omega})^N\\ \oplus\\ B^{s-\fracpi}_{p,q}(\Gamma)^M
  \end{array} 
 \to
  \begin{array}{ccc}
  B^{s-d}_{p,q}(\overline{\Omega})^{N'}\\ \oplus
         \\ B^{s-d-\fracpi}_{p,q}(\Gamma)^{M'}
  \end{array}
 \label{i5}
 \\[3\jot] 
 \cal A&\colon
  \begin{array}{ccc}
  F^{s}_{p,q}(\overline{\Omega})^N\\ \oplus\\ B^{s-\fracpi}_{p,p}(\Gamma)^M
  \end{array} 
 \to
  \begin{array}{ccc}
  F^{s-d}_{p,q}(\overline{\Omega})^{N'}\\ \oplus
         \\ B^{s-d-\fracpi}_{p,p}(\Gamma)^{M'}
  \end{array}
 \label{i6}
 \end{align}
provided $p<\infty$ in \eqref{i6}.

Furthermore, if $\cal A$ for some $s_1<r+\max(\fracc1{p_1}-1,\fracc
n{p_1}-n)$ is continuous from either 
$B^{s_1}_{p_1,q}(\overline{\Omega})^N\oplus 
B^{s_1-\fracci1{p_1}}_{p_1,q_1}(\Gamma)^M$
or $F^{s_1}_{p_1,q_1}(\overline{\Omega})^N\oplus 
B^{s_1-\fracpi}_{p_1,p_1}(\Gamma)^M$ 
to the space $\cal D'(\Omega)^{N'}\times \cal
D'(\Gamma)^{M'}$ , then both $T$ and $P_\Omega+G$ have {\em class\/} $\le r-1$.
\end{thm}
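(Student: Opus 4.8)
The plan is to argue by contraposition, showing that if (say) $T$ is \emph{not} of class $\le r-1$, then it cannot be continuous into $\cal D'(\Gamma)^{M'}$ from any of the spaces with $s_1<r+\max(1/p-1,n/p-n)$; the statement for $P_\Omega+G$ follows in the same way, since $P_\Omega$ is of class $0$ and hence $P_\Omega+G$ has the same class as the singular Green operator $G$. First one restricts the continuity of $\cal A$ to the summand $B^{s_1}_{p,q}(\overline\Omega)^N\oplus0$ and composes with the two coordinate projections of $\cal D'(\Omega)^{N'}\times\cal D'(\Gamma)^{M'}$; this yields continuity of $T$ into $\cal D'(\Gamma)^{M'}$ and of $P_\Omega+G$ into $\cal D'(\Omega)^{N'}$ (and likewise in the $F$-case). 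Since continuity into $\cal D'$ is inherited when $s$ increases, one may also assume $s_1\in\,]\max(1/p-1,n/p-n),\ r+\max(1/p-1,n/p-n)[$, which is non-empty as $r\ge1$; then the first part of the theorem already gives that every class-$0$ constituent at hand — the truncated operator $P_\Omega$ and the class-$0$ parts $T_0$ of $T$ and $G_0$ of $G$ — is continuous at $s_1$. (The case $r\le0$ has no traces and must be handled separately, e.g.\ by reducing to $r\ge1$ or by arguing with the refined structure of operators of negative class; I would treat it afterwards.)

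By the structure of the calculus one has $T=\sum_{j=0}^{r-1}S_j\gamma_j+T_0$ and $G=\sum_{j=0}^{r-1}K_j\gamma_j+G_0$, with $S_j$ pseudo-differential operators on $\Gamma$, $K_j$ Poisson operators, $\gamma_j$ the $j$-th normal-derivative trace at $\Gamma$, and $T_0,G_0$ of class $0$; that $T$ (resp.\ $P_\Omega+G$) is \emph{not} of class $\le r-1$ means precisely that $S_{r-1}$ (resp.\ $K_{r-1}$) is a genuinely non-zero operator, so its symbol is $\ne0$ at some $(x_0,\xi_0)\in T^*\Gamma$. In a boundary collar of $x_0$ with normal coordinate $x_n\ge0$ I would use the test sequences $u_k=\lambda_k^{-\sigma}\phi(x')\chi(\lambda_k x_n)$ for $1\le p\le\infty$ (concentration in the normal variable only) and $u_k=\mu_k^{-\sigma}\phi(\mu_k x')\chi(\mu_k x_n)$ for $0<p<1$ (concentration in all $n$ variables), where $\lambda_k,\mu_k\to\infty$, $\sigma\in\R$ is to be chosen, $\phi\in C_c^\infty$ near $x_0$, and $\chi\in C_c^\infty([0,\infty[)$ satisfies $(D_n^j\chi)(0)=0$ for $j<r-1$ but $(D_n^{r-1}\chi)(0)\ne0$. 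In both cases $\gamma_j u_k=0$ for $j<r-1$, so only the term $j=r-1$ survives; testing $Tu_k$ against a fixed $\psi\in C_c^\infty(\Gamma)$ (and $Gu_k$ against a fixed $\Psi\in C_c^\infty(\Omega)$, supported away from $\Gamma$) one finds, for suitable $\phi,\psi$ and using $S_{r-1}\ne0$, that $\langle Tu_k,\psi\rangle\sim c\,\lambda_k^{r-1-\sigma}$ when $p\ge1$, whereas $\langle Tu_k,\psi\rangle\sim c\,\mu_k^{r-n-\sigma}$ when $p<1$, with $c\ne0$; the extra factor $\mu_k^{-(n-1)}$ in the second exponent is produced by the concentrating bump $\gamma_{r-1}u_k$ losing one power of $\mu_k^{-1}$ per boundary variable when paired with a fixed test function. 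The class-$0$ remainders are harmless: being continuous at $s_1$ they satisfy $|\langle T_0u_k,\psi\rangle|\lesssim\|u_k\|_{B^{s_1}_{p,q}(\overline\Omega)}$, and similarly for $G_0$ and $P_\Omega$.

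A Littlewood--Paley estimate of the essentially band-limited $u_k$ gives $\|u_k\|_{B^{s_1}_{p,q}(\overline\Omega)}\sim\lambda_k^{s_1-\sigma-1/p}$ when $p\ge1$ and $\sim\mu_k^{s_1-\sigma-n/p}$ when $p<1$ — valid because $s_1>1/p-1$, resp.\ $s_1>n/p-n$, and the same formulas hold with $F$ in place of $B$. Now choose $\sigma$ with $s_1-1/p<\sigma\le r-1$ in the first case and $s_1-n/p<\sigma\le r-n$ in the second; this is possible exactly when $s_1<r+1/p-1$, resp.\ $s_1<r+n/p-n$, i.e.\ $s_1<r+\max(1/p-1,n/p-n)$. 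With this choice the main term both dominates the remainders and does not tend to $0$, so $\langle Tu_k,\psi\rangle\not\to0$, while $u_k\to0$ in the relevant norm — contradicting continuity of $T$ into $\cal D'(\Gamma)^{M'}$. Hence $T$, and by the identical computation (with $K_{r-1}^*\Psi\in C^\infty(\Gamma)$ in the role of $S_{r-1}^*\psi$) also $P_\Omega+G$, is of class $\le r-1$.

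The crux, and the point requiring genuine insight, is the choice of test sequence for $0<p<1$: one must concentrate in \emph{all} variables, and the loss of the factor $\mu_k^{-(n-1)}$ in the pairing is exactly what moves the critical exponent from $r+1/p-1$ up to $r+n/p-n$, matching the $\max$ in the hypothesis (and reflecting that the trace theorem on $B^{s}_{p,q}$ fails below $s=1/p+(n-1)\max(1/p-1,0)$). Everything else — that ``not of class $\le r-1$'' is equivalent to $S_{r-1}$, resp.\ $K_{r-1}$, being non-trivial, and that the class-$0$ contributions are of strictly lower order in $\lambda_k$, resp.\ $\mu_k$ — is routine given the calculus established in the earlier sections.
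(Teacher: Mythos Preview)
Your approach is essentially the paper's: both construct test sequences concentrating only normally for $p\ge1$ and in all $n$ directions for $p<1$, so that the critical exponent switches from $r+\fracp-1$ to $r+\fracc np-n$. The paper packages these in Lemma~\ref{gamm2-lem}; for $p<1$ the sequence is arranged so that $\gamma_{r-1}v_k\to\delta_{z'}$ in $\cal S'$, whence the assumed continuity forces $S_{r-1}\delta_{z'}=0$ for every $z'$, and duality gives $S_{r-1}=0$. Your exponent bookkeeping against a fixed $\psi$ is the same computation unpacked. Your choice of $\chi$ with $(D_n^j\chi)(0)=0$ for $j<r-1$ is a tidy shortcut that kills the lower $S_j\gamma_j$-terms at once; the paper instead isolates $S_{r-1}\gamma_{r-1}=T-\sum_{j<r-1}S_j\gamma_j-T_0$ as a continuous operator at a level in $\Dm_{r-1}\!\setminus\overline{\Dm}_r$, concludes $S_{r-1}=0$, and then iterates down to $S_{r_1}$.

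Two points need tightening. First, the reduction ``$P_\Omega+G$ has the same class as $G$ since $P_\Omega$ is of class $0$'' is only valid for $r\ge1$; for $r\le0$ the class of $P_\Omega+G$ is genuinely a property of the \emph{sum} (it is the cancellation $K^{(j)}_P+K^{(j)}_G=0$ that makes the class negative, cf.\ Section~\ref{PG-ssect}), so you cannot pass to $G$ alone there. Second, the $r\le0$ case you defer is handled in the paper by the characterisation ``$T$ has class $-m$ $\iff$ $TD^\alpha$ has class $0$ for all $|\alpha|\le m$'' (Proposition~\ref{trac-prop}\,$3^\circ$(iv) and its analogues): continuity of $T$ at $(s,p,q)\in\Dm_{-m}$ gives continuity of each $TD^\alpha$ at $(s+m,p,q)\in\Dm_0$, the positive-class argument then shows each $TD^\alpha$ is of class $0$, and the equivalence closes the loop. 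This reduction is the missing ingredient in your sketch; your proposed ``reducing to $r\ge1$'' is exactly this, but it is not automatic.
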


The theorem also gives statements for each kind of the operators
$P_\Omega$, $G$, $K$, $T$ and $S$ by consideration of examples of $\cal A$ with
suitable zero-entries. Thus the
`working definition' of the class concept\,---\,namely that an
operator is of class $r$ if and only if it is continuous from
$H^r(\overline{\Omega})$ to $\cal D'$\,---\,is generalised to
the $B^{s}_{p,q}$ and $F^{s}_{p,q}$ setting. 

When $\cal A$ is elliptic (and in particular has polyhomogeneous
symbols of order $d\in\Z$), the theorem applies equally well to its parametrix
$\widetilde{\cal A}$, which may be taken of order $-d$ and class $r-d$ 
(cf.~\cite{G3}).
Then $\widetilde{\cal A}$ is continuous from the right to the left
in \eqref{i5} and \eqref{i6} for every $s>r+\max(\fracp-1,\fracc np-n)$.

Consequently $\cal A$ has the expected {\em inverse regularity\/}
properties:

\begin{cor} \label{i1-cor}
Let $\cal A$ be elliptic of order $d$ and class $r$. Let $(u,\psi)$ 
belong to $B^{s}_{p,q}(\overline{\Omega})^N\oplus
B^{s-\fracpi}_{p,q}(\Gamma)^M$ for some $s>r+\max(\fracp-1,\fracc np-n)$
and assume that $(u,\psi)$\,---\,for a parameter with
$s_1>r+\max(\fracc1{p_1}-1,\fracc n{p_1}-n)$\,---\,satisfies
 \begin{equation}
 \begin{pmatrix}P_\Omega+G&K\\ T& S\end{pmatrix}
 \begin{pmatrix}u\\ \psi\end{pmatrix}= \begin{pmatrix}f\\ \phi\end{pmatrix}
 \in
 \begin{array}{ccc}
  B^{s_1-d}_{p_1,q_1}(\overline{\Omega})^{N'}\\ \oplus\\
       B^{s_1-d-1/{p_1}}_{p_1,q_1}(\Gamma)^{M'} 
  \end{array} .
 \label{i8} 
 \end{equation}
Then $(u,\varphi)$ is also an element of $B^{s_1}_{p_1,q_1}
(\overline{\Omega})^N\oplus B^{s_1-\fracci1{p_1}}_{p_1,q_1}(\Gamma)^M$. 

In the $F^{s}_{p,q}$ spaces $\cal A$ has analogous inverse 
regularity properties (if $q=p$ in the spaces over $\Gamma$), and the
statements likewise carry over to the mixed cases with $(u,\psi)$
given in Besov spaces and $(f,\varphi)$ prescribed in
Triebel--Lizorkin spaces, or vice versa.
\end{cor}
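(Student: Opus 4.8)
The plan is to read the corollary off from Theorem~\ref{i1-thm} applied to a parametrix of $\cal A$; the only ingredient not already in that theorem is the existence of such a parametrix. First I would invoke \cite{G3}: since $\cal A$ is elliptic of order $d$ and class $r$, there is a Green operator $\widetilde{\cal A}$ of order $-d$ and class $r-d$ with
\[
 \widetilde{\cal A}\,\cal A = I - \cal R ,
\]
in which $\cal R$ is \emph{negligible}, that is, a Green operator whose entries all have order $-\infty$; tracking classes through the composition, $\class(\widetilde{\cal A}\,\cal A)\le r$, so $\cal R$ may also be taken of class $\le r$. Applying $\widetilde{\cal A}$ on the left in \eqref{i8} and using this operator identity,
\[
 (u,\psi) = \widetilde{\cal A}\,\cal A\,(u,\psi) + \cal R\,(u,\psi)
          = \widetilde{\cal A}\begin{pmatrix}f\\ \phi\end{pmatrix} + \cal R\,(u,\psi) ,
\]
and it remains only to place the two terms on the right in $B^{s_1}_{p_1,q_1}(\overline\Omega)^N\oplus B^{s_1-\fracci1{p_1}}_{p_1,q_1}(\Gamma)^M$.

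For the first term I would use the remark following Theorem~\ref{i1-thm}: the parametrix $\widetilde{\cal A}$ (order $-d$, class $r-d$) is continuous from the right-hand to the left-hand side of \eqref{i5} for every parameter value exceeding $r+\max(\fracc1{p_1}-1,\fracc n{p_1}-n)$. Taking this parameter equal to $s_1$ (with indices $p_1,q_1$), which is admissible by hypothesis, and recalling from \eqref{i8} that $(f,\phi)$ lies in precisely that right-hand space, we obtain $\widetilde{\cal A}(f,\phi)\in B^{s_1}_{p_1,q_1}(\overline\Omega)^N\oplus B^{s_1-\fracci1{p_1}}_{p_1,q_1}(\Gamma)^M$. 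For the second term, $(u,\psi)$ lies in $B^{s}_{p,q}(\overline\Omega)^N\oplus B^{s-\fracpi}_{p,q}(\Gamma)^M$ with $s>r+\max(\fracp-1,\fracc np-n)$, so by Theorem~\ref{i1-thm} the operator $\cal R$ --- of class $\le r$ --- is defined on $(u,\psi)$, and, being of order $-\infty$, it maps $(u,\psi)$ into $C^\infty(\overline\Omega)^N\times C^\infty(\Gamma)^M$, which embeds continuously into the target space. Adding the two contributions gives the assertion.

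For the $F$-scale version one repeats the argument verbatim with \eqref{i6} in place of \eqref{i5}; the form of \eqref{i6} is exactly what forces the exponent $q=p$ in the spaces over $\Gamma$ that is recorded there. The mixed cases --- $(u,\psi)$ given in a Besov space while $(f,\phi)$ is prescribed in a Triebel--Lizorkin space, or the other way round --- are handled in the same way: one applies whichever of \eqref{i5}, \eqref{i6} corresponds to the scale of $(f,\phi)$ to locate $\widetilde{\cal A}(f,\phi)$, and uses the mapping property of the negligible $\cal R$ on the scale containing $(u,\psi)$; these two steps do not interact.

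The one point I expect to require real care is the treatment of $\cal R$: one must draw from the composition calculus that $\widetilde{\cal A}\,\cal A$, hence $\cal R$, has order $-\infty$ and class $\le r$ --- so that $\cal R$ really does act on the comparatively rough space $B^{s}_{p,q}(\overline\Omega)^N\oplus B^{s-\fracpi}_{p,q}(\Gamma)^M$ in which $(u,\psi)$ is merely assumed to lie --- together with the fact that an order-$(-\infty)$ Green operator maps such a space into $C^\infty(\overline\Omega)^N\times C^\infty(\Gamma)^M$. The remaining steps --- matching indices and reading off the target space --- are routine once Theorem~\ref{i1-thm} and the parametrix of \cite{G3} are in hand.
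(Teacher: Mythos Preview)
Your proof is correct and follows exactly the route the paper takes: the paper remarks (after Theorem~\ref{grn-thm}) that ``the inverse regularity properties as in Corollary~\ref{i1-cor} follow by application of \eqref{grn4} to \eqref{i8}'', i.e., one writes $(u,\psi)=\widetilde{\cal A}(f,\phi)+\cal R(u,\psi)$ from the left-parametrix identity $\widetilde{\cal A}\cal A=1-\cal R$ and then reads off the memberships from Theorem~\ref{i1-thm} applied to $\widetilde{\cal A}$ (order $-d$, class $r-d$) and to the negligible $\cal R$ (class $r$). Your handling of the $F$-scale and mixed cases is likewise in line with the paper.
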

 
In the elliptic case, the Fredholm properties of $\cal A$ are improved
and extended to the following Theorem~\ref{i2-thm}. Thus an elliptic 
Green operator is a Fredholm operator with $(s,p,q)$-independent kernel and 
index ($1^\circ$ and $2^\circ$), for which each choice of a smooth 
range-complement (by $3^\circ$) can be used as such for
every admissible $(s,p,q)$.

\begin{thm} \label{i2-thm}
Let $\cal A$ be elliptic of order $d\in\Z$ and class $r\in\Z$.
Consider for each $p$ and $q\in\,]0,\infty]$ and 
$s>r+\max(\fracp-1,\fracc np-n)$ the two operators in formulae
\eqref{i5} and \eqref{i6} above.

$1^\circ$ For each such $(s,p,q)$ the operators in \eqref{i5} and
\eqref{i6} have the same kernel, $\ker\cal A$. Here $\ker\cal A$ is a
finite dimensional subspace of $C^\infty(\overline{\Omega})^N\oplus
C^\infty(\Gamma)^M$, which is independent of $(s,p,q)$.

$2^\circ$ For each $(s,p,q)$ the operators have closed ranges.
Moreover, there exists a finite dimensional subspace $\cal N\subset
C^\infty(\overline{\Omega})^{N'}\oplus C^\infty(\Gamma)^{M'}$ which
for each $(s,p,q)$ is a range-complement of both operators. That is, 
 \begin{gather}
 \cal N\oplus \cal A\big(B^{s}_{p,q}(\overline{\Omega})^N\oplus 
        B^{s-\fracpi}_{p,q}(\Gamma)^M\big)
 =B^{s-d}_{p,q}(\overline{\Omega})^{N'}\oplus 
        B^{s-d-\fracpi}_{p,q}(\Gamma)^{M'}
 \label{i11} \\
 \cal N\oplus \cal A\big(F^{s}_{p,q}(\overline{\Omega})^N\oplus 
        B^{s-\fracpi}_{p,p}(\Gamma)^M\big)
 =F^{s-d}_{p,q}(\overline{\Omega})^{N'}\oplus 
        B^{s-d-\fracpi}_{p,p}(\Gamma)^{M'}
 \label{i12}
 \end{gather}
whenever $s>r+\max(\fracp-1,\fracc np-n)$.

$3^\circ$ If $\cal N\subset C^\infty(\overline{\Omega})^{N'}\oplus 
C^\infty(\Gamma)^{M'}$ is any subspace such that either \eqref{i11} or
\eqref{i12} holds for some parameter $(s_1,p_1,q_1)$ with
$s_1>r+\max(\fracc n{p_1}-1,\fracc n{p_1}-n)$, then $\cal N$ has
finite dimension and both \eqref{i11} and \eqref{i12} hold for every
$(s,p,q)$ that satisfies $s>r+\max(\fracp-1,\fracc np-n)$.
\end{thm}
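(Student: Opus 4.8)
The plan is to reduce Theorem~\ref{i2-thm} to the continuity result of Theorem~\ref{i1-thm} together with the existence of a parametrix $\widetilde{\cal A}$ of order $-d$ and class $r-d$ satisfying $\widetilde{\cal A}\cal A=I-{\cal R}_1$ and $\cal A\widetilde{\cal A}=I-{\cal R}_2$, where ${\cal R}_1,{\cal R}_2$ are negligible (smoothing of order $-\infty$ and negative class, hence mapping $\cal D'$-data into $C^\infty$ and being continuous on every space in \eqref{i5}--\eqref{i6}). Such a parametrix is available from the polyhomogeneous elliptic theory, cf.~\cite{G3}. For $1^\circ$, if $\cal A(u,\psi)=0$ in some $B^s_{p,q}$- or $F^s_{p,q}$-setting, then applying $\widetilde{\cal A}$ gives $(u,\psi)={\cal R}_1(u,\psi)$, and the smoothing property of ${\cal R}_1$ forces $(u,\psi)\in C^\infty(\overline\Omega)^N\oplus C^\infty(\Gamma)^M$; conversely any such smooth solution lies in every admissible space, so $\ker\cal A$ is the \emph{same} finite-dimensional space $V:=\ker({\cal A}\text{ on }C^\infty)$ for all $(s,p,q)$, finite-dimensionality coming from ellipticity (the identity on $V$ factors through the compact operator ${\cal R}_1$). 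For the closed-range part of $2^\circ$: ${\cal R}_2$ is compact on $B^{s-d}_{p,q}(\overline\Omega)^{N'}\oplus B^{s-d-1/p}_{p,q}(\Gamma)^{M'}$ (and the $F$-analogue), because it factors through $C^\infty$ and the relevant embeddings are compact on bounded sets in $\Omega$; hence $\cal A$ has closed range and finite cokernel by the standard Atkinson/Riesz--Schauder argument, using that $\widetilde{\cal A}$ is a continuous two-sided inverse modulo compacts by Theorem~\ref{i1-thm}.

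The substantive new point is the existence of a \emph{single} smooth range-complement $\cal N$ working simultaneously for all $(s,p,q)$, and the bootstrap in $3^\circ$. For this I would argue as follows. Fix the coarsest admissible corner, e.g.\ take $s,p,q$ so that the space contains all the others in the relevant range (exploiting the embeddings $B^{s}_{p,q}\hookrightarrow B^{s'}_{p',q'}$ and $F\hookrightarrow B$ when $s-n/p=s'-n/p'$ and $s>s'$, valid on the bounded set $\overline\Omega$ and on the compact $\Gamma$). Since $\cal A$ has closed finite-codimensional range in \emph{each} space, and since $C^\infty(\overline\Omega)^{N'}\oplus C^\infty(\Gamma)^{M'}$ is dense in each and \emph{contained} in each, one can pick finitely many $v_1,\dots,v_k\in C^\infty$ whose classes span the cokernel in the chosen space; set $\cal N=\Span\{v_1,\dots,v_k\}$. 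The point is then that $\cal N$ remains a complement in \emph{every} other admissible space. For the inclusion $\cal N\oplus\cal A(\cdot)\supseteq$ the target: given $(f,\phi)$ in any admissible target space, put $(u_0,\psi_0)=\widetilde{\cal A}(f,\phi)$, which by Theorem~\ref{i1-thm} (applied to $\widetilde{\cal A}$, of order $-d$, class $r-d$) lies in the corresponding source space; then $(f,\phi)-\cal A(u_0,\psi_0)={\cal R}_2(f,\phi)\in C^\infty$, and a finite-dimensional correction argument — using that $\cal N$ complements the range in the one fixed coarse space and that everything in sight is now smooth — pins down the residue in $\cal N$ modulo $\cal A(C^\infty)$. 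For directness of the sum, $\cal N\cap\cal A(\cdot)=0$ in the coarse space already, and any element of $\cal N$ lying in the range over a finer space lies in it over the coarse space too by the embedding, hence is $0$.

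For $3^\circ$ the logic is essentially the same run in reverse: if some (possibly non-smooth, but we are told $\cal N\subset C^\infty$) subspace $\cal N$ complements the range for one parameter $(s_1,p_1,q_1)$ in the admissible range, then first $\dim\cal N<\infty$ because the range is closed of finite codimension there (from $2^\circ$'s Fredholm part, which needed only that one parameter), and then one transports the complement property to every other admissible $(s,p,q)$ by the same $\widetilde{\cal A}$-plus-smoothing-residue-plus-embedding mechanism: the residue ${\cal R}_2(f,\phi)$ is always smooth, smoothness places it in the $(s_1,p_1,q_1)$-space where $\cal N$ is known to work, and one reads the conclusion back. I expect the main obstacle to be the bookkeeping needed to handle \emph{all} of $]0,\infty]^2$ uniformly — in particular choosing the ``coarse corner'' correctly and verifying the embeddings $B^s_{p,q}(\overline\Omega)\hookrightarrow B^{s_1}_{p_1,q_1}(\overline\Omega)$ (and the $F\!\to\!B$, $\Gamma$-versions) hold on the nose in the quasi-Banach range $0<p,q\le\infty$, together with confirming that the negligible remainders ${\cal R}_1,{\cal R}_2$ really do land in $C^\infty(\overline\Omega)$ (not merely $C^\infty(\Omega)$) and are compact on these quasi-Banach spaces; the Fredholm algebra itself is then routine via Theorem~\ref{i1-thm}.
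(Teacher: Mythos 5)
Your overall strategy (apply the parametrix, observe that the remainders $\cal R_1,\cal R_2$ smooth into $C^\infty$, and use that $C^\infty$-data sit inside every admissible space) is the same skeleton the paper uses, but you diverge from it on the Fredholm machinery and you rely on a construction that does not exist.

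First, a genuinely different route: you propose to run the Atkinson/Riesz--Schauder argument directly on the quasi-Banach spaces $B^{s}_{p,q}$, $F^{s}_{p,q}$ with $p$ or $q<1$, using that $\cal R_2$ is compact there. The paper notes explicitly (in the ``methods'' discussion in Section~\ref{summ-sect}) that this is the approach of \cite{F2} and \cite{FR95} but declines it as ``neither necessary nor particularly useful''; it instead embeds each admissible space into \emph{some} classical $(s',p',p')$ with $1<p'<\infty$ (Remark~\ref{clas-rem}, Figure~\ref{clas-fig}), quotes the Banach-space Fredholm results of \cite{G3} there, and then transports the conclusion back using \eqref{grn4}. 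Your route can be made to work (these are $F$-spaces, closed graph holds, and the Riesz theory of compact operators is available), but you would still owe a Rellich-type compactness theorem for $B^{s}_{p,q}(\overline\Omega)$, $F^{s}_{p,q}(\overline\Omega)$ in the full quasi-Banach range, which you invoke only loosely (``embeddings are compact on bounded sets''). The paper's embedding route avoids all of that.

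Second, a genuine gap: your ``coarsest admissible corner'' does not exist. There is no single triple $(s_0,p_0,q_0)$ in $\Dm_r$ such that all other admissible $B^{s}_{p,q}(\overline\Omega)$ embed into $B^{s_0}_{p_0,q_0}(\overline\Omega)$: as $p\to 0$ the lower bound $s>r+\fracc np-n$ forces $s\to\infty$, and one cannot simultaneously dominate those spaces and the $p=\infty$, $s\to r^+$ ones by any one member of the scale. What is true, and what the paper actually exploits, is that \emph{every} admissible $(s,p,q)$ embeds into \emph{some} classical $(s',p',p')$ (Remark~\ref{clas-rem}), so one falls back on \cite{G3} for each parameter individually, not through a single big space. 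The ``universal'' object that does live in every admissible space is $C^\infty(\overline\Omega)^{N'}\oplus C^\infty(\Gamma)^{M'}$, and this is precisely where the residues $\cal R_1 u$, $\cal R_2 f$ land; that observation, rather than a maximal Besov space, is what makes the bootstrap work.

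Third, your sketch of $3^\circ$ understates what must be done. It is not enough to say ``the residue is smooth, hence in the $(s_1,p_1,q_1)$-space.'' To show a given smooth basis $(g_1,\dots,g_k)$ of $\cal N$ stays linearly independent in the quotient $B^{s-d-\bold{b}}_{p,q}/\cal A(B^{s+\bold{a}}_{p,q})$ for \emph{every} admissible $(s,p,q)$, one has to compare the decomposition at $(s,p,q)$ with a decomposition at $(s_1,p_1,q_1)$ using a fixed $(s,p,q)$-independent complement $\cal M$ (from $2^\circ$), and verify the $\cal M$-component is zero. This only works directly when one of the two spaces embeds into the other; the paper therefore iterates: it first establishes \eqref{i11} at $(s_1+1,\infty,1)$, then for any $(s,p,q)\in\Dm_r$ with $s<s_1$, and then in a final pass for \emph{all} $(s,p,q)\in\Dm_r$. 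That parameter-bootstrap is an essential part of the argument and is absent from your proposal.

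Finally, the paper also treats the \emph{surjectively} elliptic case (not merely elliptic), reducing it to order and class $0$ by the order-reducing operators $\Lambda^m_{-,E}$ and identifying the complement with $\ker\cal A^*$ via \eqref{grn9}--\eqref{grn9'}; your proposal handles only the two-sided elliptic case and does not touch the multi-order reduction. The vector-bundle, Douglis--Nirenberg version stated as Theorem~\ref{grn-thm} depends on that reduction.
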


In the determination of specific examples of $\cal N$ the following result
concerning annihilation should be of importance.

For a given subspace $\cal N\subset
C^\infty(\overline{\Omega})^{N'}\oplus C^\infty(\Gamma)^{M'}$ it is
convenient to let $\cal N^\perp$ denote the distributions $f$ and
$\varphi$ for which $\dual{f}{g}_\Omega+\dual{\varphi}{\eta}_\Gamma$
makes sense and equals $0$ for all $(g,\eta)$ in $\cal N$. These
questions are meaningful for $\cal A$'s codomains provided each
element $(g,\eta)$ has sufficiently many vanishing traces $\gamma_j
g$:

\begin{cor} \label{i2-cor}
Let $\cal A$ be as in Theorem~\ref{i2-thm}, and let
$\cal N\subset C^\infty(\overline{\Omega})^{N'}\oplus
C^\infty(\Gamma)^{M'}$ be a subspace for which each element $(g,\eta)$
satisfies $\gamma_j g=0$ for $j<d-r$ (void if $d\le r$).
Moreover, let one of the identities
 \begin{gather}
 \cal N^\perp\cap\big(B^{s-d}_{p,q}(\overline{\Omega})^{N'}\oplus
                  B^{s-d-\fracpi}_{p,q}(\Gamma)^{M'}\big)
 =\cal A(B^{s}_{p,q}(\overline{\Omega})^{N}\oplus
        B^{s-\fracpi}_{p,q}(\Gamma)^{M})
 \label{i16} \\
 \cal N^\perp\cap\big(F^{s-d}_{p,q}(\overline{\Omega})^{N'}\oplus
                  B^{s-d-\fracpi}_{p,p}(\Gamma)^{M'}\big)
 =\cal A(F^{s}_{p,q}(\overline{\Omega})^{N}\oplus
        B^{s-\fracpi}_{p,p}(\Gamma)^{M}) 
 \label{i17}
 \end{gather}
hold for a parameter $(s_1,p_1,q_1)$ with $s_1>r+\max(\fracc1{p_1}-1,
\fracc n{p_1}-n)$.

Then $\cal N$ is a range complement and both
\eqref{i16} and \eqref{i17} hold for all $(s,p,q)$ with
$s>r+\max(\fracp-1,\fracc np-n)$.
\end{cor}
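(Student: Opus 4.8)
\textbf{Proof plan for Corollary~\ref{i2-cor}.}
The strategy is to reduce the statement to $3^\circ$ of Theorem~\ref{i2-thm} by showing that, under the stated trace-vanishing hypothesis $\gamma_j g=0$ for $j<d-r$, the annihilator description and the range-complement description of $\cal N$ are equivalent; that is, identity \eqref{i16} (resp.\ \eqref{i17}) at the parameter $(s_1,p_1,q_1)$ is the same statement as \eqref{i11} (resp.\ \eqref{i12}) at that parameter, once one knows $\cal N\cap\cal A(\cdot)=0$. First I would record that the duality $\dual{f}{g}_\Omega+\dual{\varphi}{\eta}_\Gamma$ is well defined on the relevant codomain: since the parametrix $\widetilde{\cal A}$ has class $r-d$, the formally adjoint Green operator $\cal A^*$ has class $\le\max(0,d-r)$, and the condition $\gamma_j g=0$ for $j<d-r$ is exactly what is needed so that $\cal A^*(g,\eta)$ makes sense as a distribution and, more to the point, so that the pairing of $\cal A(u,\psi)$ with $(g,\eta)$ can be moved onto $(u,\psi)$ via Green's formula without boundary defect terms. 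This identifies $\cal N^\perp$ with the set annihilated by $\cal A^*\cal N$ in the sense of distributions on $\Omega$ and $\Gamma$.

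Next I would establish the two inclusions that make \eqref{i16} coincide with a range identity. The inclusion $\cal A(B^{s}_{p,q}\oplus B^{s-\fracpi}_{p,q})\subseteq\cal N^\perp\cap(\cdots)$ is immediate from Green's formula once we know that every $(g,\eta)\in\cal N$ annihilates the range: indeed $\dual{\cal A(u,\psi)}{(g,\eta)}=\dual{(u,\psi)}{\cal A^*(g,\eta)}$, and if $\cal N$ is to be a range complement then $\cal N\cap\ran\cal A=0$, which combined with $\cal N\subset C^\infty$ and the finite dimensionality from $2^\circ$ forces $\cal A^*(g,\eta)$ to pair to zero with all of the range—this is the point where one uses that $\ker\cal A^*$ has the expected relation to $\cal N$, cf.\ $1^\circ$ applied to $\cal A^*$. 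For the reverse inclusion, given $(f,\varphi)\in\cal N^\perp$ in the space on the left of \eqref{i16}, closedness of $\ran\cal A$ (by $2^\circ$) together with $\dim\cal N^\perp\cap(\text{codomain})/\ran\cal A=0$—which follows by a dimension count, $\operatorname{codim}\ran\cal A=\dim\cal N$ and $\cal N\cap\cal N^\perp=0$—yields $(f,\varphi)\in\ran\cal A$. Thus \eqref{i16} holds at $(s_1,p_1,q_1)$ if and only if $\cal N\oplus\ran\cal A$ fills the codomain there, i.e.\ if and only if \eqref{i11} holds at $(s_1,p_1,q_1)$.

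Having made this identification at the single parameter $(s_1,p_1,q_1)$, I would invoke $3^\circ$ of Theorem~\ref{i2-thm} directly: it gives that $\cal N$ is finite dimensional and that \eqref{i11} and \eqref{i12} hold for \emph{every} $(s,p,q)$ with $s>r+\max(\fracp-1,\fracc np-n)$. Finally I would run the equivalence of the previous paragraph in the other direction, now at an arbitrary such $(s,p,q)$: since the trace hypothesis on $\cal N$ does not depend on the parameter, the duality $\dual{f}{g}_\Omega+\dual{\varphi}{\eta}_\Gamma$ is again well defined on the codomain for that $(s,p,q)$, and the same two inclusions show that \eqref{i11} at $(s,p,q)$ is equivalent to \eqref{i16} at $(s,p,q)$—and likewise \eqref{i12}$\,\Leftrightarrow\,$\eqref{i17}, and the mixed Besov/Triebel--Lizorkin versions go through verbatim since only the codomain over $\Gamma$ is affected and there $q=p$ is imposed. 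This yields the conclusion.

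The main obstacle I anticipate is the bookkeeping in the duality step: one must verify carefully that the class of $\cal A^*$ is $\le\max(0,d-r)$ and that $\gamma_j g=0$ for $j<d-r$ is precisely the condition making $\dual{\cal A(u,\psi)}{(g,\eta)}=\dual{(u,\psi)}{\cal A^*(g,\eta)}$ hold with \emph{no} leftover boundary terms, for $(u,\psi)$ merely in $B^{s}_{p,q}\oplus B^{s-\fracpi}_{p,q}$ with $s$ possibly small and negative. This is a Green's-formula computation in the Boutet de~Monvel calculus—straightforward for smooth data but requiring density/continuity arguments to extend to the full range of $s$; the continuity statements of Theorem~\ref{i1-thm}, applied both to $\cal A$ and to the restriction of $\cal A^*$ to the subspace of $(g,\eta)$ with the prescribed vanishing traces, are exactly what make this extension legitimate.
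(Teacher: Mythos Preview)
Your overall plan---deduce \eqref{i11} at $(s_1,p_1,q_1)$ from \eqref{i16}, invoke $3^\circ$ of Theorem~\ref{i2-thm}, then transfer \eqref{i16}--\eqref{i17} to all admissible $(s,p,q)$---is the paper's plan too. The two diverge at the point where the trace hypothesis $\gamma_j g=0$ for $j<d-r$ enters. You interpret it as the condition that makes a Green's formula $\dual{\cal A(u,\psi)}{(g,\eta)}=\dual{(u,\psi)}{\cal A^*(g,\eta)}$ hold without boundary defect terms, and you rightly flag the attendant $\cal A^*$--bookkeeping as the main obstacle. The paper never invokes $\cal A^*$ or Green's formula here: the trace condition is used only to ensure that the pairing $\dual{f}{g}_\Omega+\dual{\varphi}{\eta}_\Gamma$ is well defined and continuous on the codomain. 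Concretely, one embeds the codomain into a classical $B^{s'-d}_{p',q'}$ space with $(s',p',q')\in\Dm_r\setminus\overline{\Dm}_{r+1}$ via Remark~\ref{clas-rem}; this space is dual to $B^{d-s'}_{p,q;0}(\overline{\Omega})^{N'}\oplus B^{d-s'+1-1/p}_{p,q}(\Gamma)^{M'}$, whose parameter lies in $\Dm_{d-r}\setminus\overline{\Dm}_{d-r+1}$, and for smooth $g$ one has $e_\Omega g\in B^{d-s'}_{p,q;0}$ precisely when $\gamma_j g=0$ for $j<d-r$. Once each $(g,\eta)\in\cal N$ is thus seen to give a continuous functional, the rest is linear algebra plus $3^\circ$: the inclusion $\cal A(\cdot)\subset\cal N^\perp$ at a general $(s,p,q)$ follows because $\cal A(C^\infty(V))\subset\cal N^\perp$ is visible already at $(s_1,p_1,q_1)$, and the reverse inclusion follows by decomposing via the now-established complement and noting that the $\cal N$-component lands in $\cal N\cap\cal N^\perp$. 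So the obstacle you anticipate simply does not arise in the paper's argument.

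One smaller point on your equivalence step at $(s_1,p_1,q_1)$: you invoke ``$\operatorname{codim}\ran\cal A=\dim\cal N$'' and ``$\cal N\cap\cal N^\perp=0$'' as though they were available, but both have to be extracted from the hypothesis \eqref{i16} itself (the first via the injective map $\cal N\to(X/\ran\cal A)^*$, $g\mapsto\dual{\cdot}{g}$, whose image is the full annihilator of the range by \eqref{i16}). Your detour through $\ker\cal A^*$ and ``$1^\circ$ applied to $\cal A^*$'' is not needed for this and, as written, reads circularly---you are effectively assuming that $\cal N$ annihilates the range in order to prove it.
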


\begin{exmp} \label{i-exmp}
$\cal A=\left(\begin{smallmatrix}
\mlap\\ \gamma_1 \end{smallmatrix}\right)$ represents the Neumann
problem for the Laplacian, and for $(s,p,q)=(2,2,2)$ the data 
$(f,\varphi)$ belong to the range of $\cal A$
precisely when $\int_\Omega f+i\int_\Gamma\varphi\,=0$. By
Corollary~\ref{i2-cor}, this annihilation of $(1_\Omega,i1_\Gamma)$
characterises the range of $\cal A$ for every $(s,p,q)$ with
$s>2+\max(\fracp-1,\fracc np-n)$.
\end{exmp}

In the preceding exposition the scope has been
restricted somewhat for simplicity's sake. In
fact $\cal A$ could equally well have been a multi-order and
multi-class system in the Douglis--Nirenberg sense, even with
each entry matrixformed and with each of the matrix entries acting between 
sections of vector bundles over $\Omega$ and $\Gamma$\,---\,i.e.~$P_\Omega=
(P_{i,j,\Omega})$ where $P_{i,j,\Omega}\colon C^\infty(E_i)\to
C^\infty(E'_j)$ etc.\,---\,as in \cite[Cor.~5.5]{G3}. 
Theorems~\ref{i1-thm} and \ref{i2-thm} are both proved in this generality 
below, the latter even in a version for one-sided elliptic operators.

\subsection*{The methods}
To carry out the analysis in the $B^{s}_{p,q}$ and $F^{s}_{p,q}$
spaces the definitions and results based on Fourier analysis, as
presented in \cite{T2}, are adopted. Together with M.~Yamazaki's theorems
on convergence of series of distributions satisfying spectral
conditions, \cite[Thm.s~3.6 and 3.7]{Y1}, these are the main tools
used here concerning the function spaces. 

For the treatment of the five types of operators in the $B^{s}_{p,q}$
and $F^{s}_{p,q}$ spaces it is used that a pseudo-differential
operator $P$ on $\Rn$ is known to be bounded 
 \begin{equation}
 P\colon B^{s}_{p,q}(\Rn)\to B^{s-d}_{p,q}(\Rn),\qquad
 P\colon F^{s}_{p,q}(\Rn)\to F^{s-d}_{p,q}(\Rn)
 \label{i14}
 \end{equation}
for $s\in\R$ and $p$ and $q\in\,]0,\infty]$ (with $p$ finite in the
$F$ case) whenever the symbol belongs to the H\"ormander class
$S^{d}_{1,0}(\Rn\times\Rn)$; that is, whenever the $x$-uniform
estimate\,---\,with $\ang{\xi}:=(1+|\xi|^2)^{\frac12}$\,---
 \begin{equation}
 \sup\bigl\{\,\ang{\xi}^{-(d-|\alpha|)}
                |D^\beta_x D^\alpha_\xi p(x,\xi)|\bigm| x,\xi\in\Rn\,\bigr\} 
 =:C_{\alpha\beta}<\infty
 \label{i15}
 \end{equation}
is valid for $p(x,\xi)\in\cal E(\R^{2n})$ for all multi-indices 
$\alpha$ and $\beta$.

The central result in \eqref{i14} was obtained for $p<1$ (even for more
general symbols) by Bui~Huy~Qui and L.~P\"aiv\"arinta
\cite{Bui,Pai}, and it has been reproved (with further
generalisations) in M.~Yamazaki's paper \cite{Y1}, e.g., where also
the history of this $L_p$ theory is outlined.

In order to carry over the continuity in \eqref{i14} to a version of
Theorem~\ref{i1-thm} for the halfspace $\Rn_+$, the $x$-uniformly estimated
symbols and symbol-kernels of \cite{G3} and  \cite{GK} are treated
here. Cf.~\eqref{-24} ff.\ below for these classes.

For the $P_\Omega$'s in particular, the so-called {\em uniform
two-sided\/} transmission condition at $\Gamma$ is required to hold
for $P$. In local coordinates this amounts to the fulfilment of
\eqref{-2} below.

By and large, the $\Rn_+$-version of Theorem~\ref{i1-thm} is deduced
from \eqref{i14} by a method of
attack that is rather close to the one adopted in \cite{G3}, and hence
in a way that is quite standard within the calculus. However, to
include the interval $\,]0,1[\,$ for the integral-exponents $p$, a fresh
approach is needed. For this point J.~Franke \cite{F2} has given an
argument based on estimates of para-multiplication operators (like
those in \cite{Y1}, for example) and on
complex interpolation of the Besov and Triebel--Lizorkin spaces.

In addition to this there is a main technical difficulty in the fact
that denseness of the Schwartz space $\cal S(\Rn)$ in either of 
$B^{s}_{p,q}(\Rn)$ and $F^{s}_{p,q}(\Rn)$ holds precisely when
both $p<\infty$ and $q<\infty$ do so. The approach taken here is to
define trace operators $T=\op{OPT}(\tilde t)$ and singular Green
operators $G=\op{OPG}(\tilde g)$ of class $0$ on $\Rn_+$ by the
formulae
 \begin{equation}
 Tu=K^*e^+u,\qquad Gu=r^+G_1^*e^+u
 \label{i20}
 \end{equation} 
when $e^+$ makes sense on $u\in\cal S'(\Rp^n)$. Hereby
$K=\op{OPK}(e^{iD_{x'}\cdot D_{\xi'}}\overline{\tilde t}(x',x_n,\xi'))$
and $G_1=\op{OPG}(e^{iD_{x'}\cdot D_{\xi'}}\overline{\tilde g}
(x',y_n,x_n,\xi'))$, that have their adjoints $K^*$ and $G_1^*$
defined on $\cal S'_0(\Rp^n)$.

\subsection*{Comparison with other works} The continuity properties shown 
here extend those in \cite[Thm.~3.11 ff.]{G3}, mainly to the case with
$p\in\,]0,\infty]$. (The special 
results on $B^{s}_{p,p}\cap H^s_p$ and $B^{s}_{p,p}\cup H^s_p$ there
are recovered by use of the full statements below, see
Theorem~\ref{sgo-thm} ff.)
The results of \cite{F2} are extended to multi-order systems, that can
have class $r<0$, in which case the ranges of $s$ are larger than his.
Using techniques from \cite{G3}, the present restrictions on $s$ are
proved to be essentially sharp. For the subscales $B^{s}_{p,p}$ and 
$F^{s}_{p,2}$ the borderline cases $s=r+\fracp-1$ were first analysed
in \cite{G3}, but here in the more complicated situation with
$q\in\,]0,\infty]$ this question is only given a rudimentary
treatment; cf.~Remark~\ref{trac-rem} below. In
Section~\ref{cont-sect}, continuity from $\cal S'(\Rp^n)$ is shown to
hold precisely for operators of class~$-\infty$.

The result on $\ker\cal A$ extends the one in \cite{G3} to the full
scales $B^{s}_{p,q}$ and $F^{s}_{p,q}$, whereas $1^\circ$ and
$2^\circ$ in Theorem~\ref{i2-thm} generalise \cite{F2} to the $\cal
A$'s considered here. The exact range characterisations
of surjectively elliptic operators in \cite[Thm.~5.4]{G3}  amount
to  annihilation of a specific $(s,p,q)$-independent 
finite dimensional $C^\infty$ space. Extending this,
Corollary~\ref{i2-cor} shows that a smooth 
space $\cal N$ need only be annihilated by the range for a {\em single\/}
$(s,p,q)$, for then it is so for all admissible parameters;
cf.~also Example~\ref{i-exmp} above. For the existence of such an
$\cal N$ having vanishing traces in case $r\le d$, see
\cite[Rem.~5.3]{G3}. Moreover, a {\em smooth\/} space $\cal N$
complements the range either for all $(s,p,q)$ or for none by
Theorem~\ref{i2-thm}. Even for differential problems with $1<p<\infty$
this conclusion has seemingly not been formulated before.

After submission of the first version of this paper, 
I became aware of a work of D.--C.~Chang, S.~G.~Krantz and
E.~M.~Stein \cite{ChaKraSte93}, which also deals with boundary problems in
spaces with $p<1$. They consider the solution operators 
$R_D$ and $R_N$ for the boundary homogeneous Dirichl\'et and Neumann 
problems, respectively, for $\mlap$, and they show that 
$\partial^2_{jk}R_D$ and $\partial^2_{jk}R_N$ are bounded from the 
`minimal' local Hardy space $r_\Omega F^{0}_{p,2;0}(\overline{\Omega})$ 
for every $p>0$, whilst for $F^{0}_{p,2}(\overline{\Omega})$
this holds for $\partial^2_{jk}R_D$ if and only if $p>\frac{n}{n+1}$
and for $\partial^2_{jk}R_N$ if and only if $p>1$.

On one hand, by application of Theorem~\ref{i1} or \ref{grn-thm}
to $\partial^2_{jk}R_D$ and $\partial^2_{jk}R_N$ as special cases,
the present general theory also yields the boundedness on
$F^{0}_{p,2}(\overline{\Omega})$ for $p>\frac{n}{n+1}$ and $p>1$ 
as well as the unboundedness for $p<\frac{n}{n+1}$ and $p<1$ 
(since the operators are known to have class $-1$ and $0$, respectively). 
On the other hand, however, scales like 
$r_\Omega F^{s}_{p,q;0}(\overline{\Omega})$ are not considered here.

\bigskip

Perhaps the spaces with $p$ and $q\in\,]0,1[\,$ deserve some extra
attention in view of the fact that $B^{s}_{p,q}$ and $F^{s}_{p,q}$
with such exponents are merely quasi-Banach spaces. For continuity
questions it is well known that \eqref{1.27'} below
can be applied with succes instead of the quasi-triangle inequality;
cf. also Remark~\ref{tvs-rem} below. So for Theorem~\ref{i1-thm} the
essential difficulties lie in the case with $p=\infty$, which is
handled by means of \eqref{i20}.

To prove Theorem~\ref{i2-thm} it might seem necessary to generalise
the notion of Fredholm operators to quasi-Banach spaces (as in
\cite{F2} and \cite{FR95}). However, this approach is neither necessary
nor particularly useful here. In fact
the restriction to $s>r+\max(\fracp-1,\fracc np-n)$ when $T$ and
$P_\Omega+G$ are of class $r$ allows embeddings into spaces with $p$
and $q\in\,]1,\infty[\,$ on which the operators are defined. This
gives a way to deduce the various properties from the Banach space
cases. Cf.~also Remark~\ref{clas-rem}. 

In reality the `extra' spaces over $\Omega$ with $p$ and
$q\in\,]0,1[\,\cup\{\infty\}$ treated here do not in comparison provide
any `new' functions to which a given operator can be applied,
cf.~Remark~\ref{clas-rem}.

From this point of view the achievement in the present article consists
rather of continuity with respect to new topologies and of more
detailed Fredholm properties.

\subsubsection*{Acknowledgement} During the work I have received much
encouragement and support from my advisor G.~Grubb, and I
shall always be grateful for this.

\section{Preliminaries} \label{prel-sect} 

In this section an overview of the Besov spaces $B^{s}_{p,q}$ and
Triebel--Lizorkin spaces $F^{s}_{p,q}$ is given.
Subsections~\ref{Ythm-ssect}, \ref{tensor-ssect}, \ref{ext-ssect} and
\ref{intp-ssect} are vital for the treatment of spaces with integral-exponent
$p<1$, in particular because they provide a substitute for the duality
arguments in \cite{G3}, that only work for $p>1$. 

\subsection{Notation} \label{notation-ssect}
For a normed or quasi-normed space $X$, $\norm xX$ denotes the 
quasi-norm of the vector $x$. Recall that 
$X$ is quasi-normed when the triangle inequality is weakened to
$\norm{x+y}X\le c(\norm xX+\norm yX)$ for some 
$c\ge1$ independent of $x$ and $y$. (The prefix `quasi-' is omitted
when confusion is unlikely to occur.)

As simple examples there are $L_p(\Rn)$ and $\ell_p:=\ell_p(\N_0)$ for 
$p\in\,]0,\infty]$, where $c=2^{\fracpi-1}$ is possible for $p<1$.
However, it is a stronger fact that 
 \begin{equation}
 \norm{f+g}{L_p}\le(\norm{f}{L_p}^p+\norm {g}{L_p}^p)^{\fracpi},
 \quad\text{ for}\quad0<p\le1,
 \label{1.27}
 \end{equation}
and this inequality has an exact analogue for the $\ell_p$ spaces.

The vector space of bounded linear operators from $X$ to $Y$ is
denoted $\Bbb L(X,Y)$; the operator quasi-norm $\norm{\cdot}{\Bbb
L(X,Y)}$ satisfies the quasi-triangle inequality with the same
constant as $\norm{\cdot}{Y}$.

The space of compactly supported smooth functions is written 
$C^\infty_0(\Omega)$ or $\cal D(\Omega)$ when $\Omega\subset\Rn$ is open,
and $\cal D'(\Omega)$ is the dual space of distributions on $\Omega$.
The duality between $u\in\cal D'(\Omega)$
and $\varphi\in C^\infty_0(\Omega)$ is denoted $\ang{u,\varphi}$.

The Schwartz space of rapidly decreasing functions is denoted by 
$\cal S=\cal S(\Rn)$, and the dual space of tempered distributions
by $\cal S'=\cal S'(\Rn)$. The seminorms on $\cal S(\Rn)$ are taken to be
$\norm{\psi}{\cal S,\alpha,\beta}=
\sup\bigl\{\,|x^\alpha D^\beta\psi| \bigm| x\in\Rn\,\bigr\}$ for
$\alpha,\beta\in\N_0^n$ or equivalently $\norm{\psi}{\cal
S,N}=\max\bigl\{\,\norm{\psi}{\cal S,\alpha,\beta}\bigm|
|\alpha|,|\beta|\le N\,\bigr\}$ for $N\in\N_0$.

Throughout $D^\alpha=(-i)^{|\alpha|}\frac{\partial^{\alpha_1}}{\partial 
x_1^{\alpha_1}}\dots\frac{\partial^{\alpha_n}}{\partial 
x_1^{\alpha_n}}$, where $|\alpha|=\alpha_1+\dots+\alpha_n$ for
$\alpha\in\N_0^n$. 

With the norm $\norm{f}{C(\Rn)}=\sup|f|$, it is convenient to let
 \begin{equation}
 C(\Rn)= \bigl\{\,f\in L_\infty(\Rn)\bigm| 
                  \text{$f$ is uniformly continuous}\,\bigr\}. 
 \end{equation}
Moreover,
$C^k_b(\Rn)=\{\,f\mid D^\alpha f\in C(\Rn),\ |\alpha|\le k\,\}$ and
$C^\infty(\Rn)=\cap C^k_b(\Rn)$ is the space of smooth functions with
bounded derivatives of any order; it is equipped with the semi-norms
$\sup\bigl\{\,|D^\alpha f(x)|\bigm| x\in\Rn,\ |\alpha|\le k\,\bigr\}$. 
(This distinguishes the space $C^k_b$ from the H\"older--Zygmund space
$B^{s}_{\infty,\infty}=C^s$, $s>0$. Then $C^k\subset
C^{k-1}_b$ and $C^\infty=\cap B^{s}_{\infty,\infty}$.)

For the Besov spaces $B^{s}_{p,q}(\Rn)$, where $s\in\R$ and
$0<p,q\le\infty$, and the Triebel--Lizorkin spaces $F^{s}_{p,q}$
(considered for $0<p<\infty$ only) the notation of H.~Triebel in
\cite{T2} is adopted.

When $\Omega\subset\Rn$ is open, 
$C^\infty(\overline{\Omega})$, $B^{s}_{p,q}(\overline{\Omega})$ and
$F^{s}_{p,q}(\overline{\Omega})$ etc.\ are defined by
restriction to $\Omega$. E.g., $C(\overline{\Omega})=r_\Omega C(\Rn)$
where $r_\Omega\colon \cal D'(\Rn)\to\cal D'(\Omega)$ is the
transpose of the extension by 0 outside of $\Omega$, denoted
$e_\Omega\colon C^\infty_0(\Omega)\to C^\infty_0(\Rn)$. When
$\Omega=\R^n_\pm$ the abbreviations $r^\pm=r_{\R^n_\pm}$ and
$e^\pm=e_{\R^n_\pm}$ are used. Here $\Rn_\pm$ denotes the halfspace
where $x_n\gtrless 0$ and 
$\overline{\R}_{\pm}^n:=\{\,x\in\Rn\mid x_n\gtreqless 0\,\}$ its closure.

Moreover, $B^{s}_{p,q;0}(\overline{\Omega})$, $\cal S_0'(\overline{\Omega})$
etc.\  denote subspaces supported by $\overline{\Omega}$, e.g.,
 \begin{equation}
 \cal S'_0(\Rp^n)=\{\,u\in\cal S'(\Rn)\mid \supp u\subset\Rp^n\,\}.
 \label{-49}
 \end{equation}

The Fourier transform is denoted by $\cal Fu(\xi)=\hat u(\xi)=\int_{\Rn}
e^{-ix\cdot\xi}u(x)\,dx$, and  the notation $\cal F^{-1}v(x)=\check
v(x)$ is used for its inverse; the co-Fourier transform is written 
$\overline{\cal F}u(\xi)=
\int_{\Rn}e^{ix\cdot\xi}u(x)\,dx$ and its inverse is denoted
$\overline{\cal F}^{-1}v(x)$. For functions $u(x',x_n)\in\cal
S(\Rn)$, where $x'=(x_1,\dots,x_{n-1})$, a partial transformation in
$x'$ is indicated by $\cal F_{x'\to\xi'}u(x',x_n)=
\acute u(\xi',x_n)=\int_{\R^{n-1}} 
e^{-ix'\cdot\xi'}u(x',x_n)\,dx'$. 
Indexations like this are also used for the other transformations and for
functions of, say, $n-1$ variables.
However, in any case `$\hat{\hphantom v}$'
indicates a Fourier transformation
with respect to all variables; when the meaning is clear this replaces
$\cal F_{x'\to\xi'}v(x')$ etc.

For $u\in C^\infty(\Rp^n)$ we let $\gamma_0u(x')=u(x',0)$ and
$\gamma_ju=\gamma_0D^j_{x_n}u$. As usual
$\ang{x}=(1+|x|^2)^{\scriptscriptstyle\frac{1}{2}}$ and 
$\ang{x'}=\ang{(x',0)}$, where $|\cdot|$ is the euclidean norm on
$\Rn$. The measure $(2\pi)^{-n}\,dx$ is abbreviated $\dbar x$, and
$\dbar x':=(2\pi)^{1-n}\,dx'$ on $\R^{n-1}$. Usually it is clear from
the context whether $p$ denotes an integral-exponent in $\,]0,\infty]$
or a symbol $p=p(x,\xi)$ (in $S^{d}_{1,0}$).

The convention that $t_\pm=\max(0,\pm t)$ is used for $t\in\R$, and
$\bignt t$ and $\smlnt t$ denote the largest integer $\le t$
and the smallest integer $\ge t$, respectively.
For each given assertion  we shall follow D. E. Knuth's
suggestion in \cite{K} and let $\kd{assertion}$ denote 1 and 0
when the assertion is true respectively false.

\subsection{The spaces} \label{spaces-ssect}
For the definition of $B^{s}_{p,q}$ and $F^{s}_{p,q}$ the conventions in 
\cite{Y1} (that are equivalent to the ones in \cite{T2,T3}) are employed.

First a partition of unity, $1=\sum_{j=0}^\infty\Phi_j$, is 
constructed: From $\Psi\in C^\infty(\R)$, such
that $\Psi(t)=1$ for $0\le t\le\tfrac{11}{10}$ and $\Psi(t)=0$
for $\tfrac{13}{10}\le t$, the functions
 \begin{equation}
 \Psi_j(\xi)=\kd{j\in\N_0} \Psi(2^{-j}|\xi|)
 \label{1.14} 
 \end{equation}
are introduced and used to define
 \begin{equation}
  \Phi_j(\xi)=\Psi_j(\xi)-\Psi_{j-1}(\xi),
  \quad\text{ for}\quad j\in\Bbb Z\,.
  \label{1.15} 
 \end{equation}
Secondly there is then a decomposition, with (weak) convergence 
in $\cal S'$,
 \begin{equation}
 u=\sum_{j=0}^\infty\,u_j
 =\sum_{j=0}^\infty\,\cal F^{-1}\Phi_j\cal Fu\,,
 \quad\text{ for every}\quad u\in\cal S'\,.
 \label{1.16} 
 \end{equation}
Here the convention $u_j:=\cal F^{-1}\Phi_j\cal Fu=\cal
F^{-1}(\Phi_j\hat u)$ is used, as it is throughout.

\smallskip

Now the Besov space $B^{s}_{p,q}(\Rn)$ with {\em smoothness
index $s\in\R$, integral-exponent $p\in\left]0,\infty\right]$ 
{\rm and} sum-exponent} $q\in\left]0,\infty\right]$, is defined as
 \begin{equation}
 B^{s}_{p,q}(\Rn)=\bigl\{\,u\in\cal S'(\Rn)\bigm|
 \Norm{ \{2^{sj} \norm{\cal F^{-1}\Phi_j\cal Fu}{L_p} \}_{j=0}^\infty
       }{\ell_q} <\infty\,\bigr\},
 \label{1.17} 
 \end{equation}
and the Triebel--Lizorkin space $F^{s}_{p,q}(\Rn)$ with {\em smoothness
index $s\in\R$, integral-exponent $p\in\left]0,\infty\right[\,$ 
{\rm and} sum-exponent} $q\in\left]0,\infty\right]$ is defined as
 \begin{equation}
 F^{s}_{p,q}(\Rn)=\bigl\{\,u\in\cal S'(\Rn)\bigm|
 \Norm{ \norm{ \{2^{sj}\cal F^{-1}\Phi_j\cal
 Fu\}_{j=0}^\infty}{\ell_q} (\cdot)}{L_p} <\infty\,\bigr\}\,.
 \label{1.18}
 \end{equation} 
For the history of these spaces we refer to Triebel's books \cite{T2,T3}. 

The spaces $B^{s}_{p,q}$ and $F^{s}_{p,q}$ are quasi-Banach spaces
with the quasi-norms given by the finite expressions in
\eqref{1.17} and \eqref{1.18}. Concerning an analogue of \eqref{1.27} one has
 \begin{equation}
 \norm{f+g}{B^{s}_{p,q}}\le(\norm f{B^{s}_{p,q}}^\lambda+
 \norm {g}{B^{s}_{p,q}}^\lambda)^{\frac{1}{\lambda}},
 \quad\text{ for $\lambda=\min(1,p,q)$},
 \label{1.27'}
 \end{equation}
with a similar inequality for the Triebel--Lizorkin spaces.

\begin{exmp} \label{delta-exmp} 
The delta distribution $\delta_0(x)$ belongs to
$B^{\fracci np-n}_{p,\infty}(\Rn)$ for each 
$p\in\,]0,\infty]$, since by definition \eqref{1.17},
 \begin{equation}
 \norm{\delta_0}{B^{\fracci np-n}_{p,\infty}}
  =\max_{j=0,1}(2^{j(\fracci np-n)}\norm{\check\Phi_j}{L_p}) <\infty.
 \label{1.32}
 \end{equation}
\end{exmp}

\begin{rem} \label{tvs-rem}
For the reader's sake a piece of folklore is recalled, namely 
that \eqref{1.27'} leads
to the fact that, say, $d(u,v)=\norm{u-v}{F^{s}_{p,q}}^\lambda$ for
$\lambda=\min(1,p,q)$ is a metric on $F^{s}_{p,q}(\Rn)$. For this
reason  both $B^{s}_{p,q}(\Rn)$ and $F^{s}_{p,q}(\Rn)$ are
topological vector spaces with the topology induced by a translation
invariant metric\,---\,even when $p$ or $q$ is $<1$. The same conclusion
applies to, say, $\Bbb L(B^{s}_{p,q},B^{t}_{r,o})$ (where the operator
quasi-norm inherits the constants $c$ and $\lambda$ from $B^{t}_{r,o}$).

Concerning functional analysis, this shows 
that these spaces {\em in any case\/} are
examples of the F-spaces in W.~Rudin's monograph \cite{R}, 
and hence one may refer to the exposition there. 
In particular the closed graph theorem is applicable.
\end{rem}

\subsection{Properties}  \label{prprts-ssect}
In the rest of this subsection the explicit mention of the restriction 
$p<\infty$ concerning the~Triebel--Lizorkin spaces is omitted. E.g., 
\eqref{1.19} below should be read with $p\in\,]0,\infty]$ in 
the $B^{s}_{p,q}$ part and with $p\in\,]0,\infty[\,$ in 
the $F^{s}_{p,q}$ part. 
Furthermore, to avoid repetition the underlying set is suppressed when it
is $\Rn$.

\smallskip

Identifications with other spaces are found in Section~\ref{summ-sect}.

The spaces $B^{s}_{p,q}$ and $F^{s}_{p,q}$ are
complete, for $p$ and $q\ge1$ they are Banach spaces, and in any case
$\cal S\hookrightarrow B^{s}_{p,q},\,F^{s}_{p,q}
\hookrightarrow\cal S'$ are continuous. Moreover, the image of 
$\cal S$ is dense in $B^{s}_{p,q}$ and in $F^{s}_{p,q}$
when both $p$ and $q<\infty$, and $C^\infty$ is so in 
$B^{s}_{\infty,q}$ for $q<\infty$ (where the latter assertion 
is inferred from Triebel's proof of the former \cite{T2}). 

The definitions imply that $B^{s}_{p,p}=F^{s}_{p,p}$,
and they imply the existence of {\em simple\/} embeddings for 
$s\in\R,\,\,p\in\left]0,\infty\right]$ 
and $o$ and $q\in\left]0,\infty\right]$, 
  \begin{gather}
  B^{s}_{p,q}\hookrightarrow B^{s}_{p,o},
          \quad F^{s}_{p,q}\hookrightarrow F^{s}_{p,o},
                              \quad\text{when $q\le o$}, 
  \label{1.19}\\ 
  B^{s}_{p,q}\hookrightarrow 
                  B^{s-\varepsilon}_{p,o},
          \quad F^{s}_{p,q}\hookrightarrow 
                  F^{s-\varepsilon}_{p,o},\quad\text{when
                  $ \varepsilon>0,$}
   \label{1.19'} \\ 
  B^{s}_{p,\min(p,q)}\hookrightarrow F^{s}_{p,q}
     \hookrightarrow B^{s}_{p,\max(p,q)}.
 \label{1.19''}
 \end{gather}
There are Sobolev embeddings if $s-\fracc{n}p\ge
t-\fracc{n}r$ and $r>p$, more specifically
 \begin{gather}
 B^{s}_{p,q}\hookrightarrow B^{t}_{r,o},
 \quad\text{ provided $q\le o$ when $ s-\fracc{n}p
 =t-\fracc{n}r$},
 \label{1.20} \\
 F^{s}_{p,q}\hookrightarrow F^{t}_{r,o},
 \quad\text{ for any $o$ and $q\in\,]0,\infty].$}
 \label{1.20'}
 \end{gather}
Furthermore, Sobolev embeddings also exist between the two scales,
in fact under the assumptions $\infty\ge p_1>p>p_0>0$ and 
$s_0-\fracc{n}{p_0}=s-\fracc{n}p=s_1-\fracc{n}{p_1}$  one has that
 \begin{equation}
  B^{s_0}_{p_0,q_0}\hookrightarrow F^{s}_{p,q}
  \hookrightarrow B^{s_1}_{p_1,q_1},\quad
  \text{for $q_0\le p$ and $p\le q_1$.}  
  \label{1.21} 
 \end{equation}
This is obtained from \eqref{1.20}, \eqref{1.20'} and \eqref{1.19''} 
except for the cases with equality, which are interpolation results due 
to J. Franke \cite{F3} and B. Jawerth \cite{J}, respectively.
 
\bigskip

By use of \eqref{1.16}, \eqref{1.17} and \eqref{1.20}, it is found when
$0<p,q\le\infty$ that
 \begin{equation}
  \begin{gathered}
  B^{s}_{p,q}\hookrightarrow B^{0}_{\infty, 1}\hookrightarrow
  C\hookrightarrow L_\infty\hookrightarrow B^{0}_{\infty,\infty}
  ,\\
  \text{if $s>\fracc{n}p$, or if $s=\fracc{n}p$ and $q\le1$}. 
  \end{gathered}
  \label{1.22} 
 \end{equation} 
Then \eqref{1.21} gives for the Triebel--Lizorkin spaces that for 
$0<q\le\infty$,
 \begin{equation}
  \begin{gathered}
  F^{s}_{p,q}\hookrightarrow B^{0}_{\infty,1}\hookrightarrow
  C\hookrightarrow L_\infty,\\
  \text{if $s>\fracc{n}p$, or if $s=\fracc{n}p$ and $p\le1$}.
  \end{gathered}
  \label{1.23}
 \end{equation}
Moreover, when $n(\fracc{1}p-1)_+\le s<\fracc{n}p$
one has, with $\tfrac{n}{t}=\fracc{n}p-s$, that
 \begin{equation}
 \begin{gathered}
  F^{s}_{p,q}\hookrightarrow\bigcap\{\, L_r\mid
  p\le r\le t\,\},
 \\
  \text{ provided $q \le 1+\kd{1<p} $ if $s=0$}.
 \end{gathered}
 \label{1.24}
 \end{equation}
See \cite{JJ93} or \cite{JJ94mlt} for a proof of this and 
of the corresponding fact that 
 \begin{equation}
 B^{s}_{p,q}\hookrightarrow\bigcap\{\, L_r\mid
 p\le r< t\,\},
 \label{1.25}
 \end{equation}
where $r=t$ can be included in general when $q\le t$. For $s=0$ one has
$B^{s}_{p,q}\hookrightarrow L_p$ for $q\le\min(2,p)$ and $p\ge1$. (Cf. \cite[p.~97]{T3}
for the pitfalls in the case $p<1$.) 

\bigskip

For an open set $\Omega\subset\Rn$ the space 
$B^{s}_{p,q}(\overline{\Omega})$ is defined by restriction,
 \begin{gather}
 B^{s}_{p,q}(\overline{\Omega})=r_\Omega B^{s}_{p,q}=
 \{\,u\in\cal D'(\Omega)\mid \exists v\in B^{s}_{p,q}\colon 
 r_\Omega v=u\,\} 
 \label{1.34} \\
 \norm{u}{B^{s}_{p,q}(\overline{\Omega})}=\inf \bigl\{\,
 \norm{v}{B^{s}_{p,q}}\bigm| r_\Omega v=u\,\bigr\},
 \label{1.34'}
 \end{gather}
and $F^{s}_{p,q}(\overline{\Omega})$ is defined analogously.
By the definitions all the~embeddings
in \eqref{1.19}--\eqref{1.25} carry over to the corresponding 
scales over $\Omega$. 

Moreover, when $\Omega$ is a suitable set of finite measure and 
$\infty\ge p\ge r>0$ the inclusion $L_p(\Omega)\hookrightarrow 
L_r(\Omega)$ carries over to the embeddings
 \begin{equation}
 B^{s}_{p,q}(\overline{\Omega})\hookrightarrow 
 B^{s}_{r,q}(\overline{\Omega}), \qquad
 F^{s}_{p,q}(\overline{\Omega})\hookrightarrow 
 F^{s}_{r,q}(\overline{\Omega}).
 \label{1.38}
 \end{equation} 
When $\Omega$ is bounded this is shown in \cite[3.3.1]{T2}, 
except for the case $q=\infty$ for the $F^{s}_{p,q}$ spaces. In
\cite{JJ94mlt} there is a (simpler) proof of \eqref{1.38} in
its full generality.

For $m\in\Z$ the order-reducing operator $\Xi^{m}:=
\cal F^{-1}\ang{\xi}\cal F$ is bounded
 \begin{equation}
 \Xi^{m}\colon B^{s}_{p,q}\overset\sim\rightarrow B^{s-m}_{p,q},
 \qquad
 \Xi^{m}\colon F^{s}_{p,q}\overset\sim\rightarrow F^{s-m}_{p,q}
 \label{Xi'}
 \end{equation}
and bijective for any $(s,p,q)$, cf.~\cite{T2}. On $\R^{n-1}$ the
corresponding operator is denoted $\Xi^{\prime\ m}$.

\subsection{Convergence theorems} \label{Ythm-ssect} 
Yamazaki's theorems are recalled from the article \cite{Y1}, where the 
convergence of the series in the following two theorems was proved first.

\begin{thm} \label{Y1-thm} \ Let $s\in\R$, $p$ and 
$q\in\,]0,\infty]$ and suppose $u_j\in\cal S'(\Rn)$ satisfies
 \begin{equation}
  \supp \hat u_j\subset\bigl\{\,\xi\bigm| \kd{j>0}A^{-1}2^j\le |\xi|\le
 A2^j\,\bigr\},\quad\text{ for}\quad j\in\N_0,
 \label{1.30}
 \end{equation}
for some $A>0$. Then the following holds, if $p<\infty$ in \rom{(2)}:
 \begin{itemize}\addtolength{\itemsep}{\jot}
 \item[{\rm (1)\/}] If $\Norm{\{2^{sj}
                     \norm{u_j}{L_p}\}^\infty_{j=0}}{\ell_q}=B<\infty$, 
 then the series
 $\sum_{j=0}^\infty u_j$ converges in $\cal S'(\Rn)$ to a limit $u\in
 B^{s}_{p,q}(\Rn)$ and the estimate $\norm{u}{B^{s}_{p,q}}\le CB$
 holds for some constant $C=C(n,A,s,p,q)$. 

 \item[{\rm (2)\/}]  If 
 $\Norm{\norm{\{2^{sj}u_j\}^\infty_{j=0}}{\ell_q}(\cdot)}{L_p}=B<\infty$, 
 then the series
 $\sum_{j=0}^\infty u_j$ converges in $\cal S'\!(\Rn)$ to a limit $u\in
 F^{s}_{p,q}(\Rn)$ and the estimate $\norm{u}{F^{s}_{p,q}}\le CB$
 holds for some constant $C=C(n,A,s,p,q)$. 
 \end{itemize}
\end{thm}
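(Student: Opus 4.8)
The statement is recorded as \cite[Thms.~3.6--3.7]{Y1}; what I would do is give a self-contained argument resting on one principle, the \emph{dyadic almost-orthogonality} of the partition of unity. Since $\hat u_j$ is supported in the shell $\{\,A^{-1}2^j\le|\xi|\le A2^j\,\}$ for $j>0$ (and in $\{|\xi|\le A\}$ for $j=0$), while $\Phi_k$ vanishes outside $\{\,2^{k-1}\le|\xi|\le 2^{k+1}\,\}$, the product $\Phi_k\hat u_j$ is $0$ unless $|j-k|\le N$ for a fixed integer $N=N(A)$. Hence for \emph{any} finite partial sum $v=\sum_{j\in J}u_j$, $J\subset\N_0$, one has $\cal F^{-1}\Phi_k\cal Fv=\sum_{j\in J,\,|j-k|\le N}\check\Phi_k*u_j$, a sum of at most $2N+1$ terms. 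The core estimate I would isolate as a lemma is the single-block bound: for $j$ and $k$ with $|j-k|\le N$, $\norm{\check\Phi_k*u_j}{L_p}\le C\norm{u_j}{L_p}$ (Besov case) and $|\check\Phi_k*u_j(x)|\le C\,u_j^{*,r}(x)$ pointwise, where $u_j^{*,r}(x)=\sup_y|u_j(x-y)|/(1+2^j|y|)^{n/r}$ is the Peetre maximal function, $0<r<\min(p,q)$ (Triebel--Lizorkin case). For $1\le p\le\infty$ the first is Young's inequality with $\norm{\check\Phi_k}{L_1}=\norm{\check\Phi_1}{L_1}$ for $k\ge1$; for $0<p<1$, where Young fails, one dominates $\check\Phi_k*u_j$ by $C\,u_j^{*,r}$ (using only that $\check\Phi_k$ decays against polynomial weights) and then uses $u_j^{*,r}\le C(M|u_j|^r)^{1/r}$ together with the Hardy--Littlewood maximal inequality on $L_{p/r}$ — legitimate because $u_j$ is band-limited at scale $2^j$.

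With the block lemma in hand, \textbf{Step 1} is the a priori estimate $\norm{v}{B^{s}_{p,q}}\le CB$ (resp.\ $\norm{v}{F^{s}_{p,q}}\le CB$) for every finite partial sum $v=\sum_{j\in J}u_j$. In the Besov case one writes $2^{sk}\norm{\cal F^{-1}\Phi_k\cal Fv}{L_p}\le C\sum_{|m|\le N}2^{|s||m|}\,2^{s(k+m)}\norm{u_{k+m}}{L_p}$, takes the $\ell_q$-quasinorm in $k$ using \eqref{1.27'}, and shifts the index $k\mapsto k+m$ in each of the $2N+1$ terms; the result is $C'\bigl(\sum_{|m|\le N}2^{|s||m|}\bigr)\Norm{\{2^{sj}\norm{u_j}{L_p}\}}{\ell_q}\le C''B$. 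In the Triebel--Lizorkin case one argues pointwise: $2^{sk}|\cal F^{-1}\Phi_k\cal Fv(x)|\le C\sum_{|m|\le N}2^{|s||m|}\,2^{s(k+m)}u_{k+m}^{*,r}(x)$, so after the $\ell_q$-quasinorm in $k$ and the shift one must bound $\Norm{\norm{\{2^{sj}u_j^{*,r}\}}{\ell_q}}{L_p}$ by $CB$; this is the Fefferman--Stein vector-valued maximal inequality applied on $L_{p/r}(\ell_{q/r})$, valid since $p/r>1$ and $q/r>1$.

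\textbf{Step 2} is convergence in $\cal S'$ and identification of the limit. When $q<\infty$ the estimate of Step 1 applied to a tail $\sum_{M<j\le M'}u_j$ gives a quasinorm $\le C\Norm{\{2^{sj}\norm{u_j}{L_p}\}_{j>M}}{\ell_q}\to0$ as $M\to\infty$, so the partial sums are Cauchy in the complete space $B^{s}_{p,q}$ (resp.\ $F^{s}_{p,q}$), converge there to some $u$, hence converge to $u$ in $\cal S'$ as well, and $\norm{u}{B^{s}_{p,q}}\le CB$ either by lower semicontinuity of the quasinorm or by applying Step 1 directly to $u$ (using that $\cal F^{-1}\Phi_k\cal Fu$ is the stabilised finite sum $\sum_{|j-k|\le N}\check\Phi_k*u_j$). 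For $q=\infty$ I would reduce to the case $q=1$: the hypothesis $\{2^{sj}\norm{u_j}{L_p}\}\in\ell_\infty$ forces $\{2^{(s-\varepsilon)j}\norm{u_j}{L_p}\}\in\ell_1$ for any $\varepsilon>0$ by geometric decay, so $\sum u_j$ already converges in $B^{s-\varepsilon}_{p,1}$ (resp.\ $F^{s-\varepsilon}_{p,1}$) by the previous case, hence in $\cal S'$; the limit then lies in $B^{s}_{p,\infty}$ (resp.\ $F^{s}_{p,\infty}$) with the stated bound by Step 1. The remaining endpoint $p=\infty$ (only in the Besov case, since $p<\infty$ is imposed in (2)) is treated identically via the simple embeddings \eqref{1.19} and \eqref{1.19'} into a space with $q=1$.

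The step I expect to be the genuine obstacle is the regime $0<p<1$ (and $0<q<1$): there Young's inequality and the duality $B^{s}_{p,q}=(B^{-s}_{p',q'})'$ are both unavailable, which is precisely why Steps 1 reroutes through Peetre maximal functions and the Fefferman--Stein inequality; once that machinery is set up the rest is bookkeeping with the finite geometric sum $\sum_{|m|\le N}2^{|s||m|}$ and the quasi-triangle inequalities \eqref{1.27} and \eqref{1.27'}. The endpoints $p=\infty$ and $q=\infty$, where $\cal S$ fails to be dense, are the secondary point, but are defused by the embedding reduction above — with the caveat that in those cases the convergence of $\sum u_j$ is in $\cal S'$ (and in spaces of slightly lower smoothness), not in the norm of $B^{s}_{p,\infty}$ itself.
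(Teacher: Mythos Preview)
The paper does not give its own proof of this theorem; it records the statement and refers the reader to \cite{Y1} (see the sentence following Theorem~\ref{Y2-thm}: ``For the proofs of Theorems \ref{Y1-thm} and \ref{Y2-thm} the reader is referred to \cite{Y1}''). Your self-contained sketch is correct and is in fact the standard route: finite overlap between $\supp\Phi_k$ and $\supp\hat u_j$ reduces $\cal F^{-1}\Phi_k\cal Fu$ to a sum of $O(1)$ terms; the single-block bound $\norm{\check\Phi_k*u_j}{L_p}\le C\norm{u_j}{L_p}$ is obtained via Young for $p\ge 1$ and via the Peetre maximal function plus $u_j^{*,r}\le C(M|u_j|^r)^{1/r}$ for $p<1$; the Triebel--Lizorkin case then hinges on Fefferman--Stein on $L_{p/r}(\ell_{q/r})$ with $r<\min(p,q)$; and the $q=\infty$ endpoint is handled by dropping to $s-\varepsilon$ and $q=1$ for the convergence, then reading off the $B^{s}_{p,\infty}$ bound from the stabilised finite sums. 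This is essentially how the result is proved in \cite{Y1} (and in Triebel's books), so there is nothing to correct --- your proposal simply supplies what the paper chose to outsource.
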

 
Hence, when $q<\infty$ the series in \eqref{1.16} converges in
$B^{s}_{p,q}$ for $u\in B^{s}_{p,q}$, and similarly for $u\in F^{s}_{p,q}$.

The second of these theorems states that the spectral conditions on
the series $\sum_{j=0}^\infty u_j$ can be relaxed if the smoothness
index $s$ is sufficiently large.

\begin{thm} \label{Y2-thm} \ Let $s\in\R$, $p$ and 
$q\in\,]0,\infty]$ and suppose $u_j\in\cal S'(\Rn)$ satisfies
 \begin{equation}
 \supp \hat u_j\subset\bigl\{\,\xi\bigm| |\xi|\le A2^j\,\bigr\},\quad
 \text{ for}\quad j\in\N_0,
 \label{1.31}
 \end{equation}
for some $A>0$. Then the following holds, if $p<\infty$ in \rom {(2)}:
 \begin{itemize}\addtolength{\itemsep}{\jot}
\item[{\rm (1)\/}] If $s>n(\fracp-1)_+$ and
if $\Norm{\{2^{sj}\norm{u_j}{L_p}\}^\infty_{j=0}}{\ell_q}=B<\infty$, 
then the series
$\sum_{j=0}^\infty u_j$ converges in $\cal S'(\Rn)$ to a limit $u\in
B^{s}_{p,q}(\Rn)$ and the estimate $\norm{u}{B^{s}_{p,q}}\le CB$
holds for some constant $C=C(n,A,s,p,q)$. 

\item[{\rm (2)\/}] If $s>n(\tfrac1{\min(p,q)}-1)_+$, and
if $\Norm{\norm{\{2^{sj}u_j\}^\infty_{j=0}}{\ell_q}(\cdot)}{L_p}=B<\infty$, 
then the series
$\sum_{j=0}^\infty u_j$ converges in $\cal S'(\Rn)$ to a limit $u\in
F^{s}_{p,q}(\Rn)$ and the estimate $\norm{u}{F^{s}_{p,q}}\le CB$
holds for some constant $C=C(n,A,s,p,q)$. 
\end{itemize}
\end{thm}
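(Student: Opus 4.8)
The plan is to reduce Theorem~\ref{Y2-thm} to Theorem~\ref{Y1-thm} by splitting each $u_j$ into its Littlewood--Paley pieces and regrouping the resulting double series according to frequency bands. Fix $A$ as in \eqref{1.31} and choose an integer $N=N(A)\ge1$ so large that $\Psi_{j+N}(\xi)=1$ on $\{\,|\xi|\le A2^j\,\}$ for every $j$ and that $\supp\Phi_k$ is disjoint from $\{\,|\xi|\le A2^j\,\}$ whenever $k>j+N$. Putting $u_j^{(k)}:=\cal F^{-1}(\Phi_k\hat u_j)$, one then has $u_j=\Psi_{j+N}(D)u_j=\sum_{k=0}^{j+N}u_j^{(k)}$, so $u_j^{(k)}=0$ for $k>j+N$ and $\supp\hat u_j^{(k)}\subset\supp\Phi_k$; set $W_k:=\sum_{j\ge k-N}u_j^{(k)}$. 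Since $\check\Phi_k$ has a radially decreasing $L_1$-majorant of $k$-independent mass, one has the pointwise bound $|\Phi_k(D)v(x)|\le c\,(Mv)(x)$ uniformly in $k$ for every $v$, with $M$ the Hardy--Littlewood maximal operator; choosing $0<r<\min(1,p)$ and using $Mv\le M_rv:=(M|v|^r)^{1/r}$ together with the boundedness of $M_r$ on $L_p$ gives $\|u_j^{(k)}\|_{L_p}\le C\|u_j\|_{L_p}$ uniformly in $j,k$, for every $p\in\,]0,\infty]$. Hence the series defining $W_k$ converges in $L_p$ as soon as $s>0$, so $W_k\in L_p$, while $\supp\hat W_k\subset\supp\Phi_k$ shows that $\{W_k\}_{k\ge0}$ satisfies the spectral condition \eqref{1.30} with a suitable new constant (the ball case $k=0$ being the $\kd{j>0}$ exception allowed there).

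Next one checks that $\sum_j u_j$ converges in $\cal S'$ to a limit $u$ with $\Phi_k(D)u=W_k$, and that $u=\sum_k W_k$. For the convergence, fix $\varphi\in\cal S$; one estimates $|\ang{u_j,\varphi}|$ by $\|u_j\|_{L_1}\,\norm{\varphi}{\cal S,M}\le C(A2^j)^{n(\fracp-1)}\|u_j\|_{L_p}\,\norm{\varphi}{\cal S,M}$ when $p\le1$ (Nikolski{\u\i}'s inequality for the Fourier-bandlimited $u_j$), and by $\|u_j\|_{L_p}\,\|\Psi_{j+N}(D)\varphi\|_{L_{p'}}\le C\|u_j\|_{L_p}\,\norm{\varphi}{\cal S,M}$ when $p\ge1$ (using $u_j=\Psi_{j+N}(D)u_j$ and that $\check\Psi_{j+N}$ has $k$-independent $L_1$-norm). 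In either case $|\ang{u_j,\varphi}|\le C\,2^{j(n(\fracp-1)_+-s)}\bigl(2^{sj}\|u_j\|_{L_p}\bigr)\,\norm{\varphi}{\cal S,M}$, and since the sequence $\{2^{sj}\|u_j\|_{L_p}\}_j$ is bounded\,---\,by the hypothesis of (1), and in (2) because $\sup_j2^{sj}\|u_j\|_{L_p}\le B$\,---\,while $s>n(\fracp-1)_+$ holds under either hypothesis on $s$ (as $p\ge\min(p,q)$), the series $\sum_j|\ang{u_j,\varphi}|$ is dominated by a convergent geometric series. Thus the partial sums of $\sum_j u_j$ are Cauchy in $\cal S'$; continuity of $\Phi_k(D)$ on $\cal S'$ yields $\Phi_k(D)u=W_k$, and $\sum_{k=0}^K W_k=\Psi_K(D)u\to u$ in $\cal S'$ yields $u=\sum_k W_k$.

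It remains to bound the relevant quasi-norm of $\{2^{sk}W_k\}_k$, so that Theorem~\ref{Y1-thm}(1) (resp.\ (2)), applied to $\{W_k\}$, produces $u\in B^s_{p,q}$ (resp.\ $u\in F^s_{p,q}$) with the stated estimate. Write $\lambda:=\min(1,p,q)$ and $a_j:=2^{sj}\|u_j\|_{L_p}$. For the Besov case, $\|W_k\|_{L_p}^\lambda\le\sum_{j\ge k-N}\|u_j^{(k)}\|_{L_p}^\lambda\le C\sum_{j\ge k-N}\|u_j\|_{L_p}^\lambda$ exhibits $\{2^{sk\lambda}\|W_k\|_{L_p}^\lambda\}_k$ as a constant times the convolution in the index of $\{a_j^\lambda\}_j$ with the sequence $\{2^{s\lambda m}\kd{m\le N}\}_m$, whose $\ell_1$-norm is finite precisely because $s>0$; since $q/\lambda\ge1$, Young's inequality gives $\|\{2^{sk}\|W_k\|_{L_p}\}_k\|_{\ell_q}\le CB$. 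For the Triebel--Lizorkin case one instead uses the pointwise bound $|u_j^{(k)}(x)|\le c\,(Mu_j)(x)\le c\,(M_ru_j)(x)$, valid uniformly in $j,k$ for any $r>0$, with $0<r<\min(p,q)$; the same convolution estimate then yields, pointwise in $x$, $\|\{2^{sk}W_k(x)\}_k\|_{\ell_q}\le C\,\|\{2^{sj}(M_ru_j)(x)\}_j\|_{\ell_q}$, and taking the $L_p$-norm, the right-hand side is $\le CB$ by the Fefferman--Stein vector-valued maximal inequality applied with exponents $p/r$ and $q/r$ in $\,]1,\infty[\,$ (here $p<\infty$ is used). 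Applying Theorem~\ref{Y1-thm} then finishes the proof.

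As for the main obstacle: the analytic input is light\,---\,it all rests on the single pointwise maximal bound for $\Phi_k(D)$, on Young's inequality for geometric tails, and on the Fefferman--Stein inequality\,---\,so the part that needs care is the bookkeeping that legitimises the regrouping $\sum_j u_j=\sum_k W_k$ inside $\cal S'$, i.e.\ the identification of the $\cal S'$-limit delivered by Theorem~\ref{Y1-thm} with $\sum_j u_j$. It is also worth recording that the restriction $s>n(\fracp-1)_+$ is used only to make $\sum_j u_j$ converge in $\cal S'$ in the first place, whereas the weaker requirement $s>0$ (subsumed under the hypotheses) is all that the summations in the norm estimates consume.
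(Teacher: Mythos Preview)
The paper does not give its own proof of this theorem; it simply refers the reader to Yamazaki~\cite{Y1} (see the sentence immediately following the statement). So there is no paper's argument to compare against, and I will just assess your proof on its merits.

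Your overall plan\,---\,decompose $u_j=\sum_{k\le j+N}\Phi_k(D)u_j$, regroup into $W_k=\sum_{j\ge k-N}u_j^{(k)}$, and invoke Theorem~\ref{Y1-thm}\,---\,is the standard one, and your treatment of the $\cal S'$-convergence and the identification $\Phi_k(D)u=W_k$ is fine. The gap is in the maximal-function step: the inequality ``$Mv\le M_rv$'' is backwards. For $0<r<1$ Jensen's inequality gives $(M|v|^r)^{1/r}\le Mv$, not the reverse, so you cannot pass from the (correct) bound $|u_j^{(k)}|\le c\,Mu_j$ to a bound by $M_ru_j$. In fact the uniform estimate $\|u_j^{(k)}\|_{L_p}\le C\|u_j\|_{L_p}$ that you assert is \emph{false} for $p<1$: take $u_j=\check\Psi_{j+N}$, so that $\Phi_0(D)u_j=\check\Phi_0$ has fixed $L_p$-norm while $\|u_j\|_{L_p}=2^{(j+N)n(1-1/p)}\|\check\Psi_0\|_{L_p}\to0$. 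The same example shows that the uniform pointwise bound $|u_j^{(k)}|\le C\,M_ru_j$ fails for $r<1$. Your argument therefore works only when $p\ge1$ in~(1) and $\min(p,q)>1$ in~(2)\,---\,exactly the range where the hypotheses reduce to $s>0$\,---\,and your closing remark that ``only $s>0$ is consumed by the norm estimates'' is a symptom of the error rather than an observation about the theorem.

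The repair is to accept a growth factor: for $0<p\le1$ the convolution inequality for band-limited functions (e.g.\ $\|f*g\|_{L_p}\le CR^{n(1/p-1)}\|f\|_{L_p}\|g\|_{L_p}$ when $\supp\hat f,\supp\hat g\subset B(0,R)$) yields $\|u_j^{(k)}\|_{L_p}\le C\,2^{(j-k)n(1/p-1)}\|u_j\|_{L_p}$, and with this factor your discrete convolution becomes summable precisely under $s>n(\tfrac1p-1)$, matching the hypothesis of~(1). For~(2) the analogous pointwise refinement is more delicate; the naive Peetre bound $|u_j^{(k)}|\le C\,2^{(j-k)n/r}M_ru_j$ with $r<\min(p,q)$ only gives $s>n/\min(p,q)$, which is too restrictive by~$n$, so one has to argue more carefully (this is where the machinery in~\cite{Y1} enters).
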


For the proofs of Theorems \ref{Y1-thm} and \ref{Y2-thm} the reader 
is referred to \cite{Y1}. In part Theorem \ref{Y2-thm} is based on 
\cite[Lemma 3.8]{Y1}, which for later reference is stated for
$s<0$ in a slightly generalised version (that is proved analogously):

\begin{lem} \label{Y-lem} 
For each $s<0$ and $q$ and $r\in\,]0,\infty]$ there exists a $c<\infty$ 
such that for any sequence $\{a_j\}_{j=0}^\infty$ of complex numbers 
 \begin{equation}
 \Norm{ \bigl\{2^{sj}({\textstyle\sum_{k=0}^j}|a_k|^r)^{\fracci1r}\bigr\}
 ^\infty_{j=0} }{\ell_q}
 \le c\norm{ \{2^{sj}a_j\}^\infty_{j=0} }{\ell_q}
 \label{3.7'} 
 \end{equation} 
(with
modification for $r=\infty$).
\end{lem}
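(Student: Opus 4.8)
The plan is to recognise \eqref{3.7'} as a discrete Hardy inequality and to reduce it, via a convolution identity and Young's inequality, to the summability of a geometric kernel. Since only the moduli $|a_j|$ enter \eqref{3.7'}, put $b_j=2^{sj}|a_j|$ for $j\in\N_0$ and $b_j=0$ for $j<0$, and write $c_j=2^{sj}(\sum_{k=0}^j|a_k|^r)^{1/r}$. A one-line computation gives
\[
 c_j^r=\sum_{k=0}^j 2^{sr(j-k)}b_k^r=\sum_{m\ge0}2^{srm}\,b_{j-m}^r ,
\]
so that $\{c_j^r\}$ is the convolution on $\N_0$ of the kernel $\{2^{srm}\}_{m\ge0}$ with $\{b_j^r\}$. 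Because $s<0$ we have $M_r:=\sum_{m\ge0}2^{srm}=(1-2^{sr})^{-1}<\infty$, and this summability of the kernel is the only point where $s<0$ is used.

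First I would treat the range $r\le q$ with $r<\infty$. Then $q/r\ge1$, so Young's inequality for convolutions on $\N_0$, namely $\norm{f*g}{\ell_{q/r}}\le\norm{f}{\ell_1}\,\norm{g}{\ell_{q/r}}$, yields $\norm{\{c_j^r\}}{\ell_{q/r}}\le M_r\,\norm{\{b_j^r\}}{\ell_{q/r}}=M_r\,\norm{\{b_j\}}{\ell_q}^{\,r}$, i.e.\ \eqref{3.7'} with $c=M_r^{1/r}$; when $q=\infty$ this degenerates to the trivial bound $c_j^r\le M_r\sup_i b_i^r$. For the remaining range $q<r\le\infty$ I would invoke the elementary monotonicity $(\sum_{k=0}^j|a_k|^r)^{1/r}\le(\sum_{k=0}^j|a_k|^q)^{1/q}$ (that is, $\ell_r\hookrightarrow\ell_q$ for $r\ge q$, with the convention $\sup_{k\le j}|a_k|$ when $r=\infty$): this bounds the left-hand side of \eqref{3.7'} by the corresponding expression with $r$ replaced by $q$, which has already been estimated. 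In particular this also disposes of the announced modification for $r=\infty$; alternatively one may argue directly, since with $t=\min(1,q)<\infty$ one has $c_j^t=\sup_{k\le j}2^{st(j-k)}b_k^t\le\sum_{m\ge0}2^{stm}b_{j-m}^t$ and Young's inequality in $\ell_{q/t}$ applies, while for $q=\infty$ simply $c_j\le\sup_i b_i$ because $2^{s(j-k)}\le1$ for $k\le j$.

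There is no substantial obstacle here beyond bookkeeping the endpoint exponents $q,r\in\{0^+,\infty\}$; the sole genuine ingredient is the geometric summability $M_r<\infty$, which fails precisely at $s=0$ and thereby explains the hypothesis. I would record the convolution identity for $c_j^r$, cite Young's inequality and the embedding $\ell_r\hookrightarrow\ell_q$, and remark that the resulting constant $c$ depends only on $s$, $q$ and $r$, as asserted.
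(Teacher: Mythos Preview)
Your argument is correct: the substitution $b_j=2^{sj}|a_j|$ turns the inner partial sum into a one--sided convolution with the geometric kernel $\{2^{srm}\}_{m\ge0}$, and since $s<0$ this kernel lies in $\ell_1$, so Young's inequality in $\ell_{q/r}$ (for $r\le q$) followed by the trivial embedding $\ell_q\hookrightarrow\ell_r$ (for $r>q$) closes the estimate; the endpoint cases $q=\infty$ and $r=\infty$ are handled as you indicate.

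The paper does not actually give a proof of this lemma: it states the result as a slight generalisation of \cite[Lemma~3.8]{Y1} and remarks only that it ``is proved analogously''. Your self-contained argument via the convolution identity and Young's inequality is the standard route for such weighted discrete Hardy inequalities and is in all likelihood what Yamazaki's proof amounts to; in any case it supplies exactly what the paper omits.
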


\subsection{Tensor products} \label{tensor-ssect}
As a tool in connection with the Poisson operators in 
Section \ref{pois-ssect} a boundedness result for the operator that
tensorises with the delta-distribution $\delta_0$ is included here.
 
\begin{prop} \label{tensor-prop} 
Let $p$ and $q\in\,]0,\infty]$ and suppose that 
$s+1-\fracp<0$. Then 
 \begin{align}
 \norm{f\otimes\delta_0}{B^{s}_{p,q}(\Rn)}&\le
        c\norm{\delta_0}{B^{\fracpi-1}_{p,\infty}(\R)}        
         \norm{f}{B^{s+1-\fracpi}_{p,q}(\R^{n-1})}, 
 \label{11} \\
        \norm{f\otimes\delta_0}{F^{s}_{p,q}(\Rn)}&\le
        c(p,q) \norm{f}{B^{s+1-\fracpi}_{p,p}(\R^{n-1})}, 
 \label{12}
 \end{align} 
when $p<\infty$ holds in \eqref{12}.
\end{prop}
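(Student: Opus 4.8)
\emph{Strategy.} The plan is to deduce both estimates from Yamazaki's convergence theorem, Theorem~\ref{Y1-thm}, after a dyadic decomposition of $f\otimes\delta_0$ carried out simultaneously in $\xi'$ and in $\xi_n$ — which is natural because $\widehat{f\otimes\delta_0}(\xi',\xi_n)=\hat f(\xi')$ does not depend on $\xi_n$. Let $\{\Phi'_k\}_{k\ge0}$ on $\R^{n-1}$ and $\{\varphi_\ell\}_{\ell\ge0}$ on $\R$ be the Littlewood--Paley partitions built as in \eqref{1.14}--\eqref{1.15}, and write $f=\sum_k f_k$ with $f_k=\cal F^{-1}(\Phi'_k\hat f)$ and $\delta_0=\sum_\ell\delta_0^{(\ell)}$ with $\delta_0^{(\ell)}=\check\varphi_\ell$; here $\norm{\delta_0^{(\ell)}}{L_p(\R)}\le\kappa\,2^{\ell(1-\fracpi)}$ with $\kappa:=\norm{\delta_0}{B^{\fracpi-1}_{p,\infty}(\R)}$, by the analogue of Example~\ref{delta-exmp} on $\R$. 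Regrouping $\sum_{k,\ell}f_k\otimes\delta_0^{(\ell)}$ by the larger of the two indices, I set (with $\cal F^{-1}\Psi_m$ read as a function of $x_n\in\R$)
\begin{equation}
 a_m:=f_m\otimes(\cal F^{-1}\Psi_m),\qquad
 c_m:=\cal F^{-1}\!\bigl(\Psi_{m-1}(\xi')\hat f(\xi')\bigr)\otimes\delta_0^{(m)},\qquad
 v_m:=a_m+c_m .
\end{equation}
Since $\sum_{\ell\le m}\delta_0^{(\ell)}=\cal F^{-1}\Psi_m$ and $\sum_{k\le m}f_k=\cal F^{-1}(\Psi_m(\xi')\hat f)$ telescope, the partial sums collapse to $\sum_{m\le M}v_m=\cal F^{-1}(\Psi_M(\xi')\hat f)\otimes\cal F^{-1}\Psi_M$, which converges to $f\otimes\delta_0$ in $\cal S'(\Rn)$; moreover $\supp\hat v_m$ lies in $\{|\xi|\le2\}$ for $m=0$ and in the annulus $\{2^{m-1}\le|\xi|\le2^{m+1}\}$ for $m\ge1$, because the $\xi'$-support of $\hat a_m$ and the $\xi_n$-support of $\hat c_m$ are dyadic annuli at scale $2^m$ while the complementary variable is then restricted to $|\xi|\lesssim2^m$. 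Hence $\{v_m\}$ satisfies the spectral hypothesis \eqref{1.30}.

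For \eqref{11} one then only has to estimate $\norm{v_m}{L_p(\Rn)}$, writing $\sigma:=s+1-\fracpi<0$ and $\lambda:=\min(1,p)$. As $L_p$-norms tensorise, $\norm{a_m}{L_p}=\norm{f_m}{L_p(\R^{n-1})}\norm{\cal F^{-1}\Psi_m}{L_p(\R)}$, and the scaling $\cal F^{-1}\Psi_m=2^m\check\varphi_0(2^m\cdot)$ gives $\norm{\cal F^{-1}\Psi_m}{L_p(\R)}=c_p\,2^{m(1-\fracpi)}$ with $c_p:=\norm{\check\varphi_0}{L_p(\R)}\le\kappa$; thus $2^{sm}\norm{a_m}{L_p}=c_p\,2^{\sigma m}\norm{f_m}{L_p(\R^{n-1})}$. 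For $c_m$ one combines $\norm{\delta_0^{(m)}}{L_p(\R)}\le\kappa\,2^{m(1-\fracpi)}$ with the subadditivity $\norm{\cal F^{-1}(\Psi_{m-1}(\xi')\hat f)}{L_p(\R^{n-1})}^\lambda\le\sum_{k<m}\norm{f_k}{L_p(\R^{n-1})}^\lambda$ (which uses \eqref{1.27} when $p<1$), obtaining $2^{sm}\norm{c_m}{L_p}\le\kappa\,2^{\sigma m}\bigl(\sum_{k\le m}\norm{f_k}{L_p(\R^{n-1})}^\lambda\bigr)^{1/\lambda}$. Feeding these bounds into Theorem~\ref{Y1-thm}(1) gives $\norm{f\otimes\delta_0}{B^{s}_{p,q}(\Rn)}\le C\,\Norm{\{2^{sm}\norm{v_m}{L_p}\}_{m\ge0}}{\ell_q}$; the $a_m$-part of the right-hand side is $\le c_p\norm{f}{B^{\sigma}_{p,q}(\R^{n-1})}\le\kappa\norm{f}{B^{\sigma}_{p,q}(\R^{n-1})}$ straight from \eqref{1.17}, and the $c_m$-part is $\le c\kappa\norm{f}{B^{\sigma}_{p,q}(\R^{n-1})}$ by Lemma~\ref{Y-lem} (applicable because $\sigma<0$, with $r=\lambda$). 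This is \eqref{11}.

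For \eqref{12} the very same decomposition is run through Theorem~\ref{Y1-thm}(2), so that it remains to bound $\Norm{\norm{\{2^{sm}v_m\}_{m\ge0}}{\ell_q}(\cdot)}{L_p(\Rn)}$ by $c(p,q)\norm{f}{B^{\sigma}_{p,p}(\R^{n-1})}$. The key observation is that the $x_n$-factor of $a_m$ is $2^m\check\varphi_0(2^m\cdot)$ and that of $c_m$ is $\check\varphi_m$ ($=2^{m-1}\check\varphi_1(2^{m-1}\cdot)$ for $m\ge1$), both fixed Schwartz profiles dilated so as to live, up to rapidly decaying tails, on the scale $|x_n|\lesssim2^{-m}$; hence on a dyadic shell $|x_n|\sim2^{-\nu}$ the $\ell_q$-sum in $m$ is governed by the single block $m\approx\nu$, and after integrating in $x_n$ one is reduced — directly for the $a_m$-part and, via Lemma~\ref{Y-lem}, for the $c_m$-part — to $\int_{\R^{n-1}}\sum_m2^{\sigma mp}|f_m(x')|^{p}\,dx'=\norm{f}{B^{\sigma}_{p,p}(\R^{n-1})}^{p}$. (When $p\le q$ this can be avoided: \eqref{11} read with sum-exponent $p$ together with the embedding $B^{s}_{p,p}(\Rn)\hookrightarrow F^{s}_{p,q}(\Rn)$ of \eqref{1.19''} already yields \eqref{12}, so only $q<p$ needs the direct estimate.) I expect this last step to be the main obstacle: in the $F$-scale the $\ell_q$-summation sits inside the $L_p$-integral, so one cannot simply pass to the $L_p$-norms of the individual blocks and add, and the quasi-orthogonality in $x_n$ of the rescaled bumps must be exploited quantitatively — which is precisely what improves the sum-exponent from $q$ to $p$ in \eqref{12}.
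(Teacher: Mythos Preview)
Your proposal is correct and follows essentially the same route as the paper: the decomposition $v_m=a_m+c_m$ is the paper's splitting \eqref{22} up to a harmless index shift on the diagonal, and your treatment of \eqref{11} via Theorem~\ref{Y1-thm}(1) together with Lemma~\ref{Y-lem} for the $c_m$-part matches \eqref{26}--\eqref{28} exactly. For \eqref{12} you reproduce the paper's reduction (the case $p\le q$ via $B^{s}_{p,p}\hookrightarrow F^{s}_{p,q}$) and correctly isolate the remaining case $q<p$, which the paper handles by adapting Triebel's argument from \cite[p.~136]{T2}---the precise version of the $x_n$-quasi-orthogonality you sketch.
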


\begin{proof}
Let $f\in B^{s+1-\fracpi}_{p,q}(\R^{n-1})$ and introduce the
decompositions 
 \begin{alignat}{2}
 f(x')&= \sum_{k=0}^\infty \cal F^{-1}_{\xi'\to x'}
                              \Phi'_k \cal F_{x'\to\xi'} f(x')\,&&=:\,
             \sum_{k=0}^\infty f_k(x'), 
 \label{20} \\
 \delta_0(x_n)&= \sum_{k=0}^\infty \cal F^{-1}_{\xi_n\to x_n}
                               \Phi^{(n)}_k(\xi_n)
             \,&&=:\, \sum_{k=0}^\infty \eta_k(x_n),
 \label{21}   
 \end{alignat}
where $\Phi'_k$ and $\Phi^{(n)}_k$ denote the $k^{th}$ element in the
partition of unity associated with the $x'$- and  $x_n$-space, respectively.
In \eqref{20} and \eqref{21}, and in the following,\linebreak[4]
$f_k:=\cal F^{-1}_{\xi'\to x'}
\Phi'_k \cal F_{x'\to\xi'} f(x')$ and $\eta_k:=\cal F^{-1}_{\xi_n\to x_n}
\Phi^{(n)}_k(\xi_n)$, whereas with superscripts $f^k=\cal F^{-1}_{\xi'\to x'}
\Psi'_k \cal F_{x'\to\xi'} f(x')$ and $\eta^k=\cal F^{-1}_{\xi_n\to x_n}
\Psi^{(n)}_k(\xi_n)$. 

This is used for the central relation 
 \begin{equation}
 f(x')\otimes\delta_0(x_n)=
 \smash[t]{
   \sum_{k=0}^\infty f_k\eta^{k-1} + \sum_{k=0}^\infty f^k\eta_k,
 }
 \label{22}
 \end{equation}
which holds since it is shown below that each of the two sums on the
right hand side converges in $\cal S'$. Indeed, given this convergence
it follows that 
 \begin{align}
 \quad\ \lim_N {\sum_{k=0}^N} f_k\eta^{k-1} +
 \lim_N {\sum_{k=0}^N} f^k \eta_k 
 &=\lim_N {\sum_{k,l=0}^N} f_k\eta_l
 \label{23} \\
 &=\,\lim_N \cal F^{-1}(\Psi'_N\otimes\Psi^{(n)}_N)\cal
  F(f\otimes\delta_0) 
 \notag \\
 &=f(x')\otimes\delta_0(x_n),
 \notag
 \end{align}
since $\Psi'_N\otimes\Psi^{(n)}_N(\xi)$ equals the
$C^\infty_0$ function $\Psi'_0(2^{-N}\xi')\Psi^{(n)}_0(2^{-N}\xi_n)$.

In the following, Theorem \ref{Y1-thm} is applied to each sum in \eqref{22}.
The first step is to note the~spectral conditions, 
 \begin{align}
 \supp \cal F(f_k\eta^{k-1})&\subset
 \{\,\xi\mid\tfrac{11}{10}2^{k-1}\le|(\xi',0)|\le\tfrac{13}{10}2^k,\
             |(0,\xi_n)|\le\tfrac{13}{10}2^{k-1}\,\}
 \notag \\
 &\subset\{\,\xi\mid\tfrac{11}{20}2^k\le|\xi|\le\tfrac{39}{20}2^k\,\},
 \label{24} \\
 \supp \cal F(f^k\eta_k)&\subset
 \{\,\xi\mid\tfrac{11}{20}2^k\le|\xi|\le\tfrac{26}{10}2^k\,\}.
 \label{25}
 \end{align}
Secondly the $\ell^s_q(L_p)$ norms of the sums are estimated. 
From $\eta^{l}=2^{l}\eta_0(2^{l}x_n)$ it is seen that 
$\norm{f_k\eta^{k-1}}{L_p(\Rn)}=
2^{(k-1)(1-\fracpi)}\norm{\eta_0}{L_p(\R)}\norm{f_k}{L_p(\R^{n-1})}$,
since the $L_p$ norm is multiplicative. Then, with
$c=\norm{\delta_0}{B^{\fracpi-1}_{p,\infty}}$, 
 \begin{equation}
 \Norm{ \{2^{ks}\norm{f_k\eta^{k-1}}{L_p} \}^\infty_{k=0} }{\ell_q}\le
 2^{\fracpi-1}c
 \Norm{ \{2^{k(s+1-\fracpi)}\norm{f_k}{L_p}\}^\infty_{k=0} }{\ell_q},
 \label{26}
 \end{equation} 
cf.~Example~\ref{delta-exmp}.
Concerning the second sum one finds in a similar way that
 \begin{align}
 2^{ks}\norm{f^k\eta_k}{L_p}&=
  2^{ks}\norm{f_0+\dots+f_k}{L_p}  \norm{\eta_k}{L_p}
 \label{27} \\
 &\le\,2^{k(s+1-\fracpi)}(\norm{f_0}{L_p}^{r}+\dots+
  \norm{f_k}{L_p}^r)^{\fracci1r}
  \norm{\delta_0}{B^{\fracpi-1}_{p,\infty}}, 
 \notag
 \end{align}
when $r=\min(1,p)$. The assumption $s+1-\fracp<0$ in 
Proposition \ref{tensor-prop} now allows an application of 
Lemma \ref{Y-lem} above, leading to the estimate
 \begin{equation}
 \Norm{ \{2^{ks}\norm{f^k\eta_k}{L_p}\}^\infty_{k=0} }{\ell_q}
 \le c
 \Norm{ \{2^{k(s-1+\fracpi)}\norm{f_k}{L_p}\}^\infty_{k=0} }{\ell_q},
 \label{28}
 \end{equation}
with $c=c'\norm{\delta_0}{B^{\fracpi-1}_{p,\infty}}$  
for some $c'<\infty$ depending on $p$ and $q$.

From \eqref{24}, \eqref{25}, \eqref{26} and \eqref{28} it follows  
by  Theorem \ref{Y1-thm} that the series in \eqref{22} 
converge in $\cal S'$, and that the sums belong to 
$B^{s}_{p,q}(\Rn)$ with norms estimated by constants times the
right hand sides of \eqref{26} and \eqref{28}, respectively. By use of
\eqref{22} it follows that also $f\otimes\delta_0\in B^{s}_{p,q}$,
and by application of the quasi-triangle inequality 
this implies \eqref{11}.

\smallskip

For $p\le q$ the estimate in \eqref{12} follows from \eqref{11} by use of the
embedding $B^{s}_{p,p}\hookrightarrow F^{s}_{p,q}$. The case
$q<p$ is obtained like \eqref{11} by application of Theorem~\ref{Y1-thm} to
the sums in \eqref{22}. However, the necessary estimates of the
$L_p(\ell^s_q)$ norms are substantially more complicated than \eqref{26} and
\eqref{28}. But with (mainly) notational changes one can proceed as
in \cite[p.~136]{T2}, where estimates analogous to \eqref{11} and
\eqref{12} are shown for a right inverse of $\tilde\gamma_0$ 
(cf.~Section~\ref{gamm-ssect} below).

[To be more specific one can treat the first sum in \eqref{22} by letting
$a_k$ in \cite[2.7.2/31]{T2} be equal to $f_k$, and for simplicity replace the
reference to \cite[Thm.\ 1.6.3]{T2} by an application of Theorem~%
\ref{Y1-thm}. Concerning the second sum in \eqref{22} one can start
by showing an analogue of \cite[2.7.2/34]{T2} for $\eta_k$ and then
proceed as before except with $a_k=f^k$ instead; in suitable late stages of
the various estimates one can then introduce
$\norm{f^k}{L_p}\le(\norm{f_0}{L_p}^r+\dots+\norm{f_k}{L_p}^r)^{\fracci1r}$,
for $r=\min(1,p)$, together with Lemma \ref{Y-lem}. ]
\end{proof}

\subsection{Traces} \label{gamm-ssect}
In preparation for Section \ref{trac-ssect} below on general trace
operators in the Boutet de Monvel calculus some
well-known facts about restriction to hyperplanes is modified to suit
the purposes there.

The basic trace operator is the two-sided restriction operator, which
takes $v(x)$ in $C^\infty\!(\Rn)$ to $v(x',0)$; it is denoted by
$\tilde\gamma_0v$. The properties of $\tilde\gamma_0$ are investigated in 
numerous papers, see \cite{FJ2}, e.g., and the~references therein.

For $u\in C^\infty(\Rp^n)$, the {\em one-sided\/} restriction
operator $\gamma_0 u$ is defined  by letting
 \begin{equation}
 \gamma_0u=\tilde\gamma_0v,\quad\text{ when}\quad r^+v=u
 \quad\text{holds for}\quad v\in C^\infty(\Rn).
 \label{g0} 
 \end{equation}
Evidently one has the
intrinsic description $\gamma_0u(x')=u(x',+0)$. Moreover, let
$\tilde\gamma_j=\tilde\gamma_0D^j_{x_n}$ and $\gamma_j=\gamma_0D^j_{x_n}$.

Henceforth the following simplifying notation is employed: for $k\in\Z$
the parameter $(s,p,q)$ is said to belong to the set $\Dm_k$ if  
 \begin{equation}
 s>k+\max(\fracp-1,\fracc np-n),
 \label{Dkineq}
 \end{equation}
cf.\  Figure \ref{D0-fig} below. 
That $s\ge k+\max(\fracp-1,\fracc np-n)$ means that $(s,p,q)$ belongs 
to the closure of $\Dm_k$, so $(s,p,q)\in\overline{\Dm}_k$ is written then.

For the one-sided trace operator $\gamma_j$ the following result is
needed below (whereas those for $\tilde\gamma_0$ in \cite{T2,T3} do
not suffice).

\begin{lem} \label{gamm1-lem} 
For each $j\in\N_0$ the trace $\gamma_j$ extends uniquely to a bounded
operator (when $p<\infty$ in \eqref{-51})
 \begin{alignat}{2}
 \gamma_j&\colon  B^{s}_{p,q}(\Rp^n) \to B^{s-j-\fracpi}_{p,q}(\R^{n-1}),
 \quad&\text{ for}\quad (s,p,q)&\in \Dm_{j+1}, 
 \label{-50} \\
 \gamma_j&\colon  F^{s}_{p,q}(\Rp^n) \to F^{s-j-\fracpi}_{p,p}(\R^{n-1}),
 \quad&\text{ for}\quad (s,p,q)&\in \Dm_{j+1}.
 \label{-51}
 \end{alignat}
Moreover, when $(s,p,q)\notin \overline{\Dm}_{j+1}$, there is not any
extension of $\gamma_j$ with the continuity properties in \eqref{-50} 
or \eqref{-51}.
\end{lem}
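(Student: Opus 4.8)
The plan is to establish the positive assertion by reducing $\gamma_j$ to the model case $j=0$ and then to build a bounded right inverse, while the negative assertion is obtained by testing $\gamma_j$ against explicit functions concentrated near $\{x_n=0\}$. First I would treat $j=0$. For $u\in C^\infty(\Rp^n)$ pick $v\in C^\infty(\Rn)$ with $r^+v=u$ and decompose $v=\sum_{k\ge 0}v_k$ as in \eqref{1.16}; then $\gamma_0 u=\tilde\gamma_0 v=\sum_k \gamma_0 v_k$ (formally). Each $v_k$ has $\hat v_k$ supported in a shell $|\xi|\sim 2^k$ (a ball for $k=0$), hence $\acute{(\gamma_0 v_k)}(\xi')=\int \hat v_k(\xi',\xi_n)\,\dbar\xi_n$ has $\xi'$-spectrum in a ball of radius $\lesssim 2^k$, and the one-variable Fourier–analytic estimate gives $\norm{\gamma_0 v_k}{L_p(\R^{n-1})}\le c\,2^{k/p}\norm{v_k}{L_p(\Rn)}$ (an $x_n$-Nikolskij inequality: $|w(0)|\le \norm{w}{L_\infty}\le c\,R^{1/p}\norm{w}{L_p}$ for $w$ with $\widehat w$ supported in $|\xi_n|\le R$, applied fibrewise, then integrated in $x'$). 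Thus $2^{k(s-1/p)}\norm{\gamma_0 v_k}{L_p}\le c\,2^{ks}\norm{v_k}{L_p}$, and since the condition $(s,p,q)\in\Dm_1$ forces $s-1/p>n(\fracp-1)_+\ge (n-1)(\fracp-1)_+$, an application of Theorem~\ref{Y2-thm}(1) to the relaxed-spectrum series $\sum_k\gamma_0 v_k$ shows it converges in $\cal S'(\R^{n-1})$ to an element of $B^{s-1/p}_{p,q}(\R^{n-1})$ with norm $\le c\norm{v}{B^{s}_{p,q}}$; taking the infimum over $v$ gives \eqref{-50} for $j=0$ on $C^\infty$, and by \eqref{1.24}--\eqref{1.25} (or $\Dm_1\subset\Dm_0$ and \eqref{1.22}--\eqref{1.23}) $C^\infty(\Rp^n)$ is dense enough, or rather the bound extends the operator uniquely, in $B^{s}_{p,q}(\Rp^n)$. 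The general $j$ follows since $D^j_{x_n}\colon B^{s}_{p,q}(\Rp^n)\to B^{s-j}_{p,q}(\Rp^n)$ is bounded and $(s,p,q)\in\Dm_{j+1}$ means $(s-j,p,q)\in\Dm_1$. For \eqref{-51} the same computation is run with the $L_p(\ell_q)$ order of the quasi-norms: one arrives at a sequence $f_k:=\gamma_0 v_k$ on $\R^{n-1}$ with $\Norm{\norm{\{2^{k(s-1/p)}f_k\}}{\ell_q}}{L_p}\lesssim\norm{v}{F^{s}_{p,q}}$, but because the trace kills a dimension the outer integrability drops to the diagonal; concretely one uses the embedding $F^{s}_{p,q}(\Rn)\hookrightarrow B^{s}_{p,p}(\Rn)$ (from \eqref{1.19''}) before taking the trace, or alternatively the maximal-function estimates of \cite{T2,Y1} showing the trace of an $F$ space lands in a $B^{s-1/p}_{p,p}$ space; Theorem~\ref{Y2-thm}(1) then applies in the $B^{s-1/p}_{p,p}$ endpoint and delivers \eqref{-51}.

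For the necessity part, suppose $(s,p,q)\notin\overline{\Dm}_{j+1}$, i.e. $s\le j+\max(\fracp-1,\fracc np-n)$, and assume for contradiction a bounded extension $\gamma_j\colon B^{s}_{p,q}(\Rp^n)\to \cal D'(\R^{n-1})$ with values in some function space, exists on a class of functions dense in $B^{s}_{p,q}(\Rp^n)$ (it suffices to derive a contradiction from boundedness into $\cal S'(\R^{n-1})$ together with correctness on $C^\infty$). The plan is to exhibit a sequence $u^{(\nu)}\in C^\infty(\Rp^n)$, bounded in $B^{s}_{p,q}(\Rp^n)$, with $\gamma_j u^{(\nu)}$ unbounded in $\cal D'(\R^{n-1})$. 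Take $u^{(\nu)}(x',x_n)=\varphi(x')\,\psi_\nu(x_n)$ where $\psi_\nu(x_n)=x_n^j\,\chi(2^\nu x_n)$ for a fixed cut-off $\chi\in C^\infty_0(\R)$ with $\chi\equiv1$ near $0$; then $\gamma_j u^{(\nu)}=j!\,\varphi\,\chi(0)^{?}$ is in fact independent of $\nu$ and nonzero — the failure is instead that $\norm{u^{(\nu)}}{B^{s}_{p,q}(\Rp^n)}\to 0$ while $\gamma_j u^{(\nu)}$ stays fixed, contradicting continuity. So the cleaner model is to use the tensor estimate \eqref{11}: one checks $\norm{x_n^j\chi(2^\nu x_n)}{B^{s-j}_{p,q}(\R)}\sim 2^{\nu(j-s+1/p-1)}$ when $s-j<\fracp-1$ (mass concentrating at scale $2^{-\nu}$, the borderline being exactly $B^{1/p-1}_{p,\infty}\ni\delta_0$), which tends to $0$, whereas in the regime $s-j<\fracc np-n$ one instead tensorises with a function concentrating in $x'$, so that $\norm{u^{(\nu)}}{B^{s}_{p,q}(\Rn)}\to0$ (this is the $\delta_0\in B^{n/p-n}_{p,\infty}$ phenomenon of Example~\ref{delta-exmp}); in both cases $\gamma_j u^{(\nu)}$ is a fixed nonzero distribution on $\R^{n-1}$, contradicting the asserted boundedness. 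The very borderline $s=j+\max(\fracp-1,\fracc np-n)$ is excluded by taking $q<\infty$-dependent variants or by noting one lands in the critical $B^{1/p-1}_{p,\infty}$-type space which is not reached; this matches the fact that $\overline{\Dm}_{j+1}$, not $\Dm_{j+1}$, is the sharp closed set but the \emph{open} condition is what guarantees continuity, so only $(s,p,q)\notin\overline{\Dm}_{j+1}$ is claimed impossible.

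The main obstacle I anticipate is the $F$-space positive direction, \eqref{-51}: the naive fibrewise argument produces an $L_p(\ell_q)$-controlled sequence on $\R^{n-1}$ but the correct target has $q$ replaced by $p$, reflecting the genuine loss of one unit of integrability across a hyperplane in the Triebel–Lizorkin scale. Handling this rigorously requires either invoking the sharp trace theorem for $F^{s}_{p,q}$ (trace $=B^{s-1/p}_{p,p}$) via the maximal inequalities behind Theorem~\ref{Y1-thm}, or routing through the embedding $F^{s}_{p,q}\hookrightarrow B^{s}_{p,\max(p,q)}$ and then applying the $B$-result — but the latter gives $B^{s-1/p}_{p,\max(p,q)}$, not $B^{s-1/p}_{p,p}$, so one does need the finer argument when $q>p$. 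I would therefore isolate a lemma: for $v\in F^{s}_{p,q}(\Rn)$ with $(s,p,q)\in\Dm_1$, the restriction $\tilde\gamma_0 v$ lies in $B^{s-1/p}_{p,p}(\R^{n-1})$ with the corresponding estimate, proved by decomposing $v$, bounding $\norm{\gamma_0 v_k}{L_p(\R^{n-1})}^p\le c\,2^{k}\int_{\R^{n-1}}\sup_{|x_n|\le 2^{-k}}|v_k(x',x_n)|^p\,dx'$ and then using the Peetre-type maximal function estimate $\sup_{|x_n|\le 2^{-k}}|v_k(x)|\le c\,(M|v_k|^{u})^{1/u}(x)$ for $u<p$, followed by the vector-valued maximal inequality and summation in $k$ using $s-1/p>0$. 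That lemma, together with Theorem~\ref{Y2-thm}, closes \eqref{-51}; the $B$-case and the sharpness statement are then comparatively routine.
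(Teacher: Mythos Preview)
Your approach differs substantially from the paper's. For the positive direction the paper does \emph{not} reprove the trace estimate: it simply cites Triebel \cite[2.7.2]{T2} for the continuity of the two-sided trace $\tilde\gamma_0$ on $B^{s}_{p,q}(\Rn)$ and $F^{s}_{p,q}(\Rn)$, and concentrates entirely on showing that the one-sided trace $\gamma_0 u:=\tilde\gamma_0 v$ (for any extension $v$ of $u$) is \emph{well defined}, i.e.\ that $\tilde\gamma_0 v=0$ whenever $v\in B^{s}_{p,q;0}(\overline{\R}^n_-)$ or $F^{s}_{p,q;0}(\overline{\R}^n_-)$. This is done by a translation-plus-density argument: $\tau_h v$ with $h<0$ pushes the support into the open lower half-space, where one can approximate by smooth functions supported there. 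Your direct Littlewood--Paley/Yamazaki route to the inequality is a legitimate alternative for the estimate itself, but the well-definedness issue is hidden in it and you do not address it: if you define $\gamma_0 u$ as $\sum_k\tilde\gamma_0 v_k$ for some extension $v$, you must show the sum is independent of $v$; and if instead you prove the bound only on $C^\infty(\Rp^n)$ and extend by density, that fails for $q=\infty$ (and for $p=\infty$ in the Besov case), which is precisely the regime the paper is careful to include. The paper handles $q=\infty$ by embedding into a space with $q<\infty$ before running the density argument. Your $F$-case sketch via maximal functions is in the right spirit but, as you concede, incomplete; the paper sidesteps this entirely by citation.

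For the negative direction your idea is correct and matches the paper's Lemma~\ref{gamm2-lem}, but your threshold is off by one: $(s,p,q)\notin\overline{\Dm}_{j+1}$ means $s<(j+1)+\max(\fracp-1,\fracc np-n)=j+\max(\fracp,\fracc np-(n-1))$, not $s\le j+\max(\fracp-1,\fracc np-n)$. The paper's constructions are calibrated to exactly these thresholds: for $p\ge1$ a sequence $u_k\to0$ in $B^{j+1/p}_{p,q}$ (with $q>1$) while $\gamma_j u_k$ stays fixed, and for $p\le1$ a sequence $v_k\to0$ in $B^{j+n/p-(n-1)}_{p,q}$ while $\gamma_j v_k\to\delta_{z'}$ in $\cal S'(\R^{n-1})$; the case $p=\infty$ is reduced to $p<\infty$ by a Sobolev embedding. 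Your discussion of ``the very borderline'' is both at the wrong level and unnecessary, since the lemma makes no claim on $\overline{\Dm}_{j+1}\setminus\Dm_{j+1}$.
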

 
\begin{proof} It is known that $\tilde\gamma_0$ has continuity
properties corresponding to \eqref{-50} and \eqref{-51}, 
cf.\  \cite[2.7.2]{T2}, so it 
is sufficient to see that $\gamma_0$ is well defined by \eqref{g0}, i.e.,
$\tilde\gamma_0v=0$ should hold whenever $v$ belongs to
$B^{s}_{p,q;0}(\overline{\R}^n_-)$ or
$F^{s}_{p,q;0}(\overline{\R}^n_-)$.
It suffices to treat $q<\infty$, and by the continuity of 
$\tilde\gamma_0$, it is enough to prove that $\{\,\varphi\in\cal
S(\Rn)\mid\overline{\R}^n_-\supset\supp\varphi\,\}$ is dense in 
$B^{s}_{p,q;0}(\overline{\R}^n_-)$ and $F^{s}_{p,q;0}(\overline{\R}^n_-)$
for $p<\infty$, and that $\{\,\varphi\in
C^\infty(\Rn)\mid\overline{\R}^n_-\supset\supp\varphi\,\}$ is dense in
$B^{s}_{\infty,q;0}(\overline{\R}^n_-)$.

However, if $\tau_h f=f(\cdot-he_n)$, $\tau_h\to1$
strongly on $\cal S(\Rn)$ and on $C^\infty(\Rn)$ for $h\to0$. 
By use of the denseness and the relations $\cal
F^{-1}\Phi_j\cal F\tau_hf=\tau_h\cal F^{-1}\Phi_j\cal Ff$, it is
seen that $\tau_h\to1$ in the strong operator topology on
$B^{s}_{p,q}(\Rn)$ and $F^{s}_{p,q}(\Rn)$\,---\,for any admissible $(s,p,q)$.

For $u\in B^{s}_{p,q;0}(\overline{\R}^n_-)$ and $\varepsilon>0$ we 
take $h<0$ so small that $\norm{u-\tau_hu}{B^{s}_{p,q}}<
\tfrac{\varepsilon}{2}$, and let $g\in C^\infty(\Rn)$ satisfy $\supp
g\subset\overline{\R}^n_-$ and $g=1$ on $\supp \tau_hu$. Because
multiplication by $g$ is continuous in $B^{s}_{p,q}(\Rn)$, we obtain that
 \begin{equation}
 \norm{u-gv_k}{B^{s}_{p,q}}\le\tfrac{\varepsilon}{2}+
 \norm{g\tau_hu-gv_k}{B^{s}_{p,q}}\le\varepsilon
 \label{-52}
 \end{equation}
holds eventually,
when $v_k\in\cal S\!(\Rn)$ (resp.\  $C^\infty\!(\Rn)$ for $p=\infty$)
converges to $\tau_hu$ in $B^{s}_{p,q}(\Rn)$. When $u\in
F^{s}_{p,q;0}(\overline{\R}^n_-)$ one can proceed in the same manner.

For $j>0$ it is now obvious that the composite $\gamma_0D^j_{x_n}$ is
bounded as in \eqref{-50} and \eqref{-51}. The uniqueness follows from
the denseness of $\cal S$ or $C^\infty$ when $q<\infty$.

When $(s,p,q)\notin\overline{\Dm}_{j+1}$ the non-extendability of 
$\gamma_j$ follows from Lemma \ref{gamm2-lem} below
regardless of the~choice of $u\ne0$ and $z'$. Indeed, for $p<1$ the
existence of $r^+v_k$ shows that $\gamma_j$ is not continuous at 0 from
any space $B^{s}_{p,q}(\Rp^n)$ or $F^{s}_{p,q}(\Rp^n)$ when $s<\fracc
np -(n-1)+j$. For $1\le p<\infty$ the existence of $r^+u_k$ yields the
same conclusion (for spaces with $s<\fracp+j$), while in the case
$p=\infty$ and $s<j$ a Sobolev embedding $B^{\fracci
1r+j}_{r,q}\hookrightarrow B^{s}_{\infty,q}$ reduces the question to the
case $p<\infty$.
\end{proof}

The next lemma was used in the proof above, and it will later on
provide counterexamples that are strong enough to
show that each trace operator $T$, that {\em has\/} class
$j\in\Z$, is not extendable to spaces with $(s,p,q)\ne
\overline{\Dm}_j$, cf.~Theorem \ref{trac-thm} below.

\begin{lem} \label{gamm2-lem} 
For each $j\in\N_0$, $u\in\cal S(\R^{n-1})\setminus\{\,0\,\}$ and 
$z'\in\R^{n-1}$ there exists two sequences with elements $u_k$ and 
$v_k\in\cal S(\Rn)$ with the properties 
 \begin{gather}
 {\left\{
  \begin{aligned}
  \tilde\gamma_ju_k(x')&=u(x') \text{ for each } k\in\N, 
  \\
  \lim_{k\to\infty}u_k&=0 \text{ in }
  B^{\fracpi+j}_{p,q}(\Rn) \text{ for } 1<q\le\infty,
  \\
  \lim_{k\to\infty}u_k&=0 \text{ in }
  F^{\fracpi+j}_{p,q}(\Rn) \text{ for } 1<p<\infty,
  \end{aligned}
 \right.} 
 \label{-53}
 \\[2\jot]
 {\left\{ 
  \begin{aligned}
  &\lim_{k\to\infty}\tilde\gamma_jv_k(x')=\delta_{z'}(x')\text{ in $\cal
  S'(\R^{n-1})$},
  \\
  &\lim_{k\to\infty} v_k=0 \text{ in } B^{\fracci np-(n-1)+j}_{p,q}(\Rn)
     \text{ for } 1<q\le\infty,
 \end{aligned}
 \right.} 
 \label{-54}
 \end{gather}
provided $0<p<\infty$ in \eqref{-53} respectively $0<p\le1$ in \eqref{-54}.
\end{lem}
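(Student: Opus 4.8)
The plan is to realise both sequences as $\tfrac1k$-normalised sums of $k$ frequency-localised Schwartz functions, one per dyadic scale $2^l$ ($l=1,\dots,k$), so that the prescribed trace drops out by a one-dimensional computation, the convergence of the traces is a Ces\`aro matter, and the decay in the borderline space comes from $L_p$-scaling plus Yamazaki's Theorem~\ref{Y1-thm}. After a translation one may assume $z'=0$ and that the hyperplane is $\{x_n=0\}$.

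For \eqref{-53} I would fix $\mu\in\cal S(\R)$ with $\hat\mu\in C^\infty_0(\{R\le|\xi_n|\le 2R\})$ ($R$ a power of $2$) normalised so that $D^j_{x_n}\mu(0)=1$, set $\nu_l(x_n)=2^{-lj}\mu(2^lx_n)$ and $M_k=\tfrac1k\sum_{l=1}^k\nu_l$, and put $u_k=u\otimes M_k\in\cal S(\Rn)$. Since $D^j_{x_n}\nu_l(0)=1$ one has $D^j_{x_n}M_k(0)=1$ and hence $\tilde\gamma_ju_k=u$ for every $k$, which is the first line of \eqref{-53}. For the norm bounds the fixed Schwartz factor $u(x')$ only contributes a constant --- its frequency content being bounded up to rapidly decaying tails --- so it suffices to estimate $M_k$ in one variable. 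The $\hat\nu_l$ sit in the disjoint coronas $\{2^lR\le|\xi_n|\le 2^{l+1}R\}$, so (after a harmless reindexing) Theorem~\ref{Y1-thm}(1) applies, and as $2^{(\fracpi+j)l}\|\nu_l\|_{L_p(\R)}=\|\mu\|_{L_p}$ is constant in $l$ it gives $\|u_k\|_{B^{\fracpi+j}_{p,q}(\Rn)}\le Ck^{\fracci1q-1}$, which tends to $0$ precisely because $q>1$. For the Triebel--Lizorkin scale one uses Theorem~\ref{Y1-thm}(2): here $2^{(\fracpi+j)l}\nu_l(x_n)=2^{l\fracpi}\mu(2^lx_n)$, the profiles $2^{l\fracpi}\mu(2^l\cdot)$ live at mutually separated $x_n$-scales, so that for each fixed $x_n$ only $O(1)$ of them are of comparable size, and a direct inspection yields $\|\{2^{(\fracpi+j)l}\nu_l(x_n)\}_{l=1}^k\|_{\ell_q}\le C_q\,g_k(x_n)$ with $g_k(x_n)\sim|x_n|^{-\fracpi}$ on $2^{-k}\le|x_n|\le 1$, $g_k(x_n)\le C2^{k\fracpi}$ for $|x_n|\le 2^{-k}$, and $g_k$ rapidly decreasing for $|x_n|\ge 1$; as $\int_\R g_k^p\sim k$, this gives $\|u_k\|_{F^{\fracpi+j}_{p,q}(\Rn)}\le C_qk^{\fracpi-1}$, which tends to $0$ precisely because $p>1$.

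For \eqref{-54} I would fix $G\in\cal S(\Rn)$ with $\hat G\in C^\infty_0$ supported in a corona off the origin, normalised so that $\int_{\R^{n-1}}D^j_{x_n}G(x',0)\,dx'=1$, and set $w_l(x)=2^{l(n-1-j)}G(2^l(x-(z',0)))$ and $v_k=\tfrac1k\sum_{l=1}^k w_l\in\cal S(\Rn)$. Writing $H:=D^j_{x_n}G(\cdot,0)$ one gets $\tilde\gamma_jw_l(x')=2^{l(n-1)}H(2^l(x'-z'))$, an approximate identity at scale $2^{-l}$; thus $\tilde\gamma_jw_l\to\delta_{z'}$ in $\cal S'(\R^{n-1})$, and so do the Ces\`aro means $\tilde\gamma_jv_k=\tfrac1k\sum_{l=1}^k\tilde\gamma_jw_l$, which is the first line of \eqref{-54}. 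Since $\supp\hat w_l$ is a dyadic corona about $2^l$ and $2^{(\fracci np-(n-1)+j)l}\|w_l\|_{L_p(\Rn)}=2^{l(n-1-j)}2^{-l\fracci np}2^{(\fracci np-(n-1)+j)l}\|G\|_{L_p}=\|G\|_{L_p}$ is constant in $l$, Theorem~\ref{Y1-thm}(1) gives $\|v_k\|_{B^{\fracci np-(n-1)+j}_{p,q}(\Rn)}\le C'k^{\fracci1q-1}\|G\|_{L_p}\to0$, again precisely because $q>1$. (This argument in fact works for all $0<p<\infty$; the range $p\le1$ is where \eqref{-54} is the relevant statement, since for $p>1$ the sharper counterexample is already $u_k$.)

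The only genuinely delicate step is the Triebel--Lizorkin estimate in \eqref{-53}: a crude summation of the pointwise $\ell_q$-norms, or passing through $B^{\fracpi+j}_{p,\min(p,q)}\hookrightarrow F^{\fracpi+j}_{p,q}$, would only give decay of order $k^{\fracci1{\min(p,q)}-1}$ and so would spuriously require $q>1$ as well, whereas the sharp threshold is $p>1$ alone. What rescues it is the scale-separation of the profiles $2^{l\fracpi}\mu(2^l\cdot)$, which forces the pointwise $\ell_q$-norm to behave like its single dominant term and replaces the naive factor $k^{\fracci1q}$ by $k^{\fracpi}$. Everything else reduces to $L_p$-scaling and Theorem~\ref{Y1-thm}, with constants that may depend on $q$ (finitely, for each fixed $q$) but $k$-exponents that do not.
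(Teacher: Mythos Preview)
Your construction is essentially the paper's for \eqref{-53}: both take $u_k=u\otimes M_k$ with $M_k=\tfrac1k\sum_{l=1}^k 2^{-lj}\mu(2^l\cdot)$ for a corona-supported $\hat\mu$. The gap is your sentence ``the fixed Schwartz factor $u(x')$ only contributes a constant''. Since $\hat u$ need not be compactly supported, the $n$-dimensional pieces $u\otimes\nu_l$ do \emph{not} satisfy the corona condition of Theorem~\ref{Y1-thm}, so you cannot invoke it directly on $\Rn$. The paper closes this by citing Franke's tensor-product inequality \eqref{-55} (and its $F$-analogue from \cite{F3}),
\[
\|f\otimes g\|_{B^{s}_{p,q}(\Rn)}\le c\,\|f\|_{B^{s}_{p,q}(\R^{n-1})}\,\|g\|_{B^{s}_{p,q}(\R)},
\]
which reduces everything to the one-variable estimate you then carry out. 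Once this is inserted your Besov argument is identical to the paper's. For the $F$-case the paper takes a different shortcut: it embeds $B^{t}_{r,r}(\R)\hookrightarrow F^{\fracpi+j}_{p,q}(\R)$ for some $r\in(1,p)$ (via \eqref{1.21}) and uses the already-established Besov computation to get $k^{\fracci1r-1}$ decay, avoiding your pointwise $\ell_q$-analysis entirely. Your direct route gives the sharper rate $k^{\fracpi-1}$ and is correct in one variable, but again only yields the $n$-dimensional bound through the tensor estimate.

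For \eqref{-54} your approach genuinely differs from the paper's. The paper takes $v_k=\tfrac1k\sum_{l=k+1}^{2k}2^{l(n-1-j)}f(2^lx')g(2^lx_n)$ with $\hat f,\hat g$ supported in \emph{balls} (forced by the normalisation $\hat f(0)=1$); consequently the Fourier supports are balls, not coronas, so the paper must invoke Theorem~\ref{Y2-thm}, whose hypothesis $s>n(\fracp-1)_+$ is exactly why $p\le1$ enters. Your choice of a single $G$ with $\hat G$ supported in a genuine $n$-dimensional corona lets you use Theorem~\ref{Y1-thm} instead, which carries no restriction on $p$ --- so your parenthetical remark that the construction works for all $0<p<\infty$ is correct and is a small bonus over the paper's version. (Note, though, that your normalisation $\int H=1$ forces $\int\xi_n^j\hat G(0,\xi_n)\,d\xi_n\ne0$, which is compatible with a corona support since the corona meets the $\xi_n$-axis.)
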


\begin{proof} In the deduction of \eqref{-53}, recall the fact from
\cite{F3} that for $s>0$ 
 \begin{equation}
 \norm{f(x')\otimes g(x_n)}{B^{s}_{p,q}(\Rn)}\le c
 \norm{f}{B^{s}_{p,q}(\R^{n-1})}\norm{g}{B^{s}_{p,q}(\R)}.
 \label{-55}
 \end{equation}
Here $f$ will play the role of the given $u\in\cal S(\R^{n-1})$,
while for $g$ we shall take $w_k(x_n)=k^{-1}\sum_{l=1}^k2^{-lj}w(2^lx_n)$ 
for some auxiliary function $w\in\cal S(\R)$ satisfying $\supp \hat
w\subset\{\,\frac34\le|\xi_n|\le1\,\}$ and $\int\xi^j\hat w=2\pi$.
Observe that $\tilde\gamma_jw=1$.

One can let $u_k(x)=u(x')\cdot w_k(x_n)$, for $\tilde\gamma_j u_k=u$
and, since $\supp
\cal F(w(2^l\cdot))\subset\{\,\Phi^{(n)}_l\equiv 1\,\}$ where
$\Phi^{(n)}_l$ is as in \eqref{21} ff., 
 \begin{equation}
 \aligned 
 \norm{u_k}{B^{\fracpi+j}_{p,q}}&\le
 c\norm{u}{B^{\fracpi+j}_{p,q}(\R^{n-1})}
 \Norm{\{2^{\frac{l}{p}}\norm{w(2^l\cdot)}{L_p(\R)}\}_{l=1}^k}{\ell_q} k^{-1}
 \\
 &=c\norm{u}{B^{\fracpi+j}_{p,q}}\norm{w}{L_p} k^{\fracci1q-1}
 \endaligned
 \label{-55'}
 \end{equation}
by \eqref{-55}. Here $k^{\fracci 1q-1}\to 0$ for $k\to\infty$, when $q>1$.

When $p>1$ there is an embedding $B^{t}_{r,r}(\R)\hookrightarrow
F^{\fracpi+j}_{p,q}(\R)$ for an $r\in\,]1,p[\,$ and $t-\fracc1r=j$. Since
there is an analogue of \eqref{-55} for the $F^{s}_{p,q}$ spaces,
cf.~\cite{F3}, there is an estimate
$\norm{u_k}{F^{\fracpi+j}_{p,q}}\le c'\norm{u}{F^{\fracpi+j}_{p,q}}
\norm{w}{L_r} k^{\fracci1r-1}$. Because $r>1$, $u_k\to 0$ in
$F^{\fracpi+j}_{p,q}$. 

To obtain \eqref{-54} for $z'=0$ we take $f\in\cal S(\R^{n-1})$ and
$g\in\cal S(\R)$ satisfying
 \begin{equation}
 \alignedat2
 \supp \hat f&\subset\{\,|\xi'|\le\tfrac{1}{2}\,\},&\quad &\hat f(0)=1,
 \\ 
 \supp \hat g&\subset\{\,|\xi_n|\le\tfrac{1}{2}\,\},&\quad
    &\int\xi_n^j \hat g(\xi_n)\,d\xi_n=2\pi,
 \endalignedat
 \label{-55''}
 \end{equation}
and let
$v_{k}=\frac{1}{k}\sum_{l=k+1}^{2k} 2^{l(n-1-j)}f(2^lx')g(2^lx_n)$.
Now $\tilde\gamma_jv_k=\frac{1}{k}\sum 2^{l(n-1)}f(2^l\cdot)$
and, by a modification of the usual proof of the fact that 
$2^{k(n-1)} f(2^k\cdot)*\cdot\to1$ in the strong
operator topology on $C(\R^{n-1})$, it is verified that
$\tilde\gamma_j v_k*\cdot\to 1$ strongly on $C(\R^{n-1})$; in
particular this implies
$\tilde\gamma_j v_k(x')\to\delta_0(x')$ in $\cal S'(\R^{n-1})$.

Since
$\supp \cal F(fg(2^{l}\cdot))\subset\{\,|\xi|\le2^l\,\}$ and since
$p\le1$, Theorem \ref{Y2-thm} can be applied
to the sum defining $v_k$, which gives a constant $c$, independent
of $k$, such that
 \begin{equation}
 \norm{v_{k}}{B^{\fracci np-(n-1)+j}_{p,q}}\le
 \tfrac{c}{k}\Norm{ \{2^{l\fracci np}\norm{fg(2^l\cdot)}{L_p}\}_{l=k+1}
               ^{2k} }{\ell_q}
 =c\norm{f g}{L_p} k^{\fracci 1q-1},
 \label{-56 }
 \end{equation}
so for $z'=0$ the properties of the $v_k$ are proved. For
$z'\ne0$ one can simply translate.
\end{proof}

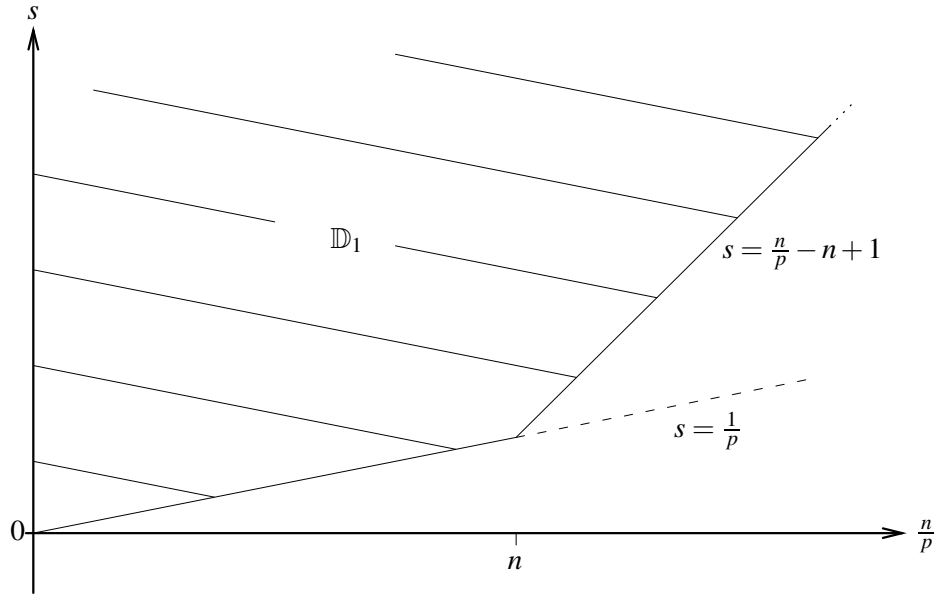
\begin{figure}[htbp]
\hfil
\setlength{\unitlength}{0.0125in}
\begin{picture}(395,261)(0,0)
\path(10,25)(210,65)
\dashline{4.000}(210,65)(330,89)
\path(210,25)(210,20)
\path(210,65)(340,195)
\dottedline{4}(340,195)(350,205)
\path(10,55)(85,40)        
\path(10,95)(185,60)
\path(10,135)(235,90)
\path(10,175)(110,155)      
\path(160,145)(268.333,123.333)
\path(35,210)(301.667,156.667)
\path(160,225)(335,190)
\thicklines
\path(7,25)(370,25)
\path(362.000,23.000)(370.000,25.000)(362.000,27.000)
\path(10,0)(10,235)
\path(12.000,227.000)(10.000,235.000)(8.000,227.000)
\put(0,22){\makebox(0,0)[lb]{\raisebox{0pt}[0pt][0pt]
{\shortstack[l]{{$0$}}}}}
\put(135,140){\makebox(10,15){$\Dm_1$}}
\put(295,140){\makebox(0,0)[lb]{\raisebox{0pt}[0pt][0pt]
{\shortstack[l]{{$s=\fracc np-n+1$}}}}}
\put(275,65){\makebox(0,0)[lb]{\raisebox{0pt}[0pt][0pt]
{\shortstack[l]{{$s=\fracp$}}}}}
\put(206,10){\makebox(0,0)[lb]{\raisebox{0pt}[0pt][0pt]
{\shortstack[l]{{$n$}}}}}
\put(375,22){\makebox(0,0)[lb]{\raisebox{0pt}[0pt][0pt]
{\shortstack[l]{{$\fracc np$}}}}}
\put(7,240){\makebox(0,0)[lb]{\raisebox{0pt}[0pt][0pt]
{\shortstack[l]{{$s$}}}}}
\end{picture}
\hfil
\caption[The borderline cases for $\tilde\gamma_0$]{The borderline 
 cases for $\tilde\gamma_0$ (when $n=5$)}
  \label{g0-fig}
\end{figure}

\begin{rem} \label{gamm-rem} 
When applied to $\gamma_j$ Lemma \ref{gamm2-lem} gives a
little more than stated in Lemma \ref{gamm1-lem}. In fact, if we for simplicity
consider $\tilde\gamma_0$, for each Hausdorff topological space $X$, there does
not exist any extension $\tilde\gamma_0\colon B^{s}_{p,q}(\Rn)\to X$, that
is continuous when $s=\fracp$ and $q>1$, or $s<\fracp$, cf.~\eqref{-53}. 
Moreover, there does not exist any
continuous extension $\tilde\gamma_0\colon F^{s}_{p,q}(\Rn)\to X$ when
$s<\fracp$, since $B^{\fracpi}_{p,\infty}\hookrightarrow F^{s}_{p,q}$
then, and for $1<p<\infty$ even $s=\fracp$ is excluded by \eqref{-53}.

These counterexamples, that were inspired by \cite[Lemma 2.2]{G3},
are sharper than previous ones obtained by H.\  Triebel, in that
$X=\cal S'(\Rn)$ is not assumed and the borderline cases $s=\fracp$ 
are included, cf.~\cite[2.7.2 Rem.~4]{T2}.

Moreover, the counterexamples provided by the $u_k$ for the Besov
spaces are optimal for $p\in[1,\infty[\,$, since 
$\tilde\gamma_0(B^{\fracpi}_{p,q}(\Rn))=L_p(\R^{n-1})$ for
$q\le\min(1,p)$ when $p\in\,]0,\infty[\,$, cf.\  \cite{FJ1}. 
For $p<1$ it is shown in \cite{FJ1} that one can not take 
$X=L_p+L_\infty$ when $q>p$. Contrary to this
$\tilde\gamma_0(B^{0}_{\infty,1}(\Rn))\subset C(\R^{n-1})$.
In the cases with $p=\infty$ and $p<q\le 1$ a strengthening of the $u_k$-%
counterexample above would be appropriate.

For the Triebel--Lizorkin spaces it was obtained in \cite{FJ2} that
$\tilde\gamma_0(F^{\fracpi}_{p,q}(\Rn))=L_p(\R^{n-1})$, independently
of $q$ when $p\le1$, and that one can not take $X=\cal S'(\R^{n-1})$ for
any $p\in\,]1,\infty[\,$ and $q\in\,]0,\infty]$. So for these spaces the
$u_k$-example removes the restriction on the space $X$.

For $p<1$ the $v_k$ yield stronger conclusions in the sense that even
$s\ge\fracc np-(n-1)$ is necessary for \eqref{-50} and \eqref{-51}. 
On the other hand, the conclusions are weaker in the sense that when $s<\fracc
np-(n-1)$ any $X$ with a continuous embedding $X\hookrightarrow\cal
D'$ is impossible, while other choices of $X$ might work. 
Indeed, from the~results quoted above it is seen that one can take 
the non-locally convex space $X=L_p$ when
$\fracp\le s\le \fracc np-(n-1)$ (except when $q>p$ for $s=\fracp$ in
the Besov case). Furthermore, in the borderline case $s=\fracc np-(n-1)$
for $p<1$, $\tilde\gamma_0(F^{\fracci np-(n-1)}_{p,r}(\Rn))+
\tilde\gamma_0(B^{\fracci np-(n-1)}_{p,q}(\Rn))\subset
\tilde\gamma_0(B^1_{1,1}(\Rn))=L_1(\R^{n-1})$ are well defined
subspaces of $\cal S'(\R^{n-1})$ when $q\le1$ and $r\in\,]0,\infty]$;
this shows the optimality of the $v_k$-counterexample in this case.  

\smallskip

However, the range spaces for $\tilde\gamma_0$ considered on
$B^{0}_{\infty,q}$, $B^{\fracci np-n+1}_{p,q}$ and $F^{\fracci np-n+1}_{p,r}$
with $q\le1$ and $p<1$ seems to be undetermined yet.
\end{rem}

\subsection{Extension by zero}  \label{ext-ssect}
For a function $f(x)\in L_2(\Rn_+)$ there is an extension by zero to the 
whole space, for example $e^+f(x)=\kd{x_n>0}f(x',|x_n|)$. 

The boundedness of $e^+\colon L_2(\Rn_+)\to L_2(\Rn)$ and the properties that
 \begin{equation}
 r^+e^+=1 \quad\text{ and}\quad r^-e^+=0
 \label{ext1}
 \end{equation}
extend to spaces with parameters $(s,p,q)$ in a whole region around
$(0,2,2)$.

In fact, when $(s,p,q)$ satisfies 
 \begin{equation}
 \max(\fracp-1,\fracc np-n)< s< \fracp,
 \label{ext2}
 \end{equation}
the operator $e^+$ can be given a sense as a bounded operator
 \begin{equation}
 e^+\colon B^{s}_{p,q}(\Rp^n)\to B^{s}_{p,q}(\Rn), \qquad
 e^+\colon F^{s}_{p,q}(\Rp^n)\to F^{s}_{p,q}(\Rn)
 \label{ext3} 
 \end{equation}
(when $p<\infty$ in the $F$ case), which has the properties in 
\eqref{ext1}.

\bigskip

For this it is convenient to use the product $\pi(u,v)$ defined in
\cite{JJ94mlt} for $u$ and $v\in\cal S'(\Rn)$ as 
 \begin{equation}
 \pi(u,v)=\lim_{k\to\infty}\cal F^{-1}(\psi(2^{-k}\cdot)\hat u)
                           \cal F^{-1}(\psi(2^{-k}\cdot)\hat v)
 \label{ext4}
 \end{equation}
when the limit exists in the w$^*$-topology on $\cal D'(\Rn)$ for each
$\psi\in C^\infty_0(\Rn)$ that equals $1$ on a neighbourhood of the
origin. Here the limit is required to be independent of $\psi$.

Now, for $u$ in $B^{s}_{p,q}(\Rp^n)$ or $F^{s}_{p,q}(\Rp^n)$ with \eqref{ext2}
satisfied by $(s,p,q)$ one can {\em define\/}, with $\chi(x)=\kd{x_n>0}$,
 \begin{equation}
 e^+u=\pi(\chi,v)\quad\text{when}\quad r^+v=u
 \label{ext5}
 \end{equation}
for $v\in B^{s}_{p,q}(\Rn)$ and $v\in F^{s}_{p,q}(\Rn)$ respectively.

Note that $v\in L_t(\Rn)$ when $\frac nt=\fracc np-s$ holds for $s>0$ in
addition to \eqref{ext2}. Then
\cite[Prop.~3.8]{JJ94mlt} gives that $\pi(\chi,v)=\chi v$, so $e^+u$
has the usual meaning. Moreover, it was proved in
\cite[Thm.~2.8.7]{T2} and \cite[Cor.~3.4.6]{F3} that 
 \begin{equation}
 \pi(\chi,\cdot)\colon B^{s}_{p,q}(\Rn)\to B^{s}_{p,q}(\Rn)
 \quad\text{and}\quad
 \pi(\chi,\cdot)\colon F^{s}_{p,q}(\Rn)\to F^{s}_{p,q}(\Rn)
  \label{ext7} 
 \end{equation}
are bounded when \eqref{ext2} holds. By taking the infimum over $v$ in
\eqref{ext5} the boundedness of $e^+$ follows.

However, \eqref{ext5} needs to be justified. So, if $v_1$ belongs to
the same space as $v$ and $r^+v_1=u$, then $\pi(\chi,v)$ and
$\pi(\chi,v_1)$ coincide except at $\{\,x\mid x_n=0\,\}$. For with 
$w=v-v_1$ at least one of the factors in $\pi(\chi,w)$ is $0$ in $\Rn_-$ 
and in $\Rn_+$, so that $r^\pm\pi(\chi,w)=0$ by \cite[Prop.~3.7]{JJ94mlt}.

For $s>0$ or otherwise when $B^{s}_{p,q}$ and $F^{s}_{p,q}$ only contain 
functions, $\pi(\chi,w)=0$ is necessary. When $s\le0$ this conclusion is
obtained from the inequality $s>\fracp-1$ by duality: from the definition of 
$(e^+)^*\colon B^{-s}_{p',q'}(\Rp^n)\to B^{-s}_{p',q';0}(\Rp^n)$ 
(where $e^+$ refers to one of the already covered cases with 
$0\le s<\fracp$) it follows when $1<q'<\infty$ that
 \begin{equation}
 \dual{(e^+)^*r^+\psi}{\overline{r^+f}}=\int_{\Rn} \psi\chi\overline{f}
 =\dual{\chi\psi}{\overline{r^+f}}
 \label{ext10}
 \end{equation}
when $\psi\in C^\infty(\Rn)\cap B^{-s}_{p',q'}$ and 
$f\in B^{s}_{p,q}(\Rn)$, so by closure
the identity $(e^+)^*r^+\psi=\pi(\chi,\psi)$ holds for every $\psi\in
B^{-s}_{p',q'}(\Rn)$. This shows that $\pi(\chi,v)$ also in these
cases only depends on $r^+v$. The cases with $q'=\infty$ are covered
by simple embeddings.

Finally it should be mentioned that the second part of \eqref{ext1} is
a direct consequence of \cite[Prop.~3.7]{JJ94mlt}. The first part
also follows from this when combined with \cite[Prop.~3.6]{JJ94mlt}:
$r^+e^+u=r^+\pi(\chi,v)=r^+\pi(1,v)=r^+v$. Altogether the desired properties
of $e^+$ as defined in \eqref{ext5} has been obtained.

\begin{rem} \label{ext-rem}
The extension operator $e^+$ has been defined with care above, albeit a
definition as a self-adjoint operator by \eqref{ext10} is simpler. The present
definition is more flexible, however, for it allows an analysis by means of
para-multiplication, which is crucial for the proof of Theorem~\ref{psdo-thm}
below. 
\end{rem}

\subsection{Interpolation}  \label{intp-ssect}
For the proof of Theorem~\ref{psdo-thm} below it is necessary to have
interpolation available. In addition to the real method, described in
\cite[2.4.2]{T2}, properties similar to those of the complex method are
needed. 

Here it is on one hand well known that the usual complex interpolation method
due to A.~P.~Calder\'on \cite{Cr} does not extend to quasi-Banach spaces.
On the other hand, the so-called $+-$-method may serve as a substitute,
as was pointed out to me by both W.~Sickel and J.~Marshall. 

The $+-$-interpolation of two quasi-Banach spaces $A_0$ and
$A_1$\,---\,both lying inside some Hausdorff topological vector space
$X$\,---\,is defined by J.~Gustavsson and J.~Peetre \cite{GuPe}, and
it is usually denoted by $\langle A_0, A_1, \rho\rangle$. Here the
function $\rho$ will be $t\mapsto t^\theta$ for some
$\theta\in\,]0,1[$, and $(A_0, A_1)_{\pm, \theta}:=\langle A_0, A_1,
t^\theta\rangle$ in order to avoid confusion with dualities. In
general $(A_0, A_1)_{\pm, \theta}$ is a quasi-Banach space.

Moreover, the interpolation property was proved in \cite{GuPe}. That
is to say, when $T$ is a linear operator defined on $X$ (or a subspace) 
such that
 \begin{equation}
 T\colon A_j\to B_j
 \label{intp1}
 \end{equation}
is bounded for $j=0$ and $1$, it is so also for $j=2$, when $A_2=(A_0,
A_1)_{\pm, \theta}$ and $B_2=(B_0, B_1)_{\pm, \theta}$. Here $\|
T\|_2\le \| T\|_0^{1-\theta}\| T\|_1^{\theta}$ holds for the operator
quasi-norms.

For the $F^{s}_{p,q}$ scale it was shown by Frazier and Jawerth
\cite{FJ2} that 
 \begin{equation}
 (F^{s_0}_{p_0,q_0}(\Rn), F^{s_1}_{p_1,q_1}(\Rn))_{\pm, \theta}=
  F^{s_2}_{p_2,q_2}(\Rn)
 \label{intp2}
 \end{equation} 
for each $\theta\in\,]0,1[\,$ and any admissible parameters provided
 \begin{equation}
 \begin{gathered}
 s_2=(1-\theta)s_0+\theta s_1;  \\
 \fracc1{p_2}=\fracc{1-\theta}{p_0}+\fracc\theta{p_1}, \qquad
 \fracc1{q_2}=\fracc{1-\theta}{q_0}+\fracc\theta{q_1}.
 \end{gathered}
 \label{intp3}
 \end{equation}
This result is also valid for open sets $\Omega\subset\Rn$, when they
have the extension property. This means that for each $N\in\N$ there
exists an operator $\ell_\Omega$, which is bounded
 \begin{equation}
 \ell_\Omega\colon B^{s}_{p,q}(\overline{\Omega})\to B^{s}_{p,q}(\Rn),\qquad
 \ell_\Omega\colon F^{s}_{p,q}(\overline{\Omega})\to F^{s}_{p,q}(\Rn)
 \label{intp4}
 \end{equation}
for $|s|<N$ and $\frac1N<p,q\le\infty$, and for which the composite
 \begin{equation}
 F^{s}_{p,q}(\overline{\Omega})\xrightarrow{\;\ell_\Omega\;}
 F^{s}_{p,q}(\Rn)\xrightarrow{\;r_\Omega\;}
 F^{s}_{p,q}(\overline{\Omega})
 \label{intp6}
 \end{equation}
equals the identity, with a similar property for the Besov spaces.

The formulae \eqref{intp2} and \eqref{intp6} and the
interpolation property now give

\begin{prop} \label{intp-prop}
Let $\Omega\subset\Rn$ be an open set with the extension property,
let $\theta\in\,]0,1[$ and let $s_j\in\R$, $p_j\in\,]0,\infty]$ and
$q_j\in\,]0,\infty]$ for $j=0$ and $1$.

When $(s_2,p_2,q_2)$ satisfies \eqref{intp3}, then
 \begin{equation}
 F^{\Mi,s_2}_{p_2,q_2}(\overline{\Omega})=
 (F^{\Mi,s_0}_{p_0,q_0}(\overline{\Omega}),
 F^{\Mi,s_1}_{p_1,q_1}(\overline{\Omega}))_{\pm, \theta}
 \label{intp5}
 \end{equation}
holds with equivalent quasi-norms.
\end{prop}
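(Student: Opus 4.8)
The idea is that $F^{s}_{p,q}(\overline{\Omega})$ is a \emph{retract} of $F^{s}_{p,q}(\Rn)$ via the pair $(\ell_\Omega,r_\Omega)$, and that this retract structure can be made uniform in the three parameter triples $(s_0,p_0,q_0)$, $(s_1,p_1,q_1)$, $(s_2,p_2,q_2)$ at once. Once that is set up, the statement follows formally from the identity \eqref{intp2} over $\Rn$ and the interpolation property \eqref{intp1} of the $+-$-method, by the standard two-sided ``transport'' argument.

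First I would fix $N\in\N$ so large that $|s_0|,|s_1|<N$ and $\tfrac1N<\min(p_0,p_1,q_0,q_1)$. Since $\tfrac1{p_2}=\tfrac{1-\theta}{p_0}+\tfrac\theta{p_1}\le\max(\tfrac1{p_0},\tfrac1{p_1})$, and likewise for $q_2$, while $|s_2|\le(1-\theta)|s_0|+\theta|s_1|<N$, this same $N$ serves $(s_2,p_2,q_2)$ as well (in particular $p_2<\infty$ unless $p_0=p_1=\infty$, which is outside the $F$-scale and thus excluded). For this $N$ the extension operator $\ell_\Omega$ of \eqref{intp4} is bounded $F^{s_j}_{p_j,q_j}(\overline{\Omega})\to F^{s_j}_{p_j,q_j}(\Rn)$ for $j=0,1,2$; the restriction $r_\Omega$ is bounded in the opposite direction by the very definition \eqref{1.34'} of the quotient quasi-norm; and $r_\Omega\ell_\Omega=\operatorname{id}$ on each $F^{s_j}_{p_j,q_j}(\overline{\Omega})$ by \eqref{intp6}, hence on the sum $F^{s_0}_{p_0,q_0}(\overline{\Omega})+F^{s_1}_{p_1,q_1}(\overline{\Omega})$ by linearity.

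Next I would run the retract argument in both directions. Applying the interpolation property to $\ell_\Omega$ and using \eqref{intp2}, $\ell_\Omega$ is bounded from $(F^{s_0}_{p_0,q_0}(\overline{\Omega}),F^{s_1}_{p_1,q_1}(\overline{\Omega}))_{\pm,\theta}$ into $(F^{s_0}_{p_0,q_0}(\Rn),F^{s_1}_{p_1,q_1}(\Rn))_{\pm,\theta}=F^{s_2}_{p_2,q_2}(\Rn)$; composing with the bounded $r_\Omega\colon F^{s_2}_{p_2,q_2}(\Rn)\to F^{s_2}_{p_2,q_2}(\overline{\Omega})$ and using $r_\Omega\ell_\Omega=\operatorname{id}$ gives the continuous embedding $(F^{s_0}_{p_0,q_0}(\overline{\Omega}),F^{s_1}_{p_1,q_1}(\overline{\Omega}))_{\pm,\theta}\hookrightarrow F^{s_2}_{p_2,q_2}(\overline{\Omega})$. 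Conversely, applying the interpolation property to $r_\Omega$ (with \eqref{intp2} on the source side) shows $r_\Omega$ is bounded $F^{s_2}_{p_2,q_2}(\Rn)\to(F^{s_0}_{p_0,q_0}(\overline{\Omega}),F^{s_1}_{p_1,q_1}(\overline{\Omega}))_{\pm,\theta}$; precomposing with the bounded $\ell_\Omega\colon F^{s_2}_{p_2,q_2}(\overline{\Omega})\to F^{s_2}_{p_2,q_2}(\Rn)$ and again using $r_\Omega\ell_\Omega=\operatorname{id}$ yields the reverse embedding. Together these give \eqref{intp5} with equivalent quasi-norms, and the quasi-norm estimates on $\ell_\Omega$, $r_\Omega$ furnish the equivalence constants.

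I do not expect a genuinely hard step: the mathematical content sits entirely in the cited ingredients (\eqref{intp2} from \cite{FJ2}, the extension property, and the interpolation property of the $+-$-method from \cite{GuPe}). The only point deserving a line of care is the \emph{uniform} choice of one $\ell_\Omega$ serving all three triples, handled by the convexity remark above; a secondary bookkeeping point is that all couples involved lie inside a common Hausdorff topological vector space ($\cal S'(\Rn)$ over $\Rn$, $\cal D'(\Omega)$ over $\Omega$), so that the $+-$-method and its interpolation property apply verbatim.
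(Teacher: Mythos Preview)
Your proposal is correct and follows essentially the same approach as the paper: the paper does not spell out a proof but simply cites \eqref{intp2}, \eqref{intp6}, and the interpolation property of the $+-$-method (together with the general retract result \cite[Prop.~6.1]{GuPe}), which is precisely the retract argument you carry out in detail. Your explicit remark on choosing one $N$ serving all three parameter triples is the only point the paper leaves implicit.
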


That $\Omega$ has the extension property when it is bounded and
$C^\infty$ smooth or $\Omega=\Rn_+$ was proved in \cite{F3} (and with
some restrictions for the $F$ case also in \cite{T2}).
By the general result in \cite[Prop.~6.1]{GuPe} the interpolation 
property holds for the
$F^{s}_{p,q}(\overline{\Omega})$ spaces too.

\begin{rem} \label{intp-rem}
It deserves to be mentioned, that the $B^{s}_{p,q}(\Rn)$ and
$F^{s}_{p,q}(\Rn)$ scales are invariant under a complex interpolation
based on $\cal S'$-analytical functions,\linebreak[5] cf.~\cite{T2}. However, for
this method the interpolation property has only been verified for
$\max(p_2,q_2)<\infty$ in \cite{F2}, and under the assumption that
there is continuity from, say, $B^{s_2-\varepsilon}_{p_2,q_2}$ to
$\cal S'(\Rn)$ the case $q_2=\infty$ was included there too. 

An overview of this is contained in \cite{JJ93}, even with a removal of
the restriction to $p_2<\infty$. Although this approach works equally
well for the application in \cite{F2}, and thus in the present paper
too, the $+-$-interpolation is preferred here because of the available
references. Ultimately the proofs are also more structured and less
technical, then.
\end{rem}

\section{Operators on $\Rp^n$} \label{Rnp-sect}

To begin with the operators are defined on the spaces $\cal S(\Rp^n)$ and $\cal
S(\R^{n-1})$. More general spaces are introduced afterwards in
Section~\ref{cont-sect}. 

Since the inclusion of the $B^{s}_{\infty,q}$ spaces requires the 
Definitions~\ref{trac-defn} and \ref{sgo-defn} of the operators 
(because $\cal S$ is not dense there), the 
exposition in Sections~\ref{Rnp-sect} and \ref{cont-sect} is 
intended to be fairly detailed. 

In particular proofs are given for
Propositions~\ref{pois-prop}--\ref{sgo-prop}, albeit the contents are
essentially known. However, none of the references apply directly, and
at least the presented proofs should be of interest in view of their
elementary nature.
 
For a general introduction to the Boutet de~Monvel calculus the reader
is referred to the exposition in \cite{G2} and to Section~1.1 ff.\ in
\cite{G1}. 

\subsection{Review of the operators} \label{revw-ssect}
Recall that a truncated pseudo-differential operator $P_{+}$, a
Poisson operator $K$, a trace operator $T$ and a singular Green operator 
$G$, cf.~\eqref{i2} ff., act in the following way on $u\in\cal
S(\Rp^n)$ and $v\in\cal S(\R^{n-1})$\,---\,when $T$ and $G$ are 
of {\em  class zero\/}:
 \begin{align}
   P_{@!@!+}u(x)&=r^+(2\pi)^{-n}\!\smash[t]{\int_{\Rn}} e^{ix\cdot\xi}p(x,\xi)
                                      \widehat{e^+u}(\xi)\,d\xi,
 \label{-20} \\
 Kv(x)&=(2\pi)^{1-n}\!\int_{\R^{n-1}} e^{ix'\cdot\xi'}\tilde k(x',x_n,\xi')
                                      \hat v(\xi')\,d\xi',
 \label{-21} \\
 Tu(x')&=(2\pi)^{1-n}\!\int_{\R^{n}_+}\!
               e^{ix'\cdot\xi'}\tilde t(x',y_n,\xi')
                                      \acute u(\xi',y_n)\,dy_nd\xi',
 \label{-22} \\
 Gu(x)&=(2\pi)^{1-n}\!\int_{\R^{n}_+}\!
               e^{ix'\cdot\xi'}\tilde g(x',x_n,y_n,\xi')
                                      \acute u(\xi',y_n)\,dy_nd\xi',
 \label{-23}
 \end{align}
The fifth kind of 
operators in the calculus are the pseudo-differential operators $S$ 
acting on $v\in\cal S(\R^{n-1})$ in the usual way, cf.~\eqref{-18}.
The definition of class $r\in\Z$ of $T$, $G$ and $P_++G$ is recalled 
in Subsections~\ref{trac-ssect}--\ref{PG-ssect} below. 

For $P_{+}$ the uniform two sided transmission condition will be 
employed to assure that $P_{+}u$ belongs to $C^\infty(\Rp^n)$ when 
$u\in\cal S(\Rp^n)$, see \cite{GK} and \cite{GH} for a
discussion of this condition. 

The starting point is the uniform class
$S^{d}_{1,0}(\Rn\times\Rn)$ given with the seminorms\linebreak[4]
$\norm{p}{S^d_{1,0},\alpha,\beta}:=C_{\alpha,\beta}$ in \eqref{i15}
above. While the symbol of $S$ is taken in
$S^d_{1,0}(\R^{n-1}\times\R^{n-1})$, that of $P$ is required to belong to
$S^d_{1,0,\op{uttr}}(\Rn\times\Rn)$:

\begin{defn} \label{uttr-defn} 
For $d\in\R$ the space $S^d_{1,0,\op{uttr}}(\Rn\times\Rn)$ consists of the
symbols $p(x,\xi)\in S^{d}_{1,0}(\Rn\times\Rn)$ satisfying the {\em uniform
two-sided\/} transmission condition (at $x_n=0$), i.e.,
for every $\alpha,\beta\in\N^n_0$ and $l,m\in\N_0$ the condition
 \begin{equation}
 C_{\alpha,\beta,l,m}(p):=\sup
 |z^l_nD^m_{z_n}\cal F^{-1}_{\xi_n\to z_n}D^\alpha_\xi D^\beta_x
 p(x',0,\xi)| <\infty
 \label{-2}
 \end{equation} 
holds for each $\xi'$ when the supremum is taken over
$(x',z_n)\in\R^n\setminus\{\,z_n=0\,\}$.
\end{defn}

In formulae \eqref{-21} and \eqref{-23} the symbol-kernels
$\tilde k$ and $\tilde g$ can belong to the uniform spaces
$S^{d-1}(\R^{n-1}\times\R^{n-1},\cal S(\Rp))$ and $S^{d-1}(\R^{n-1}
\times\R^{n-1},\cal S(\Rpp))$ respectively. This means that for all indices 
$\alpha'$ and $\beta'\in\N^{n-1}_0$ and $l,m,l'$ and $m'\in\N_0$  
the following seminorms are finite:
 \begin{gather}
   \norm{\tilde k}{S^{d-1}_{1,0},\alpha',\beta',l,m} \,:=\,
 \sup\ang{\xi'}^{-(d-|\alpha'|-l+m)}|x^l_nD^m_{x_n}D^{\alpha'}_{\xi'} 
        D^{\beta'}_{x'} \tilde k(x',x_n,\xi')|,
 \label{-24} \\
{ \aligned
 &\norm{\tilde g}{S^{d-1}_{1,0},\alpha',\beta',l,m,l'm'}
   :=\,\sup\bigl(\ang{\xi'}^{-(d+1-|\alpha'|-l+m-l'+m')}\times
 \\
 &\hphantom{ \norm{\tilde g}{S^{d-1}_{1,0},\alpha',\beta',l,m,l'm'}:= \sup\bigl((}
 |x^l_nD^m_{x_n}y^{l'}_nD^{m'}_{y_n}
       D^{\alpha'}_{\xi'}D^{\beta'}_{x'} \tilde g(x',x_n,y_n,\xi')|\bigr),    
  \endaligned} 
 \label{-25}
 \end{gather}
when the supremum is taken over 
$(x',x_n,\xi')\in\R^{n-1}\times\Rp\times\R^{n-1}$ respectively over\linebreak
$(x',x_n,y_n,\xi')$ in $\R^{n-1}\times\Rp\times\Rp\times\R^{n-1}$. 
The symbol-kernel $\tilde t$ is usually taken in\linebreak
$S^{d}_{1,0}(\R^{n-1}\times\R^{n-1},\cal S(\Rp))$ (yet here the normal
variable is integrated out, and hence denoted by $y_n$, cf.~\eqref{-17}
below). 

Occasionally we shall use the equivalent family of seminorms
 \begin{equation}
 \norm{p}{S^{d}_{1,0},k}=\max\bigl\{\,\norm{p}{S^{d}_{1,0},\alpha,\beta}
 \bigm| |\alpha|,|\beta|\le k\,\bigr\},\quad k\in\N_0
 \label{-1'} 
 \end{equation}
When the meaning is clear the symbol space is suppressed, i.e.\ $\norm{p}{k}:=
\norm{p}{S^{d}_{1,0},k}$, and instead of $S^{d}_{1,0}(\Rn\times\Rn)$
we write $S^{d}_{1,0}$ and $S^{-\infty}:=\cap_d S^{d}_{1,0}$. 
Similar abbreviations are used for the symbol-kernel spaces. 
Endowed with the topology of the introduced systems of
seminorms, $S^{d}_{1,0}(\Rn\times\Rn)$,
$S^{d}_{1,0}(\R^{n-1}\times\R^{n-1},\cal S(\Rp))$ and
$S^{d}_{1,0}(\R^{n-1}\times\R^{n-1},\cal S(\Rpp))$ are Fr\'echet spaces.

\bigskip

With the symbol-kernels belonging to the indicated spaces it is 
seen at once that the integrals in \eqref{-21}--\eqref{-23} above are 
convergent, and hence $Kv$, $Tu$ and $Gu$ are well defined:

\begin{prop} \label{S-prop}
Let $\tilde k\in S^{d-1}_{1,0}(\cal S(\Rp))$ with
$d\in\R$, and define for $v(x')$ in $\cal S(\R^{n-1})$ the function
$Kv(x',x_n)=\op{OPK}(\tilde k)v$ by the formula \eqref{-21}.

Then $(\tilde k,v)\mapsto Kv=\op{OPK}(\tilde k)v$
is continuous as a mapping
 \begin{equation}
 S^{d-1}_{1,0}(\cal S(\Rp))\times\cal S(\Rp^n)\xrightarrow{\;\op{OPK}\;}\cal
 S(\Rp^n).
 \label{S1}
 \end{equation}
Similarly the mappings 
 \begin{align}
 S^{d}_{1,0}(\cal S(\Rp))\times\cal S(\Rp^n)&\xrightarrow{\;\op{OPT}\;}\cal S(\Rp^n)
 \label{S2} \\
 S^{d-1}_{1,0}(\cal S(\overline{\R}^2_{++}))\times\cal S(\Rp^n)
  & \xrightarrow{\;\op{OPG}\;}\cal S(\Rp^n)
 \label{S3}
 \end{align}
defined by \eqref{-22} and \eqref{-23} are continuous.
\end{prop}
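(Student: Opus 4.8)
The plan is to verify the three claimed continuity statements in \eqref{S1}--\eqref{S3} by differentiating under the integral sign and producing, for each Schwartz seminorm of the output, an estimate by a product of finitely many symbol-kernel seminorms and finitely many $\cal S$-seminorms of the input; joint continuity then follows since the target norms are (sub)multiplicative in the two factors. I would treat $\op{OPK}$ first and in detail, as the model case, and then indicate the (routine) modifications for $\op{OPT}$ and $\op{OPG}$.

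For $\op{OPK}$: fix $v\in\cal S(\R^{n-1})$ and $\tilde k\in S^{d-1}_{1,0}(\cal S(\Rp))$, and note that the integrand in \eqref{-21}, namely $e^{ix'\cdot\xi'}\tilde k(x',x_n,\xi')\hat v(\xi')$, is smooth and, by \eqref{-24} together with the rapid decay of $\hat v$, absolutely integrable with all $x$-derivatives likewise dominated; hence $Kv\in C^\infty(\Rp^n)$ and one may differentiate under the integral. To control $\sup|x^{\alpha}D^{\beta}_x Kv(x)|$ one applies $D^{\beta}_x=D^{\beta'}_{x'}D^{\beta_n}_{x_n}$: the $D_{x_n}$ falls on $\tilde k$ and is absorbed into the seminorm \eqref{-24} (with $m=\beta_n$); each $D_{x'_j}$ either hits $e^{ix'\cdot\xi'}$, producing a factor $\xi'_j$ which is moved onto $\hat v$ via $\xi'_j\hat v=\widehat{D_{x'_j}v}$ after an integration by parts in $\xi'$ (this is where the $\alpha'$-index in \eqref{-24} is consumed), or it hits $\tilde k$ directly (consuming the $\beta'$-index). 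The factor $x'^{\alpha'}$ is handled by writing $x'^{\alpha'}e^{ix'\cdot\xi'}=D_{\xi'}^{\alpha'}e^{ix'\cdot\xi'}$ and integrating by parts in $\xi'$, which again only costs finitely many $\xi'$-derivatives of $\tilde k$ (picking up negative powers of $\ang{\xi'}$, harmless) and of $\hat v$. The power $x_n^{\alpha_n}$ is simply absorbed into \eqref{-24} via the index $l=\alpha_n$. After these manipulations one is left with an integral $\int_{\R^{n-1}}\ang{\xi'}^{d-1-|\alpha'|}\cdot(\text{Schwartz function of }\xi')\,d\xi'$, which converges and is bounded by $C\,\norm{\tilde k}{\cal S(\Rp),N}\cdot\norm{v}{\cal S,N'}$ for suitable finite $N,N'$ depending only on $\alpha,\beta,d,n$; here I use that $\ang{\xi'}^{d-1-|\alpha'|}\hat v(\xi')$ and its needed derivatives are integrable because $\hat v\in\cal S$. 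This gives continuity of $\op{OPK}$ as a map into $\cal S(\Rp^n)$, jointly in $(\tilde k,v)$, since the bound is bilinear in the two seminorm families.

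For $\op{OPT}$ one argues identically, the only change being the extra integration $\int_0^\infty\cdots dy_n$ against $\acute u(\xi',y_n)$; since $\tilde t\in S^{d}_{1,0}(\cal S(\Rp))$ decays rapidly in its normal variable $y_n$ and $u\in\cal S(\Rp^n)$ makes $\acute u(\xi',y_n)$ rapidly decreasing in $(\xi',y_n)$, the $y_n$-integral converges and contributes one more finite-order seminorm of $\tilde t$ and of $u$. For $\op{OPG}$ both the $x_n$- and $y_n$-dependence of $\tilde g$ must be tracked, using \eqref{-25} with its two pairs of indices $(l,m)$ and $(l',m')$; the $x_n^{\alpha_n}D_{x_n}^{\beta_n}$ from the output seminorm is absorbed by $(l,m)=(\alpha_n,\beta_n)$, and the $y_n$-integration against $\acute u(\xi',y_n)$ is handled as in the $\op{OPT}$ case. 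The main obstacle — really the only nontrivial point — is purely bookkeeping: keeping track of exactly which seminorm index of the symbol-kernel (and which order of $\cal S$-seminorm of the input) is consumed at each integration by parts, so that one exhibits explicit finite $N=N(\alpha,\beta,d,n)$, and then observing that since the estimate is a product $C\norm{\tilde k}{N}\norm{v}{N'}$, continuity at $(0,0)$ — hence everywhere, the maps being bilinear — is immediate. No convergence issue arises anywhere because every symbol-kernel factor is Schwartz in the normal variable(s) and the inputs are Schwartz, so all integrals are absolutely convergent with room to spare.
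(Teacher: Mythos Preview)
Your proposal is correct and follows essentially the same approach as the paper: differentiate under the integral, distribute $D^{\beta'}_{x'}$ via Leibniz, convert $x'^{\alpha'}$ to $D^{\alpha'}_{\xi'}$ by integration by parts, absorb the normal-variable factors $x_n^{\alpha_n}D^{\beta_n}_{x_n}$ into the seminorms \eqref{-24}--\eqref{-25}, and use the Schwartz decay of the input to make the $\xi'$-integral converge. The paper makes the last step explicit by inserting $\ang{\xi'}^{-2N}\ang{\xi'}^{2N}$ and bounding $|\widehat{(1-\Delta)^N x'^{\omega'}D^{\gamma'}v}|$ by an $L_1$ norm, arriving at the displayed estimate \eqref{5}; your appeal to ``$\hat v\in\cal S$'' achieves the same thing (one small slip: the $\alpha'$-index of \eqref{-24} is consumed by the $D_{\xi'}$-derivatives coming from $x'^{\alpha'}$, not by the $D_{x'_j}$ hitting the exponential, but this does not affect the argument).
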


\begin{proof} By use of \eqref{-24}, $\op{OPK}(\tilde k)v$
is in $\cal S(\Rp^n)$: for any multiindices $\alpha$ and $\beta\in\N_0^n$ the
seminorm $\sup\{\,x^\alpha D^\beta_x Kv\mid x\in\Rp^n\,\}$ is
finite and dominated by
 \begin{multline}
   (2\pi)^{1-n}\smash[b]{ \sum_{\omega'\le\alpha'}\sum_{\gamma'\le\beta'} }
 \textstyle{ \binom{\alpha'}{\omega'}\binom{\beta'}{\gamma'} }
 \norm{\tilde k}
 {S^{d-1}_{1,0},\alpha'-\omega',\beta'-\gamma',\alpha_n,\beta_n}
 \\ \times\norm{(1\mlap)^Nx^{\omega'}D^{\gamma'}v}{L_1}
 \int\ang{\xi'}^{d-|\alpha'-\omega'|-\alpha_n+\beta_n-2N}\,d\xi',
 \label{5} 
 \end{multline}
when $N$ is so large that
$d-|\alpha'-\omega'|-\alpha_n+\beta_n-2N<-(n-1)$.

$\op{OPT}$ and $\op{OPG}$ can be treated in a similar fashion.
\end{proof}

Contrary to this, the formula \eqref{-20} does not make sense for every
$u\in\cal S(\Rp^n)$ as it stands, so it should rather be read as $P_{+}u=
r^+\op{OP}(p)e^+u$, where
 \begin{equation}
 \op{OP}(p(x,\xi))\psi=(2\pi)^{-n}\!\int\!e^{ix\cdot\xi}p(x,\xi)
 \hat\psi(\xi)\,d\xi,\quad\text{ for}\quad\psi\in\cal S(\Rn).
 \label{-18}
 \end{equation}
Then $P_{+}u$ is well defined in view of \eqref{i14}. More precisely:

When $P=\op{OP}(p)$ for $p(x,\xi)\in S^{d}_{1,0}$, direct 
calculations show the continuity of\linebreak $P\colon \cal S(\Rn)\to\cal S(\Rn)$.
Since, by consideration of the sesqui-linear duality
$\dual{u}{\overline{\varphi}}$ for $u\in\cal S'(\Rn)$ and
$\varphi\in\cal S(\Rn)$,
 \begin{equation}
 \op{OP}(p(x,\xi))^*=\op{OP}(q(x,\xi)),\quad
 \text{with}\quad  q(x,\xi)=e^{iD_x\cdot D_\xi}\overline{p}(x,\xi),
 \label{-16}
 \end{equation} 
the continuity of $P\colon \cal S'(\Rn)\to\cal S'(\Rn)$ follows, cf.\  
\cite[Sect.\ 18.1]{H}. Here $e^{iD_x\cdot D_\xi}$ is a homeomorphism
on $S^{d}_{1,0}(\Rn\times\Rn)$. 

Recall that $P$ has the boundedness properties in \eqref{i14}, which
in particular apply when $p(x,\xi)$ belongs to the subclass
$S^d_{1,0,\op{uttr}}$. 

For $p(x,\xi)\in S^{d}_{1,0}$ it follows that $P_+=r^+Pe^+$ is bounded from 
$L_2(\Rn_+)$ to $F^{-d}_{2,2}(\Rp^n)$, so in particular $P_+u$ is defined 
for $u\in\cal S(\Rp^n)$. When in addition $p\in
S^{d}_{1,0,\op{uttr}}$, one has $P_+u\in\cal S(\Rp^n)$ then,
cf.~Proposition~\ref{psdo-prop} below. (The result there supplements
Proposition~\ref{S-prop}).  

Moreover, letting $\op{OP}(q(x',y_n,\xi))u:=
(2\pi)^{-n}\iint e^{i(x-y)\cdot\xi}q(x',y_n,\xi)u(y)\,dyd\xi$
for $u\in\cal S(\Rn)$, the technique in \eqref{-16} shows that
 \begin{equation}
 P=\op{OP}(q(x',y_n,\xi))
 \quad \text{for}\quad  q(x',y_n,\xi)=
 e^{-iD_{x_n}\cdot D_{\xi_n}}p(x,\xi)\!\bigm|_{x_n=y_n},
 \label{-17}
 \end{equation}
and $P$ is then said to be given in $(x',y_n)$-form. It is also known,
cf.\  \cite{G3}, that $p\in S^{d}_{1,0,\op{uttr}}$ 
implies that $q(x',y_n,\xi)\in S^{d}_{1,0,\op{uttr}}$, i.e., 
\eqref{-2} holds when $p(x',0,\xi)$ is replaced by $q(x',0,\xi)$.

For the symbol-kernel spaces one has results analogous to \eqref{-16}
above, and they follow from the pseudo-differential case by freezing 
$x_n$ and $y_n$:

\begin{lem} \label{sk-lem}
Let $\tilde k\in S^{d_1-1}_{1,0}(\cal S(\Rp))$,
$\tilde t\in S^{d_2}_{1,0}(\cal S(\Rp))$ and $\tilde g\in 
S^{d-1}_{1,0}(\Rpp)$, and let there be defined symbol-kernels by
 \begin{align}
 \tilde k^*(x'\!,x_n,\xi')&=e^{iD_{x'}\cdot D_{\xi'}}
  \overline{\tilde k}(x'\!,x_n,\xi')
 \label{sk1} \\
 \tilde g^*(x'\!,x_n,y_n,\xi')&=e^{iD_{x'}\cdot D_{\xi'}}
  \overline{\tilde g}(x'\!,y_n,x_n,\xi')
 \label{sk2} \\
 \tilde k\circ\tilde t(x'\!,x_n,y_n,\xi')&=
 e^{iD_{y'}\cdot D_{\eta'}}\tilde k(x'\!,x_n,\eta')
  \tilde t(y'\!,y_n,\xi')\!\!\bigm|_{\text{\rlap{$y'=x',\, \eta'=\xi'$}}}
 \hphantom{y'=x',\,}
 \label{sk3}
 \end{align} 
The mappings $\tilde k\mapsto\tilde k^*$ and 
$\tilde g\mapsto\tilde g^*$ define homeomorphisms on
$S^{d_1-1}_{1,0}(\cal S(\Rp))$ and\linebreak $S^{d-1}_{1,0}(\cal S(\Rpp))$ respectively,
and the bilinear mapping given by
$(\tilde k,\tilde t)\mapsto \tilde k\circ\tilde t$ is continuous from
$S^{d_1-1}_{1,0}(\cal S(\Rp))\times S^{d_2}_{1,0}(\cal S(\Rp))$
to $S^{d_1+d_2-1}_{1,0}(\cal S(\Rpp))$. 

In particular, for each $j\in\N$ there is a constant $c$ and a $j'$ such that
 \begin{align}
 \norm{\tilde k^*}{S^{d_1-1}_{1,0}(\cal S(\Rp)),j}&\le
 c\norm{\tilde k}{S^{d_1-1}_{1,0}(\cal S(\Rp)),j'}
 \label{sk5} \\
 \norm{\tilde g^*}{S^{d-1}_{1,0}(\cal S(\Rpp)),j}&\le
 c\norm{\tilde g}{S^{d-1}_{1,0}(\cal S(\Rpp)),j'}
 \label{sk6} \\
 \norm{\tilde k\circ\tilde t}{S^{d_1+d_2-1}_{1,0}(\cal S(\Rpp)),j}&\le
 c\norm{\tilde k}{S^{d_1-1}_{1,0}(\cal S(\Rp)),j'}
 \label{sk4} \\[-1\jot]
 &\hphantom{\le c\,\|\tilde k|}
   \smash[b]{ \times\norm{\tilde t}{S^{d_2}_{1,0}(\cal S(\Rp)),j'} }
 \notag
 \end{align}
hold for every $\tilde k$, $\tilde t$ and $\tilde g$ in the
considered spaces. 
\end{lem}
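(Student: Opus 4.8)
The plan is to deduce the whole statement from the corresponding pseudo-differential facts in dimension $n-1$, by treating the normal variables $x_n$ (and $y_n$) as inert parameters. Three ingredients are needed, all of them quoted earlier in the excerpt or entirely standard: (a) the operation $p\mapsto e^{iD_{x'}\cdot D_{\xi'}}\overline p$ is a homeomorphism of $S^{d'}_{1,0}(\R^{n-1}\times\R^{n-1})$ for every $d'$, in the quantitative form that each target seminorm is bounded by $c$ times finitely many source seminorms with $c$ and the indices depending only on $n-1$, $d'$ and the target multi-indices; (b) this operation is an involution, since $\op{OP}(p)^{**}=\op{OP}(p)$ forces $e^{iD_x\cdot D_\xi}\overline{e^{iD_x\cdot D_\xi}\overline p}=p$; and (c) the composition (``Leibniz'') product $a\#b:=e^{iD_{y'}\cdot D_{\eta'}}a(x',\eta')b(y',\xi')\big|_{y'=x',\,\eta'=\xi'}$ is bilinear and continuous $S^{d_1}_{1,0}\times S^{d_2}_{1,0}\to S^{d_1+d_2}_{1,0}$ in dimension $n-1$ (cf.\ \cite[Sect.~18.1]{H}), again with the analogous quantitative estimates. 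The point that makes the ``freezing'' legitimate is that $e^{iD_{x'}\cdot D_{\xi'}}$ and $\#$ act only on the tangential variables, hence commute with each $x_n^lD_{x_n}^m$ and $y_n^{l'}D_{y_n}^{m'}$ (disjoint variable groups), and with complex conjugation up to a harmless sign $\overline{D_{x_n}}=-D_{x_n}$.

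First I would treat $\tilde k^*$ as the model case. For fixed $l,m\in\N_0$ one has $x_n^lD_{x_n}^m\tilde k^*=(-1)^m\,e^{iD_{x'}\cdot D_{\xi'}}\overline{x_n^lD_{x_n}^m\tilde k}$, the normal operations passing through $e^{iD_{x'}\cdot D_{\xi'}}$ since $x_n^l$ is constant in $(x',\xi')$. By \eqref{-24}, for each frozen $x_n\ge0$ the function $x_n^lD_{x_n}^m\tilde k(\cdot,x_n,\cdot)$ is an ordinary symbol of order $d_1-l+m$ in $(x',\xi')$, with every seminorm bounded by $\norm{\tilde k}{S^{d_1-1}_{1,0}(\cal S(\Rp)),\alpha',\beta',l,m}$ \emph{uniformly in $x_n$}. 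By (a) the same is then true, with the constants as described, for its image under $e^{iD_{x'}\cdot D_{\xi'}}\overline{(\;\cdot\;)}$; taking the supremum over $x_n\ge0$ reproduces exactly the seminorm of $\tilde k^*$ appearing in \eqref{-24} and, after maximising over the relevant multi-indices as in \eqref{-1'}, yields \eqref{sk5}. In particular $\tilde k^*\in S^{d_1-1}_{1,0}(\cal S(\Rp))$ and $\tilde k\mapsto\tilde k^*$ is continuous; applying (b) with $x_n$ frozen gives $(\tilde k^*)^*=\tilde k$, so it is a homeomorphism.

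For $\tilde g^*$ the argument is the same with one extra inert variable: the relabelling $\tilde g(x',x_n,y_n,\xi')\mapsto\tilde g(x',y_n,x_n,\xi')$ merely permutes the seminorms in \eqref{-25} (it turns the $(\alpha',\beta',l,m,l',m')$-seminorm into the $(\alpha',\beta',l',m',l,m)$-one, by interchanging $x_n$ and $y_n$ in the supremum), hence is an isometric homeomorphism of $S^{d-1}_{1,0}(\cal S(\Rpp))$; it therefore suffices to control $\tilde g\mapsto e^{iD_{x'}\cdot D_{\xi'}}\overline{\tilde g}$, which is done by freezing $(x_n,y_n)$ exactly as above, giving \eqref{sk6}, and by (b) together with the cancellation of the double relabelling, the homeomorphism property. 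For $\tilde k\circ\tilde t$ I would note that the normal operations factor through the $\#$-product and the diagonal restriction, $x_n^lD_{x_n}^my_n^{l'}D_{y_n}^{m'}(\tilde k\circ\tilde t)=\bigl(x_n^lD_{x_n}^m\tilde k\bigr)\circ\bigl(y_n^{l'}D_{y_n}^{m'}\tilde t\bigr)$; for frozen $(x_n,y_n)$ the two factors are, by \eqref{-24}, symbols of orders $d_1-l+m$ and $d_2-l'+m'$ with seminorms bounded by finitely many seminorms of $\tilde k$ resp.\ $\tilde t$ uniformly in $(x_n,y_n)$, so (c) bounds every seminorm of the composition by a constant times a product of finitely many seminorms of $\tilde k$ and $\tilde t$, again uniformly in $(x_n,y_n)$. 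Taking suprema gives \eqref{sk4} and the asserted continuity into $S^{d_1+d_2-1}_{1,0}(\cal S(\Rpp))$.

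There is no genuine obstacle here; the one point deserving attention is the \emph{uniformity} of the pseudo-differential estimates in the frozen parameters $x_n,y_n$, which is automatic because those estimates only ever invoke finitely many seminorms with constants depending on the dimension and the symbol orders, never on the symbols themselves (and over the range $l,m,l',m'\le j$ relevant for \eqref{sk5}--\eqref{sk4} the involved orders stay in a bounded set, so the number of source seminorms can also be chosen uniformly). Finally, interpreting $e^{iD_{x'}\cdot D_{\xi'}}$ applied to an $\cal S(\Rp)$- or $\cal S(\Rpp)$-valued symbol simply as the $x_n$- (resp.\ $(x_n,y_n)$-)wise oscillatory integral is legitimate, and differentiation in $x_n,y_n$ passes under the integral sign since the integrand depends smoothly and tamely on those parameters; this is what makes the identities ``$x_n^lD_{x_n}^m$ commutes with $e^{iD_{x'}\cdot D_{\xi'}}$'' rigorous.
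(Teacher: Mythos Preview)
Your proposal is correct and follows essentially the same approach as the paper: freeze the normal variables $x_n$ (and $y_n$) as inert parameters, observe that $x_n^lD_{x_n}^m\tilde k(\cdot,x_n,\cdot)\in S^{d_1-l+m}_{1,0}(\R^{n-1}\times\R^{n-1})$ uniformly in $x_n$, and then invoke the standard $(n{-}1)$-dimensional pseudo-differential continuity of $e^{iD_{x'}\cdot D_{\xi'}}\overline{(\cdot)}$ and of the $\#$-product. Your write-up is in fact more thorough than the paper's (which dispatches $\tilde g^*$ and $\tilde k\circ\tilde t$ with ``in the same manner''), in that you explicitly note the involution property for the homeomorphism claim, the $x_n\leftrightarrow y_n$ relabelling for $\tilde g^*$, and the uniformity-in-parameters issue.
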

\begin{proof} Obviously $x^l_nD^m_{x_n}\tilde k(\cdot,x_n,\cdot)
\in S^{d_1-l+m}_{1,0}(\R^{n-1}\times\R^{n-1})$ for each $x_n$, and therefore\linebreak
$x^l_nD^m_{x_n}\tilde k^*=e^{iD_{x'}\cdot D_{\xi'}}
\overline{x^l_n(-D_{x_n})^m\tilde k}$ belongs to 
$S^{d_1-l+m}_{1,0}$. Moreover, for each $|\alpha'|$, $|\beta'|$,
$l$, and $m$ there exist $c$ and $N\ge|\alpha'|,|\beta'|$ such that,
with $N'=\max(N,l,m)$,
 \begin{equation}
 \begin{aligned}
 \ang{\xi'}^{-(d_1-l+m-|\alpha'|)}|D^{\beta'}_{x'}D^{\alpha'}_{\xi'}
 x^l_nD^m_{x_n}\tilde k^*|&\le c\norm{x^l_nD^m_{x_n}
 \tilde k(\cdot,x_n,\cdot)}{S^{d_1-l+m}_{1,0},N}
 \\
 &\le c\norm{\tilde k}{S^{d_1-1}_{1,0}(\cal S(\Rp)),N'}.
 \end{aligned}
 \label{sk7}
 \end{equation}
The statements on $\tilde g^*$ and $\tilde k\circ\tilde t$ carry
over from the pseudo-differential case in the same manner.
\end{proof}

\subsection{The transmission condition}  \label{uttr-ssect}
The requirement of the uniform two-sided transmission condition 
in \eqref{-2} is not as innocent as it looks,
with a seemingly arbitrary $\xi'$ dependence of
$C_{\alpha,\beta,l,m}$: Indeed, \eqref{-2} is equivalent to a rather 
special $\xi$-dependence of $p(x,\xi)$, cf.\ (ii) in Proposition~%
\ref{uttr-prop} below. Furthermore, there is also
equivalence with the condition (iii) below, that
implies a slowly increasing behaviour of $C_{\alpha,\beta,l,m}(\xi')$.

\begin{prop} \label{uttr-prop} 
When $p(x,\xi)\in S^d_{1,0}(\Rn\times\Rn)$ 
for $d\in\R$, the following conditions on $p(x,\xi)$ are equivalent:
 \begin{itemize}
 \item[(i)] $p\in S^d_{1,0,\op{uttr}}(\Rn\times\Rn)$, 

 \item[(ii)]For all $\alpha$ and $\beta\in \N^n_0$ there exist 
$s_{j,\alpha,\beta}(x',\xi')\in 
S^{d-j-|\alpha|}_{1,0}(\R^{n-1}\times\R^{n-1})$, for
$j\in \Z$ with $j\le d-|\alpha|$, such that for every $m\in\N_0$
 \begin{equation}
 |\xi_n^mD^\alpha_\xi D^\beta_x p(x',0,\xi)-
 \sum_{-m\le j\le d-|\alpha|} s_{j,\alpha,\beta}(x',\xi')\xi^{j+m}_n| 
 \le C\ang{\xi'}^{d+1-|\alpha|+m}\ang\xi^{-1}
 \label{-3}
 \end{equation}
holds with a constant $C$ independent of
$(x',\xi)\in\R^{n-1}\times\Rn$.

 \item[(iii)] For all $\alpha,\beta\in\N^n_0$ and 
$l,m\in\N_0$ the symbol $p(x,\xi)$ satisfies
 \begin{equation}
 \sup\,\ang{\xi'}^{-(d+1-|\alpha|-l+m)}
 |z^l_nD^m_{z_n}\cal F^{-1}_{\xi_n\to z_n}D^\alpha_\xi D^\beta_x
 p(x',0,\xi)|\,<\,\infty,
 \label{-4} 
 \end{equation}
when the supremum is taken over $x'$ and $\xi'$ in $\R^{n-1}$ and $z_n\ne0$.
 \end{itemize} 
In the affirmative case, the symbols $s_{j,\alpha,\beta}(x',\xi')$
are uniquely determined, and they are polynomials in 
$\xi'\in\R^{n-1}$ of degree $\le d-j-|\alpha|$.
\end{prop}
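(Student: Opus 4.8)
The plan is to fix the multi-indices $\alpha,\beta$ once and for all, to abbreviate $\mu:=d-|\alpha|$ and $q(x',x_n,\xi):=D^\alpha_\xi D^\beta_x p(x,\xi)\in S^\mu_{1,0}(\Rn\times\Rn)$, and to regard $q(x',0,\cdot)$ as a symbol in the single covariable $\xi_n$ with $(x',\xi')$ playing the role of parameters. Since each of (i), (ii), (iii) refers only to $q(x',0,\cdot)$, the proposition reduces to three equivalent statements about such one-covariable symbols, together with the assertion on the $\xi'$-dependence of the coefficients. I would prove the cycle (i)$\Rightarrow$(ii)$\Rightarrow$(iii)$\Rightarrow$(i); here (iii)$\Rightarrow$(i) is immediate, as \eqref{-2} is \eqref{-4} with the weight $\ang{\xi'}^{d+1-|\alpha|-l+m}$ simply dropped.

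For (i)$\Rightarrow$(ii) I would first freeze $\xi'$, so that $q(x',0,\xi',\cdot)$ becomes an honest symbol of order $\mu$ in $\xi_n$ (with $x'$ as a uniform parameter, the seminorms depending on $\xi'$). One is then in the classical situation of the two-sided transmission property, matching the sign-unrestricted $z_n\ne0$ in \eqref{-2}: finiteness of $\sup_{x',z_n\ne0}|z_n^l D_{z_n}^m\cal F^{-1}_{\xi_n\to z_n}q(x',0,\xi',\xi_n)|$ for all $l,m$ is equivalent to $q(x',0,\xi',\cdot)$ possessing an asymptotic expansion $\sum_{j\le\mu}s_j(x',\xi')\xi_n^{\,j}$ in integral powers of $\xi_n$ running down to $-\infty$, with the remainder after truncation controlled---after multiplication by $\xi_n^m$---as in \eqref{-3}, the coefficients being given by the iterated limits $s_j(x',\xi')=\lim_{\xi_n\to\pm\infty}\xi_n^{-j}\bigl(q(x',0,\xi',\xi_n)-\sum_{j<k\le\mu}s_k(x',\xi')\xi_n^{\,k}\bigr)$; for this I would invoke the one-covariable theory of \cite{BM71}, \cite{G3} or \cite[Sect.~18.2]{H}. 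Uniqueness of the $s_{j,\alpha,\beta}$ is built into these formulas. What remains is to check that the coefficients and the remainder are controlled \emph{uniformly} in $(x',\xi')$, and it is here that the $x$-uniform estimates \eqref{i15} enter.

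That the $s_{j,\alpha,\beta}$ are symbols in $\xi'$ follows by differentiating the limit formulas under the limit sign: $D_{x'}^{\gamma'}D_{\xi'}^{\delta'}s_{j,\alpha,\beta}$ is a limit of $\xi_n$-weighted combinations of $D_{x'}^{\gamma'}D_{\xi'}^{\delta'}\bigl(q-\sum_k s_k\xi_n^{\,k}\bigr)$, each term of which is $O(\ang{\xi}^{\mu-|\delta'|})$ by \eqref{i15}; hence $s_{j,\alpha,\beta}\in S^{\mu-j}_{1,0}(\R^{n-1}\times\R^{n-1})$ with seminorms bounded by finitely many seminorms of $p$, and moreover $D_{\xi'}^{\delta'}s_{j,\alpha,\beta}\equiv0$ whenever $|\delta'|>\mu-j$ (the weighted limit then tends to $0$, the coefficients of lower $\xi_n$-degree having vanishing $\xi'$-derivatives of that order by downward induction on $k$), so each $s_{j,\alpha,\beta}$ is a polynomial in $\xi'$ of degree $\le\mu-j=d-j-|\alpha|$, as claimed. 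For the uniformity of the remainder I would rescale $\xi_n=\ang{\xi'}\sigma$: then $\ang{\xi}=\ang{\xi'}\ang{\sigma}$ at $\xi_n=\ang{\xi'}\sigma$, so by \eqref{i15} the functions $q_{\xi'}(x',\sigma):=\ang{\xi'}^{-\mu}q(x',0,\xi',\ang{\xi'}\sigma)$ form a \emph{bounded} subset of $S^\mu_{1,0}(\R_\sigma)$, and from $\cal F^{-1}_{\xi_n\to z_n}q(x',0,\xi',\cdot)=\ang{\xi'}^{\mu+1}\bigl(\cal F^{-1}_{\sigma\to w}q_{\xi'}(x',\cdot)\bigr)(\ang{\xi'}z_n)$ one computes $\sup_{z_n\ne0}|z_n^l D_{z_n}^m\cal F^{-1}_{\xi_n\to z_n}q|=\ang{\xi'}^{\mu+1-l+m}\sup_{w\ne0}|w^l D_w^m\cal F^{-1}_{\sigma\to w}q_{\xi'}|$. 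Thus \eqref{-4} is equivalent to the transmission seminorms of the $q_{\xi'}$ being bounded uniformly in $(x',\xi')$, and these stay bounded on the bounded family $\{q_{\xi'}\}$ because---this being the quantitative form of the characterization quoted above, cf.~\cite{G3}---for each $l,m$ the transmission seminorm $C_{l,m}$ of a one-covariable transmission symbol of order $\mu$ is dominated by finitely many of its $S^\mu_{1,0}(\R)$-seminorms. Unscaling then gives \eqref{-3}, i.e.\ (ii). Finally, for (ii)$\Rightarrow$(iii) I would insert the expansion \eqref{-3} into $\cal F^{-1}_{\xi_n\to z_n}q(x',0,\cdot)$ and transform term by term: the powers $\xi_n^{\,j}$ with $j\ge0$ give derivatives of $\delta_0(z_n)$, invisible for $z_n\ne0$, modulo rapidly decreasing terms; the negative powers give explicit homogeneous-type functions of $z_n$; and the remainder, carried to sufficiently negative order, a smooth function---all with $\ang{\xi'}$-weights that tally to \eqref{-4}, as in \cite{G3}.

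The step I expect to be the main obstacle is precisely this upgrade from the pointwise-in-$\xi'$ condition (i) to the slowly increasing, i.e.\ uniform-in-$\xi'$, condition (iii)/(ii): the one-covariable theory applied separately at each $\xi'$ yields constants that could a priori grow arbitrarily as $|\xi'|\to\infty$, and it is only the genuinely $x$-uniform nature of \eqref{i15}---which, after the substitution $\xi_n=\ang{\xi'}\sigma$, turns $\{q_{\xi'}\}$ into a bounded family of symbols in $\sigma$---that pins this growth down to the stated power of $\ang{\xi'}$. Everything else is the standard one-covariable transmission theory of \cite{BM71} and \cite{G3} together with routine symbol bookkeeping.
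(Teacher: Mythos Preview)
Your rescaling $\xi_n=\ang{\xi'}\sigma$ is correct and useful: it does show that $\{q_{\xi'}\}$ is a bounded family in $S^\mu_{1,0}(\R_\sigma)$, and that \eqref{-4} is equivalent to \emph{uniform} boundedness of the one-variable transmission seminorms of the $q_{\xi'}$. But the step where you pass from one to the other fails. The assertion ``for each $l,m$ the transmission seminorm $C_{l,m}$ of a one-covariable transmission symbol of order $\mu$ is dominated by finitely many of its $S^\mu_{1,0}(\R)$-seminorms'' is simply false. The transmission class is not closed in $S^\mu_{1,0}(\R)$: for instance $q_k(\xi)=\varphi(\xi/k)\ang{\xi}^{-1}$ with $\varphi\in C^\infty_0$, $\varphi(0)=1$, is bounded in $S^{-1}_{1,0}(\R)$ and each $q_k$ (having compact support) trivially satisfies transmission, yet $\cal F^{-1}q_k\to\cal F^{-1}\ang{\cdot}^{-1}$, a Bessel function with a logarithmic singularity at $0$, so $\sup_{z\ne0}|\cal F^{-1}q_k(z)|\to\infty$. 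Hence condition~(i), which gives you only the \emph{finiteness} of the transmission seminorms of each $q_{\xi'}$ with no control on their size, together with boundedness of $\{q_{\xi'}\}$ in $S^\mu_{1,0}$, cannot by itself yield the uniform bound you need. This is exactly the obstacle you correctly identified at the end of your proposal, but your proposed resolution does not work; there is no such ``quantitative form'' in \cite{G3}.

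The paper handles this by a completely different device: after extracting the $s_{j,\alpha,\beta}$ pointwise in $\xi'$ and showing the $\xi'$-pointwise remainder bound \eqref{-8}, it Taylor-expands $p(x',0,\xi)$ in $\xi'$ about $\xi'=0$ to order $N=\smlnt d+1+m$. The Taylor remainder is controlled by the $S^d_{1,0}$ seminorms of $p$---which \emph{are} uniform---and gives \eqref{-14}; the Taylor polynomial involves $\partial^{\gamma'}_{\xi'}p(x',0,0,\xi_n)$, to which \eqref{-8} is applied only at the single point $\xi'=0$. Combining these yields the uniform estimate \eqref{-12}, from which both \eqref{-3} and the polynomial structure \eqref{-11} of the $s_{j,\alpha,\beta}$ drop out simultaneously. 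The paper's route for (ii)$\Rightarrow$(iii), via Plancherel and the elementary inequality \eqref{-5}, is also considerably shorter than term-by-term Fourier inversion of the expansion.
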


Here and in the following $C$ denotes a `global' constant 
(independent of variables like $x$ and $\xi$), while $c$ is a
`local' constant (that might depend on $x$, say). The constants may
differ on each occurrence, as usual.

In the rest of this section $e^+r^++e^-r^-$ is denoted $\erd$,
where $\overset{\lower 1pt\hbox{\large.}}r$ stands for restriction
to the~set $\R\setminus\{\,0\,\}$. One has $\erd=\cal F^{-1}h_{-1}\cal
F$ on $\overset{\lower 1pt\hbox{\,\large.}}{\cal S}(\R)$,      
when $h_{-1}$ denotes the projection of $\cal H$ onto $\cal
H_{-1}$. See \cite[Sect.\ 2.2]{G1} where this terminology, that
is used in the following without further mention, is explained.

\begin{proof} It is obvious that $\text{(iii)}\Rightarrow\text{(i)}$,
and $\text{(ii)}\Rightarrow\text{(iii)}$ follows by use of the
Parseval--Plancherel identity together with an 
application of the inequality
 \begin{equation}
 \sup_t |f(t)|\le\sqrt2\norm{f}{L_2}^{\frac12}
 \norm{ \erd D_tf}{L_2}^{\frac12},
 \label{-5}
 \end{equation}
valid for functions $f\in e^+W^1_2(\Rp)+e^-W^1_2(\overline{\R}_-)$,
to the function defined for each $(x',\xi')$ as 
$f(z_n)=\erd z^l_nD^m_{z_n}\cal F^{-1}_{\xi_n\to z_n}
D^\alpha_\xi D^\beta_x p(x',0,\xi)$. Indeed, for $\norm{f}{L_2}$ in
\eqref{-5} one finds
 \begin{align}
 \norm{ f(z_n) }{L_{2}(\R)}=&\norm{ h_{-1}\overline D^l_{\xi_n}\xi^m_n
      D^\alpha_\xi D^\beta_x p(x',0,\xi) }{L_{2}(\R)}
 \label{-6} \\
 \le\,&C\ang{\xi'}^{d+1-|\alpha|-l+m}\norm{ \ang\xi^{-1}}{L_{2}(\R)}
 \le\, C'\ang{\xi'}^{d+\frac{1}{2}-|\alpha|-l+m}
 \notag
 \end{align}
when \eqref{-3} is applied after Leibniz' rule. An estimate of 
$\erd D_{z_n}f(z_n)$ can be derived from \eqref{-6} with $m+1$ instead of $m$.

In the proof of $\text{(i)}\Rightarrow\text{(ii)}$ one observes first that 
$\cal F^{-1}_{\xi_n\to z_n}D^\alpha_\xi D^\beta_xp(x',0,\xi)$ for each
$(x',\xi')$ belongs to 
$\overset{\lower 1pt\hbox{\,\large.}}{\cal S}(\R)$, since the only
distributions supported by $\{\,z_n=0\,\}$ are the finite linear
combinations of derivatives of $\delta_0(z_n)$. Hence $D^\alpha_\xi 
D^\beta_xp(x',0,\xi)\in\cal H$, i.e., there exist numbers $s_{j,\alpha,
\beta}$ for $j\in\Z$, such that for $|\xi_n|\ge1$ and $l,N\in\N_0$,
 \begin{equation}
 \bigl|D^{l}_{\xi_n}(D^\alpha_\xi D^\beta_xp(x',0,\xi)-
 \sum_{d-|\alpha|-N<j\le d-|\alpha|} s_{j,\alpha,\beta}\xi_n^j)\bigr|
 \le c|\xi_n|^{\bignt d-|\alpha|-l-N}.
 \label{-7}
 \end{equation}
Such numbers are necessarily unique\,---\,and zero for
$j>d-|\alpha|$\,---\,hence {\em functions\/}\linebreak
$s_{j,\alpha,\beta}(x',\xi')$.

\medskip

The construction of the $s_{j,\alpha,\beta}$ is completed, and it 
remains to be shown by a bootstrap-method that they are symbols with 
the desired properties.

From (i) and the well-known fact that (with $\gamma_0^\pm
v=\lim_{z_n\to0_\pm} v(z_n)$) one has 
 \begin{equation}
  s_{-1-k,\alpha,\beta}(x',\xi')=
 -i(\gamma_0^+-\gamma_0^-)D^{k}_{z_n}\cal F^{-1}_{\xi_n\to z_n}
 D^\alpha_\xi D^\beta_xp(x',0,\xi)
 \label{-7'}
 \end{equation} 
for $k\in\N_0$, it follows that
$s_{j,\alpha,\beta}(\cdot,\xi')\in C^\infty(\R^{n-1})$ for each
$\xi'$ when $j<0$. 

The next step is to show that, with $C$ independent of $x'$ and $\xi_n$,
 \begin{equation}
 \bigl|D^l_{\xi_n}(\xi_n^mD^\alpha_\xi D^\beta_xp(x',0,\xi)-
 \sum_{-m\le j\le d-|\alpha|}
 s_{j,\alpha,\beta}(x',\xi')\xi_n^{j+m})\bigr| \le C\ang{\xi_n}^{-1-l}.
 \label{-8}
 \end{equation}
Observe that the left hand side is equal to $D^l_{\xi_n}h_{-1}\xi_n^m
D^\alpha_\xi D^\beta_x p(x',0,\xi)$, which is bounded in $x'$ and 
$\xi_n$ by \eqref{-5} since, e.g., for the $L_2$ norm in $\xi_n$
 \begin{equation}
 \aligned 
 \norm{ D^{l}_{\xi_n}h_{-1}\xi^m_nD^\alpha_\xi 
        D^\beta_xp(x',0,\xi) }{L_2}&= 
 \norm{ \erd z_n^lD^m_{z_n}\cal F^{-1}_{\xi_n\to z_n}
 D^\alpha_\xi D^\beta_xp(x',0,\xi)}{L_2} \\
 &\le\, \norm{ (1+|z_n|)^{-1}}{L_2} \sum_{k=l,l+1} C_{\alpha,\beta,k,m} .
 \endaligned
 \label{-9}
 \end{equation}
Moreover $\xi_n^{l+1}D^l_{\xi_n}h_{-1}\xi_n^mD^\alpha_\xi D^\beta_x 
p(x',0,\xi)$ is bounded with respect to $x'$ and $\xi_n$, since
 \begin{multline}
 \xi_n^{l+1}D^l_{\xi_n}h_{-1}\xi_n^mD^\alpha_\xi D^\beta_xp(x',0,\xi)=
 (-1)^ll! s_{-m-1,\alpha,\beta}(x',\xi') 
 \\
 +h_{-1}\xi_n^{l+1}D^l_{\xi_n}
 \xi_n^{m}D^\alpha_\xi D^\beta_x p(x',0,\xi).
 \label{-10}
 \end{multline}
Hence $(1+|\xi_n|^{l+1})D^l_{\xi_n}h_{-1}\xi_n^mD^\alpha_\xi 
D^\beta_x p(x',0,\xi)$ is bounded, so \eqref{-8} is obtained.

A consequence of \eqref{-8} is that $s_{j,\alpha,\beta}(\cdot,\xi')
\in C^\infty(\R^{n-1})$ for $j\ge0$. Indeed,
 \begin{equation}
 s_{j,\alpha,\beta}(x',\xi')=\tfrac{1}{j!}\partial^j_{\xi_n}(
 D^\alpha_\xi D^\beta_xp(x',0,\xi)-h_{-1}D^\alpha_\xi D^\beta_xp(x',0,\xi))
 \!\bigm|_{\xi_n=0},
 \label{10'}
 \end{equation}
and here the fact that $p\in S^{d}_{1,0}$ can be applied together with
\eqref{-8}.

The rest is similar to \cite[Thm.\ 1.9]{G2}:
 Only the case $\alpha=\beta=0$ will be considered
since $p$ and $d$ can be replaced by $D^\alpha_\xi D^\beta_xp$
and $d-|\alpha|$ in the following. For $d<-m$ there is nothing to
show in \eqref{-3} so $d\ge -m$ is assumed. Let $\gamma':=
(\gamma_1,\dots,\gamma_{n-1},0)$.

At this place the goal is to prove, for $j>-m$ when $m\in\N_0$, 
that with $N=\smlnt{d\,}+1+m$
 \begin{equation}
 s_{j,0,0}(x',\xi')=\sum_{|\gamma'|<N,\ |\gamma'|\le d-j}
 s_{j,\gamma',0}(x',0){\xi}^{\gamma'}.
 \label{-11}
 \end{equation}
For every $j\le d$ the function $s_{j,0,0}(x',\xi')$ would then 
be a polynomial of degree $\bignt d-j$ in 
$\xi'$ with coefficients in $C^\infty(\R^{n-1})$\,---\,i.e.\ 
$s_{j,0,0}\in S^{d-j}_{1,0}$\,---\,so in addition only \eqref{-3}
would still require a proof. 

For \eqref{-3} and \eqref{-11} it suffices to show 
 \begin{equation}
 \bigl|\xi^m_np(x',0,\xi)-
 \smash[t]{
 \sum_{|\gamma'|<N}\sum_{-m\le j\le d-|\gamma'|} } 
 s_{j,\gamma,0}(x',0){\xi}^{\gamma'}\xi_n^{j+m}\bigr|
 \le C\ang{\xi'}^{d+1+m}\ang\xi^{-1},
 \label{-12}
 \end{equation}
for on one hand \eqref{-12} and \eqref{-8} would imply that
 \begin{equation}
 \bigl|\sum_{j=-m}^{\bignt d-|\gamma'|} s_{j,\gamma',0}(x',\xi')\xi_n^{j+m}
 -\sum_{|\gamma'|<N}\sum_{j=-m}^{\bignt d-|\gamma'|} 
 s_{j,\gamma',0}(x',0){\xi}^{\gamma'}\xi_n^{j+m}\bigr|
 \le C\ang{\xi_n}^{-1},
 \label{-13}
 \end{equation}
and here the $\xi_n$-polynomial on the left hand side is identical to
zero precisely when \eqref{-11} holds. On the other hand, \eqref{-12} would then
be the estimate required in \eqref{-3}.
 
When $\ang{\xi_n}\le|\xi'|$ both terms are $\cal O(\ang{\xi'}^{d+1+m}
\ang{\xi}^{-1})$ on the left hand side of \eqref{-12}, since
$\ang{\xi'}\sim\ang{\xi}$ there. In the other region,
$\ang{\xi_n}\ge|\xi'|$, one shows by use of a Taylor expansion, 
cf.~\cite{G2}, the uniform estimate
 \begin{align}
 \bigl|\xi_n^mp(x',0,\xi)-\sum_{|\gamma'|<N} &\partial^{\gamma'}_{\xi'}
 p(x',0,0,\xi_n)\tfrac{{\xi}^{\gamma'}}{\gamma'!}\xi_n^m\bigr|
 \notag \\
&\le\bigl(\sum_{|\gamma'|<N}\tfrac{N}{\gamma'!}
 \norm{p}{\gamma',0}\bigr) |\xi'|^N\ang{\xi_n}^{d-N+m} 
 \notag \\
&\le\,C\ang{\xi'}^{d+1+m}
 \ang{\xi}^{-1}(\tfrac{|\xi'|}{\ang{\xi_n}})^{\smlnt{d\,}-d}.
 \label{-14}
\end{align}
Now \eqref{-14} and \eqref{-8} applied to 
$\xi_n^m\partial^{\gamma'}_{\xi}p(x',0,0,\xi_n)$ lead to \eqref{-12}.
 
It was obtained during the course of the proof that $s_{j,\alpha,\beta}$ is
uniquely determined and is a polynomial of degree 
$\le d-|\alpha|-j$ in $\xi'$ as claimed. The proof is complete. 
\end{proof}

The contents of Proposition~\ref{uttr-prop} are to some extent known. In
fact the equivalence of (i) and (ii) was claimed but not proved in
\cite{GK}, so the proof of \cite[Thm.~1.9]{G2} has been modified into the one 
above with the appropriate uniform estimates.

Note that the essential thing is to show \eqref{-8} and \eqref{-12},
since the proper $x'$ and $\xi'$ behaviour of the $s_{j,\alpha,\beta}$
is a gratis consequence, cf.~\eqref{10'} and \eqref{-11}.

The equivalence with (iii) fits in very naturally, so it seems
reasonable to have the short proof of this available. Indeed, 
(iii) states that $r^+\cal F^{-1}_{\xi_n\to x_n}p(x',0,\xi)$ 
is the symbol-kernel of a Poisson operator of order $d+1$, and this
property is used in Proposition~\ref{pois-prop} below.

\section{Continuity on $\Rp^n$}  \label{cont-sect}
With the preparations made in the section above, the continuity
properties of the operators introduced in \eqref{-20}--\eqref{-23} above
shall now be described. 

\subsection{Poisson operators} \label{pois-ssect}
The treatment of Poisson operators
given here follows the line of thought in \cite{G3}. 
Some observations are collected in the following proposition, where 
the proofs of \eqref{3} and \eqref{4} are intended to be more elementary than
those of the corresponding facts in \cite{G1} and \cite{G3}. 

\begin{prop} \label{pois-prop} 
$1^\circ$ Let $v\in\cal S(\R^{n-1})$ and
$w\in\cal S(\R)$ satisfy $v(0)=1$ together with $\int w(x_n)\,dx_n=1$
and $\supp w\subset\{\,x_n\mid-1\le x_n\le0\,\}$. 

Then it follows for every $\tilde k\in S^d_{1,0}(\cal S(\Rp))$ and $d'>d$ that
 \begin{gather}  
 v(\varepsilon\xi')(w_\varepsilon*_n\tilde k)(x',x_n,\xi')
 \in S^{-\infty}_{1,0}(\cal S(\Rp)), 
 \label{1 }\\
 v(\varepsilon\cdot) w_\varepsilon*_n\tilde k\to \tilde k
 \quad\text{ in $ S^{d'}_{1,0}(\cal S(\Rp))$},
 \label{2}
 \end{gather}
when $w*_n\tilde k(x',x_n,\xi')=r^+\!\int\! e^+\tilde k(x',x_n-y_n,\xi')
w(y_n)\,dy_n$ and $w_\varepsilon(\cdot)=\tfrac1\varepsilon 
w(\tfrac1\varepsilon \cdot)$.

$2^\circ$ When $P=\op{OP}(q(x',y_n,\xi))$ is given in $(x',y_n)$-form
with $q\in S^{d}_{1,0,\op{uttr}}$, then\linebreak $\tilde k=
r^+\cal F^{-1}_{\xi_n\to x_n}q(x',0,\xi)$ in 
$S^{d}_{1,0}(\cal S(\Rp))$ and 
 \begin{equation}
 r^+P(u\otimes\delta_0)=
 \op{OPK}(\tilde k) u
 \quad\text{ holds for}\quad u\in\cal S(\R^{n-1}).
 \label{3}
 \end{equation} 

$3^\circ$ For each $\tilde k\in S^{d-1}_{1,0}(\cal S(\Rp))$ there
exists a $p(x',\xi)\in S^{d-1}_{1,0,\op{uttr}}$ such that
 \begin{equation}
 Kv=r^+\op{OP}(p)(v\otimes\delta_0),\quad\text{ for}
 \quad v\in\cal S(\R^{n-1}).
 \label{4}
 \end{equation}
\end{prop}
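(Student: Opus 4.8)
The plan is to treat the three parts in turn, using Proposition~\ref{uttr-prop} for everything structural about the transmission condition and reducing the operator identities \eqref{3} and \eqref{4} to a single explicit computation in $(x',y_n)$-form.

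For $1^\circ$, I would first note that for each fixed $\varepsilon>0$ the convolution $\tilde k\mapsto w_\varepsilon*_n\tilde k$ is a continuous endomorphism of $S^{d}_{1,0}(\cal S(\Rp))$ for every $d$: since $\supp w_\varepsilon\subset[-\varepsilon,0]$, the integral $\int e^+\tilde k(x',x_n-y_n,\xi')w_\varepsilon(y_n)\,dy_n$ evaluates $\tilde k$ only at points $x_n-y_n\ge x_n\ge0$, so no boundary singularity enters, and differentiation under the integral sign together with the rapid $x_n$-decay of $\tilde k$ reproduces the seminorms \eqref{-24} with $\varepsilon$-dependent constants. Multiplying by $v(\varepsilon\xi')$ with $v\in\cal S(\R^{n-1})$ then replaces each weight $\ang{\xi'}^{N}$ by a rapidly decreasing function of $\xi'$; hence the product lies in $S^{-\infty}_{1,0}(\cal S(\Rp))$, which is \eqref{1 }. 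For \eqref{2} I would split the error into the two limits $w_\varepsilon*_n\tilde k\to\tilde k$ and $v(\varepsilon\,\cdot)\to 1$. The first is the standard mollifier convergence, now in the Fr\'echet topology of $S^{d'}_{1,0}(\cal S(\Rp))$, where $\int w=1$ together with $d'>d$ (to absorb the error term, which carries one more $x_n$-derivative) suffices. For the second, in each seminorm of order $d'$ the difference $\bigl(v(\varepsilon\xi')-1\bigr)(\cdots)$ is handled by the standard $\varepsilon$-$R$ dichotomy: on $|\xi'|\le R$ one uses $v(\varepsilon\xi')\to v(0)=1$ uniformly, while on $|\xi'|\ge R$ the weight $\ang{\xi'}^{d-d'}$ with $d-d'<0$ is small and $v(\varepsilon\xi')-1$ stays bounded; derivatives are treated by Leibniz' rule.

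For $2^\circ$, the claim that $\tilde k=r^+\cal F^{-1}_{\xi_n\to x_n}q(x',0,\xi)$ lies in $S^{d}_{1,0}(\cal S(\Rp))$ is precisely condition (iii) of Proposition~\ref{uttr-prop} applied to the symbol $q(x',0,\xi)$, which satisfies the uniform two-sided transmission condition because $q$ does (cf.\ the remark after \eqref{-17}): the estimate \eqref{-4} asserts exactly that $z_n^lD_{z_n}^m$ of $\cal F^{-1}_{\xi_n}D^\alpha_\xi D^\beta_x q(x',0,\xi)$ is bounded by a fixed power of $\ang{\xi'}$ for all $l,m$, which is rapid decay in $z_n\ge0$ together with the symbol-kernel bounds \eqref{-24}. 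The identity \eqref{3} I would then obtain by direct computation: writing $P(u\otimes\delta_0)$ as the iterated integral $(2\pi)^{-n}\iint e^{i(x-y)\cdot\xi}q(x',y_n,\xi)u(y')\delta_0(y_n)\,dy\,d\xi$ — legitimate after regularizing $\delta_0$ and passing to the limit, using smoothness of $q$ in $y_n$ — the $y_n$-integration freezes $y_n=0$ and removes $e^{-iy_n\xi_n}$, the $d\xi_n$-integration produces $\cal F^{-1}_{\xi_n\to x_n}q(x',0,\xi)$, and the $dy'$-integration produces $\hat u(\xi')$; applying $r^+$ then yields the Poisson formula \eqref{-21} with symbol-kernel $\tilde k$.

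For $3^\circ$, I would set $p(x',\xi):=\cal F_{x_n\to\xi_n}\bigl(e^+\tilde k(x',x_n,\xi')\bigr)$, so that $\cal F^{-1}_{\xi_n\to x_n}p(x',\xi',\cdot)=e^+\tilde k$ and hence $r^+\cal F^{-1}_{\xi_n}p(x',0,\xi)=\tilde k$; since $p$ does not depend on $x_n$ it equals its own $(x',y_n)$-form, so once $p\in S^{d-1}_{1,0,\op{uttr}}$ is known the identity \eqref{4} follows from the computation in $2^\circ$. This membership is the substantive point, and the step I expect to be the main obstacle. For the plain symbol estimates $p\in S^{d-1}_{1,0}$ I would repeatedly integrate by parts in $x_n$ in $\int_0^\infty e^{-ix_n\xi_n}\tilde k\,dx_n$, noting that each $\xi_n$-derivative brings down a power of $x_n$ controlled by the $x_n^l$-weights in \eqref{-24} while each integration by parts gains a power of $\xi_n$; splitting $\xi$-space into $|\xi_n|\lesssim\ang{\xi'}$ and $|\xi_n|\gtrsim\ang{\xi'}$ then gives the joint bounds $|D^\gamma_\xi D^\delta_{x'}p|\le C\ang{\xi}^{d-1-|\gamma|}$. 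For the transmission condition I would not verify \eqref{-2} directly but invoke Proposition~\ref{uttr-prop} instead: since $\cal F^{-1}_{\xi_n}D^\alpha_\xi D^\beta_x p(x',0,\xi)$ equals $D^{\alpha'}_{\xi'}D^{\beta'}_{x'}\bigl((-z_n)^{\alpha_n}e^+\tilde k\bigr)$ — and vanishes identically when $\beta_n\ge1$ because $p$ is $x_n$-independent — Leibniz' rule in $z_n$ together with the seminorms \eqref{-24} of $\tilde k\in S^{d-1}_{1,0}(\cal S(\Rp))$ yield precisely the estimate \eqref{-4} with $d$ replaced by $d-1$, that is, condition (iii); hence $p\in S^{d-1}_{1,0,\op{uttr}}$ and the proof is complete.
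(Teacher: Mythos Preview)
Your argument for $3^\circ$ has a genuine error: the extension by zero does \emph{not} give $p\in S^{d-1}_{1,0}$ in general. The integration by parts you propose in $\int_0^\infty e^{-ix_n\xi_n}\tilde k\,dx_n$ produces boundary terms at $x_n=0$, since $e^+\tilde k$ jumps there; after $N$ integrations by parts the boundary contribution is $\sum_{j<N}c_j\,\partial_{x_n}^j\tilde k(x',0,\xi')\,\xi_n^{-j-1}$, and the $j=0$ term is of size $\ang{\xi'}^{d}/|\xi_n|$. For $|\xi_n|\gg\ang{\xi'}$ you need this to be $O(|\xi_n|^{d-1})$, i.e.\ $\ang{\xi'}^d\le C|\xi_n|^d$, which fails whenever $d<0$. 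Concretely, take $\tilde k(x',x_n,\xi')=\ang{\xi'}^{-1}e^{-x_n\ang{\xi'}}\in S^{-2}_{1,0}(\cal S(\Rp))$; then $p(x',\xi)=\bigl[\ang{\xi'}(i\xi_n+\ang{\xi'})\bigr]^{-1}$, which at $\xi'=0$ behaves like $\ang{\xi_n}^{-1}$ and is therefore \emph{not} in $S^{-2}_{1,0}$. The paper uses a Seeley extension $\tilde p$ of $\tilde k$ to $x_n<0$ instead, so that $\tilde p$ is smooth across $x_n=0$ and no boundary terms arise; then $|\xi_n^Np|\le\|D_{x_n}^N\tilde p\|_{L_{1,x_n}}\le C\ang{\xi'}^{d-1+N}$ for every $N$, and choosing $N\ge(1-d)_+$ gives the joint $\ang{\xi}^{d-1}$ bound. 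Your verification of the transmission condition via (iii) of Proposition~\ref{uttr-prop} would then go through unchanged, and the paper does exactly this (see \eqref{9}).

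There is also a gap in $2^\circ$: regularising $\delta_0$ by $w_k\in C^\infty_0$ does not by itself make the $\xi_n$-integral absolutely convergent, since after the $y$-integrations you face $\int e^{ix_n\xi_n}\bigl[\int e^{-iy_n\xi_n}q(x',y_n,\xi)w_k(y_n)\,dy_n\bigr]\,d\xi_n$ with an integrand that is still only $O(\ang{\xi}^d)$; ``smoothness of $q$ in $y_n$'' does not help here. One can repair this by additional integration by parts in $y_n$ (using that $w_k$ has compact support) together with a cutoff in $\xi_n$, but you do not indicate this. The paper sidesteps the issue by first proving \eqref{3} for $q\in S^{-\infty}$, where Fubini applies directly, and then approximating a general $q$ by $q_\varepsilon=v(\varepsilon\xi')\hat w(\varepsilon\xi_n)q\in S^{-\infty}$, passing to the limit using precisely the convergence result of $1^\circ$ together with the continuity of $\op{OPK}$ in Proposition~\ref{S-prop}. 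This is in fact the main role of $1^\circ$ in the paper, and you do not use $1^\circ$ anywhere.
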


\begin{proof} $1^\circ$ The support condition on $w$ implies that 
\begin{equation}
 w(y_n)r^+x_n^lD^m_{x_n}(e^+\tilde k(x',x_n-y_n,\xi'))=
 w(y_n)x_n^lD^m_{x_n}\tilde k(x',x_n-y_n,\xi'),
 \label{5'}
 \end{equation}
so one shows straightforwardly that $\norm{v(\varepsilon\cdot)
w_\varepsilon*_n\tilde k}{S^{-N}_{1,0},\alpha',\beta',l,m}$ is
$<\infty$ for each $N\in\N$.
Now $\norm{\tilde k-v(\varepsilon\cdot)\tilde k}{S^{d'}_{1,0},0}
\le \norm{\tilde k}{S^{d}_{1,0},0}\sup_{\xi'}\ang{\xi'}^{d-d'}
|1-v(\varepsilon\xi')|\to 0$ for $\varepsilon\to 0$, and
 \begin{equation}
 \norm{v(\varepsilon\cdot)(\tilde k-w_\varepsilon*_n
 \tilde k)}{S^{d'}_{1,0},0}\le
 2\norm{\tilde k}{S^{d'}_{1,0},0}
 \cdot \norm{v}{L_\infty}\int_{-\varepsilon}^0|w|,
 \label{6'}
 \end{equation}
so $\norm{\tilde k-v(\varepsilon\cdot)w_\varepsilon*_n
\tilde k}{S^{d'}_{1,0},0} \to 0$ for $\varepsilon\to 0$.
The other seminorms can be handled in a manner similar to this;
for $\alpha'\ne0$ terms with $D^{\gamma'}(v(\varepsilon\cdot))$
obviously $\to0$ for $\varepsilon\to0$.

$2^\circ$
The formula \eqref{3} is first verified for $d=-\infty$, since Fubini's
theorem then permits the following calculation, where $v\in
C^\infty_0(\Rn_+)$, and $w_k\in C^\infty_0(\Rn)$ satisfy
$w_k\to\delta_0$ in $\cal S'$,
 \begin{align}
 \dual{r^+P(u\otimes\delta_0)}{v}&=
      \lim_{k\to\infty}\iiint \!e^{i(x-y)\cdot\xi}q(x',y_n,\xi)
 \notag \\[-2\jot]
 &\hphantom{ =\lim_{k\to\infty}\iiint \!
                     e^{i(x-y)\cdot\xi}q(x', }
 \times u(y')w_k(y_n)e^+v(x)\,dy\dbar\xi dx 
 \notag \\
 &=\,\dual{u(y')}{\!\iint\! e^{i(x'-y')\cdot\xi'}e^{ix_n\xi_n}
          q(x',0,\xi)e^+v\,dx\dbar\xi} 
 \notag \\
 &=\,\dual{\op{OPK}(r^+\cal F^{-1}_{\xi_n\to x_n}q(x',0,\xi'))u}{v};
 \label{6} 
 \end{align}
that $\tilde{\cal K}q:=r^+\cal F^{-1}_{\xi_n\to x_n}q(x',0,\xi')$ is in
$S^{d}_{1,0}(\cal S(\Rp))$ follows from (iii) in Proposition~\ref{uttr-prop}.

For $d\in\R$ the relation \eqref{3} follows from \eqref{6} by regularisation,
since $P(u\otimes\delta_0)$ and $Ku$ depend
continuously on $q$ and $\tilde k$, respectively. 
 
More precisely, take $v$ and $w$ as in $1^\circ$, and define
$q_\varepsilon=v(\varepsilon\xi')\hat w(\varepsilon\xi_n)
q(x',y_n,\xi)$ in $S^{-\infty}_{1,0}$.
Then $q_\varepsilon\to q$ in $S^{d'}_{1,0}$ when $d'>d$ and, 
as verified below, $q_\varepsilon\in S^{d'}_{1,0,\op{uttr}}$ and
$\tilde{\cal K}q_\varepsilon\to\tilde k$ in 
$S^{d'}_{1,0}(\cal S(\Rp))$ for $\varepsilon\to 0$. 
Then \eqref{6} and \eqref{S1} give, with limits taken in $\cal D'(\R^n_+)$, 
 \begin{equation}
 r^+P(u\otimes\delta_0)=
 \lim_{\varepsilon\to0} r^+\op{OP}(q_\varepsilon)(u\otimes\delta_0) 
 =\lim_{\varepsilon\to0}\op{OPK}(\tilde{\cal K}q_\varepsilon)u=Ku.
 \label{7}
 \end{equation}

To show $q_\varepsilon\in S^{d'}_{1,0,\op{uttr}}$, one may write 
$\cal F^{-1}_{\xi_n\to x_n}q(x'\!,0,\xi)$ as $\tilde q(x'\!,x_n,\xi')=
s_{\bignt d}D^{\bignt d}\delta_0\linebreak[4]+\dots+s_0\delta_0+\erd\tilde q$ by
(ii) in Proposition \ref{uttr-prop}. 

Then $r_\pm z_n^lD^m_{z_n}D^{\beta'}_{x'}D^{\alpha'}_{\xi'}
\widetilde{q_\varepsilon} (x',z_n,\xi')$ equals
 \begin{multline}
 r_\pm z_n^lD^m_{z_n}D^{\beta'}_{x'}D^{\alpha'}_{\xi'}
 \sum_{0\le k\le d}s_{k}(x',\xi')v(\varepsilon\xi')
                   D^{k}_{z_n}w_\varepsilon(z_n)
 \\
 +r_\pm{\sum_{\gamma'\le\alpha'}} {\textstyle\binom{\alpha'}{\gamma'}}
 \!\int D^{\alpha'-\gamma'}_{\xi'} (v(\varepsilon\xi'))
 z^lD^{m}_{z_n}(w_\varepsilon(z_n-y_n))\erd D^{\beta'}_{x'}
 D^{\gamma'}_{\xi'}\tilde q(x',y_n,\xi')dy_n, 
 \label  8
 \end{multline}
and using that $q\in S^{d}_{1,0,\op{uttr}}$ majorisations global in 
$(x',z_n)$ can be obtained. 

It remains to show that $\tilde{\cal K}q_\varepsilon\to\tilde k$. 
But $\tilde{\cal K}q_\varepsilon=v(\varepsilon\xi')w_\varepsilon
*_n\tilde{\cal K}q$, since $\supp w \subset[-1,0]$, so $1^\circ$ gives
the rest.

$3^\circ$ To show the existence of $p(x',\xi)$ one can proceed
as in \cite{G3}
by extending $\tilde k(x',x_n,\xi)$ for $x_n<0$ to a function 
$\tilde p(x',x_n,\xi')$ by Seeley's method in \cite{Se}, and let
$p(x',\xi)=\cal F_{x_n\to\xi_n}\tilde p$. It can be checked
that $p\in S^{d-1}_{1,0,\op{uttr}}$, where in
particular the uniform two-sided transmission condition is satisfied
since for each $l$ and $m\in\N_0$ and $\alpha$ and 
$\beta\in\N_0^n$ the functions
 \begin{equation}
 r_\pm z_n^l D^{m}_{z_n}\cal F^{-1}_{\xi_n\to z_n} 
 D^\beta_xD^\alpha_\xi p(x',\xi)=r_\pm z_n^l D^m_{z_n}(-z_n)^{\alpha_n}
 D^\beta_x D^{\alpha'}_{\xi'}\tilde p(x',z_n,\xi')
 \label{9}
 \end{equation} 
are bounded on $\R^{n-1}\times\overline\R_\pm$ for each 
$\xi'$ by the construction of $\tilde p$. \eqref{4} holds by use of
$2^\circ$ since $\tilde{\cal K}p=r^+\cal F^{-1}_{\xi_n\to
x_n}p(x',\xi)=r^+\tilde p(x',x_n,\xi')=\tilde k$.
\end{proof}

Since the composite $r^+P(\cdot\otimes\delta_0)$ is continuous from $\cal
S'(\R^{n-1})$ to $\cal S'(\Rp^n)$ and $\cal S$ is dense in $\cal
S'$ we can obviously make the following

\begin{defn} \label{pois-defn}
For $v\in\cal S'(\R^{n-1})$ the action of a Poisson operator $K$ with
symbol-kernel in $S^{d-1}_{1,0}(\R^{n-1}\times\R^{n-1},\cal S(\Rp))$ 
is defined as $Kv=r^+P(v\otimes\delta_0)$, where $P$ is any
pseudo-differential operator as in $3^\circ$ in Proposition~\ref{pois-prop}.
\end{defn} 

According to its definition $K$ is a continuous operator
 \begin{equation}
 K\colon\cal S'(\R^{n-1})\to\cal S'(\Rp^n).
 \label{S'K}
 \end{equation}
To show that this extended definition of $K$ has good continuity 
properties in the scales of Besov and Triebel--Lizorkin spaces also for $p<1$
one can make use of Proposition~\ref{tensor-prop} concerning the operator 
$f(x')\mapsto f(x')\otimes\delta_0(x_n)$:

\begin{thm} \label{pois-thm} 
Let $K$ be a Poisson operator of order $d\in\R$ and let $s\in\R$
and $p$ and $q\in\,]0,\infty]$. Then the~operator $K$ is bounded
 \begin{align}
 K\colon B^{s}_{p,q}(\R^{n-1})&\to
          B^{s-d+\fracpi}_{p,q}(\overline{\R}^n_+),
 \label{13} \\
        K\colon F^{s}_{p,p}(\R^{n-1})&\to
          F^{s-d+\fracpi}_{p,q}(\overline{\R}^n_+),
 \label{14} 
 \end{align}
when $p<\infty$ holds in \eqref{14}.
\end{thm}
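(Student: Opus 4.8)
The plan is to reduce the statement, via the representation of a Poisson operator in Proposition~\ref{pois-prop}~$3^\circ$, to the continuity of a genuine pseudo-differential operator on $\R^n$ (which is \eqref{i14}) composed with the tensor-by-$\delta_0$ operator of Proposition~\ref{tensor-prop} and the restriction $r^+$. The one obstruction is that Proposition~\ref{tensor-prop} is only available when the target smoothness index is negative; this will be circumvented by an order reduction in the tangential variable.

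First I would record the elementary fact that, for $m\in\N_0$ and $\tilde k\in S^{d-1}_{1,0}(\cal S(\Rp))$ the symbol-kernel of $K$, the product $\tilde k_0:=\ang{\xi'}^{-m}\tilde k$ lies in $S^{d-m-1}_{1,0}(\cal S(\Rp))$, so that $K_0:=\op{OPK}(\tilde k_0)$ is a Poisson operator of order $d-m$; and by inspection of the defining integral \eqref{-21} one has $K=K_0\circ\Xi^{\prime\,m}$ on $\cal S(\R^{n-1})$, since $\ang{\xi'}^{-m}\tilde k(x',x_n,\xi')\,\ang{\xi'}^{m}=\tilde k(x',x_n,\xi')$ and no remainder terms occur (the tangential factor $\ang{\xi'}^{\pm m}$ does not depend on $x'$). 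By \eqref{S'K}, the $\cal S'$-continuity of $\Xi^{\prime\,m}$, and density of $\cal S$ in $\cal S'$, this identity persists on all of $\cal S'(\R^{n-1})$ and is therefore consistent with Definition~\ref{pois-defn}.

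Given $v\in B^s_{p,q}(\R^{n-1})$, I would choose $m\in\N_0$ with $m>s$ (so $m=0$ if $s<0$) and put $w:=\Xi^{\prime\,m}v$; then $\nrm{w}{B^{s-m}_{p,q}(\R^{n-1})}\le c\,\nrm{v}{B^s_{p,q}(\R^{n-1})}$ by \eqref{Xi'}, and $s-m<0$. Proposition~\ref{pois-prop}~$3^\circ$ applied to $K_0$ gives a symbol $p_0\in S^{d-m-1}_{1,0,\op{uttr}}\subset S^{d-m-1}_{1,0}$ with $K_0w=r^+\op{OP}(p_0)(w\otimes\delta_0)$. Since $(s-m-1+\fracpi)+1-\fracpi=s-m<0$, Proposition~\ref{tensor-prop} gives $w\otimes\delta_0\in B^{s-m-1+\fracpi}_{p,q}(\R^n)$ with norm $\le c\,\nrm{\delta_0}{B^{\fracpi-1}_{p,\infty}(\R)}\,\nrm{w}{B^{s-m}_{p,q}(\R^{n-1})}$; then \eqref{i14} yields $\op{OP}(p_0)\colon B^{s-m-1+\fracpi}_{p,q}(\R^n)\to B^{s-m-1+\fracpi-(d-m-1)}_{p,q}(\R^n)=B^{s-d+\fracpi}_{p,q}(\R^n)$; and finally $r^+\colon B^{s-d+\fracpi}_{p,q}(\R^n)\to B^{s-d+\fracpi}_{p,q}(\overline{\R}^n_+)$ is bounded by \eqref{1.34}--\eqref{1.34'}. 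Chaining these four bounds, and using $Kv=K_0w$, proves \eqref{13}. For \eqref{14} one proceeds identically: here $F^s_{p,p}=B^s_{p,p}$, so $w=\Xi^{\prime\,m}v\in B^{s-m}_{p,p}(\R^{n-1})$; the estimate \eqref{12} of Proposition~\ref{tensor-prop} gives $w\otimes\delta_0\in F^{s-m-1+\fracpi}_{p,q}(\R^n)$ (valid for $p<\infty$ and $s-m<0$), then the $F$-part of \eqref{i14} applies to $\op{OP}(p_0)$, and $r^+$ is handled as before.

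All steps are routine once the quoted results are assembled; the point requiring attention is the reduction to $s<0$, i.e.\ recognising that Proposition~\ref{tensor-prop} does not cover large $s$ and that the remedy is to extract an $\Xi^{\prime\,m}$ from $K$ on the tangential side. This works cleanly precisely because that extraction is exact — the composition $\op{OPK}(\tilde k_0)\circ\Xi^{\prime\,m}$ has no lower-order remainder, as $\ang{\xi'}^{m}$ is independent of $x'$ — and because the resulting $K_0$ is again a Poisson operator to which Proposition~\ref{pois-prop}~$3^\circ$ applies. A secondary bookkeeping point is to check that the identity $K=K_0\circ\Xi^{\prime\,m}$, first verified on $\cal S$, is compatible with the $\cal S'$-extension in Definition~\ref{pois-defn}.
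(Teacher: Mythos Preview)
Your proof is correct and follows essentially the same approach as the paper: reduce to the representation $K=r^+P(\cdot\otimes\delta_0)$, use Proposition~\ref{tensor-prop} together with \eqref{i14} for $s<0$, and handle $s\ge 0$ by the tangential order reduction $K=K_0\,\Xi^{\prime\,m}$ with $K_0=\op{OPK}(\ang{\xi'}^{-m}\tilde k)$, extended to $\cal S'$ by density. The only difference is organisational\,---\,you present a single argument with $m\in\N_0$, $m>s$ (so $m=0$ covers $s<0$), whereas the paper treats the two cases separately\,---\,and you invoke Proposition~\ref{pois-prop}~$3^\circ$ afresh for $K_0$ rather than reusing the original $P$; both are fine.
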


\begin{proof} The symbol-kernel of $K$ is denoted by
$\tilde k(x',x_n,\xi')\in S^{d-1}_{1,0}(\cal S(\Rp))$ and
Definition~\ref{pois-defn} is applied to write
$K=r^+P(\cdot\otimes\delta_0)$ for some $P\in\op{OP} 
(S^{d-1}_{1,0,\op{uttr}})$.

$1^\circ$ For any $s<0$, Proposition~\ref{tensor-prop} and \eqref{i14} give
the boundedness of 
 \begin{alignat}{3}
           &B^{s}_{p,q}(\R^{n-1})&\, \xrightarrow{\;\cdot\otimes\delta_0\;}\,
           &B^{s-1+\fracpi}_{p,q}(\Rn)&\overset P{\,\longrightarrow\,}
           &B^{s-d+\fracpi}_{p,q}(\Rn),
 \label{16} \\
           &B^{s}_{p,p}(\R^{n-1})&\, \xrightarrow{\;\cdot\otimes\delta_0\;}\,
           &F^{s-1+\fracpi}_{p,q}(\Rn)&\overset P{\,\longrightarrow\,}
           &F^{s-d+\fracpi}_{p,q}(\Rn)\quad(p<\infty).
 \label{17}
 \end{alignat}
Hence \eqref{13} and \eqref{14} follow for $s<0$ for every Poisson 
operator $K$.

$2^\circ$ For a given $s\ge 0$ it follows for any $m\in\R$ that 
on $B^{s}_{p,q}(\R^{n-1})$
 \begin{equation}
 K=r^+P((\Xi^{\prime \,(-m)}\cdot)\otimes\delta_0)\Xi^{\prime\, m},
 \label{18}
 \end{equation}
cf.~\eqref{Xi'}. By $1^\circ$, if $m>s\ge 0$ is fixed, it suffices for 
the conclusion of \eqref{13} and \eqref{14}  to show that the operator 
$r^+P(\Xi^{\prime(-m)}\cdot\otimes\delta_0)$ acts on 
$B^{s-m}_{p,q}(\R^{n-1})$ as a Poisson operator $K'$ of order $d-m$.
However, first it is seen from \eqref{-21} that for $v\in\cal S(\R^{n-1})$,
 \begin{equation}
 r^+P(\Xi^{\prime\,(-m)} v\otimes\delta_0)=K\Xi^{\prime\,(-m)}v=
 \op{OPK}(\tilde k(x',x_n,\xi')\ang{\xi'}^{-m}) v=K' v,
 \label{19}
 \end{equation}
where $\tilde k(x,\xi')\ang{\xi'}^{-m}\in S^{d-m-1}_{1,0}(\cal S(\Rp))$. 
Secondly the formula \eqref{19} extends to every $v$ in 
$\cal S'(\R^{n-1})$ by the denseness of $\cal S(\R^{n-1})$.
\end{proof}

The proof above of Theorem \ref{pois-thm} seems to be the first 
to cover the full scales of Besov and Triebel--Lizorkin spaces, since 
the (somewhat different) arguments in \cite{F2} rely on an article 
that has not appeared in Mathematische Nachrichten as announced.
The proof is similar to the~one in \cite{G3}, but in the present
context it is an important point to show that \eqref{19} holds also 
when $\cal S$ is not dense in $B^{s}_{p,q}$. 

Partly for this reason Definition~\ref{pois-defn} and 
Propositions~\ref{uttr-prop} and \ref{pois-prop} are stated explicitly. 
Another step in the above extension of the arguments in \cite{G3} is
to show \eqref{11} and \eqref{12}, since it seems impossible to carry 
through the duality arguments from \cite{G3} for $p<1$ or $q<1$. 

For later reference an observation on the operator norms of $K$ is included.

\begin{cor} \label{pois-cor}
For a Poisson operator $K=\op{OPK}(\tilde k)$ of order $d$ 
the operator norms in \eqref{13} and \eqref{14} satisfy the inequality
 \begin{equation}
 \norm{K}{\Bbb L(B^{s}_{p,q},B^{s-d+\fracpi}_{p,q})}+
 \norm{K}{\Bbb L(F^{s}_{p,p},F^{s-d+\fracpi}_{p,q})}\le
 c\norm{\tilde k}{S^{d-1}_{1,0},j}
 \label{13'} 
 \end{equation}
for some $(s,p,q)$-dependent $c<\infty$ and $j\in\N$
(when the $F$-term is omitted for $p=\infty$).
\end{cor}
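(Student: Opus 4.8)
The plan is to revisit the proof of Theorem~\ref{pois-thm} and make each step quantitative in $\tilde k$. Throughout, the order $d$ and the triple $(s,p,q)$ are fixed, and $c$, $C$, $j$, $j'$, $\dots$ denote constants that may depend on $n,d,s,p,q$ but never on $\tilde k$; the $F$-term is absent when $p=\infty$, so for it we take $p<\infty$. The argument splits, exactly as in the proof of Theorem~\ref{pois-thm}, according to whether $s<0$ or $s\ge0$.

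\emph{The case $s<0$.} By $3^\circ$ of Proposition~\ref{pois-prop}, $K=r^+P(\,\cdot\otimes\delta_0)$ with $P=\op{OP}(p)$, where $p\in S^{d-1}_{1,0,\op{uttr}}$ is produced from $\tilde k$ by Seeley's extension (\cite{Se}) across $x_n=0$ followed by $\cal F_{x_n\to\xi_n}$; the composite map $\tilde k\mapsto p$ is continuous linear between the relevant Fr\'echet spaces, so for each $j'$ there are $j_0$ and $C_0$ with $\norm{p}{S^{d-1}_{1,0},j'}\le C_0\norm{\tilde k}{S^{d-1}_{1,0},j_0}$. Next, $f\mapsto f\otimes\delta_0$ is independent of $\tilde k$, and Proposition~\ref{tensor-prop} applied with smoothness index $s-1+\fracpi$ (admissible since $(s-1+\fracpi)+1-\fracp=s<0$) bounds it $B^{s}_{p,q}(\R^{n-1})\to B^{s-1+\fracpi}_{p,q}(\Rn)$ with norm $\le c\norm{\delta_0}{B^{\fracpi-1}_{p,\infty}(\R)}$ (finite by Example~\ref{delta-exmp}) and $B^{s}_{p,p}(\R^{n-1})\to F^{s-1+\fracpi}_{p,q}(\Rn)$ with norm $\le c(p,q)$, cf.~\eqref{11}--\eqref{12}. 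Finally, \eqref{i14} gives $j_1$ and $C_1$ with $\norm{P}{\Bbb L(B^{s-1+\fracpi}_{p,q},B^{s-d+\fracpi}_{p,q})}+\norm{P}{\Bbb L(F^{s-1+\fracpi}_{p,q},F^{s-d+\fracpi}_{p,q})}\le C_1\norm{p}{S^{d-1}_{1,0},j_1}$, using the standard fact that the operator norm in \eqref{i14} is dominated by finitely many symbol seminorms (\cite{Y1,Bui,Pai}). As $r^+$ has norm $\le1$, composing the three maps as in \eqref{16}--\eqref{17} yields \eqref{13'} with $j=j_0$ chosen for $j'=j_1$.

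\emph{The case $s\ge0$.} Choose an integer $m>s$. By \eqref{18}--\eqref{19}, $K=K'\Xi^{\prime\,m}$ on $\cal S(\R^{n-1})$, where $K'=\op{OPK}(\tilde k\ang{\xi'}^{-m})$ is a Poisson operator of order $d-m$, while $\Xi^{\prime\,m}$ is bounded $B^s_{p,q}(\R^{n-1})\to B^{s-m}_{p,q}(\R^{n-1})$ and $F^s_{p,p}(\R^{n-1})\to F^{s-m}_{p,p}(\R^{n-1})$ with norm depending only on $(s,p,q,m)$ by \eqref{Xi'}. Since $s-m<0$, the previous case applies to $K'$ and, because $(s-m)-(d-m)+\fracpi=s-d+\fracpi$, gives $\norm{K'}{\Bbb L(B^{s-m}_{p,q},B^{s-d+\fracpi}_{p,q})}+\norm{K'}{\Bbb L(F^{s-m}_{p,p},F^{s-d+\fracpi}_{p,q})}\le c\norm{\tilde k\ang{\xi'}^{-m}}{S^{d-m-1}_{1,0},j'}$. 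Multiplication by $\ang{\xi'}^{-m}$ being continuous $S^{d-1}_{1,0}(\cal S(\Rp))\to S^{d-m-1}_{1,0}(\cal S(\Rp))$, this is $\le C\norm{\tilde k}{S^{d-1}_{1,0},j}$ for a suitable $j$; composing with $\Xi^{\prime\,m}$ then yields \eqref{13'}.

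The one point needing care is the quantitative form of \eqref{i14} invoked above, namely that the bound there is controlled by a \emph{fixed finite} family of symbol seminorms. This is not spelled out in the excerpt but is implicit in the quoted results and can be read off from the proofs in \cite{Y1}; granting it, the rest is bookkeeping of constants through the continuous maps $\tilde k\mapsto p$ (Seeley), $\ang{\xi'}^{-m}$-multiplication and $\Xi^{\prime\,m}$, together with the $\tilde k$-independent operators $f\mapsto f\otimes\delta_0$ and $r^+$.
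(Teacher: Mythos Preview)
Your proof is correct and follows essentially the same route as the paper's: split into $s<0$ and $s\ge0$, in the first case track the operator norm through $r^+P(\cdot\otimes\delta_0)$ using the quantitative form of \eqref{i14} from \cite{Y1} together with the continuity of the Seeley map $\tilde k\mapsto p$, and in the second case reduce via $K=K'\Xi^{\prime\,m}$ and the continuity of $\tilde k\mapsto\tilde k\ang{\xi'}^{-m}$. The paper's version is terser but identical in substance, including the reference to \cite{Y1} for the seminorm dependence in \eqref{i14}.
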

\begin{proof}When $s<0$ it is clear from \eqref{16} and \eqref{17} that 
$\|K\|\le c'(s,p,q)\|P\|$ holds for the operator norms.
Here $\|P\|\le c''\norm{p}{S^{d-1}_{1,0},j'}$ when $j'$ is
large enough (depending on $s$), see the formulation of \eqref{i14} in
\cite{Y1}. Since $p$ is a Seeley extension of $\tilde k$, 
$\norm{p}{S^{d-1}_{1,0},j'}\le \norm{\tilde k}{S^{d-1}_{1,0}(\cal S(\Rp)),j}$.
Finally, when $s\ge0$ one has for $K'$ in the proof above that
$\norm{\tilde k\ang{\xi'}^{-m}}{S^{d-m-1}_{1,0},j}\le c(j)
\norm{\tilde k}{S^{d-1}_{1,0},j}$, so it can be used that $K$ acts as
$K'\Xi^{\prime\, m}$.
\end{proof}

\subsection{Truncated pseudo-differential operators, $P_{+}$} 
    \label{psdo-ssect}
The results for the $P_{+}$ operators are obtained for spaces with
$p<1$ by a combined application of
interpolation and para-multiplication due to Franke. 

Recall the extended definition of $e^+$ in Section~\ref{ext-ssect}. Since a
truncated pseudo-differential operator is defined as $P_+=r^+Pe^+$ it
is clear that $P_+$ is defined for certain singular distributions (in
spaces with $\fracp-1<s\le 0$).

\begin{thm} \label{psdo-thm} 
Let $p(x,\xi)\in
S^d_{1,0,\op{uttr}}(\Rn\times\Rn)$ for some $d\in\R$, and let
$p$ and $q\in\,]0,\infty]$. If $s>\max(\fracp-1,\fracc np-n)$ the operator
$P_{+}=r^+\op{OP}(p)e^+$ is bounded
 \begin{align}
 P_{+}&\colon B^{s}_{p,q}(\Rp^n)\to B^{s-d}_{p,q}(\Rp^n),
 \label{30} \\
 P_{+}&\colon F^{s}_{p,q}(\Rp^n)\to F^{s-d}_{p,q}(\Rp^n),
 \label{31}
 \end{align}
where in addition $p<\infty$ is assumed in \eqref{31}.
\end{thm}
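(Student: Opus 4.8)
The plan is to prove the two displayed mappings in two steps: first handle the sub-range $\max(\fracp-1,\fracc np-n)<s<\fracp$ by inspection, and then remove the upper restriction $s<\fracp$ by the order-reducing mechanism of the calculus, exploiting the transmission condition to control what happens at $x_n=0$.

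\emph{Step 1 (the strip $\max(\fracp-1,\fracc np-n)<s<\fracp$).} For such $(s,p,q)$ the operator $e^+$ is bounded from $B^{s}_{p,q}(\Rp^n)$ to $B^{s}_{p,q}(\Rn)$ and from $F^{s}_{p,q}(\Rp^n)$ to $F^{s}_{p,q}(\Rn)$ (with $p<\infty$ in the $F$ case) by \eqref{ext2}--\eqref{ext3}; since $p(x,\xi)\in S^d_{1,0,\op{uttr}}\subset S^d_{1,0}$, the operator $\op{OP}(p)$ is bounded on the spaces over $\Rn$ by \eqref{i14}; and $r^+$ is bounded onto the spaces over $\Rp^n$ by the definition \eqref{1.34'}. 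Composing, $P_{+}=r^+\op{OP}(p)e^+$ has the asserted continuity whenever $s<\fracp$, simultaneously for all $p$ and $q\in\,]0,\infty]$. This is exactly where the para-multiplication treatment of $e^+$ from Section~\ref{ext-ssect} (cf.~Remark~\ref{ext-rem}) and, when $p$ or $q<1$, the $+-$-interpolation of Proposition~\ref{intp-prop} carry the load, since neither denseness of $\cal S$ nor the duality arguments of \cite{G3} are available there.

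\emph{Step 2 (general $s>\max(\fracp-1,\fracc np-n)$).} Fix $s_{0}$ with $\max(\fracp-1,\fracc np-n)<s_{0}<\fracp$. One route is to extend $u\in B^{s}_{p,q}(\Rp^n)$ to $\tilde u\in B^{s}_{p,q}(\Rn)$ (using an extension operator, as in \eqref{intp4} for $\Omega=\Rp^n$) and to write $P_{+}u=r^{+}\op{OP}(p)\tilde u-r^{+}\op{OP}(p)\bigl(e^{-}r^{-}\tilde u\bigr)$. The first term lies in $B^{s-d}_{p,q}(\Rp^n)$ by \eqref{i14}; in the second, $e^{-}r^{-}\tilde u$ is supported in $\overline{\R}^{n}_{-}$, and a Taylor expansion of $r^{-}\tilde u$ at $x_n=0$ splits it into a ``flat'' remainder in a space of high smoothness (harmless under $\op{OP}(p)$) plus finitely many terms $(\gamma_j^{-}r^{-}\tilde u)(x')\otimes\theta_j(x_n)$ with $\theta_j$ supported in $x_n\le0$; by the transmission condition (Proposition~\ref{uttr-prop}, Proposition~\ref{pois-prop}) each $r^{+}\op{OP}(p)\bigl((\gamma_j^{-}r^{-}\tilde u)\otimes\theta_j\bigr)$ is a Poisson operator applied to the trace $\gamma_j^{-}r^{-}\tilde u\in B^{s-j-\fracpi}_{p,q}(\R^{n-1})$, and Theorem~\ref{pois-thm} then places it in $B^{s-d}_{p,q}(\Rp^n)$, the trace loss and the Poisson gain cancelling. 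The same bookkeeping works in the $F$-scale. (An equivalent route is to use the plus order-reducing operators $\Lambda^{\mu}_{+}=r^{+}\op{OP}((\ang{\xi'}-i\xi_n)^{\mu})e^{+}$ of the calculus: their symbols are of plus type, so composition with $P_{+}$ produces no stray singular Green term, and for $\mu\in\N_0$ one has $\Lambda^{\mu}_{+}=\sum_{k\le\mu}\binom{\mu}{k}\Xi^{\prime\,\mu-k}(-\partial_{x_n})^{k}$ on $\Rp^n$, which is bounded $B^{s}_{p,q}(\Rp^n)\to B^{s-\mu}_{p,q}(\Rp^n)$ for all admissible $(s,p,q)$ by \eqref{Xi'} and the support-preservation of tangential operators, while the negative-order $\Lambda^{\mu}_{+}$ are covered by the Poisson-type estimates already at hand; one then reduces the domain smoothness to $s_0$.)

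\emph{Main obstacle.} The genuine difficulty is uniformly the quasi-Banach part: giving $e^+$ a sense and bounding it, and likewise the order-reducing operators, on $B^{s}_{p,q}$ and $F^{s}_{p,q}$ when $p$ or $q$ lies in $\,]0,1[\,$, and when $s\ge\fracp$, where $\cal S$ is not dense and the duality tools of \cite{G3} fail; this is precisely what the definition of $e^+$ via $\pi(\chi,\cdot)$ in Section~\ref{ext-ssect}, Franke's para-multiplication estimates, and the $+-$-interpolation of Proposition~\ref{intp-prop} are designed to surmount. A subsidiary point needing care is the bookkeeping that keeps the construction in Step~2 honestly equal to $P_{+}$ and not merely equal modulo a singular Green term, and the justification (via the closed graph theorem, legitimate by Remark~\ref{tvs-rem}) of the passages to the limit hidden in the regularisation arguments.
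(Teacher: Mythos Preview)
Your Step~1 matches the paper, but your Step~2 diverges and contains a genuine gap for small $p$. For $n\ge2$ and $0<p\le\tfrac{n-1}{n}$ one has $\fracc np-n\ge\fracp$, so the strip in \eqref{ext2} is \emph{empty}; there is then no $s_0$ to fix, and your $\Lambda^\mu_+$ route (which ``reduces the domain smoothness to $s_0$'') cannot even start. Your Taylor-expansion route does not literally need $s_0$, but two of its steps are not covered by the propositions you cite: that $v\mapsto r^+P(v\otimes\theta_j)$ is a Poisson operator of order $d-j$ does not follow from Proposition~\ref{pois-prop}~$2^\circ$ (which treats $v\otimes\delta_0$, not $v\otimes\theta_j$ for a function $\theta_j$ supported in $\{x_n\le0\}$) and needs a direct symbol-kernel check exploiting $|\theta_j(y_n)|\sim|y_n|^j$ near $0$ to gain the missing $j+1$ orders; and that the flat remainder $R$ with vanishing traces satisfies $e^-R\in B^{s}_{p,q}(\Rn)$ with controlled norm is a nontrivial extension-by-zero statement for $p<1$ that you do not supply.

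The paper proceeds quite differently. For $1\le p\le\infty$ it runs the induction of \cite{G3}: commute $D_{x_n}$ through $P_+$, pick up the single Poisson term $r^+P(\gamma_0 u\otimes\delta_0)$ via Proposition~\ref{pois-prop}~$2^\circ$, and close with the equivalent norms of \cite[3.3.5]{T2}, covering $s-\fracp\in\N_0$ by real interpolation; this involves only $\delta_0$ and so avoids your $\theta_j$ issue entirely. For $0<p<1$ the paper abandons the strip-based bootstrap altogether: after reducing to \eqref{31} by real interpolation, it writes $e^+u=\pi(\chi,v)$ for an extension $v\in F^{s}_{p,q}(\Rn)$ of $u$ and para-decomposes $r^+P\pi(\chi,v)=\sum_{j=1}^3 r^+P\pi_j(\chi,v)$; the terms $j=1,2$ are bounded for $s\in\R$ resp.\ $s>\max(0,\fracc np-n)$ directly from para-product estimates and \eqref{i14}, while $r^+P\pi_3(\chi,\cdot)$---bounded a~priori only for $s<0$---is extended to $s\ge0$ by $+-$-interpolating (Proposition~\ref{intp-prop}) against the already-proved $F^s_{2,2}$ case. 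Thus the interpolation acts on the operator $r^+P\pi_3(\chi,\cdot)$, not on the construction of $e^+$; your ``Main obstacle'' paragraph misplaces its role.
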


\begin{proof} The cases $1\le p\le\infty$ are covered first.
When $\fracp-1<s<\fracp$, \eqref{30} and \eqref{31} follow from \eqref{i14}
and \eqref{ext3}. For $s>\fracp$ the induction argument as presented 
in \cite{G3} can be used to cover the Besov as well as the Triebel--%
Lizorkin cases with $s-\fracp\notin\N_0$ when one uses Proposition~%
\ref{pois-prop} $2^\circ$. Here the equivalent norms for these spaces 
given in \cite[3.3.5]{T2} are needed; the unnecessary restriction in
\cite[3.3.5/2]{T2} is removable by \cite[Thm.\ 4.1.1]{F3}. 
The cases $s-\fracp\in\N_0$ are
then covered by use of real interpolation, cf.~Theorem~2.4.2 and
Proposition~2.4.1 in \cite{T2}.

It remains to consider the case $0<p<1$, where it by real
interpolation suffices to prove \eqref{31}. Let $u\in F^{s}_{p,q}(\Rp^n)$
be given and take $v\in F^{s}_{p,q}(\Rn)$ such that $r^+v=u$. Then $v$
is an $L^t$ function (for some $t>1$) and $e^+u=\chi v$ as seen above
\eqref{ext7}. 

The product $\chi v=\pi(\chi,v)$ may be analysed by means of 
the para-multipli\-ca\-tion operators $\pi_j(\cdot,\cdot)$ with $j=1$, $2$
and $3$ (in the sense of \cite{Y1}), provided these exist. In fact it
is obtained then, cf.~\cite[(3.6)]{JJ94mlt}, that
 \begin{equation}
 r^+P\pi(\chi,v)=r^+P\pi_1(\chi,v)+r^+P\pi_2(\chi,v)+r^+P\pi_3(\chi,v).
 \label{32}
 \end{equation}
From \eqref{i14} and the results for the $\pi_j(\cdot,\cdot)$ 
it follows that the operators
 \begin{equation}
 r^+P\pi_j(\chi,\cdot)\colon F^{s}_{p,q}(\Rn)\to F^{s-d}_{p,q}(\Rp^n),
 \quad\text{ with $j=1,2,3$,}
 \label{33}
 \end{equation}
are bounded when $s\in\R$, $s>\max(0,\fracc np-n)$ and $s<0$, respectively:
for $j=1$ \cite[(5.1)]{JJ94mlt} applies, since $\chi\in
L_\infty$; for $j=2$ and $q\ge p$ formula (5.10) there is easily
modified to give a version for $B^{0}_{\infty,\infty}\oplus
F^{s}_{p,q}$, and generally the proof of \eqref{ext7} in 
\cite[Thm.~3.4.2]{F3} show
the property; for $j=3$ a variant of \cite[(5.9)]{JJ94mlt} may be used.

By Proposition~\ref{intp-prop} it would be enough to show that
$r^+P\pi_3(\chi,\cdot)$ is bounded between the spaces in \eqref{33} for,
say, $s\ge0$ and $p=q=2$. Indeed, in this case it would follow by
$+-$-interpolation that $r^+P\pi_3(\chi,\cdot)$ is bounded between
the spaces in \eqref{33} for any $s$, $p$ and $q$, and then, by \eqref{32},
boundedness would hold for $r^+P\pi(\chi,v)$ for $p<1$ when $s>\fracc
np-n$. Clearly \eqref{31} follows from this by taking the infimum over $v$.

Therefore we shall derive the continuity of $r^+P\pi_3(\chi,\cdot)$ 
in \eqref{33} for $s>\fracp-1$ and $1\le p<\infty$ from the fact
that \eqref{31} holds for $1\le p<\infty$. 
First note that \eqref{31} implies that the operator
$r^+P\pi(\chi,\cdot)$ is bounded between the spaces in \eqref{33} when
$s>\fracp-1$ for some $1\le p<\infty$. From \eqref{32} and \eqref{33} it 
then follows that $r^+P\pi_3(\chi,\cdot)$ has the desired property.
\end{proof}

The theorem above contains an improvement over \cite{F2}, in that
for $p=\infty$ it is not assumed that the operators are properly
supported.

From Theorem~\ref{psdo-thm} it follows that $P_+(\cal
S(\Rp^n))\subset C^\infty(\Rp^n)$, and we even have

\begin{prop} \label{psdo-prop}
Let $P$ be a pseudo-differential operator with symbol $p(x,\xi)$ in
$S^{d}_{1,0,\op{uttr}}$. Then $P_+\colon\cal S(\Rp^n)\to\cal S(\Rp^n)$ is 
continuous.
\end{prop}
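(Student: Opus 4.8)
The plan is to add the missing rapid-decay information to what is already at hand, namely that $P_+(\cal S(\Rp^n))\subset C^\infty(\Rp^n)$; this last fact follows from Theorem~\ref{psdo-thm} with $p=q=2$, since $\cal S(\Rp^n)\subset\bigcap_MH^M(\Rp^n)=\bigcap_MF^M_{2,2}(\Rp^n)$ gives $P_+u\in\bigcap_MF^{M-d}_{2,2}(\Rp^n)$, and $F^{M}_{2,2}(\Rp^n)\hookrightarrow C^k_b(\overline{\R}^n_+)$ once $M>k+\tfrac n2$. The only genuinely new ingredient is that multiplication by a coordinate function, when commuted through $P_+$, merely lowers the order of the symbol.

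First I would record, for each $j\in\{1,\dots,n\}$ and on $\cal S(\Rp^n)$, the two elementary facts that $x_je^+=e^+x_j$ (for $j=n$ because multiplication by $x_n$ commutes with the cut-off $\kd{x_n>0}$) and that $[x_j,\op{OP}(p)]=\op{OP}(i\partial_{\xi_j}p)$ on $\cal S$ and $\cal S'$, where $i\partial_{\xi_j}p\in S^{d-1}_{1,0,\op{uttr}}$ (a $\xi$-derivative lowers the order by one and preserves the condition in Definition~\ref{uttr-defn}). Writing $(\op{OP}(q))_+:=r^+\op{OP}(q)e^+$, these combine to $x_k(\op{OP}(q))_+w=(\op{OP}(q))_+(x_kw)+(\op{OP}(i\partial_{\xi_k}q))_+w$ for every symbol $q$, whence an easy induction on $|\gamma|$ yields, for $u\in\cal S(\Rp^n)$ and $\gamma\in\N_0^n$,
\begin{equation*}
 x^\gamma P_+u=\sum_{\mu\le\gamma}\binom{\gamma}{\mu}\bigl(\op{OP}((i\partial_\xi)^\mu p)\bigr)_+\bigl(x^{\gamma-\mu}u\bigr),
\end{equation*}
a finite sum in which $(i\partial_\xi)^\mu p\in S^{d-|\mu|}_{1,0,\op{uttr}}$ and $x^{\gamma-\mu}u\in\cal S(\Rp^n)\subset\bigcap_MH^M(\Rp^n)$.

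Next I would feed each summand into Theorem~\ref{psdo-thm} (with $p=q=2$), which gives boundedness $(\op{OP}((i\partial_\xi)^\mu p))_+\colon H^M(\Rp^n)\to H^{M-d+|\mu|}(\Rp^n)$ for all $M\ge0$; hence $x^\gamma P_+u\in\bigcap_MH^M(\Rp^n)\subset C^\infty(\overline{\R}^n_+)$ for every $\gamma$. Expanding $D^\beta(x^\gamma P_+u)$ by Leibniz and running a downward induction on $|\gamma|$ (equivalently, using that the two families $\{\sup|x^\gamma D^\beta f|\}$ and $\{\sup|D^\beta(x^\gamma f)|\}$ generate the same topology on $C^\infty(\overline{\R}^n_+)$) then shows $\sup_{\R^n_+}|x^\gamma D^\beta(P_+u)|<\infty$ for all $\gamma,\beta$; since $P_+u$ already extends smoothly to $\overline{\R}^n_+$, this means $P_+u\in\cal S(\Rp^n)$, using the standard fact that $\cal S(\Rp^n)=r^+\cal S(\Rn)$ is precisely the space of smooth functions on $\overline{\R}^n_+$ all of whose Schwartz seminorms are finite.

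For continuity I would invoke the closed graph theorem, which applies since $\cal S(\Rp^n)$ is a Fr\'echet space: if $u_k\to u$ and $P_+u_k\to g$ in $\cal S(\Rp^n)$, then because $e^+\colon L_2(\Rp^n)\to L_2(\Rn)$, $\op{OP}(p)\colon L_2\to F^{-d}_{2,2}$ and $r^+\colon F^{-d}_{2,2}(\Rn)\to F^{-d}_{2,2}(\Rp^n)$ are all continuous, $P_+u_k\to P_+u$ in $F^{-d}_{2,2}(\Rp^n)$, forcing $g=P_+u$. I do not expect a serious obstacle here: the only points needing care are the verification that $i\partial_{\xi_j}$ preserves $S^{d}_{1,0,\op{uttr}}$ and that $x_n$ commutes with $e^+$, together with the elementary Leibniz bookkeeping that turns $x^\gamma P_+u\in\bigcap_MH^M(\Rp^n)$ into finiteness of all the seminorms $\sup|x^\gamma D^\beta(P_+u)|$.
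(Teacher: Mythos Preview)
Your argument is correct. The key observation that multiplication by $x_j$ commutes with $e^+$ (so no boundary terms arise) together with the standard commutator $[x_j,\op{OP}(p)]=\op{OP}(i\partial_{\xi_j}p)$ gives the clean expansion of $x^\gamma P_+u$, and the transmission class is indeed stable under $\partial_{\xi_j}$ by Definition~\ref{uttr-defn}. Feeding this into Theorem~\ref{psdo-thm} and using Sobolev embedding plus the Leibniz bookkeeping is sound, and the closed graph step is unproblematic.

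The paper's own proof takes a different route: it commutes \emph{both} $x^\alpha$ and $D^\beta$ through $P_+$. Commuting $D_{x_n}$ past $e^+$ produces the boundary term $-i\gamma_0u\otimes\delta_0$, which by Proposition~\ref{pois-prop}~$2^\circ$ is absorbed into a Poisson operator; one ends up with $x^\alpha D^\beta P_+u$ as a finite sum of terms $Q_+(x^\gamma D^\omega u)$ and $K\gamma_0(x^{\gamma'}D^{\omega'}u)$, from which the explicit seminorm estimate $\norm{x^\alpha D^\beta P_+u}{L_\infty}\le C\norm{u}{\cal S,N}$ is read off directly. Your route is more economical in that you avoid invoking the Poisson operator machinery explicitly (it is of course hidden in the induction behind Theorem~\ref{psdo-thm}) and you lean more heavily on that theorem as a black box; the price is that you get continuity via the closed graph theorem rather than via explicit seminorm bounds. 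Both are perfectly valid, and yours is arguably the more streamlined argument once Theorem~\ref{psdo-thm} is already in hand.
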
 
\begin{proof} Recall the commutator identities $[D^\alpha_x,P]=
\op{OP}(D^\alpha_xp)$ and $[x^\alpha,P]=\linebreak[4]\op{OP}(D^\alpha_\xi p)$ valid
on $\cal S'(\Rn)$ and $[D^\alpha_x,e^+]u=-\kd{\alpha_n=1}i\gamma_0u(x')\otimes
\delta_0(x_n)$ valid for $u\in\cal S(\Rp^n)$ when $|\alpha|=1$. By use
of these it is seen that $x^\alpha D^\beta P_+u$ is a sum of terms
either of the form $Q_+x^\gamma D^\omega u$, with $Q$ in
$\op{OP}(S^{d}_{1,0,\op{uttr}})$, $\gamma\le\alpha$ and $\omega\le\beta$,
or of the form $K\gamma_0(\xi^{\gamma'}D^{\omega'}u)$, where
$K\in\op{OPK}(S^{d+\alpha_n}_{1,0}(\cal S(\Rp^n)))$,
$\gamma'\le\alpha'$ and $ \omega'\le\beta'$. Hence $x^\alpha D^\beta P_+u\in
C(\Rp^n)$ with $\norm{x^\alpha D^\beta P_+u}{L_\infty}\le
C\norm{u}{\cal S(\Rp^n),N}$ for appropriate constants $C$ and $N$.
\end{proof}

\subsection{Trace operators} \label{trac-ssect}
A trace operator of {\em class} $r\in\Z$ and order $d\in\R$ is of the form
 \begin{equation}
 Tu(x')=\sum_{0\le j<r_+} S_j\gamma_ju(x')+T_0u(x'), 
\quad\text{ for}\quad u\in\cal S(\Rp^n),
 \label{50}
 \end{equation}
where each $S_j=\op{OP}'(s_j)$ is a pseudo-differential operator on $\R^{n-1}$,
with symbol $s_j(x',\xi')$ in $S^{d-j}_{1,0}(\R^{n-1}\times\R^{n-1})$, and
the sum is void when $r<1$.
$T_0=\op{OPT}(\tilde t_0)$ given as in \eqref{-22} is the part of 
class $\le0$ with $\tilde t_0\in S^{d}_{1,0}(\cal S(\Rp))$.

$T$ is of class $r<0$  when (the sum is void and)
one of the equivalent conditions in Proposition \ref{trac-prop} $3^\circ$ 
below is satisfied. To prepare for these, let
 \begin{equation}
 \cal S_m(\Rp)=\{\,f\in\cal S(\Rp)\mid
 \gamma_0f=\dots=\gamma_{m-1}f=0\,\},
 \label{51}
 \end{equation}
where the index $m\in\N$ counts the number of traces required to vanish.
(This should not be confounded with $\cal S_0(\Rp^n)$ that consists of
functions on $\Rn$ supported by $\Rp^n$.) The conditions in $3^\circ$ below 
for negative class have been introduced by Franke and Grubb, cf.~%
\cite{F1,F2}, \cite{G2} and \cite{GK}.

\bigskip

The analysis of the trace operators departs from a description of the
standard traces $\gamma_j$ that enter in \eqref{50} above. 
See Section \ref{gamm-ssect} for the definitions and the basic results.

Recall in particular the $\Dm_k$-notation, cf.\  Figure \ref{D0-fig}. 
It is chosen as a reminder of the fact that $\Dm_k$ is a domain
consisting of numbers (rather than of vectors).
Observe that Theorem~\ref{psdo-thm} states that $P_{+}$ satisfies 
\eqref{30} and \eqref{31} when $(s,p,q)\in \Dm_0$.  

The aim in the following is to show that when, say, a trace operator 
$T$ is of class $r\in\Z$ then it is bounded from spaces with
parameters $(s,p,q)$ in $\Dm_r$. 

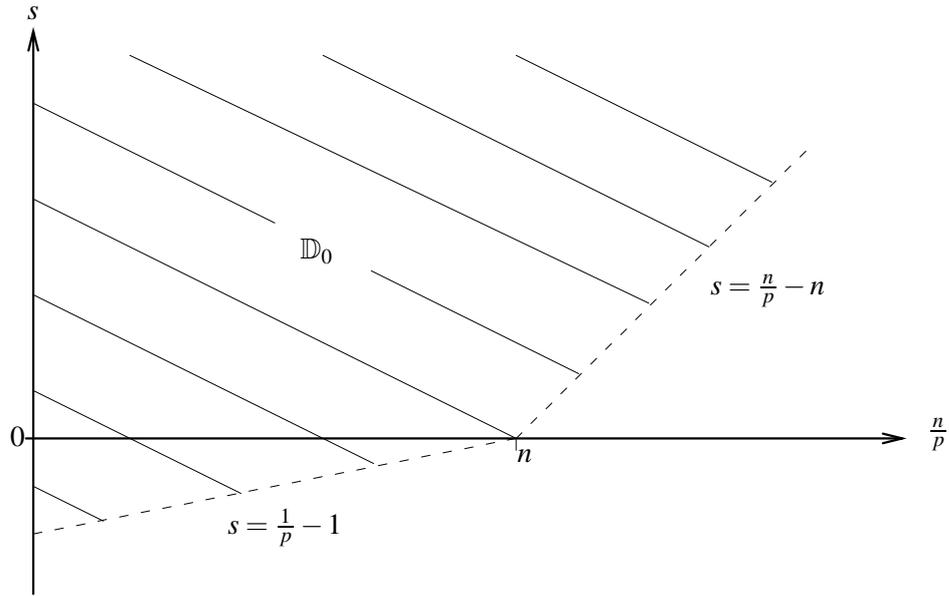
\begin{figure}[htbp]
\hfil
\setlength{\unitlength}{0.0125in}
\begin{picture}(395,261)(0,-10)
\dashline{4.000}(10,25)(210,65)
\dashline{4.000}(210,65)(330,185)
\path(210,65)(210,60)
\path(10,45)(39,30.5)
\path(10,85)(96,42)
\path(10,125)(151,54.5)
\path(10,165)(210,65)
\path(50,225)(265,121.5)
\path(130,225)(290,145)
\path(210,225)(316,172)
\path(10,205)(110,155)
\path(150,135)(236,92)
\thicklines
\path(7,65)(370,65)
\path(362.000,63.000)(370.000,65.000)(362.000,67.000)
\path(10,0)(10,235)
\path(12.000,227.000)(10.000,235.000)(8.000,227.000)
\put(380,65){\makebox(0,0)[lb]{\raisebox{0pt}[0pt][0pt] 
{\shortstack[l]{{$\fracc np$}}}}}
\put(90,25){\makebox(0,0)[lb]{\raisebox{0pt}[0pt][0pt]
{\shortstack[l]{{$s=\fracp-1$}}}}}
\put(290,125){\makebox(0,0)[lb]{\raisebox{0pt}[0pt][0pt]
{\shortstack[l]{{$s=\fracc np-n$}}}}}
\put(7,240){\makebox(0,0)[lb]{\raisebox{0pt}[0pt][0pt]
{\shortstack[l]{{$s$}}}}}
\put(0,62){\makebox(0,0)[lb]{\raisebox{0pt}[0pt][0pt]
{\shortstack[l]{{$0$}}}}}
\put(210,55){\makebox(0,0)[lb]{\raisebox{0pt}[0pt][0pt]
{\shortstack[l]{{$n$}}}}}
\put(120,140){\makebox(0,0)[lb]{\raisebox{0pt}[0pt][0pt]
{\shortstack[l]{{$\Dm_0$}}}}}
\end{picture}
\hfil
\caption[The definition of $\protect\Dm_0$.]{The definition of 
             $\protect\Dm_0$ (for $n=5$).}
 \label{D0-fig}
\end{figure}

Recall also that the dual of $\cal S(\Rp^n)$ is $\cal S'_0(\Rp^n)$,
with $\dual{u}{\varphi}=\dual{u}{\psi}$ when $\varphi=r^+\psi$ for
$\psi\in\cal S(\Rn)$ and $u\in\cal S'_0(\Rp^n)$. Similarly $\cal
S'(\Rp^n)'=\cal S_0(\Rp^n)$.

Among the statements in Proposition \ref{trac-prop} below 
fairly elementary proofs of $1^\circ$ and $2^\circ$ are given
(until now simple explanations are available in the $x'$-independent case). 

Observe that in $1^\circ$ below, $r^+\overline{\cal F}^{-1}_{\xi_n\to
y_n}q(x',\xi)=r^-\cal F^{-1}_{\xi_n\to
z_n}q(x',\xi)\!\bigm|_{y_n=-z_n}$ belongs to $\cal S(\Rp)$ as a function
of $y_n$, because the transmission condition is two-sided, i.e.,  in
\eqref{-2} the supremum is also taken over $z_n\in\Rn_-$.

\begin{prop} \label{trac-prop}  
$1^\circ$ Every $q(x'\!,\xi)\in S^{d}_{1,0,\op{uttr}}(\Rn\!\times\!\Rn)$
satisfies the relation, for $u\in\cal S\!(\Rp^n)$,
 \begin{equation}
 \gamma_0\op{OP}(q)_{+}u=\sum_{0\le j\le d}\op{OP}'(s_j)\gamma_ju
 +\op{OPT}(r^+\overline{\cal F}^{-1}_{\xi_n\to y_n}q)u,
 \label{52}
 \end{equation}
when the symbols $s_j(x',\xi')\in S^{d-j}_{1,0}(\R^{n-1}\times\R^{n-1})$ 
are determined from $q$ by Proposition~\ref{uttr-prop}. 
Hence $\gamma_0\op{OP}(q)_+$ is a trace operator of class $\le (d+1)_+$.

 $2^\circ$ For each symbol-kernel $\tilde t\in S^{d}_{1,0}(\cal S(\Rp))$ 
there exists a $p(x',\xi)\in S^d_{1,0,\op{uttr}}$ and a
Poisson operator $K$ such that
 \begin{equation}
 Tu=\op{OPT}(\tilde t)u=K^*e^+u =\gamma_0\op{OP}(p)_{+}u
 \quad\text{ holds for}\quad u\in\cal S(\Rp^n).
 \label{52'}
 \end{equation}
Here $K=\op{OPK}(e^{iD_{x'}\cdot D_{\xi'}}\overline{\tilde t})$ and 
$r^+\overline{\cal F}^{-1}_{\xi_n\to y_n}p(x',\xi)=\tilde
t(x',y_n,\xi')$. Moreover, the continuous operator $K^*\colon
\cal S'_0(\Rp^n)\to\cal S'(\R^{n-1})$ is uniquely determined by \eqref{52'}.

$3^\circ$ Let $S^{(k)}_{T}=\op{OP}'(i\overline{D}^k_{y_n}\tilde t(x',0,\xi'))$ 
for $k\in\N_0$, whenever $T$ is a class 0 trace operator with 
symbol-kernel $\tilde t(x',y_n,\xi')\in S^{d}_{1,0}(\cal S(\Rp))$.
Then, for each $m\in\N$, the following conditions are equivalent:
 \begin{itemize} \addtolength{\itemsep}{\jot}
 \item[(i)] $\tilde t(x',\cdot,\xi')\in \cal S_m(\Rp)$ 
for each $x'$ and $\xi'$.
 \item[(ii)] $t(x',\xi',\xi_n):=\overline{\cal F}_{y_n\to\xi_n}
e^+_{y_n}\tilde t(x',y_n,\xi')\in\cal H^-_{-1-m}$ as a function of $\xi_n$, for
each $(x',\xi')$.
 \item[(iii)] $S^{(0)}_{T}=\dots=S^{(m-1)}_T=0$.
 \item[(iv)] $TD^\alpha$ is a trace operator of class 0 for each
$|\alpha|\le m$.
 \end{itemize}
In the affirmative case, $T$ is said to be of class $-m$, and when
this holds for every $m\in\N$, the class of $T$ is said to be $-\infty$.
\end{prop}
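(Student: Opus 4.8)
The plan is to establish the equivalences by proving (i)$\Leftrightarrow$(iii), (i)$\Leftrightarrow$(ii) and (iii)$\Leftrightarrow$(iv): the first two reduce to unwinding the definitions together with the structure of the spaces $\cal H^\pm_j$ recalled in \cite[Sect.~2.2]{G1}, while the substance of the proposition sits in (iii)$\Leftrightarrow$(iv), and within it the direction (iv)$\Rightarrow$(iii) is the only one needing a genuine argument.

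\emph{(i)$\Leftrightarrow$(iii) and (i)$\Leftrightarrow$(ii).} The symbol of $S^{(k)}_T$ is $i\overline{D}^{\,k}_{y_n}\tilde t(x',0,\xi')$, which is a nonzero constant times $\partial^k_{y_n}\tilde t(x',0,\xi')$, i.e.\ the $y_n$-trace $\gamma_k$ of $\tilde t(x',\cdot,\xi')$; since $\op{OP}'$ is injective on symbols, the vanishing of $S^{(0)}_T,\dots,S^{(m-1)}_T$ is equivalent to $\gamma_0\tilde t=\dots=\gamma_{m-1}\tilde t=0$ for all $(x',\xi')$, which is (i). For (i)$\Leftrightarrow$(ii) one freezes $(x',\xi')$, puts $f=\tilde t(x',\cdot,\xi')\in\cal S(\Rp)$ and notes $t(x',\xi',\cdot)=\overline{\cal F}_{y_n\to\xi_n}e^+f\in\cal H^-_{-1}$ always; integrating by parts $m$ times in $\int_0^\infty e^{iy_n\xi_n}f(y_n)\,dy_n$ produces boundary contributions $\sum_{j<m}c_j(\gamma_jf)(i\xi_n)^{-1-j}$ plus $(i\xi_n)^{-m}\overline{\cal F}e^+(f^{(m)})(\xi_n)$, so (i) yields the extra decay $O(\xi_n^{-1-m})$, i.e.\ $t\in\cal H^-_{-1-m}$, and conversely this decay forces $\gamma_jf=0$ for $j<m$ by uniqueness of the $\cal H$-expansion (cf.\ Proposition~\ref{uttr-prop}(ii)). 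This is the standard correspondence in the calculus.

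\emph{(iii)$\Rightarrow$(iv).} For $|\alpha|\le m$, $\alpha=(\alpha',\alpha_n)$, replace $u$ by $D^{\alpha'}_{x'}D^{\alpha_n}_{x_n}u$ in \eqref{-22}, using $\acute{(D^{\alpha'}_{x'}D^{\alpha_n}_{x_n}u)}(\xi',y_n)=\xi'^{\alpha'}D^{\alpha_n}_{y_n}\acute u(\xi',y_n)$, and integrate by parts $\alpha_n$ times in $y_n$ exactly as in $1^\circ$. The normal derivatives move onto $\tilde t$, producing the bulk term $\pm\op{OPT}(\xi'^{\alpha'}D^{\alpha_n}_{y_n}\tilde t)u$ with $\xi'^{\alpha'}D^{\alpha_n}_{y_n}\tilde t\in S^{d+|\alpha'|}_{1,0}(\cal S(\Rp))$, plus boundary contributions at $y_n=0$ that are constant multiples of $S^{(j)}_T\gamma_{\alpha_n-1-j}D^{\alpha'}_{x'}u$ for $0\le j\le\alpha_n-1$. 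As $j\le\alpha_n-1<m$, (iii) kills every boundary term, so $TD^\alpha=\pm\op{OPT}(\xi'^{\alpha'}D^{\alpha_n}_{y_n}\tilde t)$ has class $0$.

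\emph{(iv)$\Rightarrow$(iii), and the main obstacle.} Argue by contraposition. If (iii) fails, let $k_0=\min\{\,k\mid S^{(k)}_T\ne0\,\}\le m-1$ and take $\alpha=(0,\dots,0,k_0+1)$, so $|\alpha|=k_0+1\le m$. The same integration by parts, with the terms $j<k_0$ absent because $S^{(j)}_T=0$ there, gives $TD^\alpha u=\pm\op{OPT}(D^{k_0+1}_{y_n}\tilde t)u+c\,S^{(k_0)}_T\gamma_0u$ for $u\in\cal S(\Rp^n)$, with $c\ne0$. If $TD^\alpha$ were of class $0$, say $TD^\alpha=\op{OPT}(\tilde s)$, pick $v\in\cal S(\R^{n-1})$ with $S^{(k_0)}_Tv\ne0$ and test against $u_\varepsilon=v\otimes w_\varepsilon$, where $w_\varepsilon(x_n)=w(x_n/\varepsilon)$ with $w\in\cal S(\Rp)$, $w(0)=1$. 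The rescaling $y_n=\varepsilon s$ gives $\op{OPT}(\sigma)(v\otimes w_\varepsilon)\to0$ in $\cal S'(\R^{n-1})$ for every $\sigma\in S^{d'}_{1,0}(\cal S(\Rp))$, $d'\in\R$ (the $y_n$-integral is $O(\varepsilon)$), whereas $\gamma_0u_\varepsilon=w(0)v=v$; hence $TD^\alpha u_\varepsilon\to c\,S^{(k_0)}_Tv\ne0$, while simultaneously $TD^\alpha u_\varepsilon=\op{OPT}(\tilde s)u_\varepsilon\to0$ — a contradiction. Thus (iv) forces $S^{(k)}_T=0$ for $k<m$. The one delicate point is exactly this peeling of the genuine boundary term $S^{(k_0)}_T\gamma_0$ off the $\op{OPT}$-part; since the $H^r$-characterisation of the class is established only later (partly by means of this proposition), the separation must be done by hand, and the concentrating sequence $u_\varepsilon$ is the cleanest device I know for it. The rest is bookkeeping with integration by parts and the $\cal H^\pm$-calculus.
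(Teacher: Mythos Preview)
Your proposal addresses only part $3^\circ$; for that part it is correct, and the handling of (i)$\Leftrightarrow$(ii) and (i)$\Leftrightarrow$(iii) coincides with the paper's (the $\cal H^-$-expansion plus the operator--symbol bijection, \cite[18.1]{H}).

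The real difference lies in (iii)$\Leftrightarrow$(iv). The paper disposes of this by citing \cite[Prop.~2.6]{G2} in its uniform version, whereas you prove it from scratch. Your (iii)$\Rightarrow$(iv) via integration by parts is exactly the computation underlying that cited result. Your (iv)$\Rightarrow$(iii), however, is a genuinely different device: rather than appealing to an external structure theorem, you separate the class-$0$ bulk $\op{OPT}(\cdot)$ from the boundary contribution $S^{(k_0)}_T\gamma_0$ by testing on the concentrating family $u_\varepsilon=v\otimes w(\cdot/\varepsilon)$, for which every $\op{OPT}$-term is $O(\varepsilon)$ while $\gamma_0 u_\varepsilon\equiv v$. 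This is clean and self-contained, and it is in the same spirit as the counterexample sequences the paper itself deploys later (Lemma~\ref{gamm2-lem} and the class-reduction step in Theorem~\ref{trac-thm}) to detect nonzero $S_j\gamma_j$ terms from continuity. What you gain is independence from \cite{G2} and an explicit proof that the decomposition $T'=\sum S_j\gamma_j+\op{OPT}(\tilde t_0)$ is unique; what the paper's citation buys is brevity and a direct link to the standard calculus literature.

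One small remark: your integration-by-parts bookkeeping tacitly uses that $\op{OP}'(\xi'^{\alpha'}\sigma(x',\xi'))v=\op{OP}'(\sigma)(D^{\alpha'}_{x'}v)$, so the tangential factor indeed commutes past to give $S^{(j)}_T\gamma_{\alpha_n-1-j}D^{\alpha'}_{x'}u$ as claimed. And note that the proposition also contains $1^\circ$ and $2^\circ$, which your proposal does not touch.
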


\begin{proof} $1^\circ$ According to Proposition~\ref{uttr-prop} there is
a decomposition
 \begin{equation}
 q(x',\xi)=\sum_{0\le j\le d} s_j(x',\xi')\xi_n^j+
 h_{-1,\xi_n}q(x',\xi).
 \label{53}
 \end{equation}
Here $\gamma_0\op{OP}(\sum s_j\xi_n^j)_{+}u$ equals 
$\sum \op{OP}'(s_j)\gamma_ju$, because
$s_j(x',\xi')\xi_n^j$ is a polynomial in $\xi$ (so
$\op{OP}(s_j\xi_n^j)$ acts on $\cal S'(\Rn)$ as a differential operator).

Thus $q=h_{-1}q$ can be assumed. Any $\eta\in C^\infty(\R)$ with
$\eta(t)=0$ for $t<\tfrac{1}{2}$ and $\eta=1$ for $t>1$ can now be
used to approximate $e^+u$ in $\cal S'(\Rn)$ by 
$\eta(kx_n)u\in\cal S(\Rn)$ for $k\in\N$, so 
 \begin{equation}
 \dual{\op{OP}(q)e^+u}{\psi}=\lim_{k\to\infty} \iint e^{ix\cdot\xi}
 q(x',\xi)\cal F(\eta(k\cdot)e^+u)(\xi)\psi(x)\,\dbar\xi dx,
 \label{54}
 \end{equation}
when $\psi\in\cal S(\R^{n})$. By Fubini's theorem the $\xi_n$ 
variable can be integrated first, and since
 \begin{equation}
 \dual{\tfrac{e^{ix_n\xi_n}}{2\pi}q(\xi_n)}{ \cal
 F_{y_n\to\xi_n}(\eta(ky_n) e^+\acute u(y_n))}=
 \dual{(\overline{\cal F}^{-1}_{\xi_n\to y_n}q)(\cdot-x_n)}{
 \eta(k\cdot)e^+\acute u(\cdot)}
 \label{55}
 \end{equation}
for each $x'$ and $\xi'$ when $\dual{\cdot}{\cdot}$ denotes the duality
between $\cal S'(\R)$ and $\cal S(\R)$, it is found that
 \begin{align}
 \dual{\op{OP}(q)e^+u}{\psi}&=
 \smash[b]{ \lim_k \iiint \! e^{ix'\cdot\xi'}  }
 (\overline{\cal F}^{-1}_{\xi_n\to y_n}q)(x',y_n-x_n,\xi')
 \label{56} 
 \\[-1\jot]
 &\hphantom{ =\lim_k \iiint \!e^{ix\cdot\xi}
             (\overline{\cal F}^{-1}_{\xi_n\to } }
 \times \eta(ky_n)e^+\acute u(\xi',y_n)\psi(x)\,dy_n\dbar\xi' dx 
 \notag \\
 &=\,\dual{\iint \!e^{ix'\cdot\xi'}
 (\overline{\cal F}^{-1}_{\xi_n\to y_n}q)(x',y_n-x_n,\xi')
 e^+\acute u(\xi',y_n)\,dy_n\dbar\xi'}{\psi}.
 \notag
 \end{align}
Indeed, the limit is calculated by a majorisation using that
 \begin{equation}
 \begin{gathered}
 |\overline{\cal F}^{-1}_{\xi_n\to y_n}q|(x',z_n,\xi')\le
 C\ang{\xi'}^{d+1}\le C\ang{\xi'}^{2|d+1|}, \\
 \begin{aligned}
 |\ang{\xi'}^{l}\ang{y_n}^{2}\acute u(\xi',y_n)|&\le
 \sup_{y_n}\int|(1-\lap_{y'})^{n+|d+1|}(1+y_n^2)u(y',y_n)|\,dy'
 \\
 &\le\,C\norm{u}{\cal S(\Rp^n),l}\int\ang{y'}^{-2n}\,dy'.
 \end{aligned}  
 \end{gathered}
 \label{57}
 \end{equation}
when $l=2(n+|d+1|)$. This procedure shows, in fact, that 
 \begin{equation}
 L_\infty(\Rn)\ni \iint e^{ix'\cdot\xi'}
 (\overline{\cal F}^{-1}_{\xi_n\to y_n}q)(x',y_n-x_n,\xi')
 e^+\acute u(\xi',y_n)\,dy_n\dbar\xi',
 \label{58}
 \end{equation}
which justifies the last relation in \eqref{56}. Hence this function 
equals $\op{OP}(q)e^+u$.

However, by application of \eqref{57} it is seen that
$r^+\op{OP}(q)e^+u$ is continuous in $x\in\Rp^n$ (since the integrand
in \eqref{58} converges a.e.\  when $x$ belongs to a convergent sequence).
Thus $r^+\op{OP}(q)e^+u$ may be restricted to a hyperplane
$\{\,x_n=a\,\}$, $a>0$, and the limit for $x_n\to0_+$ of this continuous
function of $x'$ is calculated for $\varphi\in C^\infty_0(\R^{n-1})$
by majorised convergence (using \eqref{57}):
 \begin{equation}
 \begin{aligned}
 \dual{\op{OP}(q)e^+u(\cdot,x_n)}{\varphi}&=
 \smash[b]{ \iiint \!e^{ix'\cdot\xi'} }
 (\overline{\cal F}^{-1}_{\xi_n\to y_n}q)(x',y_n-x_n,\xi') \\
 &\hphantom{=\iiint \!e^{ix\cdot\xi}{\cal F}
           }
 \times  e^+\acute u(\xi',y_n)\varphi(x')\,dy_n\dbar\xi'dx' \\
 & \xrightarrow[\;x_n\to0_+\;]{~} \dual{\op{OPT}(r^+\overline{\cal F}^{-1}_{\xi_n\to
 y_n}q(x',\xi)u}{\varphi}.
 \end{aligned}
 \label{59}
 \end{equation}
It follows that $\gamma_0\op{OP}(q)_{+}u=\op{OPT}(r^+\overline{\cal
F}^{-1}_{\xi_n\to y_n}q)u$.

$2^\circ$ The symbol $p(x',\xi)$ can be taken as $\overline{\cal
F}_{y_n\to\xi_n}\tilde p(x',y_n,\xi')$, where $\tilde p$ denotes a
Seeley extension to $y_n<0$ of $\tilde t(x',y_n,\xi')$, cf.\ 
Proposition \ref{pois-prop} $3^\circ$ or \cite{G3}. 
Observe that the smoothness of
$\tilde p$ in $y_n$ implies that $h_{-1,\xi_n}p=p$, so that
$\gamma_0\op{OP}(p)_{+}u=\op{OPT}(\tilde t)u$ by \eqref{52}.

Let $K$ denote the Poisson operator with the symbol-kernel 
$\tilde k=\tilde t^*$ as in the proposition. For $u\in\cal S(\Rp^n)$
and $\psi\in\cal S(\R^{n-1})$ we have
 \begin{equation}
 \dual{Tu}{\overline{\psi}}=\dual{\int_0^\infty
  \op{OP}'(\tilde t (\cdot,y_n,\cdot))\acute u(\cdot,y_n)
              \,dy_n}{\overline{\psi}}
 \label{60}
 \end{equation}
since the majorisation
 \begin{multline}
 |\tilde t(x',y_n,\xi')\acute u(\xi',y_n)|\le
 2\norm{\tilde t}{S^{d}_{1,0},2}\ang{\xi'}^{2|d+1|}\ang{y_n}^{-2}
 |\acute u(\xi',y_n)|
 \\
 \le 2\norm{\tilde t}{S^{d}_{1,0},2}
  \int_{\Rn_+}(1-\lap_{y'})^{n+|d+1|}u(y)\,dy
  \cdot \ang{\xi'}^{-2n}\ang{y_n}^{-2} 
 \label{60'}
 \end{multline}
allows a change in the order of integration in the definition of $Tu$. Then 
 \begin{equation}
 \dual{Tu}{\overline{\psi}}=\int_0^\infty
  \dual{\op{OP}'(\tilde t (\cdot,y_n,\cdot))\acute u(\cdot,y_n)
    }{\overline{\psi}}\,dy_n,
 \label{60''}
 \end{equation}
for $(\op{OP}'(\tilde t(\cdot,y_n,\cdot))u(\cdot,y_n))(x')\overline{\psi}(x')$
has by \eqref{60'} a majorant that is integrable with respect to $(x',y_n)$.
However, with $(\cdot,\cdot)=\dual{\cdot}{\overline{\,\cdot\,}}$ it is
found from \eqref{60''} that
 \begin{multline}
 \dual{Tu}{\overline{\psi}}=\int_0^\infty
 (u(\cdot,y_n),\op{OP}'(e^{iD_{x'}\cdot D_{\xi'}}
     \overline{\tilde t}(\cdot,y_n,\cdot))\psi)\,dy_n
 \\ 
 =\int_{\Rn_+}u(x)\overline{K\psi(x)}\,dx =\dual{e^+u}{\overline{K\psi}},
 \label{61}
 \end{multline}
and this shows that $Tu=K^*e^+u$ for $u\in\cal S(\Rp^n)$.

Moreover, since $e^+C^\infty_0(\Rn_+)$ is dense in $\cal S'_0(\Rp^n)$
it follows from this relation that the continuous operator 
$K^*\colon\cal S'_0(\Rp^n)\to\cal S'(\R^{n-1})$ is uniquely determined.

$3^\circ$ That $\text{(i)}\iff\text{(ii)}$ is clear from the $\cal
H$-theory, for $\overline{\cal F}\colon e^+\cal S(\Rp)\to\cal
H^-_{-1}$ is a bijection with the property that $s_{-1-k}$ of
$\overline{\cal F}e^+u$ equals $-\gamma_ku$, when $u\in\cal S(\Rp)$.

$\text{(i)}\Rightarrow\text{(iii)}$ is trivial, and when
$S^{(k)}_T=0$ one has $i\overline{D}^k_{y_n}\tilde
t(x',0,\xi')\equiv0$, since in the uniform calculus there is a
bijective correspondence between operators and symbols, cf.\ 
\cite[18.1]{H}. Hence $\text{(iii)}\Rightarrow\text{(i)}$.
$\text{(iii)}\iff\text{(iv)}$ since \cite[Prop.\ 2.6]{G2} is
valid also in the uniform case.
\end{proof}

The restriction to
$x_n$-independent symbols $q(x',\xi)$ in $1^\circ$ above was made
partly because this generality is sufficient for the
application in the proof of $2^\circ$; and partly because it requires
extra techniques to handle symbols $q(x,\xi)$, since a
decomposition like that in \eqref{53} holds only for $x_n=0$, then. 

\bigskip

It is an important result in Proposition~\ref{trac-prop} $2^\circ$
that for each trace operator $T$ of class 0
 \begin{equation}
 Tu=K^*e^+u,\quad\text{ for}\quad u\in\cal S(\Rp^n).
 \label{62'}
 \end{equation}
Indeed, this fact may be used together with Section~\ref{ext-ssect} 
to make the following

\begin{defn}  \label{trac-defn}
Let $u$ belong to $B^{s}_{p,q}(\Rp^n)$ or to
$F^{s}_{p,q}(\Rp^n)$ for some $(s,p,q)$ in $\Dm_0$ (with $p<\infty$ in the
Triebel--Lizorkin case), and let the trace operator $T$ have class~$0$
and symbol-kernel $\tilde t\in S^{d}_{1,0}(\cal S(\Rp))$.

Then the action of $T$ on $u$ is defined as $Tu=K^*e^+u$, whereby $K=
\op{OPK}(\tilde t^*)=\linebreak\op{OPK}(e^{iD_{x'}\cdot D_{\xi'}}\overline{\tilde t})$. 
\end{defn}

The justification is, of course, that the action of $K^*$ is determined 
by $T$. The definition is natural when compared to a pseudo-differential
operator $P\colon\cal S(\Rn)\to\cal S(\Rn)$ that is extended to $\cal
S'(\Rn)$ as $P=\op{OP}(e^{iD_x\cdot D_\xi}\overline{p})^*$, cf.~\eqref{-16}. 

In the following we shall for $(s,p,q)$ in $\Dm_0$
derive the continuity in \eqref{66} and \eqref{67} below for $r=0$.

\smallskip

The idea is to show that $K^*e^+$ for $d\le-1$ acts as
$\gamma_0P_+$, when $P=\op{OP}(p)$ is chosen according to $2^\circ$ in 
Proposition~\ref{trac-prop}. This is useful because
Lemma~\ref{gamm1-lem} and Theorem~\ref{psdo-thm} give the boundedness of
 \begin{equation}
 B^{s}_{p,q}(\Rp^n)\xrightarrow{\;P_+\;} B^{s-d}_{p,q}(\Rp^n)\xrightarrow{\;\gamma_0\;} 
 B^{s-d-\fracpi}_{p,q}(\R^{n-1})
 \label{62}
 \end{equation}
for every $(s,p,q)\in\Dm_0$, and similarly for the $F^{s}_{p,q}$
spaces.

For $(s_1,p_1,q_1)\in\Dm_0$ there is always an embedding
$B^{s_1}_{p_1,q_1}+F^{s_1}_{p_1,q_1}\hookrightarrow B^{s}_{p,q}$ where
$p$ and $q\in\,]1,\infty]$ and $\fracp-1<s<\fracp$. Thus it suffices to
check that $K^*e^+u=\gamma_0P_+u$ for $u\in B^{s}_{p,q}(\Rp^n)$ when $(s,p,q)$
satisfies these requirements. Moreover, since $e^+\colon
B^{s}_{p,q}(\Rp^n)\to B^{s}_{p,q;0}(\Rp^n)$ is bounded then, it
will be enough to check that $K^*=\gamma_0r^+\!P$ holds on
$B^{s}_{p,q;0}(\Rp^n)$ for the specified $(s,p,q)$.

To carry out this programme one can show that for $(s,p,q)\in\Dm_0$ with $p$ 
and $q\in\,]1,\infty]$ there is a commutative diagram
 \begin{equation}
  \begin{CD}
  B^{s}_{p,q;0}(\Rp^n) @>\iota>> \cal S'_0(\Rp^n) \\
  @V\gamma_0r^+P VV               @VVK^*V  \\
   B^{s-d-\fracpi}_{p,q}(\R^{n-1}) @>\iota>>  \cal S'(\R^{n-1})
 \end{CD}\,.
 \label{63}
 \end{equation}
However, when $e^+C^\infty_0(\Rn_+)\subset B^{s}_{p,q;0}(\Rp^n)$ is not dense, 
this is not trivial.

Note that $P^*$ given in $(x',y_n)$-form has the symbol
 \begin{equation}
 q(x',y_n,\xi')=e^{-iD_{x_n}D_{\xi_n}}e^{iD_{x}\cdot D_\xi}
 \overline{p}(x',\xi)=e^{iD_{x'}\cdot D_{\xi'}}\overline{p}(x',\xi),
 \label{63'}
 \end{equation}
which is $y_n$-independent. Hence the Poisson operator 
$r^+P^*(\cdot\otimes\delta_0)$ has the symbol-kernel 
 \begin{equation}
 r^+\cal F^{-1}_{\xi_n\to x_n}q(x',\xi)=
 e^{iD_{x'}\cdot D_{\xi'}}r^+\overline{\overline{\cal F}^{-1}_{\xi_n
 \to x_n}p(x',\xi)}= e^{iD_{x'}\cdot D_{\xi'}}
 \overline{\tilde t}=\tilde k.
 \label{63''}
 \end{equation}

One has that $B^{s}_{p,q;0}(\Rp^n)=(B^{-s}_{p',q'}(\Rp^n))'$, for
$\fracp+\tfrac{1}{p'}=1$ and $\fracc1q+\tfrac{1}{q'}=1$, so
\eqref{63} is obtained from the commutative diagram  
 \begin{equation}
  \begin{CD}
 B^{-s}_{p',q'}(\Rp^n) @<\iota<< \cal S(\Rp^n) \\
 @Ar^+P^*(\cdot\otimes\delta_0)AA       @AAKA  \\ 
 B^{-s+d+\fracpi}_{p',q'}(\R^{n-1})     @<\iota<< \cal S(\R^{n-1})
 \end{CD}
 \label{64}
 \end{equation}
by taking adjoints. Indeed, for $v\in B^{s}_{p,q;0}(\Rp^n)$ and $w\in
B^{-s+d+\fracpi}_{p',q'}(\R^{n-1})$
 \begin{equation}
 \dual{r^+P^*(w\otimes\delta_0)}{\overline{v}}=
 \dual{w\otimes\delta_0}{\overline{Pv}}=
 \dual{w}{\overline{\tilde\gamma_0Pv}};
 \label{65}
 \end{equation}
here the last relation is obtained by closure from the case with $w\in
C^\infty_0(\R^{n-1})$ and $v\in C^\infty(\Rp^n)\cap B^{s}_{p,q;0}$, for 
$q<\infty$ suffices and $\tilde\gamma_0$ makes sense on $P(B^{s}_{p,q;0})$ 
when $(s-d,p,q)\in\Dm_1$. By definition $\gamma_0r^+P=\tilde\gamma_0P$. 

For $d\le-1$ this shows \eqref{66} and \eqref{67}.
When $d>-1$ note that $T=\Xi^{'(d+1)}K_1^*e^+$
with $K_1:=\op{OPK}(\ang{\xi'}^{-d-1}\tilde k)=K\Xi^{'(-d-1)}$,
simply because $K_1^*=\Xi^{'(-d-1)}K^*$. 
Since $K_1^*e^+$ acts as a trace operator of order $-1$, 
it follows that the formulae hold also in this case (but still for $r=0$).

We shall now lift these considerations to a much stronger result, that
in special cases can be found in \cite{G3}. By and large we modify the
proofs there.
 
\begin{thm} \label{trac-thm} 
A trace operator $T$ of order $d\in\R$ and class $r\in\Z$ is continuous 
 \begin{alignat}{2}
 T&\colon B^{s}_{p,q}(\Rp^n)\to 
 \smash[t]{B^{s-d-\fracpi}_{p,q}(\R^{n-1}),
          }
 &\quad\text{ for}\quad &(s,p,q)\in \Dm_r, 
 \label{66} \\
 T&\colon F^{s}_{p,q}(\Rp^n)\to F^{s-d-\fracpi}_{p,p}(\R^{n-1}),
 &\quad\text{ for}\quad &(s,p,q)\in \Dm_r, 
 \label{67}
 \end{alignat}
when in \eqref{67} also $p<\infty$ holds.

Moreover, if $T$ is continuous from either $B^{s}_{p,q}(\Rp^n)$ or
$F^{s}_{p,q}(\Rp^n)$ to $\cal D'(\R^{n-1})$ for some 
$(s,p,q)\notin\overline{\Dm}_r$, 
then the class of $T$ is  $\le r-1$.

$T$ is continuous from $\cal S'(\Rp^n)$ to $\cal
S'(\R^{n-1})$ if and only if $T$ has class $-\infty$.
\end{thm}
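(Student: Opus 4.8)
The plan is to prove the three assertions in turn, relying on the structural results already established. \emph{For the continuity statements \eqref{66} and \eqref{67}:} I would argue by induction on the class $r$. The base case $r\le 0$ is essentially the content of the discussion preceding the theorem: for a class-$0$ trace operator the identity $Tu=K^*e^+u$ of Proposition~\ref{trac-prop}~$2^\circ$ together with Definition~\ref{trac-defn} reduces the matter, via the embedding $B^{s_1}_{p_1,q_1}+F^{s_1}_{p_1,q_1}\hookrightarrow B^{s}_{p,q}$ into a Banach-space case with $p,q\in\,]1,\infty]$ and $\fracp-1<s<\fracp$, to the commutative diagram \eqref{63}, which is obtained by duality from \eqref{64}; in that range $e^+$ maps into $B^{s}_{p,q;0}(\Rp^n)$ by Section~\ref{ext-ssect}, and $\gamma_0 r^+P$ is bounded by the composition \eqref{62}, using Lemma~\ref{gamm1-lem} and Theorem~\ref{psdo-thm}. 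For negative class one invokes the definition of class $-m$ via condition (iv) of Proposition~\ref{trac-prop}~$3^\circ$ (or simply notes that the $\Dm_r\supset\Dm_0$ inclusion and the class-$0$ result already give a valid, if non-optimal, range). The inductive step $r\ge 1$ uses the decomposition \eqref{50}: the part $T_0$ of class $\le 0$ is handled by the base case, while each summand $S_j\gamma_j u$ is bounded by composing $\gamma_j\colon B^{s}_{p,q}(\Rp^n)\to B^{s-j-\fracpi}_{p,q}(\R^{n-1})$ from Lemma~\ref{gamm1-lem} (valid for $(s,p,q)\in\Dm_{j+1}\supset\Dm_r$ since $j<r$) with the mapping property of the pseudo-differential operator $S_j=\op{OP}'(s_j)$, namely $S_j\colon B^{s-j-\fracpi}_{p,q}\to B^{s-d-\fracpi}_{p,q}$ by \eqref{i14}, and similarly in the $F$-scale (noting $j<r$ gives $(s,p,q)\in\Dm_{j+1}$, which is what \eqref{-50},\eqref{-51} require).

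\emph{For the necessity statement:} suppose $T$ is continuous from $B^{s}_{p,q}(\Rp^n)$ (or the $F$-analogue) into $\cal D'(\R^{n-1})$ for some $(s,p,q)\notin\overline{\Dm}_r$; I must show the class of $T$ is $\le r-1$, i.e.\ all the operators $S_0,\dots,S_{r-1}$ of \eqref{50} vanish and the remaining class-$0$ part is actually of class $\le -1$ (equivalently $S^{(0)}_{T_0}=0$ in the notation of Proposition~\ref{trac-prop}~$3^\circ$), i.e.\ $T$ has the form \eqref{50} with $r$ replaced by $r-1$. The tool is Lemma~\ref{gamm2-lem}: the sequences $u_k$ and $v_k$ constructed there give, for suitable $u$ and $z'$, families that tend to $0$ in $B^{s}_{p,q}(\Rp^n)$ or $F^{s}_{p,q}(\Rp^n)$ when $(s,p,q)\notin\overline{\Dm}_r$ (the thresholds $s=\fracp-1+r$ on one side and $s=\fracc np-(n-1)+r-1=\fracc np-n+r$ on the other are exactly the boundary of $\Dm_r$ after the shift by $r$), yet whose images $Tu_k$, $Tv_k$ do \emph{not} converge to $0$ in $\cal D'(\R^{n-1})$ unless the top-order trace coefficient, i.e.\ $S_{r-1}$ (or $S^{(0)}$ when $r=0$), already vanishes. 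Plugging $u_k$ (resp.\ $v_k$) into \eqref{50} and using $\tilde\gamma_j u_k = \delta_{j,r-1}u$ (up to the normalisation in the construction of the auxiliary $w$) isolates $S_{r-1}u$, which must then be $0$; varying $u$ over $\cal S(\R^{n-1})\setminus\{0\}$ forces $S_{r-1}=0$. One then iterates: with $S_{r-1}=0$ the operator $T$ has class $\le r-1$, and if still $(s,p,q)\notin\overline{\Dm}_{r-1}$ the argument repeats; but in fact the single hypothesis $(s,p,q)\notin\overline{\Dm}_r$ already forces, by the same counterexamples applied with the index $j=r-1$, the vanishing of the top coefficient, which is the assertion. (The $p=\infty$ subtlety is circumvented by the Sobolev embedding $B^{\fracci1r+j}_{r,q}\hookrightarrow B^{s}_{\infty,q}$ reduction used in the proof of Lemma~\ref{gamm1-lem}.)

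\emph{For the $\cal S'$-characterisation:} if $T$ has class $-\infty$, then by \eqref{50} (with empty sum) and Proposition~\ref{trac-prop}~$3^\circ$(ii) the symbol-kernel satisfies $\tilde t(x',\cdot,\xi')\in\cal S_m(\Rp)$ for every $m$, so $t(x',\xi',\xi_n)$ decays in $\xi_n$ faster than any power; combined with the order being effectively $-\infty$ one gets, via the mapping properties of $\op{OPT}$ from Proposition~\ref{S-prop} and the continuity $K^*\colon\cal S'_0(\Rp^n)\to\cal S'(\R^{n-1})$, that $T=K^*e^+$ extends continuously from $\cal S'(\Rp^n)$ (on which $e^+$ makes sense, as $\supp$ is unrestricted, giving a map into $\cal S'_0(\Rp^n)$) to $\cal S'(\R^{n-1})$. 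Conversely, if $T\colon\cal S'(\Rp^n)\to\cal S'(\R^{n-1})$ is continuous, then in particular it is continuous from $B^{s}_{p,q}(\Rp^n)\hookrightarrow\cal S'(\Rp^n)$ into $\cal S'(\R^{n-1})\supset\cal D'(\R^{n-1})$ for \emph{every} $(s,p,q)$; since no $(s,p,q)$ lies in $\overline{\Dm}_r$ for all $r\in\Z$ simultaneously, the necessity statement just proved forces the class of $T$ to be $\le r-1$ for every $r$, i.e.\ $-\infty$.

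\emph{Main obstacle.} The delicate point is the necessity direction: one must extract from the mere boundedness of $T$ into the large space $\cal D'(\R^{n-1})$ the precise algebraic conclusion that the top-order trace coefficient vanishes, and this requires the counterexample sequences of Lemma~\ref{gamm2-lem} to be \emph{exactly} calibrated to the boundary of $\Dm_r$ --- including the borderline rows $s=\fracp-1+r$ with $q>1$ (Besov) or $1<p<\infty$ (Triebel--Lizorkin), and the row $s=\fracc np-n+r$ with $p<1$, where one must use the $v_k$ rather than the $u_k$, and handle the failure of $\cal D'$-convergence by testing against a fixed $\varphi\in C_0^\infty$. Keeping track of which of $u_k$, $v_k$ does the job in which part of the complement of $\overline{\Dm}_r$, and the $p=\infty$ reduction, is where the real care is needed; the continuity half and the $\cal S'$-characterisation are, by contrast, fairly direct assembly of the preceding machinery.
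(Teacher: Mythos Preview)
Your plan is largely on track for the continuity and the converse $\cal S'$-direction, but there are two genuine gaps.

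\medskip

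\textbf{The $\cal S'$-sufficiency.} Your claim that ``$e^+$ makes sense on $\cal S'(\Rp^n)$, as $\supp$ is unrestricted, giving a map into $\cal S'_0(\Rp^n)$'' is false. Extension by zero is \emph{not} defined on all of $\cal S'(\Rp^n)$: Section~\ref{ext-ssect} constructs $e^+$ only on $B^{s}_{p,q}(\Rp^n)$ and $F^{s}_{p,q}(\Rp^n)$ with $\max(\fracp-1,\fracc np-n)<s<\fracp$, and for general tempered distributions on $\Rn_+$ there is no canonical extension supported in $\Rp^n$. The paper's argument bypasses $e^+$ altogether: since $T$ has class $-\infty$, the symbol-kernel $\tilde k=\tilde t^*$ of $K$ vanishes to infinite order at $x_n=0$, so $e^+K$ is continuous $\cal S(\R^{n-1})\to\cal S_0(\Rp^n)$. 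One then checks $\dual{u}{\overline{e^+Kv}}=\dual{Tu}{\overline v}$ for $u\in C^\infty(\Rp^n)$ and extends by density to obtain $T\subset(e^+K)^*\colon\cal S'(\Rp^n)\to\cal S'(\R^{n-1})$. The point is to dualize $e^+K$ rather than to factor $T$ through $e^+$.

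\medskip

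\textbf{The necessity argument.} Your assertion $\tilde\gamma_j u_k=\delta_{j,r-1}u$ is not what Lemma~\ref{gamm2-lem} gives: the construction fixes $\tilde\gamma_{r-1}u_k=u$, but the lower-order traces $\tilde\gamma_j u_k$ for $j<r-1$ are not zero (they merely tend to $0$). The paper's route is cleaner and avoids this: write $S_{r-1}\gamma_{r-1}=T-\sum_{j<r-1}S_j\gamma_j-T_0$. Each subtracted term has class $\le r-1$, hence is continuous on the given space (which lies in $\Dm_{r-1}$ once one embeds into a suitable borderline space such as $B^{\fracci np-n+r}_{p,2}$ for $p<1$), so $S_{r-1}\gamma_{r-1}$ itself is continuous there. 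Then the sequences $v_k$ (for $p<1$) or $u_k$ (for $1\le p<\infty$) from Lemma~\ref{gamm2-lem} give $S_{r-1}\gamma_{r-1}v_k\to 0$ in $\cal D'$ while $\gamma_{r-1}v_k\to\delta_{z'}$, forcing $S_{r-1}\delta_{z'}=0$ for every $z'$, whence $S_{r-1}=0$. The iteration then runs down to $j=r_1$, and for $r_1<0$ one applies the same reasoning to each $TD^\alpha$ (which is a trace operator continuous from $B^{s+m}_{p,q}$ with $(s+m,p,q)\in\Dm_0$) and invokes Proposition~\ref{trac-prop}~$3^\circ$(iv).

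\medskip

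A smaller point: for the continuity in the case $r=-m<0$ you should spell out that one uses the elliptic decomposition $u=\sum_{|\alpha|\le m}D^\alpha v_\alpha$ with $v_\alpha\in B^{s+m}_{p,q}$ (cf.~\eqref{68}), so that $Tu=\sum TD^\alpha v_\alpha$ with each $TD^\alpha$ of class $0$; merely ``invoking (iv)'' does not by itself produce the map on the larger domain $\Dm_r\supsetneq\Dm_0$.
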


\begin{proof} For $r=0$ the proof of the first part has been conducted 
above, and for $r>0$ one can treat the sum in \eqref{50} by use of
Lemma~\ref{gamm1-lem} and \eqref{i14}. Operators of
negative class $r=-m$, where $m\in\N$, can be handled with 
Proposition~\ref{trac-prop} $3^\circ$(iv) as the point of departure: 
for $u$ in $B^{s_1}_{p_1,q_1}(\Rp^n)$ respectively $F^{s_1}_{p_1,q_1}(\Rp^n)$
and $(s_1,p_1,q_1)$ arbitrary there is a decomposition, 
 \begin{equation}
 u=\sum_{|\alpha|\le m} D^\alpha v_\alpha, \quad\text{ where each
 $ v_\alpha\in B^{s_1+m}_{p_1,q_1}(\Rp^n)$ resp.\  
  $F^{s_1+m}_{p_1,q_1}(\Rp^n)$},
 \label{68} 
 \end{equation}
in such a way that each operator $u\mapsto v_\alpha$ is bounded from
$B^{s_1}_{p_1,q_1}$ to $B^{s_1+m}_{p_1,q_1}$ and from 
$F^{s_1}_{p_1,q_1}$ to $F^{s_1+m}_{p_1,q_1}$. (As usual this can
be seen by expansion of the identity $1=(1+\xi_1^2+\dots+\xi_n^2)^m
\ang{\xi}^{-2m}$.)
Since $(s+m,p,q)\in \Dm_0$ and $TD^\alpha$ is of class 0, it follows
that $u\mapsto \sum_{|\alpha|\le m}TD^\alpha v_\alpha$ has the 
boundedness properties in \eqref{66} and \eqref{67}. For
$(s_1,p_1,q_1)\in\Dm_0$ this operator equals $T$ on $B^{s_1}_{p_1,q_1}$ and
$F^{s_1}_{p_1,q_1}$ that are dense in $B^{s}_{p,q}$ and $F^{s}_{p,q}$ when
$q<\infty$. Hence this extension is unique.

On the other hand, let $T$ be continuous on $B^{s}_{p,q}(\Rp^n)$ 
for some $(s,p,q)\in \Dm_{r_1}\!\setminus\overline{\Dm}_r$ such that
$(s,p,q)\notin \overline{\Dm}_{r_1+1}$ (the
argument is the same in the Triebel--Lizorkin case). 
If $r>r_1\ge0$ the operator $T$ has the form in \eqref{50}, so 
$S_{r-1}\gamma_{r-1}=T-\sum_{0\le j<r-1} S_j\gamma_j-T_0$.
Since the case $p<1$ is a novelty we begin with this. 

Obviously 
$T\colon B^{\fracci np -n+r}_{p,2}\to\cal D'(\R^{n-1}) $ is
continuous, hence $S_{r-1}\gamma_{r-1}$ has the same property and we
shall deduce from this fact that $S_{r-1}=0$.
According to Lemma~\ref{gamm2-lem} there exists for each $z'\in\R^{n-1}$ a
sequence $v_k\in\cal S(\Rp^n)$ such that $v_k\to0$ in
$B^{\fracci np-n+r}_{p,2}(\Rp^n)$ while $\gamma_{r-1}v_k\to
\delta_{z'}$ in $\cal S'(\R^{n-1})$. 
Because of the continuity of $S_{r-1}\gamma_{r-1}$
and of $S_{r-1}$ this implies $S_{r-1}\delta_{z'}=0$. Since
$z'\in\R^{n-1}$ is arbitrary, the identity
 \begin{equation}
 0=\dual{S_{r-1}\delta_{z'}}{\overline{\psi}}=
       \dual{\delta_{z'}}{\overline{S^*_{r-1}\psi}},
 \quad\text{ where}\quad\psi\in\cal S(\R^{n-1}),
 \label{69}
 \end{equation}
gives that both $S^*_{r-1}$ and $S_{r-1}$ are equal to 0. In the case $1\le
p\le\infty$ one concludes that the operator
$S_{r-1}\gamma_{r-1}\colon B^{\fracpi
+r-1-\varepsilon}_{p,1}\to\cal D'(\R^{n-1})$ is continuous for some
$\varepsilon>0$, and then it is inferred from Lemma \ref{gamm2-lem} that
$S_{r-1}$ is 0.

This procedure can be repeated until all the terms in \eqref{50} with
$j\ge r_1$ are shown to be 0; then $T$ is of class $r_1$. If
$r_1=-m<0$, $0\le |\alpha|\le m$, the operators $TD^\alpha$ are 
trace operators, cf.\  \cite{G2}, that are continuous 
$TD^\alpha\colon B^{s+m}_{p,q}(\Rp^n)\to B^{s-d-\fracpi}_{p,q}(\R^{n-1})$,
and since $(s+m,p,q)\in \Dm_0$, each $TD^\alpha$ is of class zero
according to the preceding argument. By use of Proposition
\ref{trac-prop} $3^\circ$(iv) this implies that $T$ is of class $-m=r_1$.

If $T=K^*e^+$ has class $-\infty$, the symbol-kernel of $K$ vanishes
of infinite order at $x_n=0$, implying that $e^+K$ is bounded $\cal
S(\R^{n-1})\to \cal S_0(\Rp^n)$. For $u\in C^\infty(\Rp^n)$ and $v\in\cal
S(\R^{n-1})$ we see that $\dual{u}{\overline{e^+Kv}}=\int u\overline{Kv}
=\dual{e^+u}{\overline{Kv}}=\dual{Tu}{\overline{v}}$. 
Because $C^\infty(\Rp^n)$ is dense in $B^{-N}_{\infty,1}(\Rp^n)$ 
for any $N>0$ and in $\cal S'(\Rp^n)$ it follows that  
$T\subset(e^+\!K)^*\colon\cal S'(\Rp^n)\to\cal S'(\R^{n-1})$ 
with uniqueness of the extension.
Conversely, if $T$ is continuous from $\cal S'(\Rp^n)$, it is 
so from $F^{-N}_{2,2}$, hence of class $-N$, for any $N$.
\end{proof}

The statement above is somewhat more general than the corresponding
one in \cite{F2}. First of all because Definition~\ref{trac-defn}
allowed the inclusion of the case $p=\infty$ without assuming that the 
operator is  properly supported. Secondly the
result for operators of class $-\infty$ seems to be new. Moreover, there are
the limitations on the $(s,p,q)$ parameters in terms of $T$'s class 
(that except for the sharpness generalise the corresponding ones in \cite{G3}).

Like for the Poisson operators the operator norms are estimated.
\begin{cor} \label{trac-cor} 
For each trace operator $T=\op{OPT}(\tilde t)$ of class 0  and order
$d$ the operator norms in \eqref{66} and \eqref{67} satisfy the inequality
 \begin{equation}
 \norm{T}{\Bbb L(B^{s}_{p,q},B^{s-d-\fracpi}_{p,q})}+
 \norm{T}{\Bbb L(F^{s}_{p,q},F^{s-d-\fracpi}_{p,p})}\le
 c\norm{\tilde t}{S^{d}_{1,0},j}
 \label{13''}
 \end{equation}
for some $(s,p,q)$-dependent $c<\infty$ and $j\in\N$ 
(when the $F$-term is omitted for $p=\infty$).
\end{cor}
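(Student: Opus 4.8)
The plan is to repeat, for the case $r=0$, the proof of Theorem~\ref{trac-thm} while keeping track of the operator norms; all the ingredients have been provided with a quantitative dependence on the symbol seminorms in the preceding results, either explicitly (as in Corollary~\ref{pois-cor}) or implicitly, and it only remains to assemble them.

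Assume first $d\le-1$. By Proposition~\ref{trac-prop}~$2^\circ$ (and the Seeley extension used there, cf.~Proposition~\ref{pois-prop}~$3^\circ$ and \cite{Se}) the operator $T=\op{OPT}(\tilde t)$ acts as the composite $\gamma_0P_+$ on every space with $(s,p,q)\in\Dm_0$, where $P=\op{OP}(p)$, $p=\overline{\cal F}_{y_n\to\xi_n}\tilde p$, and $\tilde p$ is a Seeley extension of $\tilde t$ to $y_n<0$. Since $\tilde t\mapsto\tilde p\mapsto p$ is a continuous linear map of the Fr\'echet space $S^{d}_{1,0}(\cal S(\Rp))$ into $S^{d}_{1,0,\op{uttr}}(\Rn\times\Rn)$, there are $c$ and $j'$ with $\norm{p}{S^{d}_{1,0},j'}\le c\,\norm{\tilde t}{S^{d}_{1,0}(\cal S(\Rp)),j}$ for some $j=j(j')$. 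Next, for each fixed $(s,p,q)\in\Dm_0$ one has $\norm{P_+}{\Bbb L(B^{s}_{p,q}(\Rp^n),B^{s-d}_{p,q}(\Rp^n))}\le c\,\norm{p}{S^{d}_{1,0},j'}$, and similarly for the $F$-spaces (with $c$, $j'$ depending on $(s,p,q)$): indeed the proof of Theorem~\ref{psdo-thm} builds $P_+$ from \eqref{i14} in the quantitative form used in the proof of Corollary~\ref{pois-cor} (see \cite{Y1}), from the boundedness of $e^+$ in \eqref{ext3}, from real and $+-$-interpolation (for which $\|\cdot\|_2\le\|\cdot\|_0^{1-\theta}\|\cdot\|_1^{\theta}$), and from Proposition~\ref{pois-prop}~$2^\circ$, each step being equipped with such a bound, and for fixed $(s,p,q)$ only finitely many steps (hence finitely many seminorms of $p$) enter. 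Finally $(s-d,p,q)\in\Dm_1$ because $d\le-1$, so by Lemma~\ref{gamm1-lem} the map $\gamma_0\colon B^{s-d}_{p,q}(\Rp^n)\to B^{s-d-\fracpi}_{p,q}(\R^{n-1})$, resp.~$\gamma_0\colon F^{s-d}_{p,q}(\Rp^n)\to F^{s-d-\fracpi}_{p,p}(\R^{n-1})$ (cf.~\eqref{-51}), is a fixed bounded operator. Composing the three estimates gives \eqref{13''} when $d\le-1$.

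For general $d$ one reduces to the case just treated. Writing $\tilde t^*=e^{iD_{x'}\cdot D_{\xi'}}\overline{\tilde t}$ and $K=\op{OPK}(\tilde t^*)$, so that $T=K^*e^+$, one has $K\,\Xi^{\prime(-d-1)}=\op{OPK}(\ang{\xi'}^{-d-1}\tilde t^*)$ by \eqref{-21}, whence $T=\Xi^{\prime(d+1)}T_1$ with $T_1$ the class~$0$ trace operator of order $-1$ whose symbol-kernel $\tilde t_1$ is determined by $\tilde t_1^*=\ang{\xi'}^{-d-1}\tilde t^*$. Since $(\,\cdot\,)^*$ is a homeomorphism of $S^{d}_{1,0}(\cal S(\Rp))$ and of $S^{-1}_{1,0}(\cal S(\Rp))$, and multiplication by $\ang{\xi'}^{-d-1}$ is continuous from $S^{d}_{1,0}(\cal S(\Rp))$ into $S^{-1}_{1,0}(\cal S(\Rp))$, Lemma~\ref{sk-lem} gives $\norm{\tilde t_1}{S^{-1}_{1,0},j}\le c\,\norm{\tilde t}{S^{d}_{1,0},j'}$. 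As $\Xi^{\prime(d+1)}$ is a fixed isomorphism of $B^{s+1-\fracpi}_{p,q}(\R^{n-1})$ onto $B^{s-d-\fracpi}_{p,q}(\R^{n-1})$ (resp.~of the corresponding spaces over $\R^{n-1}$ in the $F$-case), the bound for $T_1$ obtained above yields \eqref{13''} in general. The one point that really deserves attention is the quantitative form of Theorem~\ref{psdo-thm} invoked here: a closed-graph shortcut (the spaces are F-spaces, cf.~Remark~\ref{tvs-rem}) would require identifying limits of $P_+$'s on a dense subspace, which fails when $p=\infty$ or $q\in\{1,\infty\}$, so one must instead rely on the observation, made above, that every individual step in the proof of Theorem~\ref{psdo-thm} is itself quantitative in the seminorms of $p$.
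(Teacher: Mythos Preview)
Your argument is correct but takes a different and more laborious route than the paper. The paper applies the closed graph theorem directly to the linear map $\tilde t\mapsto\op{OPT}(\tilde t)$ from the Fr\'echet space $S^{d}_{1,0}(\cal S(\Rp))$ into the F-space $\Bbb L(B^{s}_{p,q},B^{s-d-\fracpi}_{p,q})$ (legitimate by Remark~\ref{tvs-rem}). The limit identification when $\tilde t_\nu\to\tilde t$ and $T_\nu\to T$ does \emph{not} go through $P_+$ and needs no density: with $\tilde k_\nu=e^{iD_{x'}\cdot D_{\xi'}}\overline{\tilde t}_\nu\to\tilde k$ (Lemma~\ref{sk-lem}) one has, for every $u\in B^{s}_{p,q}(\Rp^n)$ and $\psi\in\cal S(\R^{n-1})$, that $\dual{T_\nu u}{\psi}=\dual{e^+u}{\overline{K_\nu\psi}}\to\dual{e^+u}{\overline{K\psi}}=\dual{\op{OPT}(\tilde t)u}{\psi}$ because $K_\nu\psi\to K\psi$ in $\cal S(\Rp^n)$ by \eqref{S1}; hence $T=\op{OPT}(\tilde t)$. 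So your closing remark---that a closed-graph shortcut would founder on lack of density for $p=\infty$ or $q\in\{1,\infty\}$---is misplaced: the paper's identification is pointwise in $\cal S'(\R^{n-1})$ via the duality formula $T=K^*e^+$ from Definition~\ref{trac-defn}, and never touches dense subspaces of the domain. What your approach buys is an in-principle explicit bound (one could extract $c$ and $j$ from the chain of estimates), whereas the paper's argument is non-constructive but two lines long and avoids having to revisit the proof of Theorem~\ref{psdo-thm}.
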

\begin{proof} By means of the closed graph theorem, which is applicable
by Remark~\ref{tvs-rem}, it is easy to show
that, say, $S^{d}_{1,0}(\cal S(\Rp))\to 
\Bbb L(B^{s}_{p,q},B^{s-d-\fracpi}_{p,q})$ given by $\tilde t\mapsto
\op{OPT}(\tilde t)$ is continuous. Indeed, when $\tilde t_\nu\to\tilde t$
and $\op{OPT}(\tilde t_\nu)=:T_\nu\to T$ we let $\tilde k_\nu=
e^{iD_{x'}\cdot D_{\xi'}}\overline{\tilde t}_\nu$ and 
$K_\nu=\op{OPK}(\tilde k_\nu)$;
$\tilde k$ and $K$ are defined similarly. Then $\tilde k_\nu
\to\tilde k$ in $S^{d}_{1,0}(\cal S(\Rp))$ by \eqref{sk5}.
For $u\in B^{s}_{p,q}(\Rp^n)$ and $\psi\in\cal S(\R^{n-1})$ one has
 \begin{equation}
 \dual{Tu}{\psi} = \lim_\nu\dual{e^+u}{K_\nu\psi}
 =\dual{\op{OPT}(\tilde t)u}{\psi},
 \label{13'''}
 \end{equation}
when the limit is calculated using \eqref{S1}. 
This shows that $T=\op{OPT}(\tilde t)$.
\end{proof}

\begin{rem} \label{adj-rem} 
It should be emphasised that $K^*$ in the formula \eqref{62'} is 
the adjoint of $K\colon\cal S(\R^{n-1})\to\cal S(\Rp^n)$, and as such it is
continuous $K^*\colon\cal S'_0(\Rp^n)\to\cal S'(\R^{n-1})$. 
However, it is a result in the calculus that the trace operators
of order $d$ and class 0 constitute 
precisely the adjoints of the Poisson operators of order $d+1$.
Seemingly this contadicts the fact that $K^*$ acts on $\cal
S'_0(\Rp^n)$ whereas $T$ acts on spaces over $\Rn_+$.

But it is understood in the cited result that $K\colon 
F^{d+\frac12}_{2,2}(\R^{n-1})\to L_2(\Rn_+)$ is the operator that has a
trace operator $T\colon L_2(\Rn_+)\to F^{-d-\frac12}_{2,2}(\R^{n-1})$
as adjoint, cf.\ \cite[(1.2.34)]{G1}.

More generally, one can restrict the distributional adjoint $K^*$ to a 
bounded operator from $F^{s}_{p,q;0}(\Rp^n)$
to $F^{s-d-\fracpi}_{p,p}(\R^{n-1})$ when $(s,p,q)\in\Dm_0\!\setminus
\overline{\Dm}_1$ with $1<p,q\le\infty$. Then $e^+$ is a bijection 
$F^{s}_{p,q}(\Rp^n)\to F^{s}_{p,q;0}(\Rp^n)$, and by
Definition~\ref{trac-defn} the {\em composite\/} $K^*e^+$ is a
bounded trace operator $T\colon F^{s}_{p,q}(\Rp^n)\to 
F^{s-d-\fracpi}_{p,p}(\R^{n-1})$. Identifying $F^{s}_{p,q;0}(\Rp^n)$ 
with $F^{s}_{p,q}(\Rp^n)$ corresponds to omitting $e^+$ (as above
where $L_2(\Rn_+)'=L_2(\Rn_+)$), that is, to identify $K^*$ with $T$.

With this explanation the formula $Tu=K^*e^+u$ will be used 
throughout since the `$e^+$' there is a reminder
of the fact that $T$ need not be defined for every $u\in\cal S'(\Rp^n)$.
\end{rem}

\begin{rem} \label{trac-rem} 
Contrary to the treatment of the subscales $B^{s}_{p,p}$ and $F^{s}_{p,2}$
with $1<p<\infty$ in \cite{G3}, the borderline cases 
$s=r+\max(\fracp-1,\fracc np-n)$ for a trace operator of class $r$ are left 
open here. 

It is quite tempting, though, to treat these cases beginning with the 
analysis of $\gamma_0$ in Remark~\ref{gamm-rem}. However, since
$\gamma_0(B^{\fracpi}_{p,q})=L_p$ when $1\le p<\infty$, it is not even
sufficient to use the scales $B^{s}_{p,q}$ and $F^{s}_{p,q}$ 
simultaneously (which is required since $F^{0}_{p,2}=L_p$ for 
$1<p<\infty$); for $p=1$ it is necessary to go outside of these scales 
since $\gamma_0\colon B^{1}_{1,1}(\Rp^n)\to L_1(\R^{n-1})$ is surjective. 

Besides this, an investigation of the borderline cases for truncated\linebreak[5]
pseudo-differential operators $P_{+}$ would also have to be
carried out. Altogether the exposition would be heavily burdened by
consideration of these borderline cases, so this topic is left for the future.
\end{rem}

\subsection{Singular Green operators} \label{sgo-ssect}
A singular Green operator\,---\,abbreviated as a \sgo in the sequel\,---\,of 
{\em class} $r\in\Z$ and order $d\in\R$ is of the form
 \begin{equation}
 Gu(x')=\sum_{0\le j<r_+} K_j\gamma_ju(x')+G_0u(x'), 
\quad\text{ for}\quad u\in\cal S(\Rp^n),
 \label{70}
 \end{equation}
where each $K_j$ is a Poisson operator of order $d-j$.
Here $G_0=\op{OPG}(\tilde g_0)$  is the part of class $\le0$ with
$\tilde g_0\in S^{d-1}_{1,0}(\cal S(\Rpp))$, given as in \eqref{-23}.

$G$ is of class $r<0$  when (the sum is void and)
one of the~equivalent conditions in Proposition \ref{sgo-prop} $3^\circ$ 
below is satisfied.

Below some known basic results on s.g.o.s, including a
certain Laguerre expansion, shall be modified.
Concerning the conditions for negative class the reader is referred to
the same sources as in Subsection \ref{trac-ssect}. 

\begin{prop} \label{sgo-prop}
$1^\circ$ If $K_\nu$ and $T_\nu$ have symbol-kernels in 
$S^{d_1-1}_{1,0}(\cal S(\Rp))$ respectively $S^{d_2}_{1,0}(\cal S(\Rp))$ 
that for each $j$ and $N\in\N_0$ satisfy
 \begin{equation}
 \norm{\tilde k_\nu\circ\tilde t_\nu}{S^{d_1+d_2-1}_{1,0}(\cal S(\Rp)),j} 
 =\cal O(\nu^{-N})
 \quad\text{ for}\quad \nu\to\infty,
 \label{71} 
 \end{equation}
then the series $\sum_{\nu=0}^\infty \tilde k_\nu\circ\tilde t_\nu$ is rapidly
convergent in $S^{d_1+d_2-1}_{1,0}(\cal S(\Rpp))$ to a limit\linebreak
$\tilde g(x'\!,x_n,y_n,\xi')$, i.e.,
 \begin{equation}
 \norm{\tilde g-\sum_{\nu=0}^l \tilde k_\nu\circ\tilde t_\nu
       }{S^{d_1+d_2-1}_{1,0}(\cal S(\Rpp)),j}=
 \cal O(l^{-N})\quad\text{ for}  \quad l\to\infty,
 \label{73}
 \end{equation}
for each $j$ and $N$ in $\N_0$.

$2^\circ$ For $\tilde g(x',x_n,y_n,\xi')\in 
S^{d-1}_{1,0}(\cal S(\Rpp))$ there exist sequences
$(\tilde k_\nu(x',x_n,\xi'))$ in\linebreak $S^{d-\frac12}_{1,0}(\cal S(\Rp))$ 
and $(\tilde t_\nu(x',y_n,\xi'))$ in $S^{-\frac12}_{1,0}(\cal S(\Rp))$ 
such that \eqref{71} and \eqref{73} hold.

Moreover, for such sequences $(\tilde k_\nu)$ and $(\tilde t_\nu)$ one
has, when $G=\op{OPG}(\tilde g)$, $K_\nu=\op{OPK}(\tilde k_\nu)$
and $T_\nu=\op{OPT}(\tilde t_\nu)=L_\nu^*e^+$ (cf.\ \eqref{62'}), that
 \begin{equation}
 Gu=\sum_{\nu=0}^\infty K_\nu T_\nu u=\sum_{\nu=0}^\infty K_\nu L_\nu^*e^+u,
 \quad\text{ for}\quad u\in\cal S(\Rp^n),
 \label{74}
 \end{equation} 
with convergence of the series in $\cal S(\Rp^n)$.

Furthermore, $\sum_{0}^\infty K_\nu L_\nu^*$ converges weakly
on $\cal S'_0(\Rp^n)$ to $r^+G_1^*\colon\cal S_0'(\Rp^n)\to
\cal S'(\Rp^n)$, where the~\sgo $G_1$ has symbol-kernel
$\tilde g_1=e^{iD_{x'}\cdot D_{\xi'}}
\overline{\tilde g}(x',y_n,x_n,\xi')$.

$3^\circ$ Let $K^{(k)}_{G}=\op{OPK}(i\overline{D}^k_{y_n}
\tilde g(x',x_n,0,\xi'))$ 
for $k\in\N_0$, whenever $G$ is a class 0 \sgo with symbol-kernel 
$\tilde g(x',x_n,y_n,\xi')\in S^{d-1}_{1,0}(\cal S(\Rpp))$.
Then, for each $m\in\N_0$, the following conditions are equivalent:
 \begin{itemize}\addtolength{\itemsep}{\jot}
 \item[(i)] $\tilde g(x',x_n,\cdot,\xi')\in \cal S_m(\Rp)$ 
for each $x'$, $x_n$ and $\xi'$.
 \item[(ii)] $g(x',\xi',\xi_n,\eta_n):=\overline{\cal F}_{y_n\to\eta_n}
\cal F_{x_n\to\xi_n}e_{\R^2_{++}}\tilde g(x',x_n,y_n,\xi')\in
\cal H^+\hat\otimes\cal H^-_{-1-m}$ as a function of $(\xi_n,\eta_n)$, for
each $(x',\xi')$.
 \item[(iii)] $K^{(0)}_{G}=\dots=K^{(m-1)}_G=0$.
 \item[(iv)] $GD^\alpha$ is a \sgo of class 0 for each
$|\alpha|\le m$.
 \end{itemize}
In the affirmative case, $G$ is said to be of class $-m$, and when
this holds for every $m\in\N$, the~class of $G$ is said to be $-\infty$.
\end{prop}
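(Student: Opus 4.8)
The plan is to treat $1^\circ$, $2^\circ$ and $3^\circ$ in turn, reducing everything to the symbol-kernel calculus of Lemma~\ref{sk-lem}, the continuity of $\op{OPG}$ in Proposition~\ref{S-prop}, and the $\cal H$-calculus. For $1^\circ$, Lemma~\ref{sk-lem} already places each $\tilde k_\nu\circ\tilde t_\nu$ in the Fr\'echet space $S^{d_1+d_2-1}_{1,0}(\cal S(\Rpp))$, and \eqref{71} says that each of its seminorms is $\cal O(\nu^{-N})$ for every $N$; hence every seminorm of the series $\sum_\nu\tilde k_\nu\circ\tilde t_\nu$ is summable, so by completeness the series converges to a limit $\tilde g$, and bounding $\|\tilde g-\sum_{0}^{l}\tilde k_\nu\circ\tilde t_\nu\|_j=\|\sum_{l+1}^{\infty}\tilde k_\nu\circ\tilde t_\nu\|_j$ by the tail of a rapidly decreasing series gives \eqref{73}.

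For $2^\circ$ I would build the sequences by a Laguerre expansion in the normal variables, as in \cite{G2,G3}. Let $\varphi_\mu$ be the Laguerre functions (an orthonormal basis of $L_2(\Rp)$), put $\varphi_{\mu,\xi'}(t):=\ang{\xi'}^{1/2}\varphi_\mu(\ang{\xi'}t)$, and set $a_{\mu\kappa}(x',\xi'):=\int_0^\infty\!\int_0^\infty\tilde g\,\varphi_{\mu,\xi'}(x_n)\varphi_{\kappa,\xi'}(y_n)\,dx_n\,dy_n$, so that $\tilde g=\sum_{\mu,\kappa}a_{\mu\kappa}\,\varphi_{\mu,\xi'}(x_n)\,\varphi_{\kappa,\xi'}(y_n)$. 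Because $\tilde g\in S^{d-1}_{1,0}(\cal S(\Rpp))$ is Schwartz in $(x_n,y_n)$ uniformly in $(x',\xi')$, the coefficients satisfy $a_{\mu\kappa}\in S^{d}_{1,0}(\R^{n-1}\times\R^{n-1})$ with all seminorms $\cal O((1+\mu+\kappa)^{-M})$ for every $M$, while the scaled Laguerre factors contribute symbol-kernel seminorms growing only polynomially in their index. Re-enumerating $(\mu,\kappa)$ by a single index $\nu$ and setting $\tilde k_\nu:=a_{\mu\kappa}\,\varphi_{\mu,\xi'}(x_n)\in S^{d-1/2}_{1,0}(\cal S(\Rp))$, $\tilde t_\nu:=\varphi_{\kappa,\xi'}(y_n)\in S^{-1/2}_{1,0}(\cal S(\Rp))$ (the latter is $x'$-independent, so $\tilde k_\nu\circ\tilde t_\nu$ is the plain product by \eqref{sk3} and sums to $\tilde g$), the decay of the $a_{\mu\kappa}$ dominates the polynomial growth and yields \eqref{71}; $1^\circ$ then gives \eqref{73} with limit $\tilde g$. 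Formula \eqref{74} follows from the composition rule $\op{OPG}(\tilde k\circ\tilde t)=\op{OPK}(\tilde k)\op{OPT}(\tilde t)$ — verified directly from \eqref{-21}--\eqref{-23} by performing the tangential integration as an oscillatory integral, which reproduces \eqref{sk3} — together with the continuity \eqref{S3} of $\op{OPG}$ applied to the rapidly convergent series, while $T_\nu u=L_\nu^*e^+u$ is Proposition~\ref{trac-prop}$\,2^\circ$. For the last assertion I would observe that $L_\nu K_\nu^*$ is again a composition of a Poisson and a trace operator, namely $\op{OPG}\bigl((e^{iD_{x'}\cdot D_{\xi'}}\overline{\tilde t_\nu})\circ(e^{iD_{x'}\cdot D_{\xi'}}\overline{\tilde k_\nu})\bigr)$, and that by the $*$- and $\circ$-identities of Lemma~\ref{sk-lem} this symbol-kernel equals $(\tilde k_\nu\circ\tilde t_\nu)^\dagger$ with $(\cdot)^\dagger:=e^{iD_{x'}\cdot D_{\xi'}}\overline{(\cdot)}(x',y_n,x_n,\xi')$ (mirroring $(KT)^*=T^*K^*$); since $(\cdot)^\dagger$ is continuous and linear and $\sum_\nu\tilde k_\nu\circ\tilde t_\nu=\tilde g$ rapidly (here using \eqref{sk5}--\eqref{sk6} with the bounds on $\tilde k_\nu$, $\tilde t_\nu$), one gets $\sum_\nu L_\nu K_\nu^*=\op{OPG}(\tilde g^\dagger)=G_1$ strongly on $\cal S(\Rp^n)$; transposing and pairing with test functions then gives $\sum_\nu K_\nu L_\nu^*=r^+G_1^*$ weakly on $\cal S'_0(\Rp^n)$, the $r^+$ recording (as in Remark~\ref{adj-rem}) that values off $\Rp^n$ are irrelevant.

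For $3^\circ$ I would follow the pattern of Proposition~\ref{trac-prop}$\,3^\circ$. The equivalence (i)$\Leftrightarrow$(ii) is read off the $\cal H$-calculus (cf.\ \cite[Sect.\ 2.2]{G1}): the extension $e_{\R^2_{++}}$ followed by $\cal F_{x_n\to\xi_n}$ puts the $x_n$-dependence in $\cal H^+$ and $\overline{\cal F}_{y_n\to\eta_n}$ puts the $y_n$-dependence in $\cal H^-_{-1}$, with the principal-part coefficient $s_{-1-k}$ in $\eta_n$ equal to $-\gamma_k^{y_n}\tilde g(x',x_n,0,\xi')$, so membership in $\cal H^-_{-1-m}$ is precisely the vanishing of $\gamma_0^{y_n}\tilde g,\dots,\gamma_{m-1}^{y_n}\tilde g$, i.e.\ (i). The equivalence (i)$\Leftrightarrow$(iii) is the injectivity of the correspondence symbol-kernel $\mapsto\op{OPK}(\cdot)$ in the uniform calculus (cf.\ \cite[18.1]{H}): $K^{(k)}_G=\op{OPK}(i\overline D^k_{y_n}\tilde g(x',x_n,0,\xi'))=0$ forces $i\overline D^k_{y_n}\tilde g(x',x_n,0,\xi')\equiv0$, and conversely. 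For (iii)$\Leftrightarrow$(iv) I would integrate by parts $\alpha_n$ times in $y_n$ in \eqref{-23}, which expresses $GD^\alpha$ as a class-$0$ \sgo (of higher order) plus a sum of Poisson operators built from $K^{(0)}_G,\dots,K^{(\alpha_n-1)}_G$ composed with the traces $\gamma_0,\dots,\gamma_{\alpha_n-1}$; these extra terms vanish for all $|\alpha|\le m$ iff $K^{(0)}_G=\dots=K^{(m-1)}_G=0$, tangential derivatives producing only further class-$0$ terms. This is the \sgo analogue of \cite[Prop.\ 2.6]{G2}, valid unchanged in the uniform case; the statements about class $-m$ and $-\infty$ are then just terminology.

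The main obstacle is the Laguerre expansion in $2^\circ$: establishing the $x$-uniform symbol-kernel estimates in \eqref{71} for the scaled Laguerre factors and the super-polynomial decay of the coefficients $a_{\mu\kappa}$ of the (normal-variable) Schwartz function $\tilde g$, and verifying that the single-index re-enumeration of the double sum preserves these bounds. Everything else reduces to completeness of Fr\'echet spaces, the already-established composition and continuity formulas, and routine $\cal H$-calculus bookkeeping.
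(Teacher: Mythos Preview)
Your argument is correct; the main divergence from the paper is in the construction for $2^\circ$.

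For $1^\circ$ and $3^\circ$ you do essentially what the paper does. The paper also verifies in $1^\circ$ the composition formula $K_\nu T_\nu=\op{OPG}(\tilde k_\nu\circ\tilde t_\nu)$ by inserting a convergence factor $\chi(\varepsilon y')$ in the iterated integral and passing to the limit (your ``oscillatory integral'' remark covers this, but the paper spells it out). For $3^\circ$ the paper, like you, refers back to the trace-operator argument; it only adds the explicit observation that $\op{OPK}(\tilde k)=0$ forces $\tilde k=0$ by freezing $x_n$ and using the bijectivity of $\op{OP}'$ on $\R^{n-1}$.

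In $2^\circ$ the paper takes a \emph{single} Laguerre expansion, in $y_n$ only:
\[
\tilde g(x',x_n,y_n,\xi')=\sum_{\nu} b_\nu(x',x_n,\xi')\,\varphi_\nu(y_n,\ang{\xi'}),\qquad
\tilde k_\nu:=b_\nu,\quad \tilde t_\nu:=\varphi_\nu(\cdot,\ang{\cdot}),
\]
and then estimates $\norm{\tilde t_\nu}{S^{-1/2}_{1,0},j}=\cal O(\nu^{3j})$ directly from the Laguerre recursions, while the $\ell_{2,N}$-bound on the coefficient sequence $(b_\nu)$ gives $\norm{\tilde k_\nu}{S^{d-1/2}_{1,0},j}=\cal O(\nu^{-N})$ for every $N$. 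This avoids your double expansion in $(x_n,y_n)$ and the re-indexing of a double sum; since $\tilde t_\nu$ is $x'$-independent in either approach, nothing is gained by also expanding in $x_n$. Your version works, but the bookkeeping (polynomial growth from \emph{two} Laguerre factors against rapid decay of $a_{\mu\kappa}$, preserved under a bijection $\N_0^2\to\N_0$) is heavier than necessary.

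For the adjoint statement you use the identity $\tilde t_\nu^{\,*}\circ\tilde k_\nu^{\,*}=(\tilde k_\nu\circ\tilde t_\nu)^{*}$ to conclude directly that $\sum_\nu L_\nu K_\nu^{*}e^+=\op{OPG}(\tilde g^{*})=G_1$. That identity is correct (it is the symbol-kernel shadow of $(KT)^{*}=T^{*}K^{*}$, obtained from the $\psi$do identity $(p\#q)^{*}=q^{*}\#p^{*}$ by freezing $x_n,y_n$), but it is not among the formulas of Lemma~\ref{sk-lem}, so you should say how it follows. The paper sidesteps this: it estimates $\norm{\tilde t_\nu^{\,*}\circ\tilde k_\nu^{\,*}}{j}$ via \eqref{sk4}--\eqref{sk6} and the bounds on $\tilde k_\nu,\tilde t_\nu$, gets strong convergence of $\sum_\nu L_\nu K_\nu^{*}e^+$ to some $G_2$ on $\cal S(\Rp^n)$, and then identifies $r^+G_2^{*}$ with $r^+G_1^{*}$ on the dense set $e^+C^\infty_0(\Rn_+)$ using the separately proved relation $G\subset r^+G_1^{*}e^+$ (a direct Fubini computation). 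Your route is shorter once the $*$-$\circ$ compatibility is in hand; the paper's is more self-contained.
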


\begin{proof} 
$1^\circ$ Each composite $K_\nu T_\nu$ maps $\cal S(\Rp^n)$ into
itself by Proposition~\ref{S-prop}, and $K_\nu T_\nu u$ equals
 \begin{equation}
 \int e^{ix'\cdot\xi'}\tilde k_\nu(x',x_n,\xi')
 \iiint e^{iy'\cdot(\eta'-\xi')}\tilde t_\nu(y',y_n,\eta')\acute u(\eta',y_n)\,
 dy_n\dbar\eta'dy'\dbar\xi',
 \label{72}
 \end{equation}
for each $u\in\cal S(\Rp^n)$. First it is inferred that, with $\tilde k_\nu
\circ\tilde t_\nu$ given as in \eqref{sk3},
 \begin{equation}
 K_\nu T_\nu u=\iint e^{ix'\cdot\xi'}\tilde k_\nu\circ\tilde t_\nu
 (x',x_n,y_n,\xi')\acute u(\xi',y_n)\, dy_n\dbar\xi'.
 \label{72'}
 \end{equation}
The idea is to let the $y_n$-integration be the last one in \eqref{72}, 
and then apply the result for composition of two pseudo-differential
operators on $\R^{n-1}$; in this case for each parameter value $x_n$
and $y_n$. Then \eqref{72'} is obtained, for there one can integrate
in any order. 

In \eqref{72} a change of integration order needs a justification,
that can be obtained by inserting $x_n^l$ in front of $\tilde k_\nu$ for
$l=d_++2$ and a convergence factor $\chi(\varepsilon y')$, with 
$\chi\in C^\infty_0$, $\chi(0)=1$, in front of $\tilde t_\nu$: 
evidently $K_\nu T_\nu u=x_n^{-l}\lim_{\varepsilon\to0} x_n^l
K_\nu(\chi(\varepsilon\cdot)T_\nu u)$ in $\cal S(\Rp^n)$. 
For each $x_n$ and $y_n$ the
method used in the proof of \cite[Thm.\ 18.1.8]{H} gives that
 \begin{equation}
 \lim_{\varepsilon\to0}\op{OP}'(\tilde k_\nu(\cdot,x_n,\cdot)\circ
  \tilde t_{\nu,\varepsilon}(\cdot,y_n,\cdot))u(\cdot,y_n)=
 \op{OP}'(\tilde k_\nu(\cdot,x_n,\cdot)\circ
  \tilde t_\nu(\cdot,y_n,\cdot))u(\cdot,y_n),
 \label{72''} 
 \end{equation} 
when $\tilde t_{\nu,\varepsilon}=\chi(\varepsilon y')\tilde t_\nu$.
Since $\ang{y_n}^{2}|\op{OP}'(\tilde k_\nu\circ\tilde
t_{\nu,\varepsilon})u(x',y_n)|<C$ for some constant $C$ independent of
$(x',y_n)$ and $\varepsilon$ for $0<\varepsilon\le1$, we infer that
 \begin{equation}
 \begin{split}
 K_\nu T_\nu u &= \lim_{\varepsilon\to0}\int_0^\infty
  \op{OP}'(\tilde k_\nu(\cdot,x_n,\cdot)\circ
  \tilde t_{\nu,\varepsilon}(\cdot,y_n,\cdot))u(\cdot,y_n)\,dy_n
 \\
 &=\int_0^\infty \op{OP}'(\tilde k_\nu(\cdot,x_n,\cdot)\circ
  \tilde t_\nu(\cdot,y_n,\cdot))u(\cdot,y_n) \,dy_n,
 \end{split}  \label{72'''}
 \end{equation}
from where \eqref{72'} is obtained by application of Fubini's theorem.

From \eqref{71} it is seen that $(\sum_{\nu=0}^l\tilde k_\nu\circ
\tilde t_\nu)_{l\in\N}$ is a Cauchy-sequence in
the Fr\'echet space $S^{d_1+d_2-1}_{1,0}(\cal S(\Rpp))$. Hence the
series converges to a limit $\tilde g$ as claimed, even rapidly since
\eqref{71} gives that $\norm{\sum_{\nu=l}^\infty\tilde k_\nu
\circ\tilde t_\nu}{j}$ is $\cal O(l^{-N})$ for any $N$ and $j$.

$2^\circ$ To define $\tilde k_\nu$ and $\tilde t_\nu$ one can use that
$L_2(\R_+)$ has an orthonormal basis consisting of (certain
unconventional) Laguerre functions $(2\pi)^{-1}\varphi_\nu(y_n,\sigma)$ for
$\nu\in\N_0$, cf.~\cite[(1.27) ff.]{G2}. Using this with the parameter
$\sigma=\ang{\xi'}$ one has
 \begin{equation}
 \tilde g(x',x_n,y_n,\xi')=
 \sum_{\nu=0}^\infty 
 b_\nu(x',x_n,\xi')\varphi_\nu(y_n,\ang{\xi'})
 \label{76}
 \end{equation}
for each $x'$, $x_n$ and $\xi'$ when
 \begin{equation}
 b_\nu(x',x_n,\xi')=(2\pi)^{-1}\smash[t]{\int}
 \tilde g(x',x_n,y_n,\xi')\varphi_\nu(y_n,\ang{\xi'})\,dy_n.
 \label{77}
 \end{equation}
It remains to be verified that one can let $\tilde k_\nu=b_\nu$ and 
$\tilde t_\nu=\varphi_\nu(\cdot,\ang{\cdot})$.

To see that $\tilde k_\nu$ and $\tilde t_\nu$ satisfy \eqref{71} we
use the inequality \eqref{sk4}. Concerning $\tilde t_\nu$ an
application of \cite[(2.2.20)]{G1} for $\alpha'=0$ leads to 
 \begin{equation}
 \norm{x_n^mD^l_{x_n}\varphi_\nu(x_n,\ang{\xi'})}{L_{2,x_n}}
 \le C(1+\nu)^{(1+\varepsilon)l-m}\ang{\xi'}^{l-m},
 \label{78}
 \end{equation}
when $\varepsilon>0$. (Here and in the following the equivalent
seminorms based on the $L_2$ norm on $\cal S(\Rp)$ are used.) 
For $\alpha'\ne0$ the identity
 \begin{equation}
 D_{\xi_j}\varphi_\nu=
 (\nu\varphi_{\nu-1}-(\nu+1)\varphi_{\nu+1})(2\ang{\xi'})^{-1}D_j\ang{\xi'}
 \label{79}
 \end{equation}
can be used successively, and when combined with \eqref{78} it is seen
that the worst resulting term contains $(\nu+1)\dots(\nu+|\alpha'|)
\varphi_{\nu+|\alpha'|}(2\ang{\xi'})^{-|\alpha'|}
\prod (D_j\ang{\xi'})^{|\alpha'_j|}$.
However, this is estimated by $\nu^{|\alpha'|+(1+\varepsilon)l-m}
\ang{\xi'}^{-|\alpha'|+l-m}$, and the other terms, of which there is a
fixed $\alpha'$-dependent number, have similar estimates.
Hence 
$\norm{\tilde t_\nu}{S^{-\frac12}_{1,0}(\cal S(\Rp)),j}\linebreak[3]=\cal O(\nu^{3j})$.

To treat the $\tilde k_\nu$ the proof in \cite[p.\ 169]{G1} is modified. 
Using \cite[(2.2.15)]{G1} it is found that
 \begin{align}
 &\norm{\{b_{\nu}(x',x_n,\xi')\}_{\nu=0}^\infty}{\ell_{2,N}}
 \notag \\
 :=&\,\norm{\{b_{\nu}(x',x_n,\xi')(1+\nu)^N\}_{\nu=0}^\infty}{\ell_2}
 \notag  \\
 =&\,\tfrac{1}{2^N}\norm{(\tfrac{1}{\ang{\xi'}}\partial_{y_n}y_n\partial_{y_n}
              +\ang{\xi'}y_n+1)^N\tilde g(x',x_n,\cdot,\xi')}{L_{2,y_n}}
 \notag \\
 \le&\, C_N \sum_{j+k\le N}\ang{\xi'}^{k-j-\frac12}\sup_{x_n,y_n,x'}
 |(1+\ang{\xi'}y_n)(\partial_{y_n}y_n\partial_{y_n})^{j}y_n^k
  \tilde g|
 \notag \\
 \le&\,C'_N\ang{\xi'}^{d+\frac12}.
 \label{80}
 \end{align}
In a similar manner one finds for the `coefficient sequence' 
$x_n^mD^l_{x_n}D^{\beta'}_{x'}b_\nu$ that its $\ell_{2,N}$-norm is 
$\cal O(\ang{\xi'}^{d+\frac12+l-m})$; $D^{\alpha'}_{\xi'}$ can be
applied to the definition of $b_\nu$ and using Leibniz' formula and
\eqref{79} one reduces to the case where $\alpha'=0$.
This implies that $\norm{\tilde k_\nu}{S^{d-\frac12}_{1,0}(\cal
S(\Rp))}=\cal O(\nu^{-N})$ for any $N$, so altogether \eqref{72} is verified.

After the completion of this construction \eqref{74} is proved by
application of Proposition~\ref{S-prop} and $1^\circ$.
  
With $G_1$ given as in the proposition we shall now prove that
$G\subset r^+G_1^*e^+$. By use of Fubini's theorem 
 \begin{equation}
 \begin{split}
 \dual{e^+Gu}{\overline{v}}&=\Dual{\int_0^\infty 
  \op{OP}'(\tilde g(\cdot,x_n,y_n,\cdot))u(\cdot,y_n)\,dy_n}{\overline{v}}
 \\
 &=\int_{\Rpp} (u(\cdot,y_n),\op{OP}'(\tilde g_1(\cdot,x_n,y_n,\cdot))
                          v(\cdot,x_n))\,dy_ndx_n
 \\
 &=\dual{e^+u}{\overline{G_1v}}=\dual{G^*_1e^+u}{\overline{v}}
 \end{split}
 \label{81}
 \end{equation}
for each $u$ and $v$ in $\cal S(\Rp^n)$. The inclusion
of $G$ into $r^+G_1^*e^+$ follows from this.

Each composite $K_\nu L^*_\nu $ has the adjoint 
$L_\nu K_\nu^*e^+$, where $L_\nu=\op{OPK}(\tilde t_\nu^*)$ and
$K_\nu^*e^+=\op{OPT}(\tilde k_\nu^*)$. 
Moreover, from Lemma \ref{sk-lem} one finds that
 \begin{equation}
 \norm{\tilde t_\nu^*\circ\tilde k_\nu^*}{S^{d-1}_{1,0}(\cal S(\Rpp)),j} 
 \le c\norm{\tilde t_\nu}{S^{-\frac12}_{1,0}(\cal S(\Rp)),j'}
 \norm{\tilde k_\nu}{S^{d-\frac12}_{1,0}(\cal S(\Rp)),j'},
 \label{81'}
 \end{equation} 
so the asymptotic properties of 
$\tilde k_\nu$ and $\tilde t_\nu$ shown above imply that \eqref{71} is
satisfied by $\tilde t_\nu^*$ and $\tilde k_\nu^*$. Then \eqref{S3} leads to
convergence of $G_2=\sum L_\nu K^*_\nu e^+$ on $\cal S(\Rp^n)$, and so
 \begin{equation}
 \lim_{l\to\infty}\dual{\textstyle{\sum_{\nu=0}^l}
 K_\nu L^*_\nu u}{ \overline{v}}
 =\dual{u}{\overline{G_2 r^+v}\,}=\dual{G^*_2 u}{\overline{r^+v}}
 =\dual{r^+G^*_2u}{\overline{v}}
 \label{81''}
 \end{equation}
for each $v\in\cal S_0(\Rp^n)$ and $u\in \cal S'_0(\Rp^n)$.
Hence $\sum K_\nu L^*_\nu$ converges weakly to $r^+G_2^*$ as operators $\cal
S'_0(\Rp^n)\to\cal S'(\Rp^n)$; for $u\in e^+C^\infty_0(\Rn_+)$ it
is seen that $r^+G^*_2u=r^+G_1^*u$ in $\cal S'(\Rp^n)$. 

$3^\circ$ is shown by a modification of the corresponding part of 
Proposition~\ref{trac-prop}. Observe that when a Poisson operator
$K=\op{OPK}(\tilde k)\equiv0$, then $K\psi=0$ for $\psi\in\cal
S(\R^{n-1})$ in particular. Therefore the pseudo-differential operator
$\op{OP}'(\tilde k(\cdot,x_n,\cdot))$ equals $0$ on $\cal
S((\R^{n-1}))$ for each $x_n>0$, that is to say, $\tilde
k(x',x_n,\xi')\equiv 0$ for each $x_n$. Hence $\tilde k=0$.
\end{proof}

Since the action of $G^*_1$ is determined by $\tilde g$ we can use
$2^\circ$ in Proposition \ref{sgo-prop} to extend the~definition of
\sgos of class 0 to the spaces where $e^+$ is defined.

\begin{defn}  \label{sgo-defn}
When $u$ belongs to $B^{s}_{p,q}(\Rp^n)$ or
$F^{s}_{p,q}(\Rp^n)$ for some $(s,p,q)\in \Dm_0$ (with $p<\infty$ in the
$F$-case) then the action of a \sgo $G$ of class 0 on $u$ is defined
as $Gu=r^+G^*_1e^+u$, where $G_1=\op{OPG}(e^{iD_{x'}\cdot
D_{\xi'}}\overline{\tilde g}(x',y_n,x_n,\xi'))$.
\end{defn}

Having made this definition, $2^\circ$ in 
Proposition~\ref{sgo-prop} may be applied to the sequences $(K,0,\dots)$ and
$(T,0,\dots)$ whereby the usual composition rule is obtained. 
More precisely, the identity $Gu:=\op{OPG}(\tilde k\circ\tilde t)u=KTu$, 
valid for $u\in\cal S(\Rp^n)$ by \eqref{74}, extends to the
situation where $e^+u$ makes sense, for $KTu$ may be written
$KL^*e^+u$ then, and here $KL^*=r^+G_1^*$ according to $2^\circ$.

\medskip

Generally the \sgos have the following continuity properties:
 
\begin{thm} \label{sgo-thm} 
A \sgo $G$ of order $d\in\R$ and class $r\in\Z$ is continuous 
 \begin{alignat}{2}
 G&\colon B^{s}_{p,q}(\Rp^n)\to B^{s-d}_{p,q}(\Rp^n),
 &\quad\text{ for}\quad &(s,p,q)\in \Dm_r, 
 \label{82} \\
 G&\colon F^{s}_{p,q}(\Rp^n)\to F^{s-d}_{p,o}(\Rp^n),
 &\quad\text{ for}\quad &(s,p,q)\in \Dm_r,\quad o\in\left]0,\infty\right],
 \label{83}
 \end{alignat}
when in \eqref{83} also $p<\infty$ holds.

Moreover, if $G$ is continuous from either $B^{s}_{p,q}(\Rp^n)$ or
$F^{s}_{p,q}(\Rp^n)$ to $\cal D'(\Rn_+)$ for some
$(s,p,q)\notin\overline{\Dm}_r$, then the class of $G$ is  $\le r-1$.

$G$ is continuous from $\cal S'(\Rp^n)$ to $\cal S'(\Rp^n)$
if and only if $G$ has class $-\infty$.
\end{thm}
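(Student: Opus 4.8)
The plan is to follow the scheme of the proof of Theorem~\ref{trac-thm}, reducing everything to the case of class~$0$. For a \sgo $G$ of class $r>0$ one has the representation \eqref{70}, and each term $K_j\gamma_j$ with $0\le j<r$ factors as $B^{s}_{p,q}(\Rp^n)\xrightarrow{\;\gamma_j\;}B^{s-j-\fracpi}_{p,q}(\R^{n-1})\xrightarrow{\;K_j\;}B^{s-d}_{p,q}(\Rp^n)$, which is bounded by Lemma~\ref{gamm1-lem} (applicable since $(s,p,q)\in\Dm_r\subset\Dm_{j+1}$) together with Theorem~\ref{pois-thm} (as $K_j$ has order $d-j$); in the Triebel--Lizorkin case $\gamma_j$ lands in $F^{s-j-\fracpi}_{p,p}$ and $K_j$ then maps it into $F^{s-d}_{p,o}$ for arbitrary $o$, which is exactly the freedom allowed in \eqref{83}. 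The remaining summand $G_0$ has class~$0$. For class $r=-m<0$ one invokes Proposition~\ref{sgo-prop}~$3^\circ$(iv): $GD^\alpha$ is a \sgo of class~$0$ for $|\alpha|\le m$, and every $u$ in $B^{s}_{p,q}(\Rp^n)$ or $F^{s}_{p,q}(\Rp^n)$ decomposes as $u=\sum_{|\alpha|\le m}D^\alpha v_\alpha$ with each $u\mapsto v_\alpha$ bounded into the corresponding space with $s$ replaced by $s+m$ (expand $1=(1+\xi_1^2+\dots+\xi_n^2)^m\ang\xi^{-2m}$, exactly as in Theorem~\ref{trac-thm}); since $(s,p,q)\in\Dm_{-m}$ forces $(s+m,p,q)\in\Dm_0$, the class-$0$ result applied to $GD^\alpha$ gives boundedness of $u\mapsto\sum_{|\alpha|\le m}GD^\alpha v_\alpha$, which coincides with $G$ on the dense subspace $\cal S(\Rp^n)$ and is therefore the required (unique) extension.

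For the class-$0$ case I would use the Laguerre decomposition of Proposition~\ref{sgo-prop}~$2^\circ$, $G=\sum_{\nu=0}^\infty K_\nu T_\nu$, with $\tilde k_\nu\in S^{d-\frac12}_{1,0}(\cal S(\Rp))$ and $\tilde t_\nu\in S^{-\frac12}_{1,0}(\cal S(\Rp))$, so that $K_\nu$ has order $d+\frac12$ and $T_\nu$ order $-\frac12$. By Corollaries~\ref{trac-cor} and \ref{pois-cor}, $T_\nu\colon B^{s}_{p,q}(\Rp^n)\to B^{s+\frac12-\fracpi}_{p,q}(\R^{n-1})$ and $K_\nu\colon B^{s+\frac12-\fracpi}_{p,q}(\R^{n-1})\to B^{s-d}_{p,q}(\Rp^n)$ are bounded with
\[
 \norm{K_\nu T_\nu}{\Bbb L(B^{s}_{p,q},B^{s-d}_{p,q})}\le
 c\,\norm{\tilde k_\nu}{S^{d-\frac12}_{1,0}(\cal S(\Rp)),j}\,
   \norm{\tilde t_\nu}{S^{-\frac12}_{1,0}(\cal S(\Rp)),j'} ,
\]
and by the seminorm estimates established in the proof of Proposition~\ref{sgo-prop}~$2^\circ$ the right-hand side is $\cal O(\nu^{-N})$ for every $N$. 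Hence, by the $\lambda$-inequality \eqref{1.27'} with $\lambda=\min(1,p,q)$, the series $\sum K_\nu T_\nu$ converges in $\Bbb L(B^{s}_{p,q}(\Rp^n),B^{s-d}_{p,q}(\Rp^n))$, which is complete by Remark~\ref{tvs-rem}. Its sum equals $G=r^+G_1^*e^+$ of Definition~\ref{sgo-defn}: on $\cal S(\Rp^n)$ this is \eqref{74}, and the partial sums converge to $r^+G_1^*e^+u$ in $\cal S'(\Rp^n)$ by Proposition~\ref{sgo-prop}~$2^\circ$; when $p$ or $q$ equals $\infty$, so that $\cal S$ is not dense, I would first embed $B^{s}_{p,q}(\Rp^n)$ (respectively $F^{s}_{p,q}(\Rp^n)$) by simple and Sobolev embeddings into a space $B^{\sigma}_{\varrho,\varrho}(\Rp^n)$ with $\varrho\in\,]1,\infty[$ and $\fracc1\varrho-1<\sigma<\fracc1\varrho$, on which everything is classical and $\cal S$ is dense — this step uses precisely that $(s,p,q)\in\Dm_0$. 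The Triebel--Lizorkin case is identical, with $T_\nu\colon F^{s}_{p,q}(\Rp^n)\to F^{s+\frac12-\fracpi}_{p,p}(\R^{n-1})$ followed by $K_\nu\colon F^{s+\frac12-\fracpi}_{p,p}(\R^{n-1})\to F^{s-d}_{p,o}(\Rp^n)$ for arbitrary~$o$.

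For the sharpness statement I would argue as in Theorem~\ref{trac-thm}: assume $G$ continuous from $B^{s}_{p,q}(\Rp^n)$ (or $F^{s}_{p,q}(\Rp^n)$) to $\cal D'(\Rn_+)$ for some $(s,p,q)\in\Dm_{r_1}\setminus\overline{\Dm}_{r_1+1}$ with $r_1<r$ (possible since $(s,p,q)\notin\overline{\Dm}_r$). If $r>r_1\ge0$, write $G$ in the form \eqref{70}; the part $G-K_{r-1}\gamma_{r-1}$ has class $\le r-1$, hence is continuous on this space by the first half, so $K_{r-1}\gamma_{r-1}$ maps continuously into $\cal D'(\Rn_+)$. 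Feeding it the sequences $v_k$ of Lemma~\ref{gamm2-lem} (with $v_k\to0$ in the appropriate space and $\gamma_{r-1}v_k\to\delta_{z'}$ in $\cal S'$) and using the continuity $K_{r-1}\colon\cal S'(\R^{n-1})\to\cal S'(\Rp^n)$, one gets $K_{r-1}\delta_{z'}=0$ in $\cal D'(\Rn_+)$ for every $z'$; since $K_{r-1}v=\int K_{r-1}\delta_{z'}(\cdot)\,v(z')\,dz'$ for $v\in\cal S(\R^{n-1})$ (Fubini in \eqref{-21}), this forces $K_{r-1}=0$ on $\cal S(\R^{n-1})$, whence $\tilde k_{r-1}\equiv0$ by the symbol--operator bijection of the uniform calculus (cf.\ the proof of Proposition~\ref{sgo-prop}~$3^\circ$), i.e.\ $K_{r-1}=0$. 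Iterating removes all terms with index $\ge r_1$, so $G$ has class $r_1\le r-1$, and the case $r_1=-m<0$ is finished via Proposition~\ref{sgo-prop}~$3^\circ$(iv) just as in the trace case. Finally, if $G$ has class $-\infty$ then $\tilde g$ vanishes to infinite order at $y_n=0$, so the symbol-kernel $\tilde g_1$ of $G_1$ vanishes to infinite order in its output normal variable and $e^+G_1\colon\cal S(\Rp^n)\to\cal S_0(\Rp^n)$; for $u\in C^\infty(\Rp^n)$ and $v\in C^\infty_0(\Rn_+)$ one has $\dual{Gu}{\overline v}=\dual{e^+u}{\overline{G_1v}}=\dual{u}{\overline{e^+G_1v}}$, and density of $C^\infty(\Rp^n)$ in $B^{-N}_{\infty,1}(\Rp^n)$ and in $\cal S'(\Rp^n)$ yields the unique continuous extension $G=(e^+G_1)^*\colon\cal S'(\Rp^n)\to\cal S'(\Rp^n)$. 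Conversely, continuity from $\cal S'(\Rp^n)$ implies continuity from $F^{-N}_{2,2}(\Rp^n)$ into $\cal D'(\Rn_+)$ for every $N$, and since $(-N,2,2)\in\Dm_{-N}\setminus\overline{\Dm}_{-N+1}$, the sharpness argument gives class $\le-N$ for all $N$, i.e.\ class $-\infty$.

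The main obstacle will be in the class-$0$ step: identifying the norm-convergent series $\sum K_\nu T_\nu$ with the operator $r^+G_1^*e^+$ of Definition~\ref{sgo-defn} on all of $B^{s}_{p,q}(\Rp^n)$ and $F^{s}_{p,q}(\Rp^n)$, in particular at $p=\infty$ or $q=\infty$ where $\cal S$ fails to be dense — which is exactly why the restriction $(s,p,q)\in\Dm_0$, and with it the embeddings into genuine Banach $L_\varrho$-type spaces, is indispensable.
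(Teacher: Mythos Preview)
Your proof is correct and follows essentially the same approach as the paper: Laguerre decomposition plus Corollaries~\ref{pois-cor} and \ref{trac-cor} for the class-$0$ core, the $K_j\gamma_j$ reduction for positive class, the $D^\alpha$-decomposition for negative class, and Lemma~\ref{gamm2-lem} for sharpness.

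Two minor points where the paper is slightly more economical. First, your embedding detour for the identification at $p=\infty$ or $q=\infty$ is unnecessary: Proposition~\ref{sgo-prop}~$2^\circ$ already asserts that $\sum K_\nu L_\nu^*$ converges \emph{weakly on all of $\cal S'_0(\Rp^n)$} to $r^+G_1^*$, so for every $u$ with $e^+u$ defined one has $\sum K_\nu T_\nu u\to G_0u$ in $\cal D'(\Rn_+)$ directly; comparing with the norm limit finishes the identification without any density argument. Second, for the sharpness the paper kills $K_{r-1}$ via its adjoint: from $K_{r-1}\delta_{z'}=0$ one gets $\langle\delta_{z'},\overline{K_{r-1}^*e^+\varphi}\rangle=0$ for all $\varphi\in C^\infty_0(\Rn_+)$ and all $z'$, hence $K_{r-1}^*=0$ on the dense subspace $e^+C^\infty_0(\Rn_+)\subset\cal S'_0(\Rp^n)$, so $K_{r-1}=0$; this avoids your Fubini step. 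For class $-\infty$ the paper simply invokes Theorem~\ref{PG-thm} with $P=0$, but your direct argument parallel to Theorem~\ref{trac-thm} is equally valid.
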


\begin{proof} Suppose first that $r\ge0$. For each of the terms
$K_j\gamma_j$ in \eqref{70} it is clear from Lemma~\ref{gamm1-lem} and
Theorem \ref{pois-thm} that it is continuous as in \eqref{82} and
\eqref{83}. When $G_0$ in \eqref{70} is written $G_0=\sum K_\nu T_\nu$ as
in Proposition \ref{sgo-prop} $2^\circ$ it is also clear that each
$K_\nu T_\nu$ has the stated continuity properties.

For a given $(s,p,q)\in \Dm_r$ and $u\in B^{s}_{p,q}(\Rp^n)$ there is
for $j$ large an estimate
 \begin{equation}
 (\sum_{m=0}^\infty \norm{K_\nu T_\nu u}{B^{s-d}_{p,q}}^r)^{\fracci1r}\le
 c(\sum_{m=0}^\infty\norm{\tilde k_\nu}{S^{d-\frac12}_{1,0},j}^r
 \norm{\tilde t_\nu}{S^{-\frac12}_{1,0},j}^r)^{\fracci1r}
 \norm{u}{B^{s}_{p,q}}
 \label{84}
 \end{equation}
when $r=\min(1,p,q)$, according to the Corollaries~\ref{pois-cor} and 
\ref{trac-cor}. From the estimates of $\tilde k_\nu$ and $\tilde t_\nu$
in the proof of $2^\circ$ in Proposition~\ref{sgo-prop} above it follows
that the sum on the right hand side is finite. Hence
 $\sum K_\nu T_\nu$ converges strongly to an operator in $\Bbb
L(B^{s}_{p,q}, B^{s-d}_{p,q})$; this operator is $G_0$, since 
$\sum K_\nu T_\nu u$ converges in $\cal D'(\Rn_+)$ to $G_0u$. For similar
reasons $G_0$ is also in $\Bbb L(F^{s}_{p,q},F^{s-d}_{p,o})$.

When $r=-m<0$ and in any case when $(s,p,q)\in
\Dm_{r_1}\!\setminus\overline{\Dm}_r$ is given one can simply carry over
the proof of Theorem \ref{trac-thm}. E.g., in \eqref{69} one can replace
$\psi$ by $e^+\varphi\in e^+C^\infty_0(\Rn_+)$ and $S_{r-1}$ by
$K_{r-1}$; then the denseness of $e^+C^\infty_0(\Rn_+)\subset \cal
S'_0(\Rp^n)$ shows that $K^*_{r-1}=0$.

In the case $G$ has class $-\infty$, one can let $P=0$ in
Section~\ref{PG-ssect} below.
\end{proof}

The proof above of the properties of $G$ in the $B^{s}_{p,q}$ and
$F^{s}_{p,q}$ scales was inspired by the one in \cite{G3}. There it was shown 
that $B^{s}_{p,p}\cup F^{s}_{p,2}$ is mapped into
$B^{s-d}_{p,p}\cap F^{s-d}_{p,2}$ (when $1<p<\infty$) by a s.g.o.; 
this also follows from \eqref{83} and the fact that $B^{s}_{p,p}=
F^{s}_{p,p}$, but the property does not hold for the full scales
here. For the trace and Poisson operators similar remarks can be made.

Concerning the adjoints of \sgos of class 0 the situation is analogous
to the one for trace operators, cf.\ Remark~\ref{adj-rem}.

\begin{rem} \label{PG-rem}
The extension of class $-\infty$ operators $T$ and $G$ to $\cal
S'(\Rp^n)$ is a natural consequence of the formulae $T=K^*e^+$ and
$G=r^+G_1^*e^+$, cf.\ Definitions~\ref{trac-defn} and \ref{sgo-defn}.
Moreover, it was shown in \cite[Cor.\ 4.3]{GK} that the restrictions of 
operators $T$ and $G$ (of any class) to $r^+\cal S_0(\Rp^n)$ have 
continuous extensions to the spaces $B^{s}_{p,q;0}(\Rp^n)$ 
and $F^{s}_{p,q;0}(\Rp^n)$ for any $s\in\R$ (when $q=p\in\,]1,\infty[$
resp.\ $q=2$ and $1<p<\infty$). Consistent with the present level of
pedantry, the restrictions to $r^+\cal S_0(\Rp^n)$
extend (factor) {\em through\/} $e^+$ to bounded operators, that evidently 
are equal to $K^*$ and $r^+G_1^*$, respectively, for formally $T$ and $G$
cannot act on (subspaces of) $\cal S'_0(\Rp^n)$.
\end{rem}

\subsection{Operators $P_{+}+G$ of negative class} \label{PG-ssect}
Because of the properties \eqref{30} and \eqref{31} a truncated
ps.d.o.\ $P_{+}$ is in general of class 0, but if it is
differential the class is said to be $-\infty$ since it has the
mentioned properties for any $s$.

However, a sum $P_{+}+G$ may be continuous also for 
$(s,p,q)\notin\overline{\Dm}_0$, simply because two (or more) 
contributions cancel each other. A non-trivial example is given in 
\cite[Ex. 3.15]{G3}. Results on these phenomena are included here\,---\,%
the underlying analysis is that of \cite{G3}, where such operators 
$P_{+}+G$, that are said to be of negative class, were studied first.

A criterion for $P_++G$ to be of class $-m$, for $m\in\N$, is that 
$(P_++G)D^j_{x_n}=(PD^j_{x_n})_++G^{(j)}$ holds for some \sgo $G^{(j)}$ 
of class 0 for each $j\in\{\,1,\dots,m\,\}$. (See \cite{G3} for the
general formula for $(P_++G)D^j_{x_n}$.) This is equivalent to the
fulfilment of $K^{(j)}_P+K^{(j)}_G=0$ for $j\in\{\,1,\dots,m\,\}$, when
$K^{(j)}_G$ refer to Proposition \ref{sgo-prop} and 
 \begin{equation}
 K^{(k)}_Pv=r^+iPD^k_{x_n}(v\otimes\delta_0)\quad\text{ for}\quad
 v\in\cal S(\R^{n-1});
 \label{PG1}
 \end{equation}
the reader can refer to \cite[(3.43)ff.]{G2} for this. In addition
$P_++G$ is said to be of class $r\in\N_0$ when $G$ is so,
and to have class $-\infty$ when it is of class $r$ for each $r\in\Z$.

\begin{thm} \label{PG-thm}
Let $P_{+}$ be a truncated pseudo-differential operator and 
$G$ a s.g.o., both of order $d\in\R$, and suppose that
$P_++G$ is of class $r\in\Z$. Then 
 \begin{alignat}{2}
 P_++G&\colon B^{s}_{p,q}(\Rp^n)&\to B^{s-d}_{p,q}(\Rp^n)
 &\quad\text{ for}\quad (s,p,q)\in \Dm_r
 \label{PG4} \\
 P_++G&\colon F^{s}_{p,q}(\Rp^n)&\to F^{s-d}_{p,q}(\Rp^n)
 &\quad\text{ for}\quad (s,p,q)\in \Dm_r
 \label{PG5} 
 \end{alignat}
are continuous operators (when $p<\infty$ in \eqref{PG5}).

Moreover, if $P_++G$ is continuous from either $B^{s}_{p,q}(\Rp^n)$ or
$F^{s}_{p,q}(\Rp^n)$ to $\cal D'(\Rn_+)$ for some 
$(s,p,q)\notin\overline{\Dm}_r$, then $P_++G$ is of class $\le r-1$;
and $P_++G$ is continuous $\cal S'(\Rp^n)\to \cal S'(\Rp^n)$ if and
only if it has class $-\infty$.
\end{thm}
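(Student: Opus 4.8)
The plan is to reduce everything to the results already proved for $P_{+}$, trace operators and s.g.o.s, together with the criterion for negative class recalled just above. I would dispose of the case $r\ge0$ at once: by Theorem~\ref{psdo-thm}, $P_{+}$ is bounded as in \eqref{30}--\eqref{31} for every $(s,p,q)\in\Dm_0$, hence on $\Dm_r\subseteq\Dm_0$; and ``$P_{+}+G$ of class $r$'' means by definition that the \sgo $G$ has class $r$, so Theorem~\ref{sgo-thm} makes $G$ bounded as in \eqref{82}--\eqref{83} on $\Dm_r$ (with $o=q$). Adding the two gives \eqref{PG4}--\eqref{PG5}.

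For $r=-m<0$ I would start from the criterion: for each $j\in\{1,\dots,m\}$ one has $(P_{+}+G)D^j_{x_n}=(PD^j_{x_n})_{+}+G^{(j)}$ with $G^{(j)}$ a \sgo of class~$0$ and order $d+j$. First I would promote this to all derivatives of order $\le m$: writing $D^\alpha=D^{\alpha_n}_{x_n}D^{\alpha'}_{x'}$ with $|\alpha|\le m$, using that $D^{\alpha'}_{x'}e^+=e^+D^{\alpha'}_{x'}$ on $\cal S(\Rp^n)$, that multiplying a uttr symbol by $\xi^\alpha$ keeps it in the uttr class, and that composing a class-$0$ \sgo on the right with a tangential differential operator again yields a class-$0$ \sgo (the normal variable, which governs the class, being untouched), one gets $(P_{+}+G)D^\alpha=(PD^\alpha)_{+}+\widetilde G^{(\alpha)}$ with $\widetilde G^{(\alpha)}$ a \sgo of class~$0$ and order $d+|\alpha|$; thus $(P_{+}+G)D^\alpha$ is again an operator covered by the case $r\ge0$. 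Then, for $(s,p,q)\in\Dm_{-m}$, I would decompose $u=\sum_{|\alpha|\le m}D^\alpha v_\alpha$ exactly as in the proof of Theorem~\ref{trac-thm}, each $u\mapsto v_\alpha$ being bounded from $B^{s}_{p,q}(\Rp^n)$ to $B^{s+m}_{p,q}(\Rp^n)$ (and likewise in the $F$~scale). Since $(s+m,p,q)\in\Dm_0$, the operator $(P_{+}+G)D^\alpha$ carries $B^{s+m}_{p,q}$ into $B^{s+m-d-|\alpha|}_{p,q}\hookrightarrow B^{s-d}_{p,q}$ (the embedding being valid because $|\alpha|\le m$), and similarly in the $F$~scale; summing over $\alpha$ yields \eqref{PG4}--\eqref{PG5}. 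Uniqueness of this extension follows from density of $\cal S(\Rp^n)$ (resp.\ $C^\infty(\Rp^n)$ for $p=\infty$) when $q<\infty$, the cases $q=\infty$ being reached through $B^{s}_{p,\infty}\hookrightarrow B^{s-\varepsilon}_{p,q}$.

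For the necessity of ``class $\le r-1$'' I would reprise the second halves of the proofs of Theorems~\ref{trac-thm} and \ref{sgo-thm}. Given continuity of $P_{+}+G$ into $\cal D'(\Rn_+)$ for some $(s,p,q)\notin\overline{\Dm}_r$, pick the largest $r_1<r$ with $(s,p,q)\in\Dm_{r_1}$, so that $(s,p,q)\notin\overline{\Dm}_{r_1+1}$. If $r_1\ge0$, the \sgo part of $P_{+}+G$ carries a leading Poisson term $K_{r_1}\gamma_{r_1}$ (cf.~\eqref{70}); feeding $P_{+}+G$ the sequences $v_k$ of Lemma~\ref{gamm2-lem} and using density of $e^+C^\infty_0(\Rn_+)$ in $\cal S'_0(\Rp^n)$ precisely as around \eqref{69} (with $S_{r-1}$ replaced by $K_{r_1}$ and $\psi$ by $e^+\varphi$) forces $K_{r_1}=0$; iterating downward kills all terms of index $\ge r_1$, so $P_{+}+G$ is of class $\le r_1\le r-1$. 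If $r_1=-m_1<0$ one argues instead with $(P_{+}+G)D^\alpha$, $|\alpha|\le m_1$: these are of class~$0$ by the promotion above, hence continuous on $\Dm_0$, and applying the same counterexamples to $(P_{+}+G)D^{\,m_1+1}_{x_n}$ — whose leading contribution is the balance term $K^{(m_1+1)}_P+K^{(m_1+1)}_G$, cf.~\eqref{PG1} — forces that balance term to vanish, i.e.\ class $\le -m_1-1\le r-1$. Finally, the $\cal S'(\Rp^n)$ statement I would handle as in Theorems~\ref{trac-thm} and \ref{sgo-thm}: if $P_{+}+G$ has class $-\infty$, the boundary symbol-data vanish of infinite order, which exhibits $P_{+}+G$ as (the restriction of) a continuous operator on $\cal S'(\Rp^n)$, unique by density of $C^\infty(\Rp^n)$; conversely, continuity from $\cal S'(\Rp^n)$ yields continuity from $F^{-N}_{2,2}(\Rp^n)$ into $\cal D'(\Rn_+)$ for every $N$, so, were the class some finite $r$, choosing $N$ with $(-N,2,2)\notin\overline{\Dm}_r$ and invoking the class-reduction just shown would give class $\le r-1$, and iterating forces class $-\infty$.

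The hard part is really only bookkeeping. The two points needing genuine care are: in the sufficiency proof, checking that $(P_{+}+G)D^\alpha$ reduces for every $|\alpha|\le m$ to the normal-derivative data $(P_{+}+G)D^j_{x_n}$ supplied by the criterion — i.e.\ the commutation $[e^+,D^{\alpha'}_{x'}]=0$ and the stability of class~$0$ under tangential differentiation; and, in the necessity proof, applying the counterexamples of Lemma~\ref{gamm2-lem} to the \emph{correct} leading object (the Poisson term $K_{r_1}$ coming from $G$ when $r_1\ge0$, versus the balance term $K^{(j)}_P+K^{(j)}_G$ coming from the $P_{+}$--$G$ interaction when $r_1<0$) at a parameter where $P_{+}$ on its own need not even be continuous, so that one cannot simply subtract it off.
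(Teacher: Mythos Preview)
Your treatment of the continuity statements \eqref{PG4}--\eqref{PG5} and of the class-reduction step is essentially the paper's own: for $r\ge0$ one adds Theorems~\ref{psdo-thm} and~\ref{sgo-thm}, and for $r<0$ one uses the decomposition \eqref{68} together with the class criterion, exactly as the paper indicates (``for $r<0$ as in \eqref{68} ff.''). The necessity argument is also in the spirit of the paper, which simply says ``the techniques used for trace and \sgos can be adapted''.

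The gap is in the direction ``class $-\infty$ $\Rightarrow$ continuity on $\cal S'(\Rp^n)$''. You write that ``the boundary symbol-data vanish of infinite order'' and appeal to the arguments for $T$ and $G$. But for $T$ alone, class $-\infty$ literally means that the symbol-kernel of the associated Poisson operator $K$ vanishes to infinite order at $x_n=0$, so $e^+K\colon\cal S(\R^{n-1})\to\cal S_0(\Rp^n)$ and one takes the adjoint. For $P_++G$ nothing of the sort is true: neither $P$ nor $G$ individually has any vanishing, only the \emph{cancellation} $K^{(k)}_P+K^{(k)}_G=0$ for all $k$ holds. There is no single symbol-kernel whose infinite-order vanishing you can invoke, and $P_+$ itself contains $e^+$, which is not defined on $\cal S'(\Rp^n)$. (Note also that the proof of Theorem~\ref{sgo-thm} for this point actually \emph{refers forward} to the present theorem with $P=0$, so citing it is circular.)

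What the paper does instead is pass to the adjoint data $P_1=\op{OP}(e^{iD_x\cdot D_\xi}\overline p)$ and $G_1$ with $G=r^+G_1^*e^+$, and show that $P_{1+}+G_1$ maps $\bigcap_m\cal S_m(\Rp^n)$ into itself; since $\cal S_0(\Rp^n)=e^+\bigl(\bigcap_m\cal S_m\bigr)$, this yields continuity of $e^+(r^+P_1+G_1r^+)\colon\cal S_0(\Rp^n)\to\cal S_0(\Rp^n)$, and $P_++G$ is then contained in the adjoint of this map, hence continuous $\cal S'(\Rp^n)\to\cal S'(\Rp^n)$. The non-obvious step is the verification that $\gamma_k(P_{1+}+G_1)w=0$ for $w\in\bigcap_m\cal S_m$: the paper computes $\gamma_kG_1w=iK^{(k)*}_Ge^+w$ directly from \eqref{-23}, and $\gamma_kP_{1+}w=iK^{(k)*}_Pe^+w$ via the duality $\dual{v}{\overline{\gamma_kP_{1+}w}}=\dual{PD^k_n(v\otimes\delta_0)}{\overline{e^+w}}$ and \eqref{PG1}, so that $\gamma_k(P_{1+}+G_1)=i(K^{(k)}_P+K^{(k)}_G)^*e^+\equiv0$. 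This is where the class-$-\infty$ assumption on $P_++G$ (not on $P$ or $G$ separately) enters, and it is the step your sketch does not supply.
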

\begin{proof} The continuity follows from Theorems~\ref{psdo-thm} and
\ref{sgo-thm}; for $r<0$ as in \eqref{68} ff. In case
$(s,p,q)\notin\overline{\Dm}_r$ the techniques used for trace and 
\sgos can be adapted, cf.~\cite{G2}. When $P_++G$ has
class $-\infty$, it is used that 
\begin{equation}
  \dual{(P_++G)u}{\overline{e^+w}}=
  \int_{\Rn_+}u\cdot(\overline{r^+P_1e^+w+G_1w})
  \label{PG6}
\end{equation}
when $u\in C^\infty(\Rp^n)$, $w\in\cap_{m>0}\cal S_m(\Rp^n)$ while
$P_1=\op{OP}(e^{iD_x\cdot D_\xi}\overline{p})$ and $G=r^+G_1^*e^+$.
Given that
$P_{1+}+G_1\colon \cal S(\Rp^n)\to\cal S(\Rp^n)$ maps $\cap\cal S_m$
into itself, then $e^+(r^+P_1+G_1r^+)$ is continuous $\cal
S_0(\Rp^n)\to \cal S_0(\Rp^n)$, since $\cal S_0=e^+(\cap\cal S_m)$,
hence $P_++G$ is contained in $(e^+(r^+\!P_1+G_1r^+))^*\colon \cal
S'(\Rp^n)\to \cal S'(\Rp^n)$ by \eqref{PG6}.

Thus it remains to see that $\gamma_k(P_{1+}+G_1)w=0$ for each
$w\in\cap\cal S_m(\Rp^n)$. By \eqref{-23}, $\gamma_0D^k_{x_n}G_1w$
equals $\op{OPT}(D^k_{x_n}\tilde g_1(x',0,y_n,\xi'))w$, that is
$iK^{(k)*}_Ge^+w$, and 
\begin{equation*}
  \dual{v}{\overline{\gamma_kP_{1+}w}}=
  \dual{PD^k_n(v\otimes\delta_0)}{\overline{e^+w}} =
  \dual{v}{\overline{iK^{(k)*}_Pe^+w}}
\end{equation*}
for $v\in\cal S'(\R^{n-1})$, by \eqref{PG1}.
Therefore, $\gamma_k(P_{1+}+G_1)=i(K^{(k)}_P+K^{(k)}_G)^*e^+\equiv 0$
on\linebreak $\cap\cal S_m(\Rp^n)$ when $P_++G$ has class $-\infty$.
\end{proof}

\section{Green operators} \label{grn-sect}
In full generality the results for elliptic operators on manifolds
shall now be presented. The main goal is to obtain Theorem~\ref{grn-thm} 
below, that is a generalisation of \cite[Corollary 5.5]{G3}.
Since there are not any substantial changes from the usual texts on the
calculus, a brief explanation will suffice.

\bigskip

To begin with it should be made clear that only {\em bounded\/}, open
$C^\infty$ smooth sets $\Omega\subset\Rn$ will be considered in this 
section. And for operators of class $k\in\Z$ only spaces $B^{s}_{p,q}$ and 
$F^{s}_{p,q}$ satisfying $(s,p,q)\in \Dm_k$ are treated,
cf.\ Section~\ref{gamm-ssect} and Figure~\ref{D0-fig}.
The difficulties connected to the unbounded manifolds and to the borderline 
cases $s=k+\max(\fracp-1,\fracc np-n)$ mentioned in Remark~\ref{trac-rem} are
thus left open here, with the convenience of doing so illuminated by

\begin{rem} \label{clas-rem}
In this paper, none of the spaces are larger than those 
considered in \cite{G3}: Even for $(s,p,q)\in \Dm_k$ with $p\le1$
or $p=\infty$ there is an embedding $B^{s}_{p,q}\hookrightarrow 
B^{s'}_{p',p'}$ with $(s',p',p')\in \Dm_k$ and $1<p'<\infty$, for a simple 
embedding can be combined with a Sobolev embedding or an embedding 
as in \eqref{1.38}. (These facts do not hold for the borderline cases, and 
\eqref{1.38} cannot be extended to the case of unbounded manifolds.) 
See Figure~\ref{clas-fig}, 
which for $k=0$ also illustrates that one can even embed into spaces 
in $\Dm_k\!\setminus \overline{\Dm}_{k+1}$. 

For convenience the spaces with $1<p,q<\infty$ are referred to as classical
spaces. In other words, only some norms and quasi-norms not included in 
\cite{G3} are introduced, the spaces being subspaces of the classical spaces
in focus there.

In addition it is remarked that, when $\Omega$ is bounded, the uniformly 
estimated operators considered here coincide with the locally 
estimated operators in, e.g., \cite{G3}.
\end{rem}

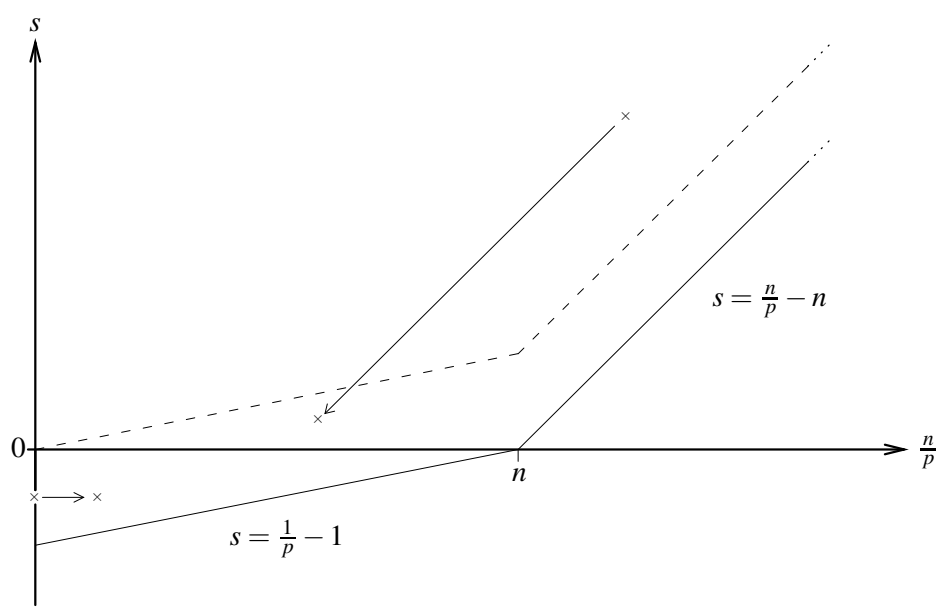
\begin{figure}[htbp]
\hfil
\setlength{\unitlength}{0.0125in}
\begin{picture}(395,261)(0,0)
\path(10,25)(210,65)(330,185)
\dottedline{4}(330,185)(340,195)           
\dashline{4.000}(10,65)(210,105)(330,225)  
\dottedline{4}(330,225)(340,235)           
\path(210,65)(210,60)                      
\path(130,80)(250,200)
\path(131,84)(130,80)(134,81)              
\path(13,45)(30,45)
\path(26,43)(30,45)(26,47)                 
\thicklines
\path(7,65)(370,65)
\path(362.000,63.000)(370.000,65.000)(362.000,67.000)
\path(10,48)(10,235)
\path(10,65)(10,48)
\path(10,42)(10,0)
\path(12.000,227.000)(10.000,235.000)(8.000,227.000)
\put(6.5,43.5){\makebox(0,0)[lb]{\raisebox{0pt}[0pt][0pt]
{\shortstack[l]{{$\scriptscriptstyle\times$}}}}}
\put(32.5,43.5){\makebox(0,0)[lb]{\raisebox{0pt}[0pt][0pt]
{\shortstack[l]{{$\scriptscriptstyle\times$}}}}}
\put(124,76){\makebox(0,0)[lb]{\raisebox{0pt}[0pt][0pt]
{\shortstack[l]{{$\scriptscriptstyle\times$}}}}}
\put(251.5,202.5){\makebox(0,0)[lb]{\raisebox{0pt}[0pt][0pt]
{\shortstack[l]{{$\scriptscriptstyle\times$}}}}}
\put(0,62){\makebox(0,0)[lb]{\raisebox{0pt}[0pt][0pt]{\shortstack[l]{{0}}}}}
\put(90,25){\makebox(0,0)[lb]{\raisebox{0pt}[0pt][0pt]
{\shortstack[l]{{$s=\fracp-1$}}}}}
\put(207,52){\makebox(0,0)[lb]{\raisebox{0pt}[0pt][0pt] 
{\shortstack[l]{{$n$}}}}}
\put(290,125){\makebox(0,0)[lb]{\raisebox{0pt}[0pt][0pt]
{\shortstack[l]{{$s=\fracc np-n$}}}}}
\put(375,62){\makebox(0,0)[lb]{\raisebox{0pt}[0pt][0pt]
{\shortstack[l]{{$\fracc np$}}}}}
\put(7,240){\makebox(0,0)[lb]{\raisebox{0pt}[0pt][0pt]
{\shortstack[l]{{$s$}}}}}
\end{picture}
\hfil
\caption{Embedding into classical spaces in 
 $\protect\Dm_0\!\setminus\overline{\protect\Dm}_1$}
 \label{clas-fig}
\end{figure}

First the operators are generalised to act on sections of vector bundles
$E$ over smooth open bounded subsets $\Omega\subset\Rn$, respectively
on vector bundles $F$ over $\Gamma=\partial\Omega$ (all $C^\infty$ and
hermitian). See for example \cite{GK} where it is shown that one can
do so invariantly . In particular the uniform two-sided transmission
condition and the class concept is invariantly defined on (such)
manifolds. However, to make sense of the transmission condition 
the pseudo-differential operator $P$ should be given on an extending 
bundle $E_1$, that is, a bundle with a boundaryless base manifold 
$\Omega_1\supset \Omega$ for which $E_1\!\bigm|_{\Omega}=E$. 
This will be a tacit assumption on $P$ in the following. For further
explanations of the vector bundle set-up, see \cite{G3} or
\cite[App.~A.5]{G1}. The space of
$C^\infty$ sections of, say, the~bundle $E$ is written $C^\infty(E)$. 
(Since $\Omega$ is an intrinsic part of $E$, 
$C^\infty(E)$ instead of the more tedious $C^\infty(\Omega,E)$
should not course confusion.)

Then, when $P_\Omega$ and $G$ send sections of the same bundle, $E$,
into sections of another bundle $E'$ etc., the Green operator
$\cal A=\bigl(\begin{smallmatrix}P_\Omega+G&K\\ T& S\end{smallmatrix}
\bigr)$ sends $C^\infty(E)\oplus C^\infty(F)$ into 
$C^\infty(E')\oplus C^\infty(F')$. In general the fibre dimensions of
$E$ and $F$ are denoted by $N>0$ and $M\ge0$, and similarly for the 
primed bundles. Then \eqref{i2} is a case with trivial bundles.

The spaces of sections $B^{s}_{p,q}(E)$, \dots, $F^{s}_{p,q}(F')$ 
are defined in the standard way by use of local trivialisations, and it
is verified that Theorems~\ref{pois-thm}, \ref{psdo-thm},
\ref{trac-thm} and \ref{sgo-thm} as well as Theorem~\ref{PG-thm}
remain valid if $\Rp^n$ is replaced by bundles $E$ and $E'$ over
$\Omega$ whereas $\R^{n-1}$ is replaced by bundles $F$ and $F'$ over $\Gamma$. 

Consequently Theorem~\ref{i1} is extended thus: for each $(s,p,q)\in \Dm_r$
there is boundedness of 
 \begin{align}
 \cal A &\colon
  \begin{array}{ccc}
  B^{s}_{p,q}(E)\\ \oplus\\ B^{s-\fracpi}_{p,q}(F) 
  \end{array}
 \to
  \begin{array}{ccc}
  B^{s-d}_{p,q}(E')\\ \oplus \\   B^{s-d-\fracpi}_{p,q}(F')
  \end{array},
 \label{grn2}  
 \\[2.5\jot]
 \cal A &\colon
  \begin{array}{ccc}
  F^{s}_{p,q}(E)\\ \oplus \\ F^{s-\fracpi}_{p,p}(F)
  \end{array} 
 \to
  \begin{array}{ccc}
  F^{s-d}_{p,q}(E') \\ \oplus  \\F^{s-d-\fracpi}_{p,p}(F')
  \end{array}, 
 \quad\text{ if}\quad p<\infty,
 \label{grn3} 
 \end{align}
when each entry in $\cal A$ has order $d\in\R$ and class $r\in\Z$. In
addition, $\cal A$ cannot be continuous from any of the spaces on the
left hand side in \eqref{grn2} and \eqref{grn3} to $\cal D'(E')\times
\cal D'(F')$ when $(s,p,q)\notin
\overline{\Dm}_r$ without the class of each entry being $\le r-1$.

\medskip

Secondly, when $\cal A'=\bigl(\begin{smallmatrix}P'_\Omega+G'&K'\\ T'&
S'\end{smallmatrix}\bigr)$ is a Green operator defined on
$C^\infty(E')\oplus C^\infty(F')$, so that $\cal A'\cal A$ makes sense on 
$C^\infty(E)\oplus C^\infty(F)$, then the composition rules simply express
that this composite $\cal A'\cal A$ is equal to yet another Green
operator $\cal A''$, cf.\ \cite[Cor.\ 5.5]{GK}. The identity 
$\cal A''=\cal A'\cal A$ holds
also when $\cal A'\cal A$ is considered on (the larger) Besov and 
Triebel--Lizorkin spaces of sections. Indeed, by Remark~\ref{clas-rem} 
the composition rules shown for the $B^{s}_{p,p}$ spaces with
$1<p<\infty$ also holds in the spaces treated here.

\bigskip

A main case of interest is the one in which there exists a parametrix,
$\widetilde{\cal A}$, of $\cal A$, that is, another Green operator such
that the operator identities
 \begin{align}
 \widetilde{\cal A}\cal A&=1-\cal R
 \label{grn4}
 \\ 
 \cal A\widetilde{\cal A}&=1-\cal R'
 \label{grn4'}
 \end{align}
hold on the spaces on the left and right hand sides of \eqref{grn2} and
\eqref{grn3}, respectively, for {\em negligible\/} operators 
$\cal R$ and $\cal R'$. This means that they have order $-\infty$,
so necessarily $\cal R$ has range in $C^\infty(E)\oplus C^\infty(F)$ and
$\cal R'$ in $C^\infty(E')\oplus C^\infty(F')$.

When the order of $\cal A$ is an integer $d\in\Z$ and each entry in
$\cal A$ is {\em poly-homogeneous\/} (explained in 
\cite[Sect.~1.2]{G1}, e.g.), there is a well-known ellipticity condition 
assuring the existence of $\widetilde{\cal A}$. If the principal symbols 
are denoted by $p^0(x,\xi)$, $g^0(x,\eta_n,\xi)$ etc., {\em ellipticity\/} 
means that the following two conditions (which are expressed in local
coordinates) are fulfilled:
 \begin{itemize}
  \item[(I)] The principal symbol of $P$ is for each $|\xi|\ge 1$ a bijection
  \begin{equation}
  p^0(x,\xi)\colon \C^N\to \C^{N'}.
  \label{grn5}
  \end{equation}
  \item[(II)] The principal boundary symbol operator
  \begin{equation}
  a^0(x',\xi',D_n)=
  \begin{pmatrix}
  p^0(x',0,\xi',D_n)_{+}+g^0(x',\xi,D_n)& k^0(x',\xi',D_n)\\[1\jot]
  t^0(x',\xi',D_n)                      & s^0(x',\xi')
  \end{pmatrix} 
  \label{grn6}
  \end{equation}
 is a bijection
 \begin{equation}
 \cal S(\Rp)^N\times\C^M @>\quad a^0(D_n)\quad>> \cal S(\Rp)^{N'}\times\C^{M'}
 \label{grn7}
 \end{equation}
 for each $x'\in \Gamma$ and each $|\xi'|\ge 1$.
 \end{itemize}
It was shown in \cite[Thm.~5.4]{G3} that if $\cal A$ is elliptic, then there
exists a parametrix $\widetilde{\cal A}$ of order $-d$ and class $r-d$. 
In this case \eqref{grn4} becomes an operator identity valid on the spaces
$B^{s}_{p,q}(E)\oplus B^{s-\fracpi}_{p,q}(F)$ and 
$F^{s}_{p,q}(E)\oplus F^{s-\fracpi}_{p,p}(F)$, and \eqref{grn4'} holds on
$B^{s-d}_{p,q}(E')\oplus B^{s-d-\fracpi}_{p,q}(F')$ and 
$F^{s-d}_{p,q}(E')\oplus F^{s-d-\fracpi}_{p,p}(F')$\,---\,in both cases for 
each $(s,p,q)\in \Dm_r$. Observe that $\cal R$ is then necessarily of 
class $r$ while the class of $\cal R'$ must be $r-d$.

{\em Injective\/} and {\em surjective\/} ellipticity of $\cal A$ means 
that (I) and (II) above hold only with `bijection' replaced by, respectively,
`injection' and `surjection'. In the affirmative case there exists an 
$\widetilde{\cal A}$ satisfying \eqref{grn4} and \eqref{grn4'}, respectively,
and it is termed a left respectively a right parametrix. 

\subsection{Fredholm properties} \label{frdh-ssect}
Already when $\cal A$ is either injectively or surjectively elliptic
one can deduce various properties for its kernel and range. 
Instead of generalising the Fredholm theory to the category of 
quasi-Banach spaces, one can proceed as in \cite{G3}. Basically
this is possible because, as seen in Remark~\ref{clas-rem}, the spaces
considered here are contained in the Banach spaces treated there.  

This will be explained in the following, where a version of
Theorem~\ref{i2-thm} for vector bundles will be proved. First
it will be convenient to introduce the
vector bundles $V=E\oplus F$ and $V'=E'\oplus F'$ and use them to
borrow the spaces $B^{s+\bold{a}}_{p,q}(V)$ and
$B^{s-\bold{b}}_{p,q}(V')$, respectively $F^{s+\bold{a}}_{p,q}(V)$ and
$F^{s-\bold{b}}_{p,q}(V')$, from \eqref{100} ff.\ below, where they are
introduced systematically. The vectors $\bold{a}$ and $\bold{b}$ 
indicate that there is a space for each column and row in $\cal A$;
in the present case they are equal to zero.

The injectively elliptic case is quite simple: for each $(s,p,q)\in
\Dm_r$ it is seen from the embedding relations and \eqref{grn4} that
 \begin{equation}
 \{\,u\in B^{s+\bold{a}}_{p,q}(V)\mid \cal Au=0\,\}=
 \{\,u\in C^\infty(V)\mid \cal Au=0\,\}.
 \label{grn8}
 \end{equation}
A similar argument works in the Triebel--Lizorkin case, and thus,
since $F^{s}_{2,2}=B^{s}_{2,2}$, the kernel of $\cal A$, written
$\ker\cal A$, is independent of $(s,p,q)\in \Dm_r$ as well as of whether
we consider \eqref{grn2} or \eqref{grn3}. Hence $\ker\cal A$ equals
the space in \eqref{grn8}\,---\,and it has finite dimension by \cite{G3}. 
Moreover, the~image $\cal A(B^{s+\bold{a}}_{p,q}(V))$ is closed
in $B^{s-d-\bold{b}}_{p,q}(V')$ and similarly
for the operator in \eqref{grn3}. The latter fact was proved 
in \cite{G3} for the spaces in consideration there, and if $\cal
Au_m\to v$ in $B^{s-d-\bold{b}}_{p,q}(V')$ we
determine $(s',p',p')$ as in Remark~\ref{clas-rem} and conclude from
\cite{G3} that
$v=\cal A w$ for some $w\in B^{s'+\bold{a}}_{p',p'}(V)$; 
here $w=\widetilde{\cal A}v+\cal Rw$ according to \eqref{grn4}, so
$w\in B^{s+\bold{a}}_{p,q}(V)$. In a similar way
the~analogous statement for the Triebel--Lizorkin spaces carry over
from \cite{G3}. Hereby $1^\circ$ of Theorem~\ref{i2-thm} is proved.

As a preparation for the surjectively elliptic case we shall first
treat the case where $\cal A$ is elliptic; evidently the arguments
above for the injectively elliptic case apply to $\cal A$ then.
Concerning the range of $\cal A$ we use an embedding of $B^{s}_{p,q}$
into a classical space $B^{s'}_{p',p'}$. For the classical spaces it was
shown in \cite{G3} that there exists a finite dimensional subspace
$\cal N\subset C^\infty(V')$ which is a complement of $\im\cal A$, that is,
 \begin{equation}
 B^{s'-d-\bold{b}}_{p',p'}(V')=
 \cal N\oplus\cal A(B^{s'+\bold{a}}_{p',p'}(V))
 \label{grn20}
 \end{equation}
for every $(s',p',p')\in\Dm_r$ with $1<p'<\infty$. This implies that 
 \begin{equation}
 B^{s-d-\bold{b}}_{p,q}(V')=
 \cal N\oplus\cal A(B^{s+\bold{a}}_{p,q}(V))
 \label{grn21}
 \end{equation}
for when \eqref{grn20} is applied to an element of the subspace
$B^{s-d-\bold{b}}_{p,q}(V')$ it follows from \eqref{grn4} that the component 
in the range of $\cal A$ belongs to $\cal A(B^{s+\bold{a}}_{p,q}(V))$. That 
the sum is direct is seen already from \eqref{grn20}.
From \eqref{grn21} we conclude that $\cal N$ is a complement of $\im\cal A$
also in the non-classical cases, and by the construction it is
independent of $(s,p,q)$ and of finite dimension. The $F$ case is
covered by a similar argument.

When $\cal A$ is surjectively elliptic the study of {\em ranges\/} of
$\cal A$ that is found in \cite[(5.21) ff.]{G3} is easily modified,
and we sketch this in the $B$ case when $d=r=0$; the $F$ case is
completely analogous.
The tools in \cite{G3} consist of some remarks on the Banach space cases
with $(s,p,q)$ in $ \Dm_0\!\setminus \overline{\Dm}_1$, and these need not be
changed at all. So recall from \cite{G3} that 
 \begin{equation}
 \cal A(B^{s'+\bold{a}}_{p',q'})=\bigl\{\,f\in B^{s'-\bold{b}}_{p',q'}(V')
  \bigm|\dual{f}{\overline{g}}=0\quad\text{for }g\in\ker\cal A^* \,\bigr\} 
 \label{grn9}
 \end{equation}
when $(s',p',q')$ is a parameter in $\Dm_0\!\setminus\overline{\Dm}_1$ with 
$1<p',q'<\infty$. In addition there is an argument which by 
Remark~\ref{clas-rem} easily gives that 
$\rho_0\le \nu(s,p,q)\le \rho_1$ for 
{\em any\/} $(s,p,q)\in \Dm_0$; here $\rho_0=\op{dim}\ker\cal A^*$,
$\rho_1=\op{codim}\cal A\cal A^*(B^{s+\bold{a}}_{p,q})$ and 
$\nu(s,p,q)$ denotes the codimension of
$\cal A(B^{s+\bold{a}}_{p,q})$ in $B^{s-\bold{b}}_{p,q}(V')$. 
In virtue of the injectively and two-sided elliptic cases treated
above the numbers $\rho_0$ and $\rho_1$ are $(s,p,q)$-independent. 
Consequently the conclusion from \cite{G3} that $\rho_0=\rho_1$
yields the independence of $\nu$ from $(s,p,q)$. 

Now we take $g_1$,\dots,$g_\nu$ in $C^\infty(V')$ as a basis for 
$\ker\cal A^*$, and we may assume that $\dual{g_j}{\overline{g_k}}=\kd{j=k}$.
From \eqref{grn9} it is seen that $\cal A(B^{s+\bold{a}}_{p,q})$ and
$\ker\cal A^*$ are linearly independent, and for this reason
$g_1$,\dots,$g_\nu$ are linearly independent in the quotient
$B^{s-\bold{b}}_{p,q}/\cal A(B^{s+\bold{a}}_{p,q})$. Hence
$(g_1,\dots,g_\nu)$ is a basis for the quotient, so
$\ker\cal A^*+\cal A(B^{s+\bold{a}}_{p,q})$ is equal to
$B^{s-\bold{b}}_{p,q}$. Altogether this shows that 
 \begin{equation}
 B^{s-\bold{b}}_{p,q}(V')=\ker\cal A^*\oplus\cal A(B^{s+\bold{a}}_{p,q}),
 \label{grn9'}
 \end{equation}
that is, $\cal A(B^{s+\bold{a}}_{p,q})$ has the finite dimensional
$(s,p,q)$-independent complement $\ker\cal A^*$. When $\cal Au_m\to v$
in $B^{s-\bold{b}}_{p,q}$ we write $v=\cal
Aw+\lambda_1g_1+\dots+\lambda_\nu g_\nu$ by use of \eqref{grn9'}. Then
\eqref{grn9} gives that $0=\dual{v}{\overline{g_j}}=\lambda_j$, so that $v=\cal
Aw$. Hence the range of $\cal A$ is closed.  

More generally one can reduce to the case where $d=r=0$, see
\cite{G3}. This reduction uses order-reducing operators, written as
$\Lambda^m_{-,E}$ for $m\in\Z$, that are chosen so that they for all
admissible parameters $(s,p,q)$ have 
the~following group and continuity properties:
 \begin{gather}
 \Lambda^k_{-,E}\Lambda^m_{-,E}=\Lambda^{k+m}_{-,E},\qquad 
 \Lambda^0_{-,E}=1,
 \label{grn10} \\
 \Lambda^m_{-,E}\colon B^{s}_{p,q}(E)\overset{\sim}{\to}
 B^{s-m}_{p,q}(E),\qquad
 \Lambda^m_{-,E}\colon F^{s}_{p,q}(E)\overset{\sim}{\to}
 F^{s-m}_{p,q}(E).
 \label{grn12}
 \end{gather}   
Such operators were constructed in \cite[Thm. 5.1 $3^\circ$]{G3} (but
called $\Xi^{m}_{-,E}$ there, see also \cite[Ex.\ 2.10]{G2} for 
a brief review) and in \cite{F2}.
Their continuity properties are a consequence of Section~\ref{cont-sect}, 
since they in general are of the form $P^{(m)}_\Omega+G^{(m)}$, and the
group property, valid by the earlier remark on composition rules, implies
the bijectivity. 

One should observe that in the reduction procedure,
\eqref{grn9'} easily carries over to a  similar statement for the more
general surjectively elliptic Green operators, except that $\ker\cal
A^*$ is replaced by another fixed finite dimensional space of smooth 
sections. Altogether this proves $2^\circ$ of Theorem~\ref{i2-thm}.

To show $3^\circ$ there, suppose that (the vector bundle version of)
\eqref{i11} holds for some $(s_1,p_1,q_1)\in\Dm_r$ for a subspace $\cal
N\subset C^\infty(V')$. By $2^\circ$ there is a parameter independent range
complement $\cal M\subset C^\infty(V')$. 
If $(g_1,\dots,g_k)$ is a linearly independent tupel in
$\cal N$, its image is so in $B^{s_1-d-\bold{b}}_{p_1,q_1}(V')/\cal
A(B^{s_1+\bold{a}}_{p_1,q_1})$ by \eqref{i11}. Hence $\op{dim}\cal N
\le\op{dim}\cal M$. In addition the quotient is isomorphic as a vector 
space to both $\cal N$ and $\cal M$, so there is equality.

For an arbitrary $(s,p,q)\in\Dm_r$ the identity \eqref{i11} now holds if and
only if a basis $(g_1,\dots,g_k)$ for $\cal N$ still gives a basis in 
$B^{s-d-\bold{b}}_{p,q}(V')/\cal A(B^{s+\bold{a}}_{p,q})$. Because
$k=\op{dim}\cal M$ was seen above, it suffices to see that
$(Qg_1,\dots,Qg_k)$ is linearly independent, when $Q$ denotes the quotient
operator. Let $0=\lambda_1Qg_1+\dots\lambda_kQg_k$. Then 
 \begin{equation}
 \lambda_1g_1+\dots+\lambda_kg_k=\cal Au
 \label{grn50}
 \end{equation}
for a unique $\cal Au$ in $B^{s-d-\bold{b}}_{p,q}(V')$. But 
 \begin{equation}
 \lambda_1g_1+\dots+\lambda_kg_k=w+\cal Av
 \label{grn51} 
 \end{equation}
for uniquely determined $w\in\cal M$ and $\cal Av\in
B^{s_1-d-\bold{b}}_{p_1,q_1}(V')$. It suffices now to verify that $w=0$, for
then \eqref{grn51} implies that
 \begin{equation}
 0=\lambda_1Q_1g_1+\dots\lambda_kQ_1g_k\quad\text{in}\quad
 B^{s_1-d-\bold{b}}_{p_1,q_1}(V')/\cal A(B^{s_1+\bold{a}}_{p_1,q_1}),
 \label{grn53}
 \end{equation}
when $Q_1$ denotes the quotient operator for $(s_1,p_1,q_1)$. That $w=0$ is
evident when $B^{s_1-d-\bold{b}}_{p_1,q_1}\hookrightarrow 
B^{s-d-\bold{b}}_{p,q}$, for then \eqref{grn51} is also a decomposition in
$B^{s-d-\bold{b}}_{p,q}$. Similarly $w=0$ holds when the reverse embedding
does so. Hereby \eqref{i11} has been established for $(s_1+1,\infty,1)$, so
by repeating the argument any $(s,p,q)\in\Dm_r$ with $s<s_1$ is covered, and
then in a last application {\em any\/} $(s,p,q)\in\Dm_r$ is so. Thus
$3^\circ$ is proved.

In Corollary~\ref{i2-cor} any $(f,\varphi)$ in
$B^{s_1-d-\bold{b}}_{p_1,q_1}(V')$ or
$F^{s_1-d-\bold{b}}_{p_1,q_1}(V')$ gives a functional
$\dual{f}{\cdot}+\dual{\varphi}{\cdot}$ on $\cal N$: 
one can take a larger classical space $B^{s'-d-\bold{b}}_{p',q'}(V')$
with $(s',p',q')\in \Dm_r\!\setminus\overline{\Dm}_{r+1}$,
cf.~Remark~\ref{clas-rem}; with $s=-s'+d$,
$\fracp+\fracc1{p'}=1$ and $\fracc1q+\fracc1{q'}=1$, this space is
dual to $B^{s}_{p,q;0}(E)\oplus B^{s+1-\fracpi}_{p,q}(F)$ and
$e_\Omega g\in B^{s}_{p,q;0}(E)$, for by construction $(s,p,q)\in
\Dm_{d-r}\!\setminus\overline{\Dm}_{d-r+1}$ so it suffices with 
$\gamma_j g=0$ for $j<d-r$ as assumed. Moreover, any 
$\varphi\in\cal D'(F')$ gives a functional on $C^\infty(F')$. 

Using this it is elementary to verify that validity of \eqref{i16} or
\eqref{i17} at $(s_1,p_1,q_1)$ implies \eqref{i11} and \eqref{i12},
respectively. For every $(s,p,q)$ in $\Dm_r$, $3^\circ$ then gives that $\cal
N$ is a range complement, and the inclusions into $\cal N^\perp$ in
\eqref{i16} and \eqref{i17} are clear since $\cal A(C^\infty(V))\subset \cal
N^\perp$ is seen by consideration of $(s_1,p_1,q_1)$. The other inclusions
follow from the ones proved, for when an element in, say,
$B^{s-d-\bold{b}}_{p,q}(V')\cap \cal N^\perp$ is decomposed as in $2^\circ$,
then the $\cal N$ component is trivial.

\subsection{Multi-order operators}  \label{mo-ssect}
Using order-reducing operators one can also reduce multi-order
Green operators to the case of order and class 0 and carry through the
preceding considerations, cf.\ \cite{G3}. Instead of going into details 
this section is concluded with a precise summary, which contains the 
previous results as well as those in Section~\ref{summ-sect} on single 
order operators as special cases.

\bigskip

In the following, $\Omega\subset\Rn$ is a
smooth open bounded set with $\partial\Omega=\Gamma$,
and $\cal A$ denotes a multiorder Green operator, i.e., 
$\cal A=\bigl(\begin{smallmatrix}P_\Omega+G&K\\ T&
S\end{smallmatrix}\bigr)$, where $P=(P_{ij})$ and $G=(G_{ij})$,
$K=(K_{ij})$, $T=(T_{ij})$ and $S=(S_{ij})$. Here $1\le i\le i_1$ and
$i_1< i\le i_2$, respectively, in the two rows of the block matrix
$\cal A$, and $1\le j\le j_1$ respectively $j_1< j\le j_2$ holds in 
the~two columns of $\cal A$; that is, $\cal A$ is an $i_2\times j_2$
matrix operator. 

Each $P_{ij}$, $G_{ij}$, $K_{ij}$, $T_{ij}$ 
respectively $S_{ij}$ belongs to the poly-homogeneous calculus, 
i.e., they are a pseudo-differential operator satisfying the~uniform
two-sided transmission condition (at $\Gamma$), a singular Green, a Poisson and
a trace operator, resp.\  a pseudo-differential operator 
on $\Gamma$. The orders of the operators are taken to be
$d+b_i+a_j$, where $d\in\Z$, $\bold{a}=(a_j)\in\Z^{j_2}$ 
and $\bold{b}=(b_i)\in\Z^{i_2}$, and the class of $P_{ij,\Omega}+G_{ij}$ 
and of $T_{ij}$ is supposed to be $r+a_j$ for some $r\in\Z$.%
\footnote{For short $\cal A$ is then said to have order $d$ and class $r$.}

The operators are supposed to act on sections of
vector bundles $E_j$ over $\Omega$ and $F_j$ over $\Gamma$, with values in
other bundles $E'_i$ and $F'_i$. Letting $V=(E_1\oplus\dots\oplus
E_{j_1})\cup (F_{j_1+1}\oplus\dots\oplus F_{j_2})$, while 
$V'=(E'_1\oplus\dots\oplus E'_{i_1})\cup
(F'_{i_1+1}\oplus\dots\oplus F'_{i_2})$, the Green operator $\cal A$
sends $C^\infty(V)$ to $C^\infty(V')$. Here one can either regard
$C^\infty(V)$ as an abbreviation for $C^\infty(E_1)\oplus\dots\oplus
C^\infty(F_{j_2})$, or verify that $V$ is a vector
bundle with base manifold $\Omega\cup\Gamma$, 
cf. the~definition in \cite{Lan}. Observe that hereby the
dimension of the base manifold, as well as that of the fibres over its
points $x$, depends on whether $x\in\Omega$ or $x\in\Gamma$. Similar
remarks apply to $V'$. 

To have a convenient notation we shall now introduce spaces that are
adapted to the order and class of each entry in $\cal A$, namely (with
$p<\infty$ in the Triebel--Lizorkin spaces)
 \begin{align}
 B^{s+\bold{a}}_{p,q}(V)&=
 (\bigoplus_{j\le j_1} B^{s+a_j}_{p,q}(E_j) )
 \oplus (\bigoplus_{j_1<j}  B^{s+a_j-\fracpi}_{p,q}(F_j) )
 \label{100} \\
 B^{s-\bold{b}}_{p,q}(V')&=( \bigoplus_{i\le i_1}  B^{s-b_i}_{p,q}(E'_i))
 \oplus ( \bigoplus_{i_1<i}  B^{s-b_i-\fracpi}_{p,q}(F'_i) ),
 \label{101}
 \\
 F^{s+\bold{a}}_{p,q}(V)&=
 (\bigoplus_{j\le j_1} F^{s+a_j}_{p,q}(E_j) )
 \oplus (\bigoplus_{j_1<j}  F^{s+a_j-\fracpi}_{p,p}(F_j) )
 \label{102} 
 \\
 F^{s-\bold{b}}_{p,q}(V')&=( \bigoplus_{i\le i_1}  F^{s-b_i}_{p,q}(E'_i))
 \oplus ( \bigoplus_{i_1<i}  F^{s-b_i-\fracpi}_{p,p}(F'_i) ).
 \label{103}
 \end{align}
(As usual $F^{s}_{p,p}(F_j)=B^{s}_{p,p}(F_j)$ etc.)
It is convenient to take the quasi-norms to be
$ \norm{v}{B^{s+\bold{a}}_{p,q}}=\big( 
 \norm{v_1}{B^{s+a_1}_{p,q}(E_1)}^q+\dots+
 \norm{v_{j_2}}{B^{s+a_{j_2}-\fracpi}_{p,q}(F_{j_2})}^q\big)^{\fracci1q}$,
and
$ \norm{v}{F^{s+\bold{a}}_{p,q}}=\big( 
 \norm{v_1}{F^{s+a_1}_{p,q}(E_1)}^p+\dots+
 \norm{v_{j_2}}{F^{s+a_{j_2}-\fracpi}_{p,q}(E_{j_2})}^p\big)^{\fracpi}$,
with similar conventions for $B^{s-\bold{b}}_{p,q}$ and $F^{s-\bold{b}}_{p,q}$.

{\em The ellipticity concept\/} for multi-order Green operators is like
the one for single-order operators, except that $p^0(x,\xi)$ is a
matrix with $p^0_{ij}$ equal to the principal symbol of $P_{ij}$ {\em
relative\/} to the order $d+b_i+a_j$ of $P_{ij}$; and similarly for
$a^0(x',\xi',D_n)$.

\begin{thm} \label{grn-thm}
Let $\cal A$ denote a multi-order Green operator going from $V$ to $V'$
as described above. Then $\cal A$ is {\em continuous\/}
 \begin{equation}
 \cal A\colon B^{s+\bold{a}}_{p,q}(V)\to B^{s-d-\bold{b}}_{p,q}(V'),
 \quad \cal A\colon F^{s+\bold{a}}_{p,q}(V)\to F^{s-d-\bold{b}}_{p,q}(V'),
 \label{104}
 \end{equation}
for each $(s,p,q)\in\Dm_r$, when $p<\infty$ in the Triebel--Lizorkin spaces.

If $\cal A$ is injectively elliptic, surjectively elliptic, respectively
elliptic, then $\cal A$ has a left, right respectively two-sided 
{\em parametrix\/} $\widetilde{\cal A}$ in the calculus; it can be taken of
order $-d$ and class $r-d$, and then 
$\widetilde{\cal A}$ is continuous in the opposite
direction in \eqref{104} for all the parameters $(s,p,q)$ mentioned above.

When $\cal A$ is continuous $B^{s+\bold{a}}_{p,q}(V)\to\cal
D'(V')$ or $\widetilde{\cal A}$ is so from
$B^{s-d-\bold{b}}_{p,q}(V')$ to $\cal D'(V)$ for some
$(s,p,q)\notin\overline{\Dm}_r$, then the {\em class\/} of
$\cal A$ is $\le r-1$ and $\widetilde{\cal A}$ has class $\le
r-1-d$, respectively. A similar conclusion holds for $F^{s+\bold{a}}_{p,q}(V)$
and $F^{s-d-\bold{b}}_{p,q}(V')$.

Furthermore, when $\cal A$ is injectively elliptic,
the {\em inverse regularity\/} properties in Corollary~\ref{i1-cor} carry
over to the operators in \eqref{104}. Moreover, $1^\circ$ of
Theorem~\ref{i2-thm} is valid mutatis mutandem for $\cal A$, and the ranges
are closed.

When $\cal A$ is surjectively elliptic, analogues of $2^\circ$ and
$3^\circ$ of Theorem~\ref{i2-thm} as well as of Corollary~\ref{i2-cor}
hold for $\cal A$ (when $\gamma_j g_i=0$ for $j<d+b_i-r$).

In the elliptic case, all these properties hold for $\cal A$, and the
parametrices are two-sided.
\end{thm}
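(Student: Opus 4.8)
The plan is to reduce the whole statement to the single-order, class-$0$ situation already settled in Sections~\ref{cont-sect} and \ref{frdh-ssect}, using the order-reducing operators. For each bundle $E_j$ over $\Omega$ (resp.\ $F_j$ over $\Gamma$) take $\Lambda^m_{-,E_j}$ (resp.\ a corresponding $\Lambda^{\prime\,m}_{-,F_j}$ on $\Gamma$) with the group and bijectivity properties \eqref{grn10}--\eqref{grn12}; assembling these diagonally produces, for every admissible $(s,p,q)$, bijections $\Lambda_{\bold{a}}\colon B^{s+\bold{a}}_{p,q}(V)\overset{\sim}{\to}B^{s}_{p,q}(V)$ and $\Lambda_{d+\bold{b}}\colon B^{s-d-\bold{b}}_{p,q}(V')\overset{\sim}{\to}B^{s-d}_{p,q}(V')$, and likewise in the $F$ scale. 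Then $\cal A_0:=\Lambda_{d+\bold{b}}^{-1}\cal A\,\Lambda_{\bold{a}}^{-1}$ is, by the composition rules recalled after \eqref{grn3}, again a Green operator in the calculus, now with every entry of order $d$ and class $r$; a further conjugation by an order reduction of degree $d$ brings it to order $0$. The continuity \eqref{104} for $(s,p,q)\in\Dm_r$ then follows by applying Theorems~\ref{pois-thm}, \ref{psdo-thm}, \ref{trac-thm}, \ref{sgo-thm} and \ref{PG-thm} entrywise to $\cal A_0$ and transporting back along the $\Lambda$'s, whose boundedness is exactly \eqref{grn12}.

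For the parametrix I would invoke \cite[Thm.~5.4]{G3}: under (I)--(II), with the principal symbols taken relative to the orders $d+b_i+a_j$, there is $\widetilde{\cal A}$ in the calculus of order $-d$ and class $r-d$ satisfying \eqref{grn4} and/or \eqref{grn4'} on $C^\infty(V)$, $C^\infty(V')$. Since $\widetilde{\cal A}$ is itself a multi-order Green operator, the continuity half just proved applies to it and gives boundedness in the direction opposite to \eqref{104} for all $(s,p,q)\in\Dm_{r-d}$, hence in particular for the parameters $(s,p,q)\in\Dm_r$ of the statement. The operator identities \eqref{grn4}--\eqref{grn4'} then extend from $C^\infty$ to all the Besov and Triebel--Lizorkin spaces involved, using the density of $C^\infty$ where available and continuity (for $p$ or $q$ infinite one argues as in the discussion following \eqref{grn7}). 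The sharpness assertions are obtained by the same localisation: were $\cal A$ (or $\widetilde{\cal A}$) continuous into $\cal D'(V')$ (resp.\ $\cal D'(V)$) for some $(s,p,q)\notin\overline{\Dm}_r$, then after the reductions some entry $P_{ij,\Omega}+G_{ij}$ or $T_{ij}$ would be continuous into $\cal D'$ for a parameter outside $\overline{\Dm}_r$, which by the class characterisations in Theorems~\ref{trac-thm}, \ref{sgo-thm} and \ref{PG-thm} (resting on the counterexamples of Lemma~\ref{gamm2-lem}) forces the class of $\cal A$ down to $\le r-1$, and that of $\widetilde{\cal A}$ to $\le r-1-d$.

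It remains to push the Fredholm-type conclusions through. The inverse regularity (Corollary~\ref{i1-cor}), the kernel/closed-range/range-complement statements $1^\circ$--$3^\circ$ of Theorem~\ref{i2-thm}, and the annihilation criterion of Corollary~\ref{i2-cor} were all established in Section~\ref{frdh-ssect} for single-order $\cal A$; conjugating by $\Lambda_{\bold{a}}$ and $\Lambda_{d+\bold{b}}$ carries $\ker\cal A$, the ranges, and the smooth complements $\cal N$ bijectively to and from those of $\cal A_0$, and the embeddings into classical $B^{s'}_{p',p'}$-spaces of Remark~\ref{clas-rem} remain available because they hold for each summand of $V$ and $V'$ separately. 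The hypothesis $\gamma_jg_i=0$ for $j<d+b_i-r$ in the analogue of Corollary~\ref{i2-cor} is precisely what is needed so that, after reduction, $e_\Omega g_i$ lands in the relevant $B^{s}_{p,q;0}$ space and the duality pairings in \eqref{i16}--\eqref{i17} make sense. The main obstacle here is bookkeeping rather than a new idea: one must verify that the diagonal order-reducing operators are compatible with the vector-bundle-over-$\Omega\cup\Gamma$ formalism, and that passing to $\cal A_0$ preserves ellipticity, i.e.\ that the principal symbols of $\cal A_0$ relative to the shifted orders are the conjugates of $p^0$ and $a^0$ by the elliptic invertible principal symbols of the $\Lambda$'s, so that injectivity, surjectivity, or bijectivity of $p^0$ in \eqref{grn5} and of $a^0(D_n)$ in \eqref{grn7} is unaffected.
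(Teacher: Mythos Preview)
Your approach is essentially the paper's own: it explicitly states that the multi-order case is obtained ``by a straightforward extension of the proof of \cite[Cor.~5.5]{G3}'' on the basis of the single-order case, and the mechanism is exactly the diagonal order-reducing conjugation you describe (cf.\ the discussion around \eqref{grn10}--\eqref{grn12} and the sentence opening Section~\ref{mo-ssect}). One small slip: your claim that $\widetilde{\cal A}$ is bounded ``for all $(s,p,q)\in\Dm_{r-d}$, hence in particular for $(s,p,q)\in\Dm_r$'' only gives the right inclusion when $d\ge0$; the correct observation is that class $r-d$ for $\widetilde{\cal A}$ acting on the $(s-d)$-spaces translates exactly to $(s,p,q)\in\Dm_r$, so the domains of validity of $\cal A$ and $\widetilde{\cal A}$ coincide by design rather than by inclusion.
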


On the basis of the single-order case described above,
Theorem~\ref{grn-thm} is obtained by a straightforward extension of
the proof of \cite[Cor.~5.5]{G3}.

It should be observed explicitly, that the $(s,p,q)$-independence of
$\ker \cal A$ implies that it is the~same space regardless of
whether $\cal  A$ is considered on $B^{s+\bold{a}}_{p,q}(V)$ or on
$F^{s+\bold{a}}_{p,q}(V)$; this follows since $B^{s+\bold{a}}_{2,2}=
F^{s+\bold{a}}_{2,2}$. A similar argument shows that one can take a
space $\cal N$ that is a complement of $\cal
A(B^{s+\bold{a}}_{p,q})$ in $B^{s-d-\bold{b}}_{p,q}(V')$ as well as of $\cal
A(F^{s+\bold{a}}_{p,q})$ in $F^{s-d-\bold{b}}_{p,q}(V')$.

Also it should be observed that the inverse regularity properties as in
Corollary~\ref{i1-cor} follow by application of \eqref{grn4} to \eqref{i8}
(or its analogue).

\bigskip

The theorem above is a  generalisation of 
\cite[Cor.\ 5.5]{G3} to the 
scales of $B^{s}_{p,q}$ and $F^{s}_{p,q}$ spaces with $0<p,q\le\infty$
(with $p<\infty$ for the $F$ spaces) with a rather more detailed 
Fredholm theory characterising the ranges of $\cal A$.
Moreover, the H\"older--Zygmund spaces $C^s=B^{s}_{\infty,\infty}$,
$s>0$, are included, and unlike \cite{RS} it is unnecessary to assume
that $s-d>0$ when applying operators of order $d$ to $C^s$: the space
$B^{s-d}_{\infty,\infty}$ can receive in any case.
For further historical remarks see the~beginning and end of \cite{G3}.

\section{Applications} \label{appl-sect}
As a first example it is clear that Theorem~\ref{grn-thm} above
applies to such boundary problems as those in \eqref{i1} above.

Secondly the various orthogonal decompositions into divergence-free
and gradient subspaces in \cite[Ex.~3.14]{G3} are generalised by
Theorem~\ref{grn-thm} to the spaces $B^{s}_{p,q}(\overline{\Omega})^n$ and
$F^{s}_{p,q}(\overline{\Omega})^n$ for $(s,p,q)\in\Dm_0$. 

Thirdly the inverse regularity properties in Corollary~\ref{i1-cor}
carry over to {\em semi-linear\/} 
perturbations of $\cal A$, as long as the non-linear term is `better
behaved' than $\cal A$. This is proved for the stationary
Navier--Stokes equations (considered with boundary conditions of class $1$ and
$2$) in \cite[Thm.~5.5.3]{JJ93}. A paper on this application to more general
semi-linear problems is being worked out; results and methods are
sketched in \cite{JJ95stjm}.

For the stationary Navier--Stokes equations the mentioned inverse
regularity results have led to an extension of the weak $L_2$ solvability
theory for the Dirichl\'et problem, cf.~\cite{Tm}, to existence of
solutions in the
$B^{s}_{p,q}$ and $F^{s}_{p,q}$ spaces, in rough terms when $s\ge
\max(1,\fracc np -\frac n2+1)$. See \cite[Thm.~5.5.5]{JJ93} for details.


\providecommand{\bysame}{\leavevmode\hbox to3em{\hrulefill}\thinspace}
\providecommand{\MR}{\relax\ifhmode\unskip\space\fi MR }
\providecommand{\MRhref}[2]{%
  \href{http://www.ams.org/mathscinet-getitem?mr=#1}{#2}
}
\providecommand{\href}[2]{#2}

\end{document}